\documentclass[12pt]{article}
\usepackage{amssymb}
\usepackage{amsmath}
\usepackage{amsthm}
\usepackage{graphicx}
\usepackage{psfrag}
\usepackage{graphics}
\usepackage{amsthm}
\usepackage{hyperref,enumitem,bbold}
\usepackage[dvipsnames]{xcolor}
\usepackage[english]{babel}

\newcommand{\cS}{\mathcal{S}}

\newcommand{\cM}{\mathcal{M}}
\newcommand{\cD}{\mathcal{D}}

\newcommand{\N}{\mathbb N}

\newcommand{\T}{\mathbb T}
\newcommand{\sS}{\mathbb S}

\newcommand{\var}{\mathrm{var}}

\newcommand{\drift}{\mathrm{drift}}

\newcommand{\R}{\mathbb{R}}

\newcommand{\Z}{\mathbb Z}

\newcommand{\cC}{\mathcal C}

\newcommand{\Diff}{\mathrm{Diff}}

\newcommand{\eps}{\varepsilon}

\def\Fix{{\mathrm {Fix}}}

\def\norm#1{\left\lVert #1 \right\rVert}

\theoremstyle{theorem}

\newtheorem{thm}{Theorem}[section]
\newtheorem*{thmA}{Theorem A}
\newtheorem*{thmB}{Theorem B}
\newtheorem*{thmC}{Theorem C}

\newtheorem*{thmF}{Theorem F}

\newtheorem{thmintro}{Theorem}

\newtheorem{qsintro}{Question}

\newtheorem{prop}[thm]{Proposition}

\newtheorem{cor}[thm]{Corollary}

\newtheorem{qs}[thm]{Question}

\newtheorem{lem}[thm]{Lemma}
\newtheorem*{lemsn}{Lemma}

\theoremstyle{definition}
\newtheorem{defn}[thm]{Definition}
\newtheorem{notation}[thm]{Notation}

\newtheorem{claim}[thm]{Claim}
\theoremstyle{remark}
\newtheorem{rem}[thm]{Remark}

\newcommand{\id}{\mathrm{id}}
\newcommand{\f}{\varphi}

\begin{document}

\date{}
\vspace{-1cm}

\date{}
\author{H\'el\`ene Eynard-Bontemps \,\, \& \,\, Andr\'es Navas}

\title{\vspace{-1.5cm}How (not) to prove (un)distortion \\
for diffeomorphisms of one-manifolds
}

\maketitle

\noindent{\bf Abstract.} This article addresses the following general question: Given a one-dimensional manifold $M$ and $1\le r<s\le \infty$, does there exist a $C^s$ orientation preserving compactly supported diffeomorphism of $M$ that is undistorted in the group $\Diff_{c,+}^s(M)$ of such diffeomorphisms while distorted in the bigger group of $C^r$ diffeomorphisms? Interestingly, the answer is known to be positive in the case $(r,s)=(1,2)$ and negative in the case $(r,s)=(2,\infty)$, according to \cite{Na21} and \cite{EBM25}, respectively. 

The first part of this note originates from a failed attempt to extend the ideas of \cite{Na21} to the case $(r,s)=(2,3)$. More precisely, in regularities $C^1$ and $C^2$, obstructions to distortion are provided by \emph{drifts of cocycles for isometric actions of $\Diff_{c,+}^r(M)$ on Banach spaces for $r=1$ and $r=2$ (namely, the logarithmic and projective derivatives $f\mapsto \log Df$ and $f\mapsto D\log Df$, respectively)}. On $\Diff_{c,+}^3(M)$, the so-called \emph{Liouville cocycle} is a natural candidate when looking for new obstructions, but we show that its drift vanishes for $C^2$-distorted diffeomorphisms (and this holds more generally for any ``similar'' cocycle). 

This does not rule out the existence of $C^2$-distorted diffeomorphisms that are $C^3$-undistorted. 
However, at least in the case of the real line, such a diffeomorphism should have very low regularity. 
Indeed, extending the methods and results of \cite{EBM25}, in the second part of this article, 
we show that every compactly supported $C^2$-distorted diffeomorphism of the real line is $C^r$-distorted provided its differentiability class is larger than $C^{2r+2}$.\\

\tableofcontents

\section{Introduction}

In this work, we are interested in two properties that a diffeomorphism of a one-manifold might or might not have, namely: \emph{distortion} and \emph{almost-reducibility}. Distortion is a very general geometric group theoretic notion: an element $f$ of a group $G$ is said to be \emph{distorted} in $G$ if there exists a finite subset $S\subset G$ such that $f\in\langle S\rangle$ and the word-length $|f^n|_S$ of the iterates of $f$ with respect to $S$ grows sublinearly in $n$. Almost-reducibility, on the other hand, concerns topological groups. In our context, we will say that a compactly supported $C^r$ diffeomorphism of a one-manifold $M$ (specifically, $M=\R$, $[0,1]$ or $\sS^1$) is $C^r$-almost-reducible if it can be $C^r$-conjugated arbitrarily close to an isometry in $C^r$ topology by diffeomorphisms with common compact support. Concretely, if $M$ is an interval or the real line, the only compactly supported isometry of $M$ is the identity, while if $M$ is the circle, the isometries are the rotations. 

One interesting aspect of these notions is that they highlight deep differences between the groups $\Diff^r_{c}(M)$ for different values of $r$, where the subindex ``c'' stands for ``compactly supported. A key instance of this is the construction by the second-named author in \cite{Na21} of a smooth diffeomorphism of the closed interval which is ``$C^1$-distorted'' (that is, distorted in the group of $C^1$ diffeomorphisms) and $C^1$-almost-reducible but neither $C^2$-distorted nor $C^2$-almost-reducible. (See \cite{dinamarca-escayola} for a subsequent smoothing of this example to class $C^{1+\alpha}$.) This naturally leads to the following question, also from \cite{Na21}:

\begin{qsintro}
\label{q:intro} Given a manifold $M$ and integers $0\le r<s$, does there exist a $C^s$
diffeomorphism that is distorted in $\Diff^r_{c}(M)$ while undistorted in $\Diff^s_c(M)$?
\end{qsintro}

One of the purposes of this article is to present results that point towards a negative answer in the one-dimensional case for higher values of $r,s$. To simplify the discussion, we will only consider orientation-preserving diffeomorphisms. The general case can be easily deduced from this taking into account a couple of minor considerations. First, an orientation-reversing diffeomorphism $f$ is a distortion element if and only if $f^2$ is; second, a simple trick shows that, in this situation, $f^2$ is distorted in the whole group of diffeomorphisms if and only if it is distorted in the group of orientation-preserving ones. Indeed, it is enough to change the corresponding 
generating system $S$ by the set $S'$ formed by the elements of the form $g \varphi^{\pm 1}$ and $\varphi^{\pm 1} g$ if $g \in S$ inverts orientation, and $g$ and $\varphi^{\pm 1} g \varphi^{\pm 1}$ if $g \in S$ preserves orientation, where $\varphi$ denotes any orientation-reversing diffeomorphism.

The results of \cite{EBM25} already give a negative answer to Question A for the case $M=\R$ or $\sS^1$, $r=2$ and $s=\infty$. Indeed, it follows from \cite{Na23} that if a smooth diffeomorphism of $M$ is $C^2$-distorted, then it must have vanishing \emph{asymptotic variation} (see below), and \cite{EBM25} proves that this is a sufficient condition for being $C^\infty$-almost-reducible and distorted in $\Diff^\infty_c(\R)$ or $\Diff^\infty_+(\sS^1)$. (Let us point out that $C^{\infty}$-almost-reducibility of minimal circle diffeomorphisms corresponds to a --still unpublished-- result of Avila and Krikorian.) Here, we focus on the finite regularity case. 

This article can be subdivided in two distinct parts: the first one is formed by Sections \ref{s:cocycles} to~\ref{s:necessary}, and the second one by Sections~\ref{s:a-r} to \ref{s:perfect}. In all what follows, $M$ will denote either the closed interval or the circle.

\paragraph{Overview of the first part.} The first three sections deal will a classical tool to prove undistortion, namely (stabilized) length functions, and more specifically those coming from \emph{cocycles} for isometric actions of $\Diff^r_+(M)$ on Banach spaces. Rather than providing a precise definition here (see Section \ref{s:cocycles} for 
the details), let us illustrate the general idea with an example for $\Diff^1_+(M)$. In this context, the logarithmic derivative has a nice chain rule: for all $f,g \in\Diff^1_+(M)$,
$$\log D(f\circ g)=\log (Df) \circ g+\log Dg.$$
Note that given $g\in\Diff^1_+(M)$, the map $U(g):u\mapsto u\circ g$ defines an isometry of $L^\infty(M)$ (or $C^0(M)$ endowed with the $\|\cdot\|_\infty$ norm). If $\delta$ denotes the logarithmic derivative operator $f\mapsto \log Df$, the above equality can be rewritten as 
$$ \delta(f\circ g)=U(g) (\delta(f)) + \delta(g).$$
We can summarize this by saying that \emph{$\delta$ is a cocycle for the isometric action of $\Diff^1(M)$ on $L^\infty(M)$}. Now, the above implies that 
$$\|\delta(f\circ g)\|_\infty\le \|\delta(f)\|_\infty+\|\delta(g)\|_\infty,$$
which we rephrase by saying that $\|\delta(\cdot)\|_\infty$ \emph{is a length function on $\Diff^1_+(M)$}. 
By the classical Fekete Lemma on subadditive sequences, this implies that the quotient $\frac{\|\delta(f^n)\|_\infty}n$ has a finite limit, which we call the \emph{drift of $\delta$ at $f$} and simply denote by $\drift_\delta(f)$. 

It is well-known (and easy to check) that if an element $f$ in a group $G$ is distorted, then for any length function $L$ on $G$, the quotient $\frac{L(f^n)}n$ must go to $0$ as $n$ goes to infinity. (Here and in what follows, by a {\em length function on a group $G$ we mean a function $L: G \to \mathbb{R}_+$ such that $L(fg) \leq L(f) + L(g)$ holds for all $f,g$ in $G$.}) In particular, in our context, if $f\in\Diff^1(M)$ is distorted in this group, then $\drift_\delta(f)=0$. Luckily, this drift is easy to interpret: it corresponds to the maximum of $| \! \log Df^{p}(a)| / p$ where $a$ ranges over the periodic points of $f$ and $p$ is the period (which is the same for all periodic points). In particular, if a diffeomorphism is $C^1$-distorted, then it cannot have any hyperbolic periodic point. 
However, it is unknown whether the converse is true.

One strategy to attack Question \ref{q:intro} is to try to construct new length functions in high regularity that could provide new obstructions to distortion (in the corresponding regularity). This is done in \cite{Na23} for $\Diff^2_+(M)$ (actually, for the bigger group $\Diff^{1+ac}_+(M)$ of $C^1$ diffeomorphisms with absolutely continuous derivative) using the ``projective derivative''  $P:g\mapsto D\log Dg$. Indeed, this is a cocycle for the canonical action of $\Diff^2_+(M)$ by isometries of $L^1(M)$ given by $U(g)(u) = (u\circ g) Dg$. The drift in this case is called the 
 ``asymptotic variation of $f$'',  and is denoted by $V_\infty(f)$ (with our current notations, this would be $\drift_P(f)$). 
 In simpler terms, $P$ satisfies the chain rule: 
$$P(f\circ g)=(P(f)\circ g) \times Dg +P(g),$$ 
and $V_\infty$ is the stabilization of the length function $f \in\Diff^2_+(M)\mapsto \| P(f) \|_{L^1}$:
$$ V_\infty(f) =\drift_P(f) = \lim_{n\to+\infty}\frac{\|D\log Df^n\|_{L^1}}n\;\in\R_+.$$
Note that, again, this number has a nice interpretation in the case of the interval: for a diffeomorphism without interior fixed point and parabolic at the boundary, it is equal to $\var\log DM_f$, where $M_f$ denotes the \emph{Mather invariant of $f$} (cf. \cite{EN21} and \S\ref{ss:drift-mu}), which quantifies the ``non embeddability'' of $f$ in a $C^1$ flow. 

With the terminology above, the example built in \cite{Na21} can be described as a smooth diffeomorphism of the interval that is $C^1$-distorted (which is proved by hand) but has non-vanishing asymptotic variation and is thus $C^2$-undistorted.

In the later article \cite{DN}, the authors suggest to study the drift of the so-called \emph{Liouville cocycle} 
(cf. Section \ref{s:drift}) defined on the group $\Diff^{>2}_c(M)$ of $C^2$ diffeomorphisms with Hölder continuous second derivative, hoping to yield new obstructions there. Concretely, for a diffeomorphism $f$ of this regularity, 
$\ell(f)$ is the $L^1$ function on $[0,1]^2$ defined (outside the diagonal) by
$$\ell(f)(x,y) = \frac{Df(x) Df(y)}{d(f(x),f(y))^2} - \frac{1}{d(x,y)^2},$$
{where $d$ denotes the usual distance on $M$. This map $\ell$ defines a cocycle for the canonical action of 
$\Diff^{>2}_c(M)$ by isometries of $L^1([0,1]^2)$ (see \cite{Na06}, or Section \ref{s:cocycles}), and is closely related to the classical Schwarzian derivative. However, the first result of this article shows that this line of reasoning leads to a dead-end and that, more generally, no $\drift$ of any cocycle for the standard isometric action of $\Diff^r_+(M)$ on $L^1(M^d)$ ($r\ge2$, $d \in \mathbb{N}^*$) will yield new obstructions, at least for diffeomorphisms without interior fixed points: 

\begin{thmintro}
\label{t:vanish-drift-gen}
Let $r\ge2$ and $c:\Diff^r_+([0,1])\to L^1([0,1]^d)$ be a continuous map such that, for every $f,g\in\Diff^r_+([0,1])$, 
the following equality is satisfied for almost every $(x_1,\dots,x_d)\in[0,1]^d${\em :}
$$c(f\circ g)(x_1,\dots,x_d)= c(f)(g(x_1),\dots,g(x_d))Dg(x_1)\dots Dg(x_d)+c(g)(x_1,\dots,x_d)$$
(i.e. $c$ is a continuous cocycle for the canonical isometric action of $\Diff^r_+([0,1])$ on $L^1([0,1]^d)$). 
If $f\!\in\!\Diff^{r}_+([0,1])$ has vanishing asymptotic variation and no interior fixed point, then $\drift_c(f)\!=\!0$.
\end{thmintro}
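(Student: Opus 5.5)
The idea is to show that $f$ is close, in the $C^r$ topology, to something whose iterates have bounded cocycle, up to a conjugacy that costs little. Two facts drive this.

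First, drift is a conjugacy invariant. Since $c$ is a cocycle for an isometric action, for any $h\in\Diff^r_+([0,1])$ we have
$$c(hf^nh^{-1})=U(h^{-1})\bigl(c(f^n)\bigr)+c(h^{-1})+U(f^{-n}h^{-1})\,c(h),$$
where $U(g)u=(u\circ g)\,Dg(x_1)\cdots Dg(x_d)$ is the isometric action. Hence $\bigl|\|c(hf^nh^{-1})\|_{L^1}-\|c(f^n)\|_{L^1}\bigr|\le \|c(h)\|+\|c(h^{-1})\|$, and therefore $\drift_c(hfh^{-1})=\drift_c(f)$.

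Second, drift is subadditive and homogeneous, so $\drift_c(f)\le \|c(g)\|_{L^1}$ for every $g$ conjugate to $f$. More precisely, $\drift_c(f)=\inf_n \|c(f^n)\|/n$, so it suffices to make $\|c(g)\|$ small for a conjugate $g$ of $f$, or of some power of $f$.

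The key input is the characterization (from \cite{EN21}, and in regularity $C^r$ the almost-reducibility results of \cite{EBM25}) of diffeomorphisms of $[0,1]$ without interior fixed points and with vanishing asymptotic variation. Such an $f$ has trivial Mather invariant class in the appropriate sense, and it is $C^r$-almost-reducible: there are $h_k\in\Diff^r_+([0,1])$ with $h_kfh_k^{-1}\to\id$ in $C^r$ topology. Here I would assume the $C^r$ version of that statement, available for $r\ge 2$ following \cite{EBM25}.

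With that input the argument closes quickly. By continuity of $c$ at $\id$ (note $c(\id)=0$ from the cocycle relation), $\|c(h_kfh_k^{-1})\|_{L^1}\to 0$. Combining this with the two facts above gives
$$\drift_c(f)=\drift_c(h_kfh_k^{-1})\le \|c(h_kfh_k^{-1})\|_{L^1}\to 0.$$

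The main obstacle is the almost-reducibility input in exactly the regularity $C^r$ where $c$ is continuous. If only a weaker statement is available, I would instead use the near-embeddability of $f$ in a $C^1$ flow with controlled Mather invariant. The plan then is to write $f^n$ as a conjugate of a time-$n$ map of a flow, up to an error with $o(n)$ total cost, and to exploit the fact that for a $C^r$ vector field $X$, the map $t\mapsto c(\phi^t_X)$ is additive along the flow, so $\|c(\phi^t)\|$ grows at most linearly. Vanishing asymptotic variation would then have to force this linear rate to be $0$. That last step would require an explicit computation for cocycles on flows with parabolic endpoints, and it is the part I am least sure of.
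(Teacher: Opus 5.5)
Your two preliminary facts are fine, up to a harmless misprint: the last term should be $U(f^nh^{-1})\,c(h)$. The step everything rests on, however, is not available. You assume that every $f\in\Diff^{r,\Delta}_+([0,1])$ with vanishing asymptotic variation is $C^r$-almost-reducible. This is exactly the first question posed at the end of the introduction, and it is open. What is known is the following.
\begin{itemize}
\item \cite{EN21} gives $C^r$-almost-reducibility when $f$ is the time-$1$ map of a $C^r$ parabolic vector field.
\item \cite{EBM25} treats $r=\infty$.
\item Theorem~\ref{t:a-r-intro} gives almost-reducibility only with a loss of regularity: $C^{r'}$ for $r'<r/2$, or $C^{r-1}$ when $f-\id$ is nowhere $C^r$-flat.
\end{itemize}
The obstruction, explained at the start of Section~\ref{s:a-r}, is that the $C^1$ generating vector field of such an $f$ may fail to be $C^r$ at flat endpoints \cite{Se}, even with no interior fixed point. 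In that case the elementary argument of \cite{EN21} does not apply. Since $c$ is only defined and continuous on $\Diff^r_+([0,1])$ with its $C^r$ topology, a statement with loss of regularity is useless here. Your argument therefore proves the theorem only when $f$ is $C^r$-flowable, smooth, or of class $C^R$ with $R>2r$ (via Corollary~\ref{c:a-r}).

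Your fallback does not close the gap either. The flow maps of the $C^1$ generating field need not lie in $\Diff^r_+([0,1])$, so $c(\phi^t_X)$ is not even defined. Moreover, ``vanishing asymptotic variation forces the linear rate to vanish'' is a restatement of the theorem, not a step toward it.

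The missing idea is to approximate $f$ rather than almost-reduce it. Proposition~\ref{p:lissage} produces $f_n\in\Diff^{r,\Delta}_+([0,1])$, $C^r$-converging to $f$, each the time-$1$ map of a $C^r$ parabolic vector field. First one glues in, near $0$ and $1$, the germs of a smooth flowable map with the same $r$-jets (Lemma~\ref{l:pre-lissage}). Then one cancels the Mather invariant by a perturbation supported in a fundamental domain (Lemma~\ref{l:cancel}); this perturbation is $C^r$-small by Yoccoz's continuity results. By \cite{EN21} and \eqref{e:upper}, $\drift_c(f_n)=0$.

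It then remains to show $\drift_c(f)\le\liminf_n\drift_c(f_n)$, and this is the genuinely nontrivial point. Indeed, $\drift_c=\inf_n\|c(f^n)\|/n$ is an infimum of continuous functions, hence only upper semicontinuous, which is the wrong direction. Lower semicontinuity actually fails in general (\S\ref{ss:non-cont}). The paper obtains continuity on $\Diff^{r,\Delta}_+([0,1])$ (Proposition~\ref{c:continu}) from the localization Lemma~\ref{l:local}. That lemma bounds $\bigl|\|c(f^n)\|/n-\|c(f^{2N})\|_{L^1(F^{-N}(\cD))}\bigr|$ by the $L^1$-mass of $c(f)$ near $\partial[0,1]^d$ plus terms of order $N/n$. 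These error terms are uniformly controlled for $g$ that are $C^r$-close to $f$, while the local term depends continuously on $g$ for fixed $N$.
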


Section \ref{s:cocycles} presents the necessary general background on cocycles and drifts as well as concrete examples where we clearly know what the drift quantifies dynamically. This gives us the opportunity to revisit some results and proofs from \cite{EN21} alluded to above, about the relation between the asymptotic variation of a diffeomorphism $f$ without interior fixed points (that is, the drift of the projective derivative at $f$, as explained before) and its \emph{Mather invariant} $M_f$. This relation is clarified here by considering a slightly different (``composite'') cocycle:
$$\forall f\in\Diff^2([0,1]),\quad \mu(f) := D\log Df - \left(\tfrac{Df}f-\tfrac1{\id}\right)-\left(\tfrac{Df}{f-1}-\tfrac1{\id-1}\right)\in L^1([0,1]).$$

\begin{thmintro}
\label{t:mu}
The equality  $\drift_\mu(f)=\var\log DM_f$ holds for every $f\in\Diff_+^{2}([0,1])$ without interior fixed point.  
\end{thmintro}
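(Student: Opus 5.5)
The plan is to turn $\mu$ into a plain projective derivative in a better coordinate. The key observation is that the two correction terms in $\mu$ are logarithmic derivatives: $\tfrac{Df}{f}-\tfrac1{\id}=D\log f-D\log \id$ and $\tfrac{Df}{f-1}-\tfrac1{\id-1}=D\log|f-1|-D\log|\id-1|$. Hence, for every $n$,
$$\mu(f^n)=D\log\Bigl(\frac{Df^n(x)\,x(1-x)}{f^n(x)\,(1-f^n(x))}\Bigr).$$
Consider $\psi(x)=\log\frac{x}{1-x}$, which maps $(0,1)$ onto $\R$, and put $F=\psi\circ f\circ\psi^{-1}$. Since $\psi'(x)=\frac1{x(1-x)}$, the chain rule gives
$$\log DF^n\circ\psi=\log\frac{Df^n\,x(1-x)}{f^n(1-f^n)}.$$
Therefore $\|\mu(f^n)\|_{L^1}=\var_{\R}\log DF^n$, and it remains to show that
$$\lim_n\tfrac1n\var_\R(\log DF^n)=\var\log DM_f.$$
We may assume that $f(x)<x$ on $(0,1)$. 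Near $\pm\infty$, the map $F$ is asymptotic to a translation (of amount $\log Df(0)$ or $\log Df(1)$, possibly zero in the parabolic case).

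The next step is to use the standard description of $M_f$. Let $\nu_0$ and $\nu_1$ be the $C^1$ vector fields on $[0,1)$ and $(0,1]$ whose time-one maps equal $f$: these are the Szekeres vector fields, or the Sternberg linearizations at hyperbolic ends. Their flows give coordinates $\Phi_\pm$ on $\R$ near each end, in which $F$ is the translation $T:t\mapsto t+1$. The Mather invariant is then the transition map $M_f=\Phi_+\circ\Phi_-^{-1}$, which commutes with $T$, and $\var\log DM_f$ is computed over one period.

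Fix a fundamental domain $I=[F(a),a]$. For $0\le k\le n$, the map $F^n$ on $F^{-k}(I)$ can be written as
$$F^n=\Phi_+^{-1}\circ T^{\,n-k'}\circ M_f\circ T^{k''}\circ\Phi_-,$$
up to an error localized near the transition region. Consequently, on most of the $n$ fundamental domains that $F^n$ carries from the left end to the right end, $\log DF^n$ differs from $\log DM_f\circ(\text{affine})$ by terms whose variation is controlled by $\var\log D\Phi_\pm$ restricted to far-out domains.

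The estimate is then a two-sided count. For the lower bound, one checks that about $n-O(1)$ domains $F^{-k}(I)$ have both $F^{-k}(I)$ deep in the left chart and $F^{n-k}(I)$ deep in the right chart. On each such domain the variation of $\log DF^n$ is at least $\var\log DM_f-\eps$, which gives $\liminf\ge\var\log DM_f$. For the upper bound, the same domains contribute at most $\var\log DM_f+\eps$ each. The remaining domains, namely those not transported across by $F^n$ and those near the transition, contribute $o(n)$ in total. This holds provided $\var\log D\Phi_\pm$ is finite on half-lines, or at least that its variation on the $k$-th fundamental domain tends to $0$ as $k\to\infty$.

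I expect the main obstacle to be precisely this regularity input in the parabolic case. There the Szekeres vector field is only $C^1$ on the closed half-interval, so one has to show that $\log D\Phi_\pm$ has summable variation along fundamental domains. I would derive this from the explicit formula
$$\nu_0=\lim_k (f^k)^*\bigl(\tfrac{f-\id}{1}\bigr),$$
together with the $C^2$ distortion estimates for $f$ near a fixed point. An alternative is to avoid $\Phi_\pm$ altogether: use the subadditivity of $\|\mu(\cdot)\|_{L^1}$ (the drift exists by Fekete) and the convergence of renormalized $\log D f^n$ on $I$ to $\log DM_f$ plus a constant, in the $C^0$ topology with uniformly bounded variation.
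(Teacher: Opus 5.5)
Your logit change of coordinates is correct and useful. With $\psi=\log\frac{\id}{1-\id}$ one has $\mu(g)=(P(\psi g\psi^{-1})\circ\psi)\,D\psi$, so $\|\mu(f^n)\|_{L^1}=\var_\R\log DF^n$. Since $\Phi_+=M_f\Phi_-$ and $M_f$ commutes with $T$, the identity $\log DF^n=\log DM_f\circ\Phi_-+\log D\Phi_--\log D\Phi_+\circ F^n$ holds exactly on all of $\R$; there is no transition error. On the domains carried from one end to the other, this gives $\drift_\mu(f)\ge\var\log DM_f$. That is the paper's first inequality written in the logit chart: the variation of $\log D\Phi_-$ on a domain equals the integral of $|\frac{DX}X-\frac1{\id}-\frac1{\id-1}|$ over the corresponding subinterval, which is the integrand of the paper's $I_n$. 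The input you need there is not an obstacle. The Szekeres field $X$ is $C^1$ on $[0,1)$ with $DX(0)=\log Df(0)$, so $\frac{DX}X-\frac1{\id}=o(\frac1X)$ in both the hyperbolic and parabolic cases. Together with $\int_J\frac{dx}{|X|}=1$ on every fundamental domain $J$, this shows that the variation $\eps_k$ of $\log D\Phi_\pm$ on the $k$-th domain tends to $0$.

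The genuine gap is in the upper bound, on the domains that $F^n$ does not carry across. There the chart identity reads $\log DF^n=\log D\Phi_\pm-\log D\Phi_\pm\circ F^n$. It bounds their total contribution only by $\sum_k(\eps_k+\eps_{k+n})$, which needs $\sum_k\eps_k<\infty$; knowing $\eps_k\to0$ is not enough. Summability is false exactly in the parabolic case you single out. If the generating field near $0$ is $\pm x^2\partial_x$, the $k$-th domain is $[\frac1{c+k+1},\frac1{c+k}]$ and $\eps_k\asymp\frac1k$. So the lemma you plan to extract from Szekeres' formula cannot hold. Your fallback ($C^0$ convergence with bounded variation) only gives lower semicontinuity of the variation, hence again only the lower bound.

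To repair this, use the chart identity only for $k\in[K,n-K]$. Bound every other domain $D$ by the cocycle inequality $\|\mu(f^n)\|_{L^1(D)}\le\sum_{i<n}\|\mu(f)\|_{L^1(f^iD)}$. This gives at most $n\|\mu(f)\|_{L^1(E_K)}+2K\|\mu(f)\|_{L^1}$, where $E_K$ is the union of the domains at index distance $\ge K$ from the central one. Since $\mu(f)\in L^1$, this is negligible after dividing by $n$, letting $n\to\infty$ and then $K\to\infty$; this is the mechanism of Lemma~\ref{l:local}.

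The paper instead avoids estimating $\|\mu(f^n)\|_{L^1}$ altogether and applies \eqref{eq-drift-cob} to an explicit $u_n$. It takes $u_n=\frac{DX}X-\frac1{\id}-\frac1{\id-1}$ to the left of a fundamental domain and $u_n=\frac{DY}Y-\frac1{\id}-\frac1{\id-1}$ to its right, truncated near the endpoints. The coboundary defect is then $\frac{DX}X-\frac{DY}Y$ on that domain, plus terms controlled only by $\mu(f)\in L^1$ and by $\eps_k\to0$.
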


The original result of \cite{EN21} was the inequality 
\begin{equation}
\label{e:ADMIN}
|\drift_P(f)-\var\log DM_f|\le |\log Df(0)|+|\log Df(1)|,
\end{equation}
(recall that $\drift_P(f)=V_\infty(f)$), which provides an equality only in the case where both endpoints are parabolic. The addition of the terms $\left(\tfrac{Df}f-\tfrac1{\id}\right)$ and $\left(\tfrac{Df}{f-1}-\tfrac1{\id-1}\right)$ in the ``mixed'' cocycle $\mu$ allows to get a general equality statement. In particular, we show in Section \ref{ss:drift-nu} that the drifts of these terms, which also define cocycles (first introduced in \cite[Chapter 4]{Na03}), correspond precisely to the terms $|\log Df(0)|$ and $|\log Df(1)|$. Note, however, that the inequality (\ref{e:ADMIN}) above holds more generally for $C^{1+bv}$ diffeomorphisms, while this regularity is too weak to ensure the $L^1$ character of $\mu(f)$.\medskip

Section \ref{s:drift} is devoted to the proof of Theorem \ref{t:vanish-drift-gen}. It relies on a localization formula for the drift at a diffeomorphism without interior fixed point. This allows us to prove the continuity of the drift at such diffeomorphisms (we will see in \S\ref{ss:non-cont}, however, that the drift is not globally continuous in general, as was already observed in \cite[Theorem C]{EN21} for the projective derivative). Finally, to prove the theorem, we show that any diffeomorphism satisfying its assumptions can be approximated by ``nice'' diffeomorphisms whose drift is known to vanish. \medskip

Section \ref{s:necessary} is devoted to the Liouville cocycle introduced above. Although there is no simple interpretation for its drift (in contrast, for example, to the case of the ``corrected projective cocycle'' $\mu$ above), we have a simple characterization of when it vanishes:

\begin{thmintro}
\label{t:criterion}
Given $f\in \Diff^{>2}_+([0,1])$, the Liouville drift vanishes at $f$ if and only if $f$ is $C^{>2}$-conjugated to the restriction to $[0,1]$ of a Möbius map of $\R\cup\{\infty\}$ or $f$ 
embeds in a $C^1$ flow without hyperbolic fixed points (equivalently, has vanishing asymptotic variation).\end{thmintro}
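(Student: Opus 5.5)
Both implications can be organized around the fixed-point structure of $f$ and the localization/continuity results for drifts developed in Section \ref{s:drift}.

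\emph{The ``if'' direction.}
\begin{itemize}
\item \emph{Möbius case.} Suppose $f=h\circ m\circ h^{-1}$ with $m$ a Möbius map and $h\in\Diff^{>2}_+([0,1])$. The Liouville cocycle vanishes identically on Möbius maps, because the cross-ratio form $\frac{dx\,dy}{(x-y)^2}$ is projectively invariant. So $\ell(m^n)=0$ for all $n$. The cocycle relation then gives
$$\ell(f^n)=\ell(h)\circ(h^{-1}\times h^{-1})\cdot(\ldots)+U(\ldots)\ell(h^{-1})$$
up to the isometric action. This is uniformly bounded in $L^1$, so the drift is $0$.
\item \emph{Flow case.} Suppose $f$ embeds in a $C^1$ flow with no hyperbolic fixed points, i.e.\ $V_\infty(f)=0$. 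On each component of $[0,1]\setminus\Fix(f)$ (a diffeomorphism without interior fixed point), apply Theorem \ref{t:vanish-drift-gen}, or rather its localized version from Section \ref{s:drift}.
\item \emph{Assembly.} We then need that the drift of $\ell$ is controlled by the sum, or supremum, of the drifts over components. The obstacle is that $\ell$ is a two-variable object. Its off-diagonal blocks $(x,y)\in I\times J$, with $I,J$ distinct components, are the delicate part. I would check that the $L^1$ mass of $\ell(f^n)$ on $I\times J$ stays bounded, or grows sublinearly. The idea is to compare $\frac{Df^n(x)Df^n(y)}{(f^n x-f^n y)^2}$ with $\frac{1}{(x-y)^2}$ using that $f^n(x)$ and $f^n(y)$ stay separated by a common fixed point, and to integrate the change of variables.
\end{itemize}

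\emph{The ``only if'' direction.} Assume the drift vanishes and $V_\infty(f)>0$; we aim to show $f$ is Möbius-conjugate.
\begin{itemize}
\item \emph{Reduction to one component.} By localization, $V_\infty(f)>0$ is seen on some component $I$ of the complement of $\Fix(f)$. On $I$, either an endpoint is hyperbolic or the Mather invariant is nontrivial (Theorem \ref{t:mu} and \eqref{e:ADMIN}).
\item \emph{Lower bound on the diagonal block.} I would estimate $\|\ell(f^n)\|_{L^1(I\times I)}$ from below using fundamental domains. Write $I^2$ as a union of products $D_i\times D_j$ of translates of a fundamental domain $D$. On pairs with $|i-j|$ bounded, $\ell(f^n)$ restricted there relates to the Schwarzian-type defect of $f^n$ composed with the renormalized charts. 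Its contribution is roughly $n$ times a quantity that vanishes only if the renormalized maps are Möbius, i.e.\ only if the Mather-type invariant is projective and the endpoint multipliers are compatible.
\item \emph{Rigidity conclusion.} From positive-drift-forcing, zero drift yields that $f|_I$ is conjugate to a Möbius map. Using the hyperbolicity at both ends, I would then glue across components. Because $f$ has positive asymptotic variation, I expect the only consistent global configuration to be a single component with hyperbolic endpoints, conjugate to $x\mapsto \lambda x/(1+(\lambda-1)x)$.
\end{itemize}

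The main obstacle is this lower bound. We must extract linear growth of $\|\ell(f^n)\|_{L^1}$ from the failure of $f$ to be projective. I would first try it in the model case where $f$ is hyperbolic at both endpoints and linearizable near each, so that the renormalization to $\R$ via the linearizing charts makes $\ell(f^n)$ computable.
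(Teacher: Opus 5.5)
Your Möbius case (conjugacy invariance of the drift, plus $\ell\equiv0$ on Möbius maps) and your use of Theorem \ref{t:vanish-drift-gen} on each component of $[0,1]\setminus\Fix(f)$ agree with the paper. The two steps you leave open, however, are exactly where the content lies, and the assembly step fails as described.

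\textbf{The assembly step in the ``if'' direction.} We have $\|\ell(f^n)\|_{L^1([0,1]^2)}=\|\ell(f^n)\|_{L^1(I^2)}+\|\ell(f^n)\|_{L^1(J^2)}+2\|\ell(f^n)\|_{L^1(I\times J)}$. So sublinear growth of the off-diagonal block is equivalent to the fragmentation equality, and your mechanism cannot give it. First, $1/(x-y)^2$ is not integrable on $I\times J$ near $(a,a)$, so you cannot split $\ell(f^n)$ and change variables term by term. Second, separation by a common fixed point is not enough. If $a$ is hyperbolic, the off-diagonal blocks do grow linearly even when both diagonal blocks have zero drift (Remark \ref{r:hyp}, Corollary \ref{c:multip}). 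So parabolicity of $a$ must enter somewhere. The paper uses the coboundary characterization \eqref{eq-drift-cob}. It takes a near-optimal cobounding function on $I^2\cup J^2$, and $u_n=-1/(x-y)^2$ on the off-diagonal blocks except on a shrinking square around $(a,a)$. The defect then lives near $(a,a)$, is computed by cross-ratios, and tends to terms in $\log Df(a)=0$ (Lemma \ref{l:fragment}). Infinitely many components need a further approximation, using absolute continuity of $\int|\ell(f)|$ near the diagonal (Proposition \ref{p:fragment}).

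\textbf{The ``only if'' direction.} The lower bound you flag as the main obstacle is the whole proof, and your stated target is off: zero Liouville drift forces the Mather invariant to be \emph{trivial}, not ``projective''. The missing idea is to push $\ell$ down to one-variable cocycles instead of analysing it on fundamental domains. We have $\partial_y\bigl(\frac{Df(x)}{f(x)-f(y)}-\frac1{x-y}\bigr)=\ell(f)(x,y)$, and this primitive is continuous across $y=x$. Integrating in $y$ gives $\mu(f)(x)=\int_0^x\ell(f)(x,y)\,dy-\int_x^1\ell(f)(x,y)\,dy$ and $(\nu_1-\nu_0)(f)(x)=\int_0^1\ell(f)(x,y)\,dy$ (Lemma \ref{l:relation}). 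Hence $\|\mu(f^n)\|_{L^1}\le\|\ell(f^n)\|_{L^1}$, and by Theorem \ref{t:mu}, $\var\log DM_f\le\drift_\ell(f)$ when there is no interior fixed point.

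Integrating once more in $x$ telescopes to $\iint_{[a,b]^2}\ell(f)=\log(Df(a)Df(b))$ for every $f$-invariant interval $[a,b]$ (Proposition \ref{p:multip}). Apply this on $[0,a]$, $[a,1]$ and $[0,1]$, together with the trivial superadditivity \eqref{e:loc}. Zero drift then gives $Df(0)Df(1)=1$, and, if there is an interior fixed point $a$, $Df(0)=Df(a)=Df(1)=1$. This is precisely the ``gluing'' you only conjecture.

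You also still have to finish each case:
\begin{itemize}
\item In the hyperbolic case ($\Fix(f)=\{0,1\}$, $Df(0)=1/Df(1)\neq1$, trivial Mather invariant), you must build the conjugacy to the Möbius map. This uses Sternberg linearization at the endpoints, plus Mather's argument that two such maps with conjugate boundary germs, both embedding in $C^1$ flows, are globally conjugate.
\item In the parabolic case, the same inequalities on each component give trivial Mather invariants there, and additivity of $V_\infty$ \cite{EN21} yields $V_\infty(f)=0$.
\end{itemize}
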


One of the implications (the ``if'' part) follows, in the case without interior fixed points, from Theorem \ref{t:vanish-drift-gen} (and the fact that $\ell$ vanishes at Möbius maps and that a drift is always a conjugacy invariant, cf. Section \ref{s:drift}). The general case (with interior fixed points) uses a ``fragmentation formula'' proved in \S\ref{ss:fragment}.  The other implication (the ``only if'' part) comes from an explicit relation between all the cocycles mentioned so far (cf. Lemma \ref{l:relation} and Corollary \ref{c:necessary}). Things are much simpler in the case of the circle (cf. \S\ref{s:circle}), where we adapt an argument from \cite{Na23} regarding the asymptotic variation to obtain:

\begin{thmintro}
\label{t:cercle-intro}The Liouville drift at a $C^{>2}$ circle diffeomorphism with irrational rotation number always vanishes. 
\end{thmintro}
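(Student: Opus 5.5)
The approach adapts the argument of \cite{Na23} showing that the asymptotic variation vanishes for circle diffeomorphisms with irrational rotation number. Let $f$ be a $C^{>2}$ circle diffeomorphism with irrational rotation number $\rho$. The idea is to exploit the denominators $q_n$ of the continued fraction of $\rho$. On the one hand, the iterates $f^{q_n}$ converge to the identity in a weak sense: $f^{q_n}$ is $C^0$-close to the identity. On the other hand, Denjoy--Koksma type control gives bounded distortion: $\|\log Df^{q_n}\|_\infty$ is bounded, and with $C^{>2}$ regularity even tends to $0$ (Herman/Yoccoz-type estimates are not needed, only Denjoy-type bounds).

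\textbf{Step 1 (reduction).} Since $L(g)=\|\ell(g)\|_{L^1(\mathbb T^2)}$ is a length function, Fekete's lemma gives
$$\drift_\ell(f)=\lim_{m\to\infty}\frac{L(f^m)}{m}=\inf_m \frac{L(f^m)}{m}.$$
It therefore suffices to exhibit a sequence $m_n\to\infty$ with $L(f^{m_n})=o(m_n)$. I would take $m_n=q_n$ and aim to show that $L(f^{q_n})$ stays bounded, or at least grows sublinearly.

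\textbf{Step 2 (control of $\ell(f^{q_n})$ via derivative bounds).} Write $g=f^{q_n}$. The Liouville cocycle integrand is
$$\frac{Dg(x)Dg(y)}{d(g x,g y)^2}-\frac{1}{d(x,y)^2}.$$
I would split the domain $\mathbb T^2$ into a neighbourhood $|x-y|<\delta$ of the diagonal and its complement.

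Off the diagonal, $g$ is $C^0$-close to a rotation, namely to $R_{q_n\rho}$, which is close to the identity. In fact it is more convenient to use that $\ell$ vanishes on rotations and that $d(gx,gy)$ is comparable to $d(x,y)$ with uniformly bounded $Dg$. This gives a bounded contribution, uniformly in $n$.

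Near the diagonal, I would Taylor expand. The integrand is essentially controlled by a Schwarzian-type quantity of $g$, and the $L^1$ mass there is bounded in terms of
$$\int_{|x-y|<\delta}\frac{|Dg(x)Dg(y)-(\text{difference quotient})^2|}{|x-y|^2}.$$
Using the $C^{1+\alpha}$ Hölder modulus of $\log Dg$ and cancellation, this is bounded by a constant times $\int |x-y|^{\alpha-1}$, provided the Hölder norm of $D\log Dg$ is controlled.

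\textbf{Main obstacle.} The difficult point is to bound the $C^{2}$ or Hölder distortion of $f^{q_n}$ uniformly in $n$. Denjoy--Koksma applied to $\log Df$ only bounds $\log Df^{q_n}$. For the second-derivative term I would first try the cocycle identity
$$P(f^{q_n})=\sum_{k<q_n}(P(f)\circ f^k)\,Df^k,$$
combined with the bounded distortion of $Df^k$ on the dynamical intervals $I,f(I),\dots$ of the $q_n$-partition. This shows that the $L^1$ norm of $P(f^{q_n})$ stays bounded, which is the \cite{Na23} argument. For the Liouville cocycle I would then use the relation between $\ell$ and $P$ (Lemma \ref{l:relation}) to dominate $\|\ell(f^{q_n})\|_{L^1}$ by a function of $\|P(f^{q_n})\|_{L^1}$ and $\|\log Df^{q_n}\|_\infty$.

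Should this domination fail, I would fall back on two alternatives. The first is the $L^1$ integration of the cocycle identity over the Denjoy partition, carried out for $\ell$ directly on squares $I_i\times I_j$ of the partition, using that the number of diagonal-adjacent squares is $O(q_n)$ but each contributes $o(1)$. The second is to conjugate $f$ to an almost rotation and use conjugacy invariance of the drift.
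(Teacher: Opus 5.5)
\textbf{There is a genuine gap: the region near the diagonal.} Your reduction via Fekete is fine, and so is your off-diagonal bound. For fixed $\delta>0$, Denjoy--Koksma gives $\|\log Df^{q_n}\|_\infty\le\var\log Df$, so $\|\ell(f^{q_n})\|_{L^1(\{d\ge\delta\})}$ is bounded uniformly in $n$. The problem is the strip $\{d(x,y)<\delta\}$, and none of your proposed mechanisms handles it.

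\begin{itemize}
\item Uniform Hölder control of $D^2f^{q_n}$ is not available, as you acknowledge.
\item The domination through Lemma \ref{l:relation} goes the wrong way. That lemma writes $\mu(f)$ and $(\nu_1-\nu_0)(f)$ as fiber integrals of $\ell(f)$, which yields $\|\mu(f)\|_{L^1}\le\|\ell(f)\|_{L^1}$ (Corollary \ref{c:necessary}). You cannot recover $|\ell(g)|$ from its fiber integrals. Moreover, no bound of $\|\ell(g)\|_{L^1}$ by a function of $\|P(g)\|_{L^1}$ and $\|\log Dg\|_\infty$ holds in general: these are finite for every $C^{1+bv}$ diffeomorphism, whereas integrability of $\ell(g)$ needs extra regularity of $D^2g$ (compare the Hölder assumption in Proposition \ref{p:ell}).
\item Your claim that $\|P(f^{q_n})\|_{L^1}$ stays bounded does not follow from the sketch. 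The triangle inequality on $\sum_{k<q_n}(P(f)\circ f^k)\,Df^k$ only gives $q_n\|P(f)\|_{L^1}$. Also, \cite{Na23} does not argue this way; it uses Hahn--Banach.
\item Conjugating $f$ close to a rotation would require $C^{>2}$-almost-reducibility, which is not known; only the unpublished $C^\infty$ result of Avila--Krikorian exists.
\item The partition-square fallback, at the scale of the dynamical partition, leaves the region between that scale and $\delta$ to the bounded-distortion estimate. That estimate costs of order $1/\min_i|I_i|\ge q_n$, not $o(q_n)$.
\end{itemize}

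\textbf{The missing idea.} Near the diagonal you need no regularity of the iterates at all. Use the cocycle identity $\ell(f^m)=\sum_{k=0}^{m-1}(F^k)^*\ell(f)$, with $F=f^{\otimes 2}$, together with only the topological content of Denjoy's theorem.

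Write $f=\varphi^{-1}R\varphi$ and $\Phi=\varphi^{\otimes2}$. The sets $A'_\eps=\Phi^{-1}(A_\eps)$ are preimages of the annuli $A_\eps=\{d(x,y)\le\eps\}$, which $R^{\otimes 2}$ preserves. So they are $F$-invariant neighbourhoods of the diagonal whose area tends to $0$. Their complements are also $F$-invariant and stay a positive distance from the diagonal.

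By invariance and change of variables, for every $m$,
\begin{itemize}
\item on the annulus, $\|\ell(f^m)\|_{L^1(A'_\eps)}\le m\,\|\ell(f)\|_{L^1(A'_\eps)}$;
\item on the complement, $\ell(f^m)=(F^m)^*u-u$ with $u=d^{-2}$ integrable there, so $\|\ell(f^m)\|_{L^1(\T^2\setminus A'_\eps)}\le 2\|u\|_{L^1(\T^2\setminus A'_\eps)}$.
\end{itemize}
Hence $\drift_\ell(f)\le\|\ell(f)\|_{L^1(A'_\eps)}$. This tends to $0$ as $\eps\to0$, by absolute continuity of the integral of the single function $\ell(f)\in L^1$. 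The argument needs neither the $q_n$, nor Denjoy--Koksma, nor any derivative bound on iterates.

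The paper runs exactly this splitting in dual form. Assuming $\ell(f)$ is not in the closure of coboundaries, Hahn--Banach gives an $F$-invariant $K\in L^\infty$ that annihilates coboundaries. Then $\int\ell(f)K$ vanishes on the invariant complement by change of variables and is arbitrarily small on the invariant annulus.
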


\paragraph{Overview of the second part.} On the one hand, stabilized length functions (and, in particular, drifts of cocycles considered in the first part) provide obstructions to distortion and almost-reducibility (this has not been emphasized so far, but they provide conjugacy invariants that are continuous at the identity, where they vanish). On the other hand, one would like to prove that distortion and almost-reducibility do hold if these obstructions 
vanish. This is partially carried out in the second part of this article, yet the results therein are incomplete, 
in the sense that they involve some loss of regularity (which is inherent to the involved tools). Here is what we get in Section \ref{s:a-r}:

\begin{thmintro}
\label{t:a-r-intro}
Let $2\le R\in\N$ and $f\in\Diff^{R}_+([0,1])$ with vanishing asymptotic variation. Then $f$ is $C^{r}$-almost-reducible for every $r<\frac{R}2$, 
and actually $C^{R-1}$-almost-reducible if $f-\id$ is nowhere $C^R$-flat. 
\end{thmintro}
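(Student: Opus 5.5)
The plan is to extend the strategy of \cite{EBM25} from $C^\infty$ to finite regularity, keeping careful track of the derivatives lost at each step. First I reduce to the case without interior fixed points. The fixed point set of $f$ is closed. On each complementary component $I$, vanishing asymptotic variation of $f$ gives vanishing asymptotic variation of $f|_I$, via the fragmentation/localization formulas of Section~\ref{s:drift}. A conjugacy built component by component, with a common compact support, will then be glued. This requires controlling the $C^r$ norms uniformly near accumulation points of $\Fix(f)$, where $f$ is $C^R$-close to the identity because all its derivatives up to order $R$ are small there. So on each component I conjugate only by maps whose $C^r$ distance to the identity is controlled by the $C^R$ distance of $f$ to the identity on that component.

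On a component without interior fixed points, vanishing asymptotic variation forces both endpoints to be parabolic. By Theorem~\ref{t:mu} and \eqref{e:ADMIN}, it also forces $\var\log DM_f=0$, so the Mather invariant is a translation. Then $f$ is the time-one map of a $C^1$ vector field $X$ on $I$, which is $C^{R-1}$ in the interior. The construction goes as follows:
\begin{enumerate}[label=(\roman*)]
\item take a fundamental domain $[a,f(a)]$ and let $h$ be a $C^R$ conjugacy rescaling it;
\item use the trivial Mather invariant to pass to a suitable conjugate $h_n f h_n^{-1}$, with $h_n$ spreading a fundamental domain over a long region, in the style of the classical argument ``time-$1/n$ map conjugated to time one'';
\item obtain this conjugate as $(\phi^{1/n})$-type maps, concretely conjugating $f=\phi^1_X$ to something close to $\phi^{1/n}_X$ or to the identity.
\end{enumerate}

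The $C^r$ distance to the identity of the resulting conjugate is estimated by Faà di Bruno. The key inputs are:
\begin{itemize}
\item the vector field $X$ near a parabolic endpoint with $f-\id$ vanishing to finite order $k\le R$ is $C^{R-k}$-ish in general;
\item Takens/Sergeraert-type normal form estimates give $X\in C^{R-1}$ up to the boundary when $f-\id$ is not $C^R$-flat;
\item in the flat case only about half the derivatives survive, through interpolation.
\end{itemize}
Concretely, near a flat endpoint I would use interpolation inequalities (Hadamard–Kolmogorov) between $\|f-\id\|_{C^0}$, which is small, and $\|f\|_{C^R}$, which is bounded. This gives smallness in $C^r$ for $r<R/2$ of the normalized displacement $(f-\id)/\text{(scale)}$, and that is exactly the loss yielding the threshold $r<R/2$.

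The main obstacle is the neighbourhood of the parabolic (especially flat) endpoints. There the conjugacy must simultaneously flatten $f$ toward the identity and remain uniformly $C^r$ controlled, even though the generating vector field may fail to be $C^r$ at the endpoint. I would first try the explicit Sergeraert/Yoccoz approach. Write $f$ near $0$ as $\id+\Delta$, where $\Delta$ has controlled $C^R$ norm. Conjugate by dilations $x\mapsto\lambda x$ restricted to $[0,\eps]$ and patched with the interior conjugacy. The interpolation estimate $\|D^j\Delta\|\lesssim\|\Delta\|_0^{1-j/R}\|\Delta\|_{C^R}^{j/R}$ shows that the rescaled displacement is small in $C^r$ exactly when $2r<R$. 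In the nowhere-flat case, finite-order tangency lets one use Takens' normal form $x+x^{k}+\alpha x^{2k-1}$ in class $C^{R-1}$ instead, improving the conclusion to $r=R-1$.
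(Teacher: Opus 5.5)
There is a genuine gap: your plan puts the loss of regularity in the wrong place, and it lacks the idea that actually handles flat fixed points.

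Near a flat (indeed any parabolic) endpoint, conjugating by $x\mapsto\lambda x$ costs nothing. The new displacement $\lambda\Delta(y/\lambda)$ has $j$-th derivative $\lambda^{1-j}D^j\Delta(y/\lambda)$. On the rescaled neighbourhood this is small for $j=0,1$ by parabolicity, and for $2\le j\le R$ thanks to the factor $\lambda^{1-j}$; no interpolation is involved. Moreover, the Hadamard inequality you quote for $\Delta=f-\id$ costs $1/R$ per derivative, so by itself it could only suggest $r<R$, not $r<R/2$.

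The real difficulty is joining the endpoint region to the interior. There $f$ is controlled only through its generating field $X$, and $X$ may fail to be $C^2$ at a flat fixed point (Sergeraert). Your phrase ``patched with the interior conjugacy'' hides exactly this, and step (iii) presupposes it away: making $\phi_X^{1/n}$ (or any conjugate of $f$ built from the flow of $X$) $C^r$-close to the identity near the endpoint requires $C^r$ bounds on $X$ there, which are not available. Likewise, your gluing requirement, that the conjugacies be $C^r$-controlled by $\|f-\id\|_{C^R}$ on each component, is unjustified, since the conjugacies one needs are typically far from the identity.

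The missing ingredient is the \emph{tameness} of $X$ at well-chosen points. Following Sergeraert, the paper proves (Lemma~\ref{p:estb}) that $|X^{k-1}D^kX(x)|\le Cx^k\|f-\id\|_{0,[0,x]}^{k(1-2/R)}$. The proof uses $DX=-\sum_{i\ge0}(L_X\log Df)\circ f^i$ and two Hadamard interpolations: one for $f-\id$, which is $C^R$-flat, and one for $\log Df$, which is $C^{R-1}$-flat. The identity $(1-\frac1{R-1})(1-\frac1R)=1-\frac2R$ is the true origin of the factor $2$. At ``record'' points $x_0\to0$, where $|f(x_0)-x_0|=\|f-\id\|_{0,[0,x_0]}$, this yields $|D^kX(x_0)|\le|f(x_0)-x_0|^{1-k\delta}$ for $1\le k\le r$ with some $\delta<1/r$. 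Such a $\delta$ exists because $2/R<1/r$.

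The paper then proceeds as follows (Proposition~\ref{p:a-r}). It keeps $X$ on $[0,x_0]$, where $f$ is already $C^r$-close to $\id$ by flatness, so no dilation is used and the conjugacy will be the identity near that endpoint. Beyond $x_0$ it replaces $X$ by $X(x_0)$ plus its Taylor terms of order $1$ to $r$ at $x_0$, cut off at scale $|f(x_0)-x_0|^{\delta+\eps}$ for a small $\eps>0$. This gives a non-vanishing, $C^r$-small field that becomes constant. After interpolating between the constant parts coming from both ends, Mather's conjugacy criterion with a synchronization condition shows that its time-one map is $C^r$-conjugate to $f$.

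Gluing then involves only finitely many pieces; elsewhere $f$ is left untouched, being already $C^r$-close to $\id$. The conjugacies are $C^r$-tangent to the identity at tame junction points, and equal to homotheties of a common ratio on both sides of isolated zeros (Proposition~\ref{l:woITI}).

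In the nowhere-flat case your conclusion is correct, but no Takens normal form is needed. Indeed, $x+x^k+\alpha x^{2k-1}$ is not even determined by the $R$-jet when $2k-1>R$. The relevant fact is that $X$ is $C^{R-1}$ with finitely many zeros, as in your second bullet; your first bullet, claiming only $C^{R-k}$, is wrong. Theorem~\ref{t:a-r} then applies with $r=R-1$.
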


Of course, this holds a fortiori for compactly supported diffeomorphisms of $\R$. This probably easily implies a similar statement for circle diffeomorphisms with rational rotation number. 
Also, there is probably a similar statement for circle diffeomorphisms with irrational rotation number, based on renormalization techniques (recall that in this context, an unpublished work by Avila and Krikorian claims that the statement above is true for $R=r=\infty$). 

The proof of Theorem~\ref{t:a-r-intro} is based on fine estimates on the generating vector fields of interval diffeomorphisms obtained in Section \ref{s:bounds} (which are refinements of results of \cite{Se,BE16} in finite regularity), and on interpolation techniques developed in \cite{EBM25} in the smooth case. 

\begin{rem}
\label{r:C1}
In this article, we are mostly dealing with regularity $C^2$ and higher. Regarding almost-reducibility, the $C^1$ case is fully understood: the absence of hyperbolic periodic points is an obvious necessary condition of $C^1$-almost-reducibility for interval or circle diffeomorphisms, and it turns out that this condition is also sufficient according to \cite{Na14} (see also \cite{Fa} for the interval case). 
\end{rem}

Finally, again adapting the techniques of \cite{EBM25}, we prove:

\begin{thmintro}
\label{t:dist-fini}
If $f\in\Diff^r_c(\R)$ is $C^r$-almost-reducible for a certain $r \geq 3$, then it is $C^{r-2}$-distorted. 
\end{thmintro}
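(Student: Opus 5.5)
The plan is to follow the scheme of \cite{EBM25} for the smooth case, keeping track of how many derivatives each step costs. The underlying mechanism is the classical Avila-type distortion trick. Given a sequence $h_k$ of diffeomorphisms supported in a fixed compact set $K$ with $h_k f h_k^{-1}\to\id$ fast in $C^r$, we build a finitely generated subgroup containing $f$ in which $f^n$ has sublinear word length. The key tool is the fact (Avila's lemma and its finite-regularity variants) that any sequence $(g_k)$ in $\Diff^{r}_c(\R)$ with support in $K$ and $\|g_k-\id\|_{C^r}$ going to zero fast enough, say superexponentially, is contained in a finitely generated subgroup $\langle S\rangle$ of $\Diff^{r-2}_c(\R)$. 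Moreover, in this subgroup $g_k$ has word length $O(k)$. The loss of two derivatives is exactly where $r-2$ comes from. This is the step I expect to be the main obstacle. In the smooth case one builds $g_k$ as a product of commutators of elements with disjoint, shrinking, accumulating supports, and these elements are conjugated by a fixed contraction. Concretely, one writes $g_k$ as a commutator-type word in a fixed contraction $\varphi$ and an infinite product $\prod_j \varphi^{-j}\psi_j\varphi^{j}$. Controlling the $C^{r-2}$ norms of such infinite products forces summing rescaled $C^r$ norms, and the rescaling factors $\lambda^{-j(s-1)}$ for the $s$-th derivative must be absorbed by the decay of $\|g_k-\id\|_{C^r}$. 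I would first establish this lemma with explicit norms: each conjugation by a contraction of ratio $\lambda$ multiplies the $C^s$ seminorm by at most $\lambda^{1-s}$. The infinite product then converges in $C^{r-2}$ provided $\|g_k-\id\|_{C^r}\le \lambda^{Ck}$, the two spare derivatives giving summability of the error terms in the Faà di Bruno expansions.

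Second, use almost-reducibility to produce the input. Since $f$ is $C^r$-almost-reducible, for every $\eps$ there is $h$ with support in a fixed compact set $K$ such that $\|hfh^{-1}-\id\|_{C^r}<\eps$. Since $f$ preserves supports, we may enlarge $K$ to contain the support of $f$. Choose $h_k$ with $\|h_kfh_k^{-1}-\id\|_{C^r}<\eps_k$, where $\eps_k$ is superexponentially small. Then define $g_k=h_kfh_k^{-1}$, with $C^r$ norms superexponentially small. We also need the $h_k$ themselves to lie in a finitely generated group with controlled word length. For this, I would follow \cite{EBM25} and use a uniformly perfect, fragmentation-type argument. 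Every element of $\Diff^{r}_c$ supported in $K$ is a product of a bounded number of commutators of elements of a fixed finitely generated group, which is fine for $r\ge 3$ away from $r=d+2$ issues in dimension one. Alternatively, we can absorb $h_k$ into the generating set using the same infinite-product trick applied to the family $\{h_k\}$, with a slowly growing exponent.

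Third, estimate word lengths. Write $f^{n}=h_k^{-1}g_k^{n}h_k$. Choosing $k\approx \log n$, or more generally choosing $g_k$ so small that $g_k^{n}$ for $n\le N_k$ is still superexponentially close to the identity, Avila's construction gives $|g_k^{n}|_S=O(k)$ uniformly in $n\le N_k$, together with $|h_k^{\pm1}|_S=O(k)$. Hence $|f^n|_S=O(\log n)$, which is sublinear. So $f$ is distorted in $\Diff^{r-2}_c(\R)$. Note that $f\in\langle S\rangle$ since $f=h_1^{-1}g_1h_1$. The delicate bookkeeping is to choose the decay rates $\eps_k$ and the ranges $N_k$ compatibly. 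I would do this after fixing the constant $C$ from the first step.
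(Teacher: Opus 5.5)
Your overall architecture is the right one: conjugate powers of $f$ close to the identity, feed them into an Avila-type construction that loses two derivatives, then conjugate back. However, your treatment of the conjugators $h_k$ has a genuine gap.

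\textbf{The conjugators.} The $h_k$ cannot tend to the identity, since otherwise $h_kfh_k^{-1}\to f$ would force $f=\id$. So they cannot be absorbed by the infinite-product trick, which only accepts inputs converging fast to $\id$. Nor can a fixed finitely generated group supply bounded commutator decompositions of all of $\Diff^r_c(K)$, which is uncountable. Above all, nothing yields $|h_k^{\pm1}|_S=O(k)$. However $h_k$ enters $\langle S\rangle$, its word length is essentially uncontrolled. Since $h_k$ must be chosen after, and depending on, the range of exponents $n$ you want to handle, nothing prevents $|h_k|_S$ from dwarfing those exponents.

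Two ideas are missing.
\begin{enumerate}
\item Small pieces (Lemma~\ref{Lem:smallpieces}). $\Diff^r_c(\R)$ is simple for $r\neq2$, and conjugation-invariant norms on it are bounded (Theorem~\ref{t:BIP}). Hence each conjugator $\varphi_k$ is a product of at most $14$ time-one maps of $C^r$ flows, so a product of $N$-th powers of at most $14$ elements that are $\delta_k$-close to $\id$. These small pieces are interleaved with $\varphi_kf^{n_k}\varphi_k^{-1}$ in the input sequence of Proposition~\ref{p:GSM2}. Thus $\varphi_k\in\langle S\rangle$, with some finite but uncontrolled length $p_k$.
\item Amortization. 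The identity $f^{p_kn_k}=\varphi_k^{-1}(\varphi_kf^{n_k}\varphi_k^{-1})^{p_k}\varphi_k$ gives a word of length at most $p_k(2+O(k))$. So $|f^{p_kn_k}|_S/(p_kn_k)=O(k/n_k)\to0$, and subadditivity of $n\mapsto|f^n|_S$ upgrades this subsequence to $|f^n|_S=o(n)$.
\end{enumerate}
In particular you only need one conjugated power per $k$. Your claim that $|g_k^n|_S=O(k)$ uniformly for $n\le N_k$ is not what the construction gives, since each element of the input sequence costs a word length linear in its index.

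\textbf{Your first step.} The statement is correct (it is Proposition~\ref{p:GSM2}), but the mechanism you describe is not where $r-2$ comes from. Conjugating by a contraction of ratio $\lambda^j$ multiplies the $C^s$ seminorm by $\lambda^{j(1-s)}$. If the inputs decay fast enough, the product of disjointly supported rescaled pieces converges in $C^r$ with no loss at all; Proposition~\ref{p:GSM} is a $C^r$ statement.

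What the isolation mechanism actually needs is that the inputs be commutators $[g_n,g'_n]$ of two sequences that are themselves small. The heart of the matter is therefore a uniform local perfectness result: a $C^r$-small diffeomorphism supported in $I$ coincides on $J$ with a product of five commutators of $C^{r-2}$-small diffeomorphisms (Theorem~\ref{t:unif-perfect}). This comes from Avila's local fragmented perfection on the circle (Theorem~\ref{Thm:locfragperf}), combined with Sternberg linearization and a Mather-invariant correction as in Lemma~\ref{l:cancel}. The two lost derivatives come from the finite-regularity Herman linearization theorem (Theorem~\ref{Thm:Herman}) used there, not from Fa\`a di Bruno summability.
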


The proof of this theorem requires some weak ``uniform local perfectness'' result proved in Section \ref{s:perfect}, 
which is again an adaptation in finite regularity of arguments of \cite{Av08} and \cite{EBM25}. The key ingredient of this result, a famous local linearization theorem for circle diffeomorphisms by Arnold, Moser and Herman, involves a ``loss of regularity'', which is naturally reverberated in the above statement. Note, moreover, that such a perfectness result, as well as other ingredients of the proof of Theorem \ref{t:dist-fini}, hold for compactly supported diffeomorphisms of $\R$ and not for diffeomorphisms of a fixed closed interval. This is why we had to slightly change the context in this last theorem, from $\Diff^r_+([0,1])$ to $\Diff^r_c(\R)$. Most probably, this is more than a technical issue, as it is related to the deep problem of understanding distortion in the group of germs of diffeomorphisms, that we plan to explore in a later work.
\medskip

We end this introduction with a few concrete and natural questions:

\begin{qs}
If $f\in \Diff^r_+([0,1])$, with $r\ge2$, has vanishing asymptotic variation, is $f$ $C^r$-almost-reducible? In other words: can one avoid the loss of regularity in Theorem \ref{t:a-r-intro}?
\end{qs}
As we will see, this loss of regularity is inherent to the techniques we employ, so some new idea is needed to answer this question. Note that in \cite{Na23} it is proved that the vanishing of $V_\infty(f)$ implies the $C^{1+ac}$-almost-reducibility of $f$, and that the conjugacies involved are actually $C^r$; however, it is unclear wether the convergence is stronger than $C^{1+ac}$.\medskip

\begin{qs}
If $f\in \Diff^r_c(\R)$ is $C^r$-almost-reducible, with $r\ge2$, is $f$ distorted in $\Diff^r_c(\R)$? In other words: can one avoid the loss of regularity in Theorem \ref{t:dist-fini}?
\end{qs}

This would require a uniform local perfection result like Theorem \ref{Thm:locfragperf} but without loss of regularity. However, again, the loss of regularity is inherent to the argument used there. 

\medskip

\paragraph{Notations.} In what follows, for $k\in\N$, $\alpha>0$, and $M=[0,1]$ or $\mathbb{S}^1$, 
$\Diff^{k+\alpha}_+(M)$ denotes the group of $C^k$ diffeomorphisms of $M$ with $\alpha$-Hölder $k$th derivative. Also, $\Diff^{>k}_+([0,1])$ denotes the group $\bigcup_{\alpha>0}\Diff^{k+\alpha}_+([0,1])$. We say that a map is $C^{>k}$ if it is $C^{k+\alpha}$ for some positive $\alpha$, that is, if its $k$th derivative is Hölder, without specifying the degree of ``Hölderness''.} 

We let $C^k(M)$ be the space of real-valued functions of class $C^k$ defined on $M$. For $u \in C^k (M)$, we let $\| u \|_k$ be its $C^k$ norm:
$$\| u \|_k := \max \{ \|D^i u\|_{\infty} : 0 \leq i \leq k \}.$$
The distance between $f$ and $g$ in $\Diff^{k}_+(M)$ is $d_{C^k} (f,g) := \| f - g \|_k.$ This defines a complete metric that endows $\Diff^{k}_+(M)$ with a topological group structure.

For $0 < \alpha \leq 1$, we let $C^{k+\alpha}(M)$ be the space of functions of class $C^k$ defined on $M$ whose 
derivatives are $\alpha$-H\"older continuous. For $u \in C^{k+\alpha} (M)$, we let $\| u \|_{k+\alpha}$ be its $C^{k+\alpha}$ norm:
$$\| u \|_{k+\alpha} := \max \Big\{ \|u\|_k, \max_{x \neq y} \frac{|D^ku(x)-D^ku(y)|}{|x-y|^{\alpha}} \Big\}.$$
The distance $d_{C^{k+\alpha}}(f,g)$ between $f$ and $g$ in $\Diff^{k+\alpha}_+(M)$ is $\| f- g \|_{k+\alpha}$.
This defines a complete metric that, however, does not endow $\Diff^{k+\alpha}_+(M)$ with a topological group structure. Indeed, although for every $g \in \Diff^{k+\alpha}_+(M)$, the (right translation) map $f \mapsto f \circ g$ is continuous, the (left translation) map $f \mapsto g \circ f$ fails to be continuous for certain $g \in \Diff^{k+\alpha}_+(M)$. We refer to \cite{Bri18} (Lemmas 3.4 and 3.5) for a neat description of this phenomenon; see also \cite[Appendix 4]{EN23}), where a similar phenomenon is exhibited in the group of diffeomorphisms having derivatives with bounded 
variation.
\medskip

Given $r\in\R_+$, we denote by $\Diff^{r,\Delta}_+([0,1])$ the subset of $\Diff^{r}_+([0,1])$ consisting 
of diffeomorphisms without interior fixed points. Note that this subspace is not closed.

Given a subset $S$ of a group $G$, the subgroup generated by $S$ will be denoted $\langle S \rangle$. 
An interval of the form $[\![a,b]\!]$ will denote the set of integers $n$ such that $a \leq n \leq b$. 
Finally, we denote $\N = \{0,1,\ldots\}$ and $\N_* = \{1,2,\ldots\}$.


\section{Cocycles and their drifts: definition and first examples}
\label{s:cocycles}

In this section, we introduce cocycles for isometric actions of a group on a Banach space and their drifts. We provide several examples of cocycles for the isometric action of $\Diff^r_+([0,1])$ on $L^1([0,1]^d)$, for different values of $r$ and $d$, and we give a concrete dynamical interpretation of their drifts.

\subsection{Generalities}
\label{ss:gen}

This subsection provides a synthetic overview of notions introduced in more details in \cite{EN21} and \cite{EN23}. 
Let $U$ be a linear isometric action of a group $G$ on a Banach space $\mathbb{B}$. A {\em cocycle} for $U$ is a map $c : G \to \mathbb{B}$ which, for all $g_1,g_2 $ in $G$, satisfies the relation
$$c (g_1 \circ g_2) = c(g_2) + U(g_2) (c(g_1))$$ 
(from which one gets in particular that $c(\id)=0$). Since $U$ is an isometric action, this implies the following triangle inequality:
$$\|c(g_1\circ g_2)\|_{\mathbb{B}}\le \|c(g_1)\|_{\mathbb{B}}+\|c(g_2)\|_{\mathbb{B}}.$$
As a consequence, for each $f \in G$, we can define {\em the drift of $c$ at $f$} as the nonnegative real number
$$\mathrm{drift}_c (f) := \lim_{n \to \infty} \frac{\| c(f^n) \|_{\mathbb{B}}}{n}\le \|c(f)\|_{\mathbb{B}}.$$ 

\begin{rem} 
In the setting above, we choose to use the contravariant formulation (equivalently, actions on the right) just to 
simplify the notation. Otherwise, we would be forced to systematically introduce the inverses of the maps, rather than the maps themselves, all along the expressions we manipulate.
\end{rem}

Regarding the previous remark, it is worth pointing out the following invariance property, which is important by itself:   
$$
\forall f \in G, \quad  \drift_c (f) = \drift_c (f^{-1}).
$$
Besides this, one can easily check that $\drift_c$ is invariant under conjugacy, which implies that
\begin{equation}
\label{e:upper}
\mathrm{drift}_c (f) \le \inf_{\varphi\in G}\|c(\varphi f \varphi^{-1})\|_{\mathbb{B}}.
\end{equation}
Furthermore, one can prove that
\begin{equation}
\label{eq-drift-cob}
\mathrm{drift}_c (f) = \inf_{u \in \mathbb{B} }\big\| c(f) - \big( u - U(f) (u) \big) \big\|_{\mathbb{B}}
\end{equation}
and, in particular, 
\begin{equation} 
\label{e:gen-fact}
\mathrm{drift_c}(f)=0 \quad\Leftrightarrow\quad c(f)\in\overline{\{u-U(f)(u)\,;u\in\mathbb{B}\}}.
\end{equation}

In this article, we will be almost exclusively interested in the case where $G=\Diff_+^r([0,1])$ for some given regularity 
$r\ge1$ and $\mathbb{B}=L^1([0,1]^d)$ for some $d\in\N^*$. Let us describe the natural isometric action of $G$ on 
$\mathbb{B}$ in this case. Let $\cM([0,1]^d)$ denote the space of measurable maps from $[0,1]^d$ to $\R$. 
Given $F\in\Diff_+^1([0,1]^d)$, we define the endomorphism $F^*$ of $\cM([0,1]^d)$ by:
$$F^*(u)= (u\circ F)\det(DF).$$
The notation is motivated by the standard pull-back operation on differential forms: 
$$F^*(u \, dx_1\wedge \dots \wedge dx_d)=(u\circ F) \det(DF) \, dx_1\wedge\dots\wedge dx_d$$
(in our context, functions are often viewed as things to integrate). Naturally, this operation has a good behavior 
with respect to composition: given $F,G\in\Diff_+^1([0,1]^d)$,
$$(FG)^*=G^*F^*.$$
By the change of variable formula, the endomorphism $F^*$ induces an isometry of $L^1([0,1]^d)$. We will be particularly interested in two cases:
\begin{itemize}
\item $d=1$: given $f\in\Diff_+^1([0,1])$, $f^*$ is simply:
$$f^*:u\mapsto (u\circ f )Df$$
\item $d=2$: given $f\in \Diff_+^1([0,1])$ again, if $f^{\otimes2}$ denotes the diffeomorphism $(x,y)\mapsto (f(x),f(y))$ of $[0,1]^2$, then $(f^{\otimes2})^*$ is the map:
 $$(f^{\otimes2})^*:u\mapsto \Big((x,y)\mapsto u(f(x),f(y))Df(x)Df(y)\Big).$$
 \end{itemize}
More generally, $U:f\mapsto (f^{\otimes d})^*$ defines a linear isometric action of $\Diff^1_+([0,1])$ on $L^1([0,1]^d)$. 
 
\subsection{Examples of cocycles}
\label{ss:def}

Let $u_0,u_1\in\cM([0,1])$ and $v\in\cM([0,1]^2)$ be the maps $x\mapsto \frac1x$, $x\mapsto \frac1{x-1}$ and $(x,y)\mapsto \frac1{(x-y)^2}$ respectively (neither of which is integrable). We define three ``differential operators'' 
$$\nu_0,\nu_1 : \Diff^1_+([0,1])\to \cM([0,1])$$ 
and 
$$\ell:\Diff^1_+([0,1])\to \cM([0,1])^2$$ 
by 
$$\nu_i(f)=f^*u_i-u_i= \left(x\mapsto \frac{Df(x)}{f(x)-i}-\frac1{x-i} \right)= D\log\left(\frac{f-i}{\id-i}\right)$$ 
and 
$$\ell(f) = (f^{\otimes2})^*v-v = \left( (x,y) \mapsto \frac{Df(x) Df(y)}{(f(x)-f(y))^2} - \frac{1}{(x-y)^2}\right)=\partial_x\partial_y \log\left(\frac{f(x)-f(y)}{x-y}\right).$$
From their definition, it immediately follows that these operators satisfy the following cocycle relations:
\begin{equation}
\label{e:coc-tau}
\nu_i(f\circ g)=g^*\nu_i(f)+\nu_i(g),
\end{equation}
\begin{equation}
\label{e:coc-c}
\ell(f\circ g)=(g^{\otimes2})^*\ell(f)+\ell(g).
\end{equation}

However, if we want them to take values in the nice Banach spaces $L^1([0,1])$ and $L^1([0,1]^2)$ we need to restrict them to $\Diff^{>1}_+([0,1])$ and $\Diff^{>2}_+([0,1])$ respectively, as we will see below. Before that, let us (re)introduce two more operators, namely the so-called projective and Schwarzian derivatives:
$$
P:\Diff^{2}([0,1])\to L^1([0,1]),\quad  P(f)=D\log Df
$$
(which actually extends to $C^1$ diffeomorphisms with absolutely continuous derivative) and 
$$
S:\Diff^{3}([0,1])\to C^0([0,1]),\quad S(f)=D^2\log Df-\tfrac12(D\log Df)^2.
$$
The usual chain rule implies that $P$ also satisfies the cocycle relation:
\begin{equation}
\label{e:coc-P}
P(fg)= g^*P(f)+P(g).
\end{equation}
The Schwarzian derivative has a chain rule as well, but not so nice in the perspective of \S\ref{ss:gen}:
$$
S(fg)= ( S( f )  \circ g ) \times (Dg)^2 + S(g).
$$

The content of the next two propositions is scattered throughout the literature, and we include them just for the sake of completeness. In the statements, the different spaces are endowed with their usual topology/metric.

\begin{prop}
\label{p:tau}
\begin{enumerate}
\item If $f\in \Diff^{2}_+[0,1]$, then $\nu_i(f)$, which is a priori only defined on $[0,1]\setminus\{i\}$, extends 
to a continuous map on $[0,1]$ by $\nu_0(f)(0)=\frac12P(f)(0)$ and $\nu_1(f)(1)=\frac12P(f)(1)$, respectively.
\item For every $\alpha>0$, $\nu_i$ defines a continuous map from $\Diff^{1+\alpha}_+([0,1])$ to $L^1([0,1])$.
\end{enumerate}
\end{prop}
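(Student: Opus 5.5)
By the symmetry $x\mapsto 1-x$ (conjugating by this orientation-reversing involution exchanges the roles of $0$ and $1$ in the formulas), I will only treat $i=0$. The plan is to write $\nu_0(f)$ via an integral representation of $f(x)/x$ near $0$ and expand.

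\textbf{Part 1.} Write $f(x)=x\,h(x)$ with $h(x)=\int_0^1 Df(tx)\,dt$. Then $h$ is $C^1$ and positive on $[0,1]$, since $f\in\Diff^2_+$, $f(0)=0$ and $Df>0$. Moreover $h(0)=Df(0)$ and $Dh(0)=\frac12 D^2f(0)$. Since $\nu_0(f)=D\log(f/\id)=D\log h = Dh/h$, this function is continuous on all of $[0,1]$. Its value at $0$ is $\frac{D^2f(0)}{2Df(0)}=\frac12 P(f)(0)$, which is the claimed extension.

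\textbf{Part 2.} For $f\in\Diff^{1+\alpha}_+$, I will split $\nu_0(f)=\frac{Df-h}{f}+\frac{h-Df(0)}{f}+\left(\frac{Df(0)}{f}-\frac1{\id}\right)$, or more directly $\nu_0(f)=\frac{Df(x)}{f(x)}-\frac1x=\frac{xDf(x)-f(x)}{xf(x)}$. The numerator equals $\int_0^x (Df(x)-Df(s))\,ds$, which is bounded by $\frac{[Df]_\alpha}{1+\alpha}x^{1+\alpha}$. The denominator satisfies $xf(x)\ge \min Df\cdot x^2$. Hence
$$|\nu_0(f)(x)|\le C\,x^{\alpha-1},\qquad C=\frac{\|Df\|_{\alpha}}{(1+\alpha)\min Df},$$
which is integrable, so $\nu_0(f)\in L^1$.

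For continuity, let $f_n\to f$ in $C^{1+\alpha}$. Then $\min Df_n$ is bounded below and the Hölder constants of $Df_n$ are bounded, so the $\nu_0(f_n)$ share a common integrable dominating function $Cx^{\alpha-1}$. They converge pointwise on $(0,1]$, because $f_n\to f$ in $C^1$ and $f_n(x)\ge x\min Df_n>0$ there. Dominated convergence then gives $L^1$ convergence.

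The main subtlety is that continuity cannot come from the $C^1$ topology alone. Only a Hölder modulus gives uniform integrability near $0$, which is why the statement is posed for $\Diff^{1+\alpha}_+$. Note also that the $C^{1+\alpha}$ metric is only used to bound the seminorm, so the failure of the group structure for this metric causes no issue here.
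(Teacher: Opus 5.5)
Your proof is correct, and Part 1 is exactly the paper's argument: the Taylor integral formula gives a positive $C^1$ function $h=f/\id$, and $\nu_0(f)=Dh/h$. Your reduction of $\nu_1$ to $\nu_0$ via $\sigma(x)=1-x$ also checks out. One has $\nu_1(f)=-\nu_0(\sigma f\sigma)\circ\sigma$ and $P(\sigma f\sigma)(0)=-P(f)(1)$, whereas the paper just says the case is analogous.

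In Part 2, your integrability bound is essentially the paper's. The paper bounds $|Df(x)-Df(0)|\le C_f x^\alpha$ in the expression $\nu_0(f)=\frac{1}{f}\bigl(Df-\int_0^1 Df(t\,\cdot)\,dt\bigr)$, and you bound $|Df(x)-Df(s)|$ inside $\int_0^x$. The continuity argument, however, takes a genuinely different route.

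\textbf{The paper's route.} It proves continuity only at the identity, noting that $C_f\to 0$ as $f\to\id$ in $C^{1+\alpha}$. It then transports this to an arbitrary $f$ through the cocycle identity $\nu_0(f_n)=f^*\nu_0(f_nf^{-1})+\nu_0(f)$. This uses that $f^*$ is an $L^1$-isometry and that right translation is continuous in $\Diff^{1+\alpha}_+([0,1])$. It is a structural principle: a cocycle for an isometric action that is continuous at the identity is continuous everywhere, provided right translations are continuous. The paper reuses the same template for $\ell$ in Proposition~\ref{p:ell}.

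\textbf{Your route.} You prove continuity directly at every $f$ by dominated convergence, using the majorant $Cx^{\alpha-1}$, which is uniform along the sequence. This is more elementary and self-contained, and it needs no information on which translations are continuous for the $C^{1+\alpha}$ metric. It also yields slightly more: $L^1$-convergence already holds when $f_n\to f$ in $C^1$ with uniformly bounded $\alpha$-H\"older seminorms of $Df_n$.
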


\begin{prop}
\label{p:ell}
\begin{enumerate}
\item If $f\in \Diff^{3}_+[0,1]$, then $\ell(f)$, which is a priori not defined on the diagonal, extends to a continuous map on $[0,1]^2$ by $\ell(f)(x,x)=\frac16Sf(x)$ for every $x\in[0,1]$.  
\item For every $\alpha>0$, $\ell$ defines a continuous map from $\Diff^{2+\alpha}_+([0,1])$ to $L^1([0,1]^2)$.
\end{enumerate}
\end{prop}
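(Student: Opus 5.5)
The natural route is to write $\ell(f)$ through divided differences of $f$ and expand near the diagonal. We set $q(x,y)=\frac{f(x)-f(y)}{x-y}$, which is positive, symmetric and equals $Df(x)$ on the diagonal. Then
$$\ell(f)(x,y)=\frac{Df(x)Df(y)-q(x,y)^2}{(x-y)^2\,q(x,y)^2},$$
and $\ell(f)=\partial_x\partial_y\log q$ off the diagonal. Off any neighbourhood of the diagonal $\ell(f)$ is continuous in $(x,y)$ and depends continuously on $f$ in the $C^1$ topology, since $q$ is bounded below by $\min Df>0$. So the whole work lies near $\{x=y\}$.

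For item 1, with $f\in C^3$, I would use the integral form of Taylor's formula at the midpoint $m=\frac{x+y}2$, with $h=\frac{y-x}2$. Expanding $Df(m\pm h)$ and $q(x,y)=\int_0^1 Df(x+t(y-x))\,dt$ to second order in $h$ with integral remainders gives
$$Df(x)Df(y)-q^2=h^2\Big(-(D^2f(m))^2+\tfrac{2}{3}Df(m)D^3f(m)\Big)+o(h^2)\cdot(\ldots).$$
Here the remainders are uniform because $D^3f$ is uniformly continuous. Dividing by $4h^2q^2\to 4h^2(Df(m))^2$ yields the limit $\frac16\big(\frac{D^3f}{Df}-\frac32(\frac{D^2f}{Df})^2\big)(x)=\frac16Sf(x)$, and the convergence is uniform as $(x,y)$ approaches the diagonal. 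This gives the continuous extension. I will check the precise constants carefully; the main point is that the first-order terms cancel by symmetry.

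For item 2, with $f\in C^{2+\alpha}$, the same expansion, carried only to first order with a Hölder remainder, gives $|Df(x)Df(y)-q^2|\le C\|f\|_{2+\alpha}^{(\cdot)}|x-y|^{1+\alpha}$. At first glance the $h$-terms do not cancel exactly at this order. However, they do: the first-order terms of $Df(x)Df(y)$ and of $q^2$ around $m$ are both $\pm hD^2f(m)$ products, and they cancel by symmetry. What is left is of size $|x-y|^{2+\alpha}$ from $(D^2f)$-Hölder remainders, together with the exact quadratic term $-h^2(D^2f(m))^2$. Even taking the crude bound $|x-y|^{\alpha}$ after dividing by $(x-y)^2$, only integrability is needed. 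So the key estimate I would aim for is
$$|\ell(f)(x,y)|\le C(f)\,|x-y|^{\alpha-1},$$
which is integrable on $[0,1]^2$ because $\alpha>0$. The constant $C(f)$ depends on $\min Df$ and $\|f\|_{2+\alpha}$. This estimate is also where the restriction to $\Diff^{>2}$ is sharp: for merely $C^2$ maps the bound degenerates to $|x-y|^{-1}$, which is not integrable.

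Continuity in $f$ is where I expect the real obstacle, given that $\Diff^{2+\alpha}$ is not a topological group. I would argue by dominated convergence in a quantitative form. Let $f_n\to f$ in $C^{2+\alpha}$. Fix $\delta>0$. On $\{|x-y|<\delta\}$ the uniform bound above, with constants uniform in $n$, gives $\int|\ell(f_n)|+|\ell(f)|\le C\delta^{\alpha}\cdot 2$ (up to a constant), since $\int_{|x-y|<\delta}|x-y|^{\alpha-1}\,dx\,dy=O(\delta^{\alpha})$. On the complement, $\ell(f_n)\to\ell(f)$ uniformly, because $f_n\to f$ in $C^1$ and $q_n$ is bounded below. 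Letting $n\to\infty$ and then $\delta\to0$ gives $L^1$ convergence. Note that only right-composition-free quantities enter, so the non-continuity of left translations plays no role. I would then record the analogous, easier one-variable argument for Proposition \ref{p:tau} in the same pattern: there $\nu_i(f)=\frac{Df(x)-q_i(x)}{(x-i)q_i(x)}$ with $q_i=\frac{f-i}{x-i}$, and this gives the bound $|x-i|^{\alpha-1}$.
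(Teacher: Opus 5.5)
Your proof is correct, but it is organized differently from the paper's.

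\textbf{Item 1.} The paper does not expand at the midpoint. It observes that the divided difference $\tau(x,y)=\int_0^1 Df((1-t)x+ty)\,dt$ is a positive $C^2$ function on the closed square when $f\in C^3$. Hence $\ell(f)=\partial_x\partial_y\log\tau$ is continuous there for free, and the diagonal value $\frac16 Sf$ is read off by differentiating under the integral sign. Your midpoint expansion reaches the same conclusion; your constants are right, since the bracket is $h^2(-b^2+\frac23 ac)+o(h^2)$ and dividing by $4h^2a^2$ gives $\frac16 Sf$. The cost is that you must invoke uniformity of the remainders explicitly, which the paper's route gets automatically.

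\textbf{Item 2.} Both proofs rest on the same first-order H\"older estimate $|\ell(f)(x,y)|\le C|x-y|^{\alpha-1}$; the paper expands at $x$ rather than at the midpoint. The real difference is how continuity is obtained. The paper checks that the constant tends to $0$ as $f\to\id$, which gives continuity at the identity. It then moves to an arbitrary $f$ through the cocycle identity $\ell(f_n)=(f^{\otimes2})^*\ell(f_nf^{-1})+\ell(f)$, using continuity of right translations in $\Diff^{2+\alpha}_+([0,1])$ and the fact that $(f^{\otimes2})^*$ is an isometry.

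Your argument splits off a strip $\{|x-y|<\delta\}$ of mass $O(\delta^\alpha)$ and uses uniform convergence off the strip under $C^1$ convergence. This bypasses the group structure entirely. It also proves slightly more: $\ell(f_n)\to\ell(f)$ in $L^1$ whenever $f_n\to f$ in $C^1$ with $\|f_n\|_{2+\alpha}$ bounded. The paper's route, in exchange, follows the same template it uses for $\nu_i$ and only needs the estimate near the identity.

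\textbf{A correction.} Your aside that, once the linear terms cancel, ``what is left is of size $|x-y|^{2+\alpha}$'' together with $-h^2(D^2f(m))^2$ is false for $C^{2+\alpha}$ maps. The H\"older remainders in $Df(m\pm h)$ and in $q$ are only $O(h^{1+\alpha})$, and their symmetric combination need not cancel. For instance, suppose $D^2f(z)=\mathrm{sgn}(z-m)|z-m|^\alpha$ near an interior point $m$. Then $Df(m-h)Df(m+h)-q^2\sim\frac{2Df(m)}{2+\alpha}h^{1+\alpha}$, so $\ell(f)(m-h,m+h)$ is of exact order $h^{\alpha-1}$ and $\ell(f)$ is genuinely unbounded. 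This does not affect your proof, because the bound you actually use, $|Df(x)Df(y)-q^2|\le C|x-y|^{1+\alpha}$, is correct and is precisely what yields $|x-y|^{\alpha-1}$. Simply drop the aside.
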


\begin{proof}[Proof of Proposition \ref{p:tau}] We treat the case $i=0$, the other one being analogous.
\medskip

\noindent 1. Since $f(0)=0$, the Taylor integral formula gives:
$$
\forall x\in(0,1],\quad \frac{f(x)}x=\int_0^1 Df(tx)dt.
$$
If $f$ is $C^2$, the map $h:=\frac{f}{\id}$ thus extends to a (non-vanishing) $C^1$ function on $[0,1]$. 
Hence, $\nu_0(f)=D\log(\frac{f}{\id})$ extends to a continuous map on $[0,1]$. More precisely, on $(0,1]$, by ``derivation under the integral'',
\begin{align*}
(D\log h)(x) = (\tfrac{Dh}h)(x)=\frac{\int_0^1tD^2f(tx)dt}{\int_0^1Df(tx)dt}\xrightarrow[x\to0]{}\tfrac12Pf(0).
\end{align*}
\medskip

\noindent 2. Let $\alpha>0$. For $f\in\Diff^{1+\alpha}_+([0,1])$, let $C_f>0$ be the smallest constant such that, for all $x\in[0,1]$, 
$$
|Df(x)-Df(0)|=|D(f-\id)(x) - D(f-\id)(0)|\le C_f x^\alpha.
$$
Note that $\nu_0(f)$ is $C^0$ on $(0,1]$. Moreover,  
$$
|\nu_0(f)(x)| = \frac1{f(x)}\left|Df(x)-\int_0^1 Df(tx)dt\right| = \frac1{f(x)}\left|(Df(x)-Df(0))+\int_0^1 (Df(0)-Df(tx)) \, dt\right|\!,
$$
hence 
$$
|\nu_0(f)(x)| \le C_f\frac{x^\alpha}{f(x)}\left(1+\int_0^1t^\alpha dt\right).
$$
Since $\frac{x^\alpha}{f(x)}=O(\frac1{x^{1-\alpha}})$, it follows that $\nu_0(f)$ is integrable around $0$. 
Moreover, if $f$ goes to the identity in the $C^{1+\alpha}$ topology, then $C_f$ goes to $0$, and this implies that $\|\nu_0(f)\|_{L^1}$ goes to $0$. This shows the continuity of $\nu_0$ at the identity.

The continuity everywhere can be deduced from the cocycle identity as follows. Assume $(f_n)$ converges to $f$ in $\Diff^{1+\alpha}_+([0,1])$. By continuity of the right translation, $(f_nf^{-1})$ goes to $ff^{-1}=\id$, and thus $\nu_0(f_nf^{-1})$ goes to $0$ according to the above. Now, 
$$
\nu_0(f_n)=\nu_0(f_nf^{-1}f)=f^*\nu_0(f_nf^{-1})+\nu_0(f),
$$ 
and this converges to $\nu_0(f)$ since $f^*$ is an isometry. This concludes the proof.
\end{proof}

\begin{proof}[Proof of Proposition \ref{p:ell}] 

\noindent 1. The Taylor integral formula gives:
$$
\forall x\neq y\in[0,1],\quad \frac{f(x)-f(y)}{x-y}=\int_0^1 Df((1-t)x+ty)dt.
$$
If $f$ is $C^3$, the map $(x,y)\mapsto \frac{f(x)-f(y)}{x-y}$ thus extends to a (non-vanishing) $C^2$ map $\tau$ on $[0,1]^2$, 
so $\ell(f):(x,y)\mapsto\partial_x\partial_y\log\left(\frac{f(x)-f(y)}{x-y}\right)$ extends to a continuous map on $[0,1]^2$. More precisely, 
\begin{equation}
\label{e:partialtau}
\partial_x\partial_y\log\tau= \partial_x\left(\frac{\partial_y\tau}{\tau}\right)=\frac{\partial_x\partial_y\tau}{\tau}-\frac{(\partial_x\tau)(\partial_y\tau)}{\tau^2}.
\end{equation}
Now, by derivation under the integral,
$$
\partial_x\tau(x,y)=\int_0^1(1-t)D^2f((1-t)x+ty)dt
\quad\text{and}\quad
\partial_y\tau(x,y)=\int_0^1tD^2f((1-t)x+ty)dt,
$$
so the rightmost term in \eqref{e:partialtau} converges to 
$$
\left(\frac{D^2f(x_0)}{Df(x_0)}\right)^2\times \left(\int_0^1(1-t)dt\right)\times \left(\int_0^1tdt\right)=\frac14 Pf(x_0)
$$
when $(x,y)$ tends to $(x_0,x_0)$. Finally, 
$$
(\partial_x\partial_y\tau)(x,y)=\int_0^1t(1-t)D^3f((1-t)x+ty)dt,
$$
so the first term of the right hand side of \eqref{e:partialtau} converges to 
$$
\frac{D^3f(x_0)}{Df(x_0)}\times \left(\int_0^1t(1-t)dt\right)=\frac16 \frac{D^3f(x_0)}{Df(x_0)}
$$
when $(x,y)$ tends to $(x_0,x_0)$. Therefore, $\ell(f)(x,y)$ tends to 
$$
\frac16 \frac{D^3f(x_0)}{Df(x_0)}-\frac14 Pf(x_0)=\frac16 \left( \frac{D^3f(x_0)}{Df(x_0)}-\frac32Pf(x_0) \right) =\frac16Sf(x_0),
$$ 
as announced.

\medskip

\noindent 2. Let $\alpha>0$. Let $f\in \Diff^{2+\alpha}_+([0,1])$ and $C_f>0$ be such that, for all $x,y\in[0,1]$, 
$$
|D^2f(x)-D^2f(y)|\le C_f |x-y|^\alpha.
$$
For all $x,y\in[0,1]$, with $x\leq y$,
$$
\ell(f)(x,y) = \frac{1}{(f(x)-f(y))^2}\left(Df(x)Df(y)-\left(\frac{f(x)-f(y)}{x-y}\right)^2\right).
$$
By Taylor's integral formula,
\begin{align*}
Df(y)&=Df(x)+(y-x)D^2f(x)+(y-x)\int_0^1(D^2f(x+t(y-x))-D^2f(x))dt\\
&=Df(x)+(y-x)D^2f(x)+(y-x)\eps(x,y)\qquad\text{with $\eps(x,y)\le C_f(y-x)^\alpha$}\\
&=Df(x)(1+(y-x)Pf(x)+(y-x)\eps'(x,y))\qquad\text{with $\eps'(x,y)= \frac{\eps(x,y)}{Df(x)}$}.
\end{align*}
Furthermore,
\begin{align*}
\frac{f(x)-f(y)}{x-y} &= Df(x)+\tfrac12(y-x)D^2f(x)+(y-x)\int_0^1(1-t)(D^2f(x+t(y-x))-D^2f(x))dt\\
&=Df(x)+\tfrac12(y-x)D^2f(x)+(y-x)\delta(x,y)\quad\text{with $\delta(x,y)\le C_f(y-x)^\alpha$}\\
&=Df(x)\Big(1+\tfrac12(y-x)Pf(x)+(y-x)\delta'(x,y)\Big)\quad\text{with $\delta'(x,y)=\frac{\delta(x,y)}{Df(x)}$}.
\end{align*}
Hence 
\begin{small}
\begin{align*}
\ell(f)(x,y)&=\tfrac{\left(  Df(x)\right)^2}{(f(x)-f(y))^2}\left(\! \Big(1+(y\!-\!x)Pf(x)+(y\!-\!x)\eps'(x,y)\Big) \!-\! \Big(1+\tfrac12(y\!-\!x)Pf(x)+(y\!-\!x)\delta'(x,y)\Big)^{\!\!2} \! \right)\\
&=\tfrac{\left(Df(x)\right)^2}{(f(x)-f(y))^2}(y-x)(\eps'(x,y)-2\delta'(x,y)-(y-x)^2(\delta'(x,y))^2),
\end{align*}
and thus
\begin{align*}
|\ell(f)(x,y)|&\le C'_f\times \tfrac{1}{|f(x)-f(y)|^2}|y-x|^{1+\alpha}=C'_f\times\left|\tfrac{x-y}{f(x)-f(y)}\right|^2\times\tfrac1{|x-y|^{1-\alpha}} 
\le C''_f\times \tfrac1{|x-y|^{1-\alpha}},
\end{align*}
\end{small}where $C'_f$ and $C''_f$ are constants going to $0$ when $f$ goes to $\id$ in $C^{2+\alpha}$ topology. This shows both that $\ell(f)$ is $L^1$ and that $\|\ell(f)\|_{L^1}$ goes to $0$ when $f$ goes to $\id$ in $C^{2+\alpha}$ topology.

As in the previous proposition, this only proves that the map $\ell$ from $\Diff^{2+\alpha}_+([0,1])$ to $L^1 ([0,1]^2)$ is continuous at the origin. However, continuity at every point can be deduced from this by an extra manipulation of the cocycle relation. We leave the details to the reader.
\end{proof}

Thanks to the cocycle relations \eqref{e:coc-tau}, \eqref{e:coc-c} and \eqref{e:coc-P} and to Propositions \ref{p:tau} and \ref{p:ell}, the cocycle $\ell:G=\Diff^{>2}_+([0,1])\to\mathbb{B}=L^1([0,1]^2)$ and any linear combination of $\nu_0$, $\nu_1$ and $P$ from $G=\Diff^2_+([0,1])$ to $\mathbb{B}=L^1([0,1])$ fall right into the scope of \S\ref{ss:gen}. As we will see now, we understand well the drifts of $\nu_0$, $\nu_1$ and $P-\nu_0-\nu_1$. Things are not as clear for $\ell$, but in Section~\ref{s:necessary} we will determine at which diffeomorphisms its drift vanishes. 

\subsection{Interpretation of the drifts of $\nu_0$ and $\nu_1$}
\label{ss:drift-nu}

\begin{lem}
\label{l:T0}
For every $f\in\Diff^{>1}_+[0,1]$, for $i \in [\![ 0,1 ]\!] $,
$$\drift_{\nu_i}(f)=|\log Df(i)|.$$
\end{lem}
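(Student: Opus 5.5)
We treat $i=0$; the case $i=1$ is symmetric, via the conjugation by $x\mapsto 1-x$ or the same argument written at $1$. The key observation is that $\nu_0(f)=D\log\big(\frac{f}{\id}\big)$ is an exact derivative on $(0,1]$. Hence, for every $g\in\Diff^{>1}_+([0,1])$,
$$\|\nu_0(g)\|_{L^1}=\var_{(0,1]}\log\tfrac{g(x)}{x}\ \ge\ \Big|\log g(1)-\lim_{x\to0}\log\tfrac{g(x)}{x}\Big|=|\log Dg(0)|,$$
using $g(1)=1$ and $g(x)/x\to Dg(0)$. Applying this to $g=f^n$ and using $Df^n(0)=(Df(0))^n$ gives
$$\|\nu_0(f^n)\|_{L^1}\ge n|\log Df(0)|,$$
so $\drift_{\nu_0}(f)\ge|\log Df(0)|$.

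For the upper bound we use the conjugacy invariance \eqref{e:upper}, or an equivalent coboundary argument. Because $\|\nu_0(g)\|_{L^1}$ is the total variation of $\log(g/\id)$, it suffices to find conjugates $\varphi f\varphi^{-1}$ for which $\log\frac{\varphi f\varphi^{-1}(x)}{x}$ is almost monotone. Its total variation is then close to $|\log D(\varphi f \varphi^{-1})(0)|=|\log Df(0)|$. A more robust route is to estimate $\|\nu_0(f^n)\|_{L^1}=\var\log\frac{f^n(x)}{x}$ directly. Write
$$\log\frac{f^n(x)}{x}=\sum_{k=0}^{n-1}\log\frac{f(f^k x)}{f^k x}=\sum_{k=0}^{n-1}\psi(f^k x),\qquad \psi:=\log\frac{f}{\id}.$$
Here $\psi$ is $C^{>0}$ on $[0,1]$, so $\psi\circ f^k$ has the same total variation as $\psi$ on $[0,1]$. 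This immediately gives only the bound $n\var\psi$, which is too crude. The plan is to split $[0,1]$ into a neighbourhood $[0,\delta]$ of $0$ and its complement.

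\textbf{Step 1 (away from $0$).} We decompose $[\delta,1]$ into the components of $[0,1]\setminus\Fix(f)$ that it meets. On a component $(a,b)$, each orbit contributes a telescoping structure: $\var_{[c,d]}\sum_k \psi\circ f^k$ on a fundamental domain shifted by $f^k$ is controlled by $\sum_k \var_{f^k[c,d]}\psi$. Since the intervals $f^k[c,d]$ are essentially disjoint, this is at most a constant times $\var\psi$ per fundamental-domain piece. The number of pieces of $[\delta,1]$ is controlled via the Hölder regularity of $\nu_0$, which comes from Proposition~\ref{p:tau}.

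\textbf{Step 2 (near $0$).} On $[0,\delta]$ we use $\psi(y)=\log Df(0)+O(y^\alpha)$, which gives
$$\var_{[0,\delta]}\sum_k\psi\circ f^k\le n\,\var_{[0,\delta]}\psi.$$
The quantity $\var_{[0,\delta]}\psi$ tends to $0$ with $\delta$ only if $\psi$ has bounded variation near $0$. This holds because $D\psi=\nu_0(f)\in L^1$.

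\textbf{Main obstacle.} The bound in Step 2 still has the form $n\cdot o(1)$ rather than $n|\log Df(0)|$, so Steps 1 and 2 alone are insufficient. The cleaner fix, which I would try first, is to use formula \eqref{eq-drift-cob}: it suffices to show that
$$\nu_0(f)-c\,\mathbb{1}_{\text{near }0}\cdot\tfrac{1}{x}\cdot(\text{cutoff})$$
is approximable by coboundaries $u-f^*u$ with $u\in L^1$. Concretely, we take the $L^1$ function $w:=\nu_0(f)$ and look for $u$ with $u-f^*u\approx w-\lambda\rho$, where $\rho$ is a probability density concentrated near $0$ and $\lambda=\log Df(0)$, the total integral of $w$ (indeed $\int\nu_0(f)=\log f(1)-\log Df(0)=-\log Df(0)$ up to sign). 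Then
$$\drift_{\nu_0}(f)\le\|\lambda\rho\|_{L^1}+\varepsilon=|\log Df(0)|+\varepsilon.$$
To solve the equation on each component of $[0,1]\setminus\Fix(f)$, we move the mass of $w$ by pushforward sums $u=\sum_{k\ge0}(f^k)^*(\cdot)$ along fundamental domains. A zero-integral function on a fundamental domain is a coboundary as long as truncations of the series remain in $L^1$. Mass that cannot be cancelled escapes only towards $0$, and its total is exactly $\lambda$. Verifying that all mass except that at $0$ cancels is the step I expect to be delicate: at interior fixed points and at $1$ one must check that the local integrals of $\nu_0(f)$ vanish, i.e.\ that $\log\frac{f(x)}{x}$ takes equal values at the fixed points, which it does (namely $0$).
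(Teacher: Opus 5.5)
Your lower bound is correct, and it is simpler than the paper's. Since $\int_0^1\nu_0(g)=-\log Dg(0)$ for every $g$, you get $\|\nu_0(f^n)\|_{L^1}\ge n|\log Df(0)|$ immediately. The paper instead conjugates $f$ to a homothety near $0$ via Sternberg's theorem and runs a Ces\`aro argument over fundamental domains; your integration trick is the one the paper itself uses later in Proposition~\ref{p:multip}.

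The gap is the upper bound, which is never established. The conjugacy route is only sketched, and you yourself find Steps~1--2 insufficient. The final coboundary route rests on a false criterion. A function $v\neq0$ supported in a fundamental domain $D$ with $\int v=0$ is \emph{not} a limit of coboundaries $u-f^*u$. Indeed, the truncations $u_N=\sum_{k=0}^{N}(f^k)^*v$ give $u_N-f^*u_N=v-(f^{N+1})^*v$, whose defect has norm $\|v\|_{L^1}$ for every $N$. Moreover, pairing with the $f$-invariant $K\in L^\infty$ equal to $\mathrm{sign}(v)$ on $D$, which annihilates every $u-f^*u$, shows that $v$ stays at distance at least $\|v\|_{L^1}$ from their closure. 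Mass can be transported along orbits but not cancelled across orbits. So what your scheme controls is the $L^1$ norm of the push-down $\sum_k(f^k)^*w$ on a fundamental domain, not $|\int w|$. With $\rho$ an arbitrary probability density near $0$, $w-\lambda\rho$ is in general not in the closure of coboundaries. To land exactly on $|\log Df(0)|$ you would need the uncancelled residual to have constant sign, which you never address. For the same reason, checking that $\int\nu_0(f)$ vanishes on components away from $0$ is not the relevant condition.

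The missing idea is that $\nu_0(f)=f^*u_0-u_0$ holds exactly with $u_0=\frac1x$, which fails to be integrable only at $0$. Truncating it, $u_n=\frac1x\mathbb{1}_{[\frac1n,1]}\in L^1$, gives
\[ \nu_0(f)-(f^*u_n-u_n)=\tfrac{Df}{f}\,\mathbb{1}_{\{f<\frac1n\}}-\tfrac1x\,\mathbb{1}_{[0,\frac1n)}. \]
This defect equals $\nu_0(f)$ on $[0,\min(\frac1n,f^{-1}(\frac1n)))$, which is negligible in $L^1$ as $n\to\infty$. Between $\frac1n$ and $f^{-1}(\frac1n)$ it has constant sign, with integral $-\log(nf(\frac1n))$ or $\log(nf^{-1}(\frac1n))$, both tending to $-\log Df(0)$. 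Hence its $L^1$ norm tends to $|\log Df(0)|$. By \eqref{eq-drift-cob} this yields $\drift_{\nu_0}(f)\le|\log Df(0)|$, with no analysis of the components of $[0,1]\setminus\Fix(f)$ needed. This is exactly the paper's argument for the upper bound.
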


\begin{proof} 
The cases $i=0$ and $i=1$ being analogous, we will only give the proof in the former one. 
Let $f\in\Diff^{>1}_+[0,1]$. Let us first prove the inequality $\drift_{\nu_0}(f)\le|\log Df(0)|$. Given \eqref{e:gen-fact}, it suffices to find a sequence $u_n$ in $L^1$ such that $\|\nu_0(f)-(f^*u_n-u_n)\|_{L^1}\to|\log Df(0)|$. Let us show that this holds for $u_n=\frac1x \mathbb{1}_{[\frac1n,1]}$. Let $n\in\N^*$, and let us first assume that $f(\frac1n)<\frac1n$. 
Then, for all $x\in(0,1]$,
\begin{align*}
(\nu_0(f)-(f^*u_n-u_n))(x)
&=\frac{Df(x)}{f(x)}-\frac1x - \frac{Df(x)}{f(x)}\mathbb{1}_{[\frac1n,1]}(f(x))+\frac1{x}\mathbb{1}_{[\frac1n,1]}(x)\\
&=\left\{
\begin{matrix}
\nu_0(f)(x)& \!\! \text{if } x\in[0,\frac1n],\\
\frac{Df(x)}{f(x)}& \, \qquad \text{if }  x\in[\frac1n,f^{-1}(\frac1n)],\\
0& \qquad \text{if } x\in[f^{-1}(\frac1n),1].
\end{matrix}
\right.
\end{align*}
Hence,
\begin{equation}
\label{e:first}
\|\nu_0(f)-(f^*u_n-u_n)\|_{L^1}
=\int_0^{\frac1n}|\nu_0(f)|+\int_{\frac1n}^{f^{-1}(\frac1n)}\frac{Df(x)}{f(x)}dx 
= \int_0^{\frac1n}|\nu_0(f)|-\log nf(\tfrac1n).
\end{equation}
Similarly, if $f(\frac1n)>\frac1n$, one gets 
\begin{equation}
\label{e:second}
\|\nu_0(f)-(f^*u_n-u_n)\|_{L^1}
=\int_0^{f^{-1}(\frac1n)}|\nu_0(f)|+\int_{f^{-1}(\frac1n)}^{\frac1n}\frac{dx}x 
= \int_0^{f^{-1}(\frac1n)}|\nu_0(f)|-\log nf^{-1}(\tfrac1n).
\end{equation}
In both cases, the integral on the right hand side goes to $0$ when $n$ goes to the infinity since $\nu_0(f)$ is an $L^1$ function, and the following term goes to $\pm\log Df(0)$. If $\log Df(0)=0$, this shows the desired convergence. If $\log Df(0)<0$ (resp. $>0$), $f(\frac1n)<\frac1n$ (resp. $f(\frac1n)>\frac1n$) for all sufficiently small $n$, and \eqref{e:first} (resp. \eqref{e:second}) implies that $\|\nu_0(f)-(f^*u_n-u_n)\|_{L^1}$ goes to $-\log Df(0)$ (resp. $+\log Df(0)$) as claimed.  

We only need to prove the other inequality in the case $\lambda=\log Df(0)<0$ (the case $\lambda = 0$ is immediate, and the case $\lambda > 0$ follows directly by taking the inverse). By (the generalization of) Sternberg's linearization theorem for hyperbolic $C^{>1}$ germs of $(\R,0)$, one can conjugate $f$ by a $C^{>1}$ diffeomorphism to a diffeomorphism which coincides with the homothety of ratio $e^\lambda$ on $[0,\frac12]$ (for example). Since $\drift_{\nu_0}$ is invariant under conjugacy in $\Diff^{>1}_+([0,1])$, we can assume henceforth that $f$ itself has this 
property. Now fix $n\in\N^*$. For all $k\in[\![1,n]\!]$, on $f^{-k}([f(\frac12),\frac12])$, we have 
$$
\frac{Df^n}{f^n}=\frac{(Df^{n-k}\circ f^k)Df^k}{f^{n-k}\circ f^k}=\frac{e^{(n-k)\lambda}Df^k}{ e^{(n-k)\lambda}f^k}=\frac{Df^k}{f^k}.
$$
Hence 
\begin{align*}
\frac{\|\nu_0(f^n)\|_{L^1}}n
&\ge \frac1n\sum_{k=1}^{n}\int_{f^{-k+1}(\frac12)}^{f^{-k}(\frac12)}\left|\frac{Df^n(x)}{f^n(x)}-\frac1x\right|dx\\
&=\frac1n\sum_{k=1}^{n}\int_{f^{-k+1}(\frac12)}^{f^{-k}(\frac12)}\left|\frac{Df^k(x)}{f^k(x)}-\frac1x\right|dx
\ge \frac1n\sum_{k=1}^{n}\left|\int_{f^{-k+1}(\frac12)}^{f^{-k}(\frac12)}\left(\frac{Df^k(x)}{f^k(x)}-\frac1x\right)dx\right|.
\end{align*}
In view of Cesaró's theorem, it is thus sufficient to prove that the integrals in the last term go to $\pm\lambda$ when $k$ goes to infinity. And indeed,
\begin{align*}
\int_{f^{-k+1}(\frac12)}^{f^{-k}(\frac12)}\left(\frac{Df^k(x)}{f^k(x)}-\frac1x\right)dx
= \Big[\log f^k(x) - \log x\Big]_{f^{-k+1}(\frac12)}^{f^{-k}(\frac12)}
= \underbrace{\log\left(\frac{\frac12}{f(\frac12)}\right)}_{=-\lambda}-\underbrace{\log\left(\frac{f^{-k}(\frac12)}{f^{-k+1}(\frac12)}\right)}_{\to\log(1)=0},
\end{align*}
thus closing the proof. 
\end{proof}

\begin{cor}
\label{l:nu}
For all $f\in\Diff^{>1}_+[0,1]$ without interior fixed point, 
$$\drift_{\nu_0-\nu_1}(f)\ge\left|\log (Df(0)Df(1))\right|.$$
\end{cor}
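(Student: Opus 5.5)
The plan is to bound the $L^1$ norm from below by the absolute value of the integral, and to compute $\int_0^1 \nu_i(f^n)$ exactly using the fact that $\nu_i(g)$ is an exact derivative. The hypothesis of no interior fixed point should not actually be needed for this inequality.

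For any $g\in\Diff^{>1}_+([0,1])$ we have $\nu_0(g)=D\log\big(\tfrac{g}{\id}\big)$ and $\nu_1(g)=D\log\big(\tfrac{g-1}{\id-1}\big)$ on $(0,1)$. By Proposition~\ref{p:tau}, both functions are integrable. Hence $\int_0^1\nu_i(g)$ is the limit as $\eps\to0$ of $\int_\eps^{1-\eps}\nu_i(g)$, by dominated convergence. Each truncated integral is computed by the fundamental theorem of calculus, and the boundary terms are evaluated using $\tfrac{g(x)}{x}\to Dg(0)$ as $x\to0$ and $\tfrac{g(x)-1}{x-1}\to Dg(1)$ as $x\to1$. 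Since $g(0)=0$ and $g(1)=1$, the other boundary terms vanish:
$$\int_0^1\nu_0(g)=\log\frac{g(1)}{1}-\log Dg(0)=-\log Dg(0),\qquad \int_0^1\nu_1(g)=\log Dg(1)-\log\frac{g(0)-1}{0-1}=\log Dg(1).$$
So $\int_0^1(\nu_0-\nu_1)(g)=-\log\big(Dg(0)Dg(1)\big)$.

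Next we apply this to $g=f^n$. The chain rule at the fixed points $0$ and $1$ gives $Df^n(0)=Df(0)^n$ and $Df^n(1)=Df(1)^n$, so
$$\|(\nu_0-\nu_1)(f^n)\|_{L^1}\ \ge\ \Big|\int_0^1(\nu_0-\nu_1)(f^n)\Big| = n\,\big|\log\big(Df(0)Df(1)\big)\big|.$$
Dividing by $n$ and letting $n\to\infty$ yields $\drift_{\nu_0-\nu_1}(f)\ge|\log(Df(0)Df(1))|$. Here $\nu_0-\nu_1$ is a cocycle for the action $f\mapsto f^*$ on $L^1([0,1])$, by \eqref{e:coc-tau} and linearity, so its drift is well defined.

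The only delicate point is justifying the boundary limits. This amounts to checking that $\int_0^1\nu_i(g)$ really equals the improper integral of an exact derivative. It follows from integrability, which is given by Proposition~\ref{p:tau}, together with the existence of the limits of $\log\frac{g}{\id}$ at $0$ and of $\log\frac{g-1}{\id-1}$ at $1$. Both limits exist because $g$ is $C^1$ and fixes the endpoints.
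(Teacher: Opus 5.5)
Your proof is correct, but it takes a different route from the paper. The paper derives the corollary from Lemma~\ref{l:T0}, that is, from the exact values $\drift_{\nu_i}(f)=|\log Df(i)|$. It then applies the reverse triangle inequality $\drift_{\nu_0-\nu_1}(f)\ge|\drift_{\nu_0}(f)-\drift_{\nu_1}(f)|$. The hypothesis of no interior fixed point is used only to get $Df(0)\ge1\ge Df(1)$ (for $f>\id$), so that $\big||\log Df(0)|-|\log Df(1)|\big|=|\log(Df(0)Df(1))|$. This route needs both the upper and the lower bound of Lemma~\ref{l:T0}, and the lower bound rests on Sternberg linearization of hyperbolic germs. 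You instead integrate the exact derivative $(\nu_0-\nu_1)(f^n)$ and use $\|\cdot\|_{L^1}\ge|\int\cdot|$. This is the same trick the paper uses later for $\ell$ in Proposition~\ref{p:multip}; indeed $\int_0^1(\nu_1-\nu_0)(f)=\iint\ell(f)$ by Corollary~\ref{c:necessary}. Your argument buys several things: it is more elementary (no Sternberg, no knowledge of the individual drifts), it yields the unstabilized bound $\|(\nu_0-\nu_1)(f)\|_{L^1}\ge|\log(Df(0)Df(1))|$, and it dispenses with the no-interior-fixed-point hypothesis. Since $|a+b|\ge\big||a|-|b|\big|$, your bound is never weaker than the reverse-triangle one. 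It is strictly stronger when $\log Df(0)$ and $\log Df(1)$ are nonzero of the same sign, which forces interior fixed points. In that case, combined with subadditivity, it even shows $\drift_{\nu_0-\nu_1}(f)=\drift_{\nu_0}(f)+\drift_{\nu_1}(f)$. The paper's route buys brevity: given Lemma~\ref{l:T0}, the corollary is a one-line consequence of the dynamical interpretation of the individual drifts.
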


\begin{proof} 
Let $\nu=\nu_0-\nu_1$. Without loss of generality, we can assume $f(x)>x$ on $(0,1)$, so that, necessarily, $Df(0)\ge1\ge Df(1)$. Then 
$$
\drift_\nu(f)\ge |\drift_{\nu_0}(f)-\drift_{\nu_1}(f)| = \Big||\log Df(0)|-|\log Df(1)|\Big| = \Big|\log(Df(0)Df(1))\Big|,
$$
as we wanted to prove.
\end{proof}

\subsection{A short digression: generating vector fields and Mather invariant}
\label{ss:mather}

Here, we briefly recall some classical facts about interval diffeomorphisms. We refer the reader to \cite{Yoc,Na11} for a complete account on the subject and for the original references such as \cite{Ko,Sz58,Ta}, etc. First, if $f$ is a $C^r$ diffeomorphism of $[0,1)$ fixing only $0$, with $r\ge2$, then $f$ is the time-$1$ map of a unique $C^1$ vector field on $[0,1)$, which we will call its \emph{generating vector field}, and which is in addition $C^{r-1}$ on the interior $(0,1)$, and even on the whole $[0,1)$ if $f-\id$ is not $C^r$-flat at $0$. This vector field $X$ satisfies in particular 
\begin{equation}
\label{e:invariance}
X\circ f^n=X\cdot Df^n\quad \text{(invariance under its flow)},
\end{equation}
\begin{equation}
\label{e:derivX}
DX(0)=\log Df(0), 
\end{equation}
and
\begin{equation}
\label{e:X0}
\lim_{x\to0}\frac{f(x)-x}{X(x)} =
\left\{
\begin{matrix}
1 & \text{ \, \, if } Df(0) = 1,\\
\frac{Df(0)-1}{\log Df(0)} & \text{otherwise.}
\end{matrix}
\right.
\end{equation}
We will also need the following direct consequence of \eqref{e:invariance}:
\begin{equation}
\label{e:clef}
\log Df^n=\log X\circ f^n-\log X, \quad\text{ and thus } \quad P(f^n)=(f^n)^*D\log X - D\log X.
\end{equation}

Now if $f$ is a $C^r$ diffeomorphism of $[0,1]$ without interior fixed point, it has a generating vector field $X$ on $[0,1)$ and a generating field $Y$ on $(0,1]$. Generically, these two vector fields do not coincide, and $f$ thus does not embed in a global $C^1$ flow. If $(\phi_X^t)_t$ and $(\phi_Y^t)_t$ denote the flows of these two vector fields, given a point $p\in(0,1)$, the maps $\psi_X:t\mapsto \phi_X^t(p)$ and $\psi_Y:t\mapsto \phi_Y^t(p)$ define two $C^r$ diffeomorphisms from $\R$ to $(0,1)$ (their derivatives $t\mapsto X(\phi_X^t(p))$ and $t\mapsto Y(\phi_X^t(p))$ are $C^{r-1}$) which both conjugate $f$ to the unit translation. The transfer map 
$$
M_f^{p,p}=\psi_Y^{-1}\circ \psi_X
$$
 is thus a $C^r$ diffeomorphism of $\R$ (fixing $0$) which commutes with the unit translation, and one easily sees that its derivative equals $\tfrac{X}{Y}\circ\psi_X$, and that a different choice of $p$ results in pre- and post-composing $M_f^{p,p}$ by translations. The class of the induced circle diffeomorphism, up to pre- and post-composition by rotations, is called the \emph{Mather invariant of $f$} and denoted by $M_f$. Hence, $f$ embeds in a global $C^1$ flow if and only if $M_f$ is the class of the rotations.  Note that the total variation $\var\log DM$ of $\log DM$ on the circle for some representative $M$ of $M_f$ does not depend on the choice of representative, so we will abusively denote it by $\var\log DM_f$. One thus has, for any $p\in(0,1)$,
\begin{equation}
\label{e:varlogDM}
\var\log DM_f 
= \var(\log D(\psi_Y^{-1}\circ \psi_X),[0,1])
= \var \big( \log\tfrac{X}{Y},[p,f(p)] \big) 
= \int_p^{f(p)}|\tfrac{DX}X-\tfrac{DY}Y|.
\end{equation}

In the next sections, we will also be interested in the diffeomorphism 
$$
m^{p,p}_f = \psi_X\circ \psi_Y^{-1}\in\Diff^r_+([0,1])
$$
which simply equals $\psi_Y M_f^{p,p}\psi_Y^{-1}$, and commutes with $f$. More generally, it conjugates the time-$t$ map of $Y$ to that of $X$ (recall that the time-$1$ maps are both equal to $f$).
 
As was already observed by Mather himself in \cite{Mather} (and later used with variations in \cite{BCVW,EBN,EBM25}), the Mather invariant of a diffeomorphism can be made trivial by a localized perturbation in the following sense. This will be decisive in Section \ref{s:drift}. 

\begin{lem}
\label{l:cancel}
Let $f\in\Diff^r_+([0,1])$ be such that $f(x)>x$ for all $x\in(0,1)$, let $p\in(0,1)$ and define $X$, $Y$, $\psi_X$, $\psi_Y$ as above. Let 
\begin{itemize}
\item $\phi_0$ and $\phi_{1/2}$ be $C^r$ circle diffeomorphisms coinciding with the identity near $0$ and near $1/2$ respectively, and such that $\phi=\phi_0\phi_{1/2}$ is the circle diffeomorphism induced by $(M_f^{p,p})^{-1}$ (hence $\phi_{1/2}$ also fixes $0$);
\item $\Phi_0$ (resp. $\Phi_{1/2}$) be the diffeomorphism of the real line supported in $[-1,0]$ (resp. $[-\frac52,-\frac32]$) and coinciding 
therein with the lift of $\phi_0$ (resp. $\phi_{1/2}$) fixing the endpoints;
\item $h_i$ be the diffeomorphism $\psi_{Y}\Phi_i\psi_Y^{-1}$, for $i=0$ and $\frac12$, supported in $I=[f^{-1}(p),p]$ and $\phi_Y^{-3/2}(I)$ respectively (which both fix the orbit of $p$ under $f$);
\item $h=h_0h_{1/2}=h_{1/2}h_0$.
\end{itemize}
Then $g=fh$ is a $C^r$-diffeomorphism of $[0,1]$ without interior fixed point, which coincides with $f$ on $\{f^n(p),n\in\Z\}$ and outside $[f^{-3}(p),p]$ and which has a trivial Mather invariant. Furthermore, on $[f^{-3}(p),f^{-2}(p)]$, 
$$g^{-3}f^3 =\psi_{Y}M_f^{p,p}\psi_Y^{-1}=m^{p,p}_f.$$
\end{lem}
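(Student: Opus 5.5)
The plan is to do the whole computation in the Y-coordinate $t=\psi_Y^{-1}(x)$, where everything becomes explicit. Since $\psi_Y$ conjugates $f$ to the unit translation $T:t\mapsto t+1$, we get $\psi_Y^{-1}h\psi_Y=\Phi:=\Phi_0\Phi_{1/2}$ and $\psi_Y^{-1}g\psi_Y=T\Phi$. Note that $\Phi_0$ and $\Phi_{1/2}$ have disjoint supports, hence commute, which justifies $h_0h_{1/2}=h_{1/2}h_0$.

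\textbf{Elementary properties of $g$.} First, $h$ is supported in $I\cup\phi_Y^{-3/2}(I)\subset[f^{-3}(p),p]$ and fixes the points $f^n(p)$, which correspond to the integers in the $t$-coordinate. Hence $g=fh$ coincides with $f$ outside $[f^{-3}(p),p]$ and on the orbit of $p$. Second, $g$ is $C^r$, because $\Phi_i$ is $C^r$ and $\psi_Y$ is a $C^r$ diffeomorphism onto $(0,1)$, with $h_i$ compactly supported inside $(0,1)$. Third, $g$ has no interior fixed point. In the $t$-coordinate, $T\Phi$ maps each $[k,k+1]$ onto $[k+1,k+2]$, because $\Phi$ preserves each integer interval. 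So every point moves strictly to the right, i.e.\ $g(x)>x$ on $(0,1)$.

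\textbf{Generating fields of $g$.} Since $g=f$ on a neighbourhood of $0$ and of $1$, uniqueness of the $C^1$ generating vector field of a germ shows that the generating fields of $g$ near $0$ and near $1$ are the restrictions of $X$ and $Y$. More precisely, we invoke the classical Kopell/Szekeres uniqueness, or equivalently the fact that such a field is determined by $X=\lim_n (Dg^n)^{-1}(g^n-g^{n-1})\circ g^{-n}$ near the repelling end. Consequently, the Mather invariant of $g$ can be computed as a transfer map along $g$-orbits. Take a point in the zone $t\le -3$, where the X-chart of $g$ agrees with that of $f$, namely $t=M(s)$ with $M=M_f^{p,p}$ and $s=\psi_X^{-1}(x)$. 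Push it by $g^3$ into the zone $t\ge0$, where the Y-chart is just $t$. Up to translations, the Mather invariant of $g$ is the circle map induced by
\[
s\mapsto (T\Phi)^3\big(M(s)\big)-3 .
\]
It therefore suffices to prove the identity $g^{-3}f^3=m_f^{p,p}$ on $[f^{-3}(p),f^{-2}(p)]$: this says $(T\Phi)^3=T^3 M^{-1}$ on $[-3,-2]$, so the transfer map above is a translation and the Mather invariant of $g$ is trivial.

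\textbf{The key computation.} The main step is to compute $(T\Phi)^3$ on $[-3,-2]$, tracking which piece of $\Phi$ acts at which time. A point $t\in[-3,-2]$ first undergoes $\Phi$ there. Only the part of $\Phi_{1/2}$ supported in $[-5/2,-2]$ acts, i.e.\ half of $\phi_{1/2}$ viewed on the circle, starting at $1/2$. The point is then translated into $[-2,-1]$. There it meets the remaining part of $\Phi_{1/2}$, on $[-2,-3/2]$. It is translated again into $[-1,0]$, where it meets $\Phi_0$, and a final translation brings it into $[0,1]$. Reading modulo $1$, the point undergoes $\phi_{1/2}$ (both halves, in the correct order, since $\phi_{1/2}$ fixes $0$ and the two pieces act on complementary arcs of $[1/2,3/2]$) and then $\phi_0$. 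Hence $(T\Phi)^3=T^3\circ\widetilde{\phi_0\phi_{1/2}}$ on $[-3,-2]$ for the appropriate lift, and $\phi_0\phi_{1/2}=\phi$ is induced by $M^{-1}$. Conjugating back by $\psi_Y$ gives $g^3=f^3\psi_Y M^{-1}\psi_Y^{-1}$ on $[f^{-3}(p),f^{-2}(p)]$, that is, $g^{-3}f^3=\psi_Y M\psi_Y^{-1}=m^{p,p}_f$ there, which is the last claim of the lemma.

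I expect the main obstacle to be the bookkeeping of lifts in this step. One must check that the piecewise action of $\Phi_{1/2}$ across the integer $-2$ genuinely reassembles into the lift of $\phi_{1/2}$ fixing the integers; this works precisely because $\phi_{1/2}$ fixes $0$ and is the identity near $1/2$. One must also check that the composition order, $h$ first and then $f$, matches the order in $\phi=\phi_0\phi_{1/2}$. I would verify this directly by following a single point through the three steps.
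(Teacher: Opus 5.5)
Your argument is correct. The paper gives no proof of this lemma; it attributes the construction to \cite{Mather} and \cite{BCVW,EBN,EBM25}. Your computation in the coordinate $t=\psi_Y^{-1}(x)$ is exactly the verification the construction is built for. There $g$ becomes $T\Phi$, and everything reduces to $(T\Phi)^3=T^3M^{-1}$ on $[-3,-2]$, where $M=M_f^{p,p}$. This holds because the integer-fixing lifts satisfy $\tilde\phi_0\tilde\phi_{1/2}=M^{-1}$, as both fix $0$.

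Two small points need tidying.

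\textbf{The zone for the $X$-chart.} In the Mather-invariant step you justify that the $X$-chart of $g$ agrees with $\psi_X^{-1}$ only on $\{t\le -3\}$. But $g^3$ does not send that zone into $\{t\ge 0\}$, and you then work on the fundamental domain $[-3,-2]$. This is harmless. Write $X_g$, $Y_g$ for the generating fields of $g$.
\begin{itemize}
\item If $t(x)\le -3/2$, then $g^{-n}(x)=f^{-n}(x)$ for all $n\ge 0$, and for $n\ge1$ these points lie in $\{t\le -5/2\}$, where $h=\mathrm{id}$ locally, so $Dg=Df$ there.
\item Hence $X_g(x)=Dg^n(g^{-n}x)\,X_g(g^{-n}x)=Df^n(f^{-n}x)\,X(f^{-n}x)=X(x)$, using germ uniqueness near $0$. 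So $X_g=X$ on $\{t\le -3/2\}$.
\item Similarly, forward orbits give $Y_g=Y$ on $\{t\ge 0\}$.
\item Alternatively, work on $[-4,-3]$ with $g^4$, noting that $(T\Phi)^4=(T\Phi)^3T$ there.
\end{itemize}

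\textbf{The formula for $X$.} Your parenthetical formula for $X$ is wrong as written, since $(g^n-g^{n-1})\circ g^{-n}=\mathrm{id}-g^{-1}$. Drop it and rely only on Kopell--Szekeres uniqueness of the $C^1$ generating field of a $C^2$ germ, which is all you actually use.
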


\subsection{Interpretation of the drift of $\mu=P-\nu_0-\nu_1$}
\label{ss:drift-mu}

We can now prove Theorem \ref{t:mu}, which we recall:

\begin{thmB}
Let $\mu=P-\nu_0-\nu_1$. The equality  $\drift_\mu(f)=\var\log DM_f$ holds for every $f\in \Diff^2_+([0,1])$ without interior fixed point. 
\end{thmB}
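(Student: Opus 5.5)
We prove the two inequalities separately, using the generating vector fields $X$ (on $[0,1)$) and $Y$ (on $(0,1]$) of $f$. We may assume $f(x)>x$ on $(0,1)$, since the drift is invariant under inversion.

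\emph{Upper bound.} We will use \eqref{eq-drift-cob}, writing $\mu(f)$ as a coboundary up to a small error. By \eqref{e:clef}, $P(f)=f^*D\log X-D\log X$, and also $P(f)=f^*D\log Y-D\log Y$. For the boundary terms we use $\nu_0(f)=f^*u_0-u_0$ and $\nu_1(f)=f^*u_1-u_1$, with $u_0=\frac1{\id}$ and $u_1=\frac1{\id-1}$. Fix $p\in(0,1)$ and let $J$ be a fundamental domain near $0$. Consider
$$w=\big(D\log X-u_0\big)\mathbb{1}_{[0,a]}+\big(D\log Y-u_1\big)\mathbb{1}_{[a,1]},$$
with the cut point $a$ chosen in the middle of the interval. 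Then $\mu(f)=f^*w-w$ formally, except on the fundamental domain $[a,f^{-1}(a)]$ (or its image), where the discrepancy is $\pm\big((D\log X-D\log Y)-(u_0-u_1)\big)$.

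Near $0$, $D\log X-u_0=\frac{DX}{X}-\frac1x$ is integrable. This holds because $X(x)\sim DX(0)x$ when the fixed point is hyperbolic; in the parabolic case $X=o(x)$, and integrability has to be checked from $X\in C^1$. If it fails, we instead use truncations $u_n$ as in Lemma \ref{l:T0}, so that the error contributed near the endpoints tends to $0$. The remaining error is the $L^1$ norm over one fundamental domain of $(D\log X-D\log Y)-(u_0-u_1)$. By \eqref{e:varlogDM} the first part has norm $\var\log DM_f$. The term $u_0-u_1$ is not negligible, so we exploit the freedom of choosing the domain: replacing the single cut by an average over cut points spread along $n$ fundamental domains, that is, by a Cesàro mean of $w$'s, the bounded term $u_0-u_1$ contributes $O(1/n)$ while the $X$/$Y$ difference still contributes exactly $\var\log DM_f$. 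This yields $\drift_\mu(f)\le\var\log DM_f$.

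\emph{Lower bound.} We compute $\mu(f^n)$ directly. Using the same identities applied to $f^n$, on the middle fundamental domains $\mu(f^n)$ behaves like $(f^n)^*(D\log X-u_0)-(D\log Y-u_1)$, or equivalently like the $X$-versus-$Y$ comparison. We pick $n$ disjoint fundamental domains $f^{k}(I)$, $k\in[\![1,n]\!]$, placed between $0$ and $1$ so that $f^n$ sends the region near $0$ (where $X$ governs) to near $1$ (where $Y$ governs). On each such domain the integrand is
$$(f^n)^*D\log Y-D\log X+O(\text{small}),$$
whose integral in absolute value, after the change of variables via the flow invariance \eqref{e:invariance}, is close to $\int_p^{f(p)}\big|\tfrac{DX}X-\tfrac{DY}Y\big|$. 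The error terms $\frac1x$ and $\frac1{x-1}$ are small on domains far from the corresponding endpoint, and $D\log X-\frac1x$, $D\log Y-\frac1{x-1}$ tend to $0$ in $L^1$ on domains approaching the corresponding endpoint. Summing over the $n$ domains and dividing by $n$ gives $\drift_\mu(f)\ge\var\log DM_f$ in the limit.

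\emph{Main obstacle.} The key step is controlling $\frac{DX}{X}-\frac1x$ near the endpoints uniformly along fundamental domains, especially at parabolic endpoints with only $C^2$ regularity. There $X$ is merely $C^1$ and flat to order one, so we will rely on \eqref{e:X0} and on a Lemma \ref{l:T0}-style Cesàro argument rather than on pointwise estimates.
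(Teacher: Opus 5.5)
There is a genuine gap in your upper bound: it rests on a false identity. With $w=(D\log X-u_0)\mathbb{1}_{[0,a]}+(D\log Y-u_1)\mathbb{1}_{[a,1]}$, look at the set where $x$ and $f(x)$ both lie in $[0,a]$. There you get $f^*w-w=(f^*D\log X-D\log X)-(f^*u_0-u_0)=P(f)-\nu_0(f)$, so the coboundary defect is $-\nu_1(f)$. Symmetrically, the defect is $-\nu_0(f)$ on the right piece. These are fixed $L^1$ functions that are not small: for instance $\int_0^b\nu_1(f)=\log\frac{1-f(b)}{1-b}\neq0$ for every $b\in(0,1)$. Neither truncation nor averaging removes them.

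Ces\`aro averaging also does not dispose of the term $u_0-u_1$ you found on the cut domain. The defect of an average of correctors is the average of their defects, and pieces carried by pairwise disjoint fundamental domains keep their total $L^1$ mass. So that term contributes the average of its masses on the $n$ cut domains, not $O(1/n)$; it gets exactly the same weight $1/n$ as the $X/Y$ term. In fact, for $f$ without interior fixed point and $D$ a fundamental domain, $\inf_{u\in L^1}\|g-(u-f^*u)\|_{L^1}=\|\sum_{k\in\Z}(f^k)^*g\|_{L^1(D)}$, so no rearrangement of correctors can beat the periodized defect.

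The missing observation is that you must subtract $u_0+u_1$ on \emph{both} sides. Set $\alpha=\frac{DX}X-u_0-u_1$ and $\beta=\frac{DY}Y-u_0-u_1$. Then \eqref{e:clef} and $\nu_i(f)=f^*u_i-u_i$ give $\mu(f)=f^*\alpha-\alpha=f^*\beta-\beta$ on all of $(0,1)$. Take the corrector equal to $\alpha$ left of the cut and to $\beta$ right of it, truncated at points $y_n\to0$ and $z_n\to1$; this is the paper's $u_n$. Its defect is exactly $\frac{DX}X-\frac{DY}Y$ on the cut domain, of mass $\var\log DM_f$ by \eqref{e:varlogDM}, plus errors on the two truncation domains and on shrinking neighbourhoods of $0$ and $1$. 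No averaging is needed.

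The step you call the main obstacle is elementary, and it is exactly what makes these boundary errors vanish. Near $0$ one has $\frac{DX}X-\frac1x=o(\frac1X)$. If $\lambda=\log Df(0)\neq0$, this holds because $X(x)=\lambda x+o(x)$ and $DX\to\lambda$. If $\lambda=0$, it holds because $DX\to0$ and $X=o(x)$, so that $\frac1x=o(\frac1X)$. Meanwhile $\int_J\frac1X=1$ on every fundamental domain $J$. Hence $\int_J|\frac{DX}X-\frac1x|\to0$ as $J$ tends to $0$, and similarly at $1$. No Ces\`aro argument is involved. In the parabolic case truncation is genuinely necessary, since $\frac{DX}X-\frac1x$ need not be integrable (take $X=x^2$ near $0$).

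Finally, your lower-bound identity is misstated: on the middle domains $\mu(f^n)$ is not close to $(f^n)^*D\log Y-D\log X$. When $X=Y$, e.g.\ for a hyperbolic M\"obius map, that function is $P(f^n)$, whose mass on such domains tends to $|\log Df(0)|+|\log Df(1)|\neq0=\var\log DM_f$. The correct identity is $\mu(f^n)=(f^n)^*\beta-\beta=(f^n)^*\beta-\alpha+(\alpha-\beta)$. On $E=f^{-k}(D)$ with $\eps n\le k\le(1-\eps)n$:
\begin{itemize}
\item the first two terms have masses $\|\beta\|_{L^1(f^{n-k}(D))}$ and $\|\alpha\|_{L^1(f^{-k}(D))}$, both $o(1)$ by the estimate above;
\item $\alpha-\beta=D\log\frac XY$ is $f^*$-invariant (as $\frac XY\circ f=\frac XY$), with mass $\var\log DM_f$ on every fundamental domain.
\end{itemize}
Summing over the roughly $(1-2\eps)n$ disjoint domains gives $\drift_\mu(f)\ge\var\log DM_f$. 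With this correction your lower bound is a valid, slightly more direct variant of the paper's argument. The paper instead expresses $\frac{DX}X-\frac{DY}Y$ on $[f^{-n}(a),f^n(a)]$ through $\mu(f^{2n})$ and $(f^{-2n})^*\mu(f^{2n})$, plus two boundary terms of the same kind.
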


\begin{proof}
This is an adaptation of \cite{EN21}, p. 30 and 37. In what follows, $X$ and $Y$ denote the left and right Szekeres vector fields of $f$, respectively. Without loss of generality, we assume $f(x)>x$ on $(0,1)$.\medskip

\noindent\textbf{First inequality.} Let us first prove that $\var\log DM_f\le \drift_\mu(f)$. According to \eqref{e:varlogDM}, for any $a\in(0,1)$,
\begin{equation}
\label{e:varlog}
\var\log DM_f 
= \int_a^{f(a)}\left|\tfrac{DX}X-\tfrac{DY}Y\right|
=\frac1{2n}\int_{f^{-n}(a)}^{f^n(a)}\left|\tfrac{DX}X-\tfrac{DY}Y\right|.
\end{equation}
Now 
\begin{align*}
\tfrac{DX}X-\tfrac{DY}Y 
& = (f^{-2n})^*\tfrac{DX}X - P(f^{-2n}) - \left((f^{2n})^*\tfrac{DY}Y - P(f^{2n})\right)\\
&=(f^{-2n})^*\tfrac{DX}X - \mu(f^{-2n})-\nu_0(f^{-2n})-\nu_1(f^{-2n}) - (f^{2n})^*\tfrac{DY}Y + \mu(f^{2n})+\nu_0(f^{2n})+\nu_1(f^{2n})\\
&=(f^{-2n})^*\left(\tfrac{DX}X-\tfrac1x-\tfrac1{x-1}\right)-(f^{2n})^*\left(\tfrac{DY}Y-\tfrac1x-\tfrac1{x-1}\right)+(f^{-2n})^*\mu(f^{2n})+\mu(f^{2n}).
\end{align*}
Thus the last integral in \eqref{e:varlog} is bounded above by the sum of 
$$
I_n=\int_{f^{-n}(a)}^{f^n(a)}\left|(f^{-2n})^*\left(\tfrac{DX}X-\tfrac1x-\tfrac1{x-1}\right)\right|, 
\quad 
J_n=\int_{f^{-n}(a)}^{f^n(a)}\left|(f^{2n})^*\left(\tfrac{DY}Y-\tfrac1x-\tfrac1{x-1}\right)\right|
$$
$$
K_n = \int_{f^{-n}(a)}^{f^n(a)}|(f^{-2n})^*\mu(f^{2n})|
\quad\text{and}\quad 
L_n=\int_{f^{-n}(a)}^{f^n(a)}|\mu(f^{2n})|.
$$
Now by a change of variables,
$$
K_n + L_n =  \int_{f^{-3n}(a)}^{f^{-n}(a)}|\mu(f^{2n})|+\int_{f^{-n}(a)}^{f^n(a)}|\mu(f^{2n})|\le \|\mu(f^{2n})\|_{L^1} 
= 2n \, \mathrm{drift}_{\mu} (f) + o(n).
$$
We are thus left with proving that $\frac1{2n}(I_n+J_n)$ goes to $0$ as $n$ goes to infinity. Regarding $I_n$,
$$
I_n=\int_{f^{-3n}(a)}^{f^{-n}(a)}\left|\tfrac{DX}X-\tfrac1x-\tfrac1{x-1}\right|\
le
\int_{f^{-3n}(a)}^{f^{-n}(a)}\left|\tfrac{DX}X-\tfrac1x\right|+\int_{f^{-3n}(a)}^{f^{-n}(a)}\left|\tfrac1{x-1}\right|.
$$
The last integral goes to $0$ when $n$ goes to infinity for the interval of integration goes to $\{0\}$ and $x\mapsto \frac1{x-1}$ is continuous near $0$. For the previous integral, let us observe that near $0$, one has $\frac{DX}X-\frac1\id = o(\frac1X)$. Indeed, in the hyperbolic case, that is if $DX(0)=\lambda\neq 0$, 
$$
\frac{DX(x)}{X(x)}-\frac1x 
= \frac{\lambda+o(1)}{\lambda x +o(x)}-\frac1x
=o\left(\frac1x\right)=o\left(\frac1{X(x)}\right)\quad [x\to0].
$$

In the parabolic case, near $0$, $DX(x)=o(1)$ and $X(x)=o(x)$, so $\frac1x=o(\frac1X)$, so again one has $\frac{DX}X-\frac1\id = o(\frac1X)$. Thus,
$$
\int_{f^{-3n}(a)}^{f^{-n}(a)}\left|\tfrac{DX}X-\tfrac1x\right|
=o\left( \int_{f^{-3n}(a)}^{f^{-n}(a)}\frac1X\right)=o(2n),
$$
hence $\frac{I_n}{2n} \to 0$, as required.
The case of $J_n$ is analogous, and we leave it to the reader.
\medskip

\noindent \textbf{Second inequality.}
According to \eqref{e:gen-fact}, in order to prove that $\drift_\mu(f)\le\var\log DM_f$, it suffices to find a sequence of $L^1$ functions $u_n$ such that $\|\mu(f)-(f^*u_n-u_n)\|_{L^1}$ goes to $\var\log DM_f$. Let us show that this holds for 
$$
u_n := \left\{
      \begin{array}{rcl}
          0 & \mbox{on} & [0,1]\setminus [y_n,z_n], \\
         \tfrac{DX}X-\tfrac1\id-\tfrac1{\id-1} & \mbox{on} & [y_n, f(a)], \\ 
         \tfrac{DY}Y-\tfrac1\id-\tfrac1{\id-1}  & \mbox{on} & [f(a),z_n], 
        \end{array} 
        \right .
$$
where $y_n$ (resp. $z_n$) is a sequence of points in $(0,1)$ converging to $0$ (resp. $1$) and $a$ is a point in $(0,1)$. We next compute the $L^1$-norm of the coboundary defect $\delta_n:=\mu(f)-(f^*u_n-u_n)$ which, as can be easily checked, coincides with
$$\left \{
      \begin{array}{rcl}
          0 & \mbox{on} & K_n=[ y_n, a] \cup [f(a), f^{-1}(z_n)], \\
         -\tfrac{DX}X+\frac1\id+\frac1{\id-1}  & \mbox{on} & I_n^-=[f^{-1}(y_n), y_n], \\ 
         f^*(\tfrac{DY}Y-\frac1\id-\frac1{\id-1}) & \mbox{on} & I_n^+=[f^{-1} (z_n), z_n], \\ 
         P(f) - \big(  f^*(\tfrac{DY}Y)- \tfrac{DX}X\big) & \mbox{on} & I=[a, f(a)], \\
         \mu(f) & \mbox{on} & B_n=(0,f^{-1}(y_n))\cup (z_n,1).
      \end{array}
   \right .$$
In particular, $\int_{K_n\cup B_n}|\delta_n|$ goes to $0$, since $\mu(f)$ is an $L^1$ function. Handling $\int_{I_n^{\pm}}|\delta_n|$ is very similar to what we've done in the proof of the first inequality. Indeed, 
$$
\int_{I_n^-}|\delta_n|
\le \int_{I_n^-}|\tfrac{DX}X-\tfrac1\id|+\int_{I_n^-}\frac1{1-\id}
=o\underbrace{\left(\int_{I_n^-}\frac1X\right)}_{=1}+o(1),
$$
and similarly for $\int_{I_n^{+}}|\delta_n|$. We are thus left with evaluating $\int_I|\delta_n|$. For this, let us observe that $P(f) = f^*(\tfrac{DY}Y)-\tfrac{DY}Y$. Therefore, on $I$, one simply has $\delta_n=\tfrac{DX}X-\tfrac{DY}Y$, and we have already seen that the integral of the absolute value of this expression on a fundamental interval is precisely $\var \log DM_f$. This completes the proof.
\end{proof}


\section{General properties of drifts of cocycles for isometric actions on $L^1$ spaces}
\label{s:drift}

\subsection{A localization formula}

Given $f\in\Diff_+^r([0,1])$ such that $f(x)>x$ for all $x \in (0,1)$, we will call {\em simple fundamental domain} of $F=f^{\otimes d}$ any set $\cD\subset[0,1]^d$ of the form $[0,f(p)]^d\setminus[0,p]^d$ with $p\in(0,1)$.

\begin{prop}[Localization]
\label{p:local}
Let $r\in[1,+\infty)$, $d\in\N^*$ and $c:G=\Diff_+^r([0,1])\to \mathbb{B}=L^1([0,1]^d)$ be such that $c(f\circ g)=(g^{\otimes d})^*c(f)+c(g)$ (i.e. $c$ is a cocycle for the natural isometric action of $G$ on $\mathbb{B}$). Then for every $f\in\Diff_+^r([0,1])$ such that $f(x)>x$ on $(0,1)$ and every simple fundamental domain $\cD\subset [0,1]^d$ of $F=f^{\otimes d}$, 
\begin{equation}
\label{e:local}
\drift_c(f)=\lim_{N\to+\infty}\|c(f^{2N})\|_{L^1(F^{-N}(\cD))}
\end{equation}
\end{prop}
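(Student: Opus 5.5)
The plan is to expand $c(f^n)$ by the cocycle relation into a sum of pulled-back copies of $c(f)$, and to cut everything along the partition of $[0,1]^d$ into the images $F^j(\cD)$. The identity \eqref{e:local} then becomes a statement about truncations of one fixed absolutely convergent series in $L^1(\cD)$.

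\textbf{Step 1: partition and expansion.} Write $\cD=[0,f(p)]^d\setminus[0,p]^d$. Then $F^j(\cD)=[0,f^{j+1}(p)]^d\setminus[0,f^j(p)]^d$. Since $f(x)>x$ on $(0,1)$, we have $f^j(p)\to 0$ as $j\to-\infty$ and $f^j(p)\to1$ as $j\to+\infty$. Hence the sets $F^j(\cD)$, $j\in\Z$, are pairwise disjoint and cover $[0,1]^d$ up to a null set. Iterating the cocycle relation gives
$$c(f^n)=\sum_{k=0}^{n-1}(F^k)^*c(f).$$
The operator $(F^k)^*$ maps functions supported on $F^{i}(\cD)$ to functions supported on $F^{i-k}(\cD)$, isometrically in $L^1$. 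Set $u_i:=(F^i)^*\big(c(f)\mathbb 1_{F^i(\cD)}\big)\in L^1(\cD)$. Then for every $j\in\Z$ and $n\ge1$,
$$\|c(f^n)\|_{L^1(F^j(\cD))}=\Big\|\sum_{i=j}^{j+n-1}u_i\Big\|_{L^1(\cD)}=:\|w_{[j,j+n)}\|_{L^1(\cD)}.$$
Moreover $\sum_{i\in\Z}\|u_i\|_{L^1(\cD)}=\|c(f)\|_{L^1}<\infty$. So the series $w_\Z:=\sum_{i\in\Z}u_i$ converges in $L^1(\cD)$.

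\textbf{Step 2: the right-hand side of \eqref{e:local}.} Taking $j=-N$ and $n=2N$ gives $\|c(f^{2N})\|_{L^1(F^{-N}(\cD))}=\|w_{[-N,N)}\|_{L^1(\cD)}$. This converges to $\|w_\Z\|_{L^1(\cD)}$, because the difference is bounded by the tail $\sum_{i\notin[-N,N)}\|u_i\|$. So the limit in \eqref{e:local} exists.

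\textbf{Step 3: the drift.} Summing over the partition gives $\|c(f^n)\|_{L^1}=\sum_{j\in\Z}\|w_{[j,j+n)}\|_{L^1(\cD)}$. Fix $\eps>0$ and choose $M$ with $\sum_{|i|>M}\|u_i\|<\eps$. We then split the sum over $j$ into three kinds of windows $[j,j+n)$.
\begin{enumerate}
\item Windows containing $[-M,M]$. There are at most $n$ of them, and each has norm within $\eps$ of $\|w_\Z\|$.
\item Windows meeting $[-M,M]$ without containing it. There are at most $2(2M+1)$ of them, and each has norm at most $\|c(f)\|_{L^1}$.
\item Windows disjoint from $[-M,M]$. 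Their total contribution is at most $\sum_j\sum_{i\in[j,j+n),|i|>M}\|u_i\|\le n\eps$, since each index $i$ belongs to exactly $n$ windows.
\end{enumerate}
The number of windows in the first class is $n-2M$ once $n>2M$. Putting the three classes together,
$$\big|\|c(f^n)\|_{L^1}-n\|w_\Z\|\big|\le 2n\eps+2M\|w_\Z\|+C_M .$$
Dividing by $n$ and letting $n\to\infty$, then $\eps\to0$, gives $\drift_c(f)=\|w_\Z\|_{L^1(\cD)}$. This equals the limit found in Step 2.

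The only delicate point is the bookkeeping in Step 3. The windows lying entirely in the tails are infinitely many, so they cannot be bounded one by one. They must be controlled by the double-counting bound, in which each $u_i$ is counted exactly $n$ times. Everything else follows from integrability of $c(f)$ and the fact that $(F^k)^*$ is an isometry.
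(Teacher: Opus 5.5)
Your proof is correct. The folding identity $\|c(f^n)\|_{L^1(F^j(\cD))}=\|\sum_{i=j}^{j+n-1}u_i\|_{L^1(\cD)}$ follows from $(F^j)^*(F^k)^*=(F^{j+k})^*$. The three-class bookkeeping is sound, including the double-counting bound for the infinitely many tail windows.

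Your route is organized somewhat differently from the paper's.

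\textbf{The paper's route.} It proves a finite estimate, Lemma \ref{l:local}: for $n>2N$, the quantity $\bigl|\|c(f^n)\|-(n-2N)\|c(f^{2N})\|_{L^1(F^{-N}(\cD))}\bigr|$ is at most $n$ times the mass of $c(f)$ outside $\bigcup_{k\in[-N,N[}F^k(\cD)$ plus $(4N-1)$ times the mass inside. It obtains this by splitting the domains $F^\ell(\cD)$ into five cases, which essentially match your three classes of windows. In the central case it compares each window directly with the reference window $F^{-N}(\cD)$, using the cocycle identity for $f^n=f^{n+\ell-N}\circ f^{2N}\circ f^{-\ell-N}$. It then lets $n\to\infty$ and afterwards $N\to\infty$.

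\textbf{Your route.} You transport everything to the single domain $\cD$. You then compare every window with the full sum of one absolutely convergent series in $L^1(\cD)$, rather than with the reference window. This makes the bookkeeping more transparent. As a by-product it gives the closed formula
$$\drift_c(f)=\Bigl\|\sum_{i\in\Z}(F^i)^*\bigl(c(f)\mathbb{1}_{F^i(\cD)}\bigr)\Bigr\|_{L^1(\cD)},$$
which identifies the common value of both sides of \eqref{e:local}.

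\textbf{What each buys.} The paper's formulation gives an explicit error term at fixed $N$ and $n$, stated in the original coordinates. That is exactly what is reused in the proof of Proposition \ref{c:continu}. Your argument yields comparable quantitative bounds, but they would need to be restated in that form for that application.
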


The proof is a straightforward generalization of the proof of Proposition 5.1 in \cite{EN21}. It relies on the following lemma.

\begin{lem}
\label{l:local}
Under the assumptions and notations of Proposition \ref{p:local}, if 
$n>2N$, 
the expression 
\begin{equation*}
\label{e:local2}
\left| \;\|c(f^n)\|_{L^1([0,1]^d)} - (n-2N) \|c(f^{2N})\|_{L^1(F^{-N}(\cD))}\;\right|
\end{equation*}
is bounded from above by 
\begin{equation*}
\label{e:local3}
n \times \;\|c(f)\|_{L^1(\cup_{k\notin[-N,N[}F^{k}(\cD))} + (4N-1)\times \|c(f)\|_{L^1(\cup_{k\in[-N,N[}F^{k}(\cD))}.
\end{equation*}
\end{lem}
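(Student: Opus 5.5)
The plan is to cut $[0,1]^d$ into the translates of the simple fundamental domain and compare $c(f^n)$ cell by cell with a translate of $c(f^{2N})$. Write $\cD=[0,f(p)]^d\setminus[0,p]^d$. Since $f(x)>x$ on $(0,1)$, the orbit $f^k(p)$ tends to $1$ as $k\to+\infty$ and to $0$ as $k\to-\infty$. Hence the sets
$$F^k(\cD)=[0,f^{k+1}(p)]^d\setminus[0,f^k(p)]^d,\qquad k\in\Z,$$
are pairwise disjoint, and their union is $[0,1]^d$ up to a null set (the points with $\max_i x_i\in\{0,1\}$). So every $L^1$ norm on $[0,1]^d$ is the sum over $k$ of the norms on the cells $F^k(\cD)$.

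Iterating the cocycle relation gives, for $0\le a\le b$,
$$c(f^{b})=c(f^{a})+(F^{a})^*c(f^{b-a}),\qquad c(f^m)=\sum_{j=0}^{m-1}(F^j)^*c(f).$$
Each $(F^j)^*$ is an isometry that carries the norm on a cell to the norm on the shifted cell:
$$\|(F^j)^*u\|_{L^1(F^k\cD)}=\|u\|_{L^1(F^{k+j}\cD)}.$$
Consequently $\|c(f^m)\|_{L^1(F^k\cD)}\le\sum_{j=0}^{m-1}\|c(f)\|_{L^1(F^{k+j}\cD)}$.

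\emph{Main range of cells.} Consider the $n-2N$ indices $k$ with $N-n<k\le -N-1$, or any choice of $n-2N$ consecutive admissible values. For such $k$, set $a=-N-k\ge 0$, so that $F^a(F^k\cD)=F^{-N}\cD$ and $a+2N\le n$. Splitting $f^n=f^{n-a-2N}\circ f^{2N}\circ f^{a}$ gives
$$c(f^n)=c(f^a)+(F^a)^*c(f^{2N})+(F^{a+2N})^*c(f^{n-a-2N}).$$
Restricted to $F^k\cD$, the middle term has norm exactly $\|c(f^{2N})\|_{L^1(F^{-N}\cD)}$. The first term is bounded by $\sum_{j<a}\|c(f)\|_{L^1(F^{k+j}\cD)}$, whose cells all have index $<-N$. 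The last term is bounded by a sum over cells of index $\ge N$. By the triangle inequality, $\|c(f^n)\|_{L^1(F^k\cD)}$ differs from $\|c(f^{2N})\|_{L^1(F^{-N}\cD)}$ by at most these two sums, and they involve only cells $F^m\cD$ with $m\notin[-N,N[$.

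\emph{Remaining cells.} For $k$ outside the main range we simply use
$$\|c(f^n)\|_{L^1(F^k\cD)}\le\sum_{j=0}^{n-1}\|c(f)\|_{L^1(F^{k+j}\cD)}.$$

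The last step is the counting, which is the only real (and bookkeeping) obstacle. Sum all these inequalities over $k\in\Z$ and track how many times each cell $F^m\cD$ appears. Any fixed $m$ arises from at most $n$ values of $k$, since $k=m-j$ with $j\in[0,n)$. This gives the term $n\,\|c(f)\|_{L^1(\bigcup_{m\notin[-N,N[}F^m\cD)}$. For $m\in[-N,N[$, the main-range $k$ never produce $m$, so only the $k\in[m-n+1,m]$ outside the main range contribute. There are $n-(n-2N)=2N$ such values, up to boundary effects at the ends of the range, which I will check carefully. In any case their number is $\le 4N-1$, giving the term $(4N-1)\|c(f)\|_{L^1(\bigcup_{m\in[-N,N[}F^m\cD)}$. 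Combining the main-range comparison with these bounds yields the stated estimate for $\bigl|\|c(f^n)\|_{L^1}-(n-2N)\|c(f^{2N})\|_{L^1(F^{-N}\cD)}\bigr|$.
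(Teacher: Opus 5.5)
Your proof is correct and is essentially the paper's argument: the same decomposition into the cells $F^k(\cD)$, the same splitting $f^n=f^{n-a-2N}\circ f^{2N}\circ f^{a}$ on $n-2N$ main cells (the paper's $a=-\ell-N$), and the crude bound elsewhere, with your per-cell count replacing the paper's sorting of the remaining indices into four types. Two small fixes: the range you wrote, $N-n<k\le -N-1$, has only $n-2N-1$ elements, so take $N-n<k\le -N$ as the paper does; and your pending boundary check does go through, because the whole admissible range $[N-n,-N]$ lies inside $[m-n+1,m]$ for every $m\in[-N,N[$, so each middle cell is counted exactly $2N\le 4N-1$ times (the paper gets $4N-1$ only because it bounds each partial middle sum by the full middle norm).
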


\medskip

\begin{proof}[Proof of Proposition \ref{p:local} from Lemma \ref{l:local}.] 
If we divide by $n$ each side of the inequality of the preceding lemma and let $n$ go to infinity, we obtain
\begin{small}
$$
\big| \drift_\ell (f) - \|c(f^{2N})\|_{L^1(F^{-N}(\cD))} \big| 
\leq \|c(f)\|_{L^1(\cup_{k\notin[-N,N[}F^{k}(\cD))}.
$$
\end{small}
Letting now $N$ go to infinity, the term of the right-hand side above converges to $0$, thus yielding the announced equality \eqref{e:local}. 
\end{proof}

\begin{proof}[Proof of Lemma \ref{l:local}]
For each $n \geq 1$ we have 
\begin{equation}
\label{suma-total}
\|c(f^n)\|_{L^1([0,1]^d)}  = \sum_{\ell = -\infty}^{+\infty} \|c(f^n)\|_{L^1(F^\ell(\cD))}  .
\end{equation}
There are five types of indices $\ell$ to analyze:  
\begin{enumerate}

\item If $\ell \leq -n - N$, then by the cocycle relation and changes of variable, we have 
$$  \|c(f^n)\|_{L^1(F^\ell(\cD))}  \leq \sum_{k=0}^{n-1}  \|c(f)\|_{L^1(F^{k+\ell}(\cD))} ,$$
and all the domains involved in the right-hand side, namely $F^{\ell}(\cD), \ldots, F^{\ell+n-1}(\cD)$,
are contained in $\cup_{k<-N}F^{k}(\cD)$, since $\, \ell + n   -1\leq -N-1 $. 

\item If $-n-N < \ell \le -n + N$, then $\|c(f^n)\|_{L^1(F^\ell(\cD))} $ is smaller than or equal to   
$$\sum_{k=0}^{-\ell - N -1} \|c(f)\|_{L^1(F^{k+\ell}(\cD))} 
+ \sum_{k = -\ell - N}^{n-1} \|c(f)\|_{L^1(F^{k+\ell}(\cD))} .$$
The domains involved in the first sum above are all contained in $\cup_{k<-N}F^{k}(\cD)$. Moreover, since $\ell + n - 1 \leq N-1$, the second sum is bounded from above by $\|c(f)\|_{L^1(\cup_{k\in[-N,N[}F^{k}(\cD))}$.
Note that this case arises for $2N$ possible values of $\ell$.

\item If $-n + N < \ell \leq -N$, then for $(x,y) \in F^\ell(\cD)$ we have $F^{-\ell-N}(x,y) \in F^{-N}(\cD)$ and $F^{-\ell+N}(x,y) \in F^N(\cD)$. Hence, applying the cocycle relation to $f^n = f^{n+\ell-N}\circ f^{2N}\circ f^{-\ell-N}$, we get 
$$
c(f^n) = (F^{-\ell+N})^*c(f^{n+\ell-N}) + (F^{-\ell-N})^*c(f^{2N})+c(f^{-\ell-N}).
$$
We conclude that the value of 
$$
\big| \|c(f^n)\|_{L^1(F^\ell(\cD))}  - \|c(f^{2N})\|_{L^1(F^{-N}(\cD))}  \big|
$$
is bounded from above by 
$$
\|c(f^{-\ell-N})\|_{L^1(F^\ell(\cD))} + \|c(f^{n+\ell-N})\|_{L^1(F^N(\cD))} .
$$
In its turn, this is bounded from above by 
$$
\sum_{k=0}^{-\ell - N - 1} \|c(f)\|_{L^1(F^{k+\ell}(\cD))}  + 
\sum_{k=0}^{ n + \ell - N - 1} \|c(f)\|_{L^1(F^{k+N}(\cD))} ,
$$
with the domains involved in the first (resp. second) sum being contained in $\cup_{k<-N}F^{k}(\cD)$ (resp. $\cup_{k\ge N}F^{k}(\cD)$). Notice that this case arises for $\, n \!-\! 2N \,$ values of $\ell$.

\item If $-N < \ell \leq N-1$, then $ \|c(f^n)\|_{L^1(F^\ell(\cD))}$ is bounded from above by 
$$
\sum_{k=0}^{N-\ell-1}  \|c(f)\|_{L^1(F^{k+\ell}(\cD))} 
+ \sum_{k=N-\ell}^{n-1} \|c(f)\|_{L^1(F^{k+\ell}(\cD))} ,
$$
which in its turn is smaller than or equal to 
$$
 \|c(f)\|_{L^1(\cup_{-N<k<N}F^{k}(\cD) )}+ 
\sum_{k=N-\ell}^{n-1}\|c(f)\|_{L^1(F^{k+\ell}(\cD))}  ,
$$
with the last sum involving only domains contained in $\cup_{k\ge N}F^{k}(\cD)$. Notice that this case arises for $\, 2N \! - \! 1 \,$ different values of $\ell$.

\item If $N \leq \ell$, then 
$$ 
\|c(f^n)\|_{L^1(F^\ell(\cD))} \leq \sum_{k=0}^{n-1} \|c(f)\|_{L^1(F^{k+\ell}(\cD))},
$$
and all the domains involved in the last sum are contained in $\cup_{k\ge N}F^{k}(\cD)$. 

\end{enumerate}

We come back to equality (\ref{suma-total}). Notice that the $L^1$ norm of $c(f)$ on each domain of the form $F^{\ell} (\cD)$ contained in $\cup_{k\notin[-N,N[}F^{k}(\cD)$ appears precisely $n$ times along the preceding estimates. Putting all of this together, one easily obtains the desired estimate just using the triangle inequality. 
\end{proof}

\subsection{Continuity and invariance under ``almost-conjugacy''}
\label{ss:continuity}

The localization formula from the previous paragraph allows us to prove an important continuity result 
for the drift. Again, this is a straightforward generalization of Theorem C in \cite{EN21}.

\begin{prop}[Continuity]
\label{c:continu}
Given $r\in[1,+\infty)$ and $d\in\N^*$, let $c:G=\Diff_+^r([0,1])\to \mathbb{B}=L^1([0,1]^d)$ be a continuous cocycle, that is, $c(f\circ g)=(g^{\otimes d})^*c(f)+c(g)$. Then the restriction of $\drift_c$ to $\Diff_+^{r,\Delta}([0,1])$ is continuous.
\end{prop}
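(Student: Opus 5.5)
The plan is to combine the localization formula (Proposition \ref{p:local}) with the quantitative estimate of Lemma \ref{l:local}. Fix $f\in\Diff^{r,\Delta}_+([0,1])$; up to replacing $f$ by $f^{-1}$ (which does not change the drift, and inversion is continuous in $\Diff^r_+$), we may assume $f(x)>x$ on $(0,1)$. Any $g$ close enough to $f$ in $C^r$ (hence in $C^1$) also satisfies $g(x)>x$ on $(0,1)$. This is the delicate point: it holds away from the endpoints, but near $0$ and $1$ it may fail if $f$ is parabolic there. So restrict attention to $g\in\Diff^{r,\Delta}_+$ near $f$. Then either $g>\id$ on $(0,1)$, which is automatic on compact subsets of $(0,1)$ and then on all of $(0,1)$ since $g$ has no interior fixed point, or we pass to $g^{-1}$. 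Fix $p\in(0,1)$, and use the simple fundamental domains $\cD_f=[0,f(p)]^d\setminus[0,p]^d$ and $\cD_g=[0,g(p)]^d\setminus[0,p]^d$, with $F=f^{\otimes d}$ and $G_g=g^{\otimes d}$.

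Lemma \ref{l:local} gives, for each $N$,
\[
\big|\drift_c(g)-\|c(g^{2N})\|_{L^1(G_g^{-N}(\cD_g))}\big|\le \|c(g)\|_{L^1(\bigcup_{k\notin[-N,N[}G_g^k(\cD_g))},
\]
and the same for $f$. The argument has three steps. First, for fixed $N$, the middle term depends continuously on $g$. Indeed, $g^{2N}\to f^{2N}$ in $C^r$, so $c(g^{2N})\to c(f^{2N})$ in $L^1$ by continuity of $c$. Moreover, $G_g^{-N}(\cD_g)$ converges to $F^{-N}(\cD_f)$ in the sense that the symmetric difference has Lebesgue measure tending to $0$; this uses only finitely many iterates and $C^0$ convergence. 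Combined with the uniform integrability of the converging $L^1$ family, this gives convergence of the restricted norms. Second, we need the error term to be small uniformly for $g$ near $f$, once $N$ is large. Write $W_g=\bigcup_{k\notin[-N,N[}G_g^k(\cD_g)$; it equals $[0,1]^d$ minus the box-shell $[0,g^{N}(p)]^d\setminus[0,g^{-N}(p)]^d$. This set is contained in a neighborhood of the set of points having some coordinate close to $0$ or to $1$. We then bound
\[
\|c(g)\|_{L^1(W_g)}\le \|c(g)-c(f)\|_{L^1}+\|c(f)\|_{L^1(W_g)}.
\]
The first term is small by continuity of $c$. The second term is small provided that $W_g\subset W_f'$, where $W_f'$ is a slightly enlarged version of $W_f$ obtained by using $N-1$ instead of $N$. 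This inclusion holds because $g^{\pm N}(p)\to f^{\pm N}(p)$. Since $\|c(f)\|_{L^1(W_f')}\to 0$ as $N\to\infty$ (the sets decrease to the null set where some coordinate is $0$ or $1$), the error term is small.

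Third, we conclude with an $\varepsilon/3$ argument. Choose $N$ so that the $f$-error and the uniform $g$-error are both below $\varepsilon$. Then, for $g$ close enough, the middle terms differ by less than $\varepsilon$, which gives $|\drift_c(g)-\drift_c(f)|<3\varepsilon$.

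The main obstacle is the sign issue near the boundary. A nearby $g$ without interior fixed points could a priori satisfy $g<\id$ while $f>\id$? This cannot happen, because $g$ close to $f$ in $C^0$ forces $g(q)>q$ at any fixed interior point $q$, and hence $g>\id$ everywhere on $(0,1)$. This is exactly why we restrict to $\Diff^{r,\Delta}_+$: without that restriction, $g$ could have interior fixed points near $0$ or $1$, the fundamental domains would break down, and continuity would fail (cf.\ \S\ref{ss:non-cont}).
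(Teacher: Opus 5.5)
Your proposal is correct and follows essentially the paper's argument. Both rest on the estimate of Lemma \ref{l:local}: you use its $n\to\infty$ form from the proof of Proposition \ref{p:local}, whereas the paper keeps $n$ finite and carries the extra $\tfrac{4N}{n}$, $\tfrac{2N}{n}$ terms. Both then fix $N$ so that the tail regions near the boundary carry little $L^1$-mass of $c(f)$, and finally take $g$ close enough that both the $g$-tail error and the localized term $\|c(g^{2N})\|_{L^1(G^{-N}(\cD_g))}$ are controlled. Your argument that a nearby $g\in\Diff^{r,\Delta}_+([0,1])$ satisfies $g>\id$ on $(0,1)$ (one interior point suffices, since $g-\id$ has constant sign) makes explicit a step the paper simply asserts.
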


Before giving the proof, let us state a direct corollary of this proposition and the invariance of the drift under conjugacy.

\begin{cor}[Invariance]
\label{c:invariance}
Under the assumptions of the previous statement, the restriction of $\drift_c$ to $\Diff_+^{r,\Delta}([0,1])$ is invariant under almost-conjugacy in $\Diff_+^{r}([0,1])$ in the following sense: given $f,g\in \Diff_+^{r,\Delta}([0,1])$, if there exists a sequence $(\varphi_n)$ in $\Diff_+^{r}([0,1])$ such that $\varphi_n g \varphi_n^{-1}$ converges to $f$ in $C^r$ topology, then $\drift_c(f)=\drift_c(g)$.
\end{cor}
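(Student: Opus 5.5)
The corollary should follow by combining Proposition \ref{c:continu} with the conjugacy invariance of drifts recalled in \S\ref{ss:gen}. Let $f,g\in\Diff_+^{r,\Delta}([0,1])$ and let $(\varphi_n)$ be a sequence in $\Diff_+^r([0,1])$ with $g_n:=\varphi_n g\varphi_n^{-1}\to f$ in the $C^r$ topology. The plan has three steps.

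First, I will check that each $g_n$ belongs to $\Diff_+^{r,\Delta}([0,1])$. Conjugation by the homeomorphism $\varphi_n$ maps the fixed point set of $g$ onto that of $g_n$. Since $\varphi_n$ fixes $0$ and $1$ and preserves $(0,1)$, $g_n$ has no interior fixed point.

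Second, I will show that $\drift_c(g_n)=\drift_c(g)$ for every $n$. Writing $\psi=\varphi_n$, the cocycle relation applied to $\psi g^k\psi^{-1}$ gives
$$c(\psi g^k\psi^{-1})=\big((\psi^{-1})^{\otimes d}\big)^*\big(c(\psi)+(\psi^{\otimes d})^*c(g^k)\big)+c(\psi^{-1}).$$
Because the action is isometric, this yields
$$\big|\|c(g_n^k)\|_{\mathbb{B}}-\|c(g^k)\|_{\mathbb{B}}\big|\le \|c(\psi)\|_{\mathbb{B}}+\|c(\psi^{-1})\|_{\mathbb{B}}.$$
The right-hand side is bounded independently of $k$, so dividing by $k$ and letting $k\to\infty$ gives equality of the drifts. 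This is the conjugacy invariance already stated in \S\ref{ss:gen}, restricted to the group $G=\Diff_+^r([0,1])$ on which $c$ is defined.

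Third, since $f\in\Diff_+^{r,\Delta}([0,1])$ and $g_n\to f$ in $C^r$ with all $g_n$ in $\Diff_+^{r,\Delta}([0,1])$, Proposition \ref{c:continu} gives $\drift_c(g_n)\to\drift_c(f)$. The sequence $(\drift_c(g_n))$ is constant, equal to $\drift_c(g)$, so its limit is $\drift_c(g)$. Hence $\drift_c(f)=\drift_c(g)$.

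All the analytic difficulty is contained in Proposition \ref{c:continu}. The only point that needs care here is that continuity is used only on $\Diff_+^{r,\Delta}([0,1])$, which is not closed. It is therefore essential that both the limit $f$ and all approximants $g_n$ lie in that set: the first is a hypothesis, and the second is the first step above.
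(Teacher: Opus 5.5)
Your proof is correct and follows the paper's route exactly: the corollary is deduced from the continuity of $\drift_c$ on $\Diff_+^{r,\Delta}([0,1])$ (Proposition~\ref{c:continu}) together with conjugacy invariance, and your check that each $\varphi_n g\varphi_n^{-1}$ has no interior fixed point is the detail the paper leaves implicit (it is needed because $\Diff_+^{r,\Delta}([0,1])$ is not open, rather than because it is not closed). One slip in Step 2: with the convention $c(f\circ g)=(g^{\otimes d})^*c(f)+c(g)$ one has $c(\psi g^k)=((g^{k})^{\otimes d})^*c(\psi)+c(g^k)$, not $c(\psi)+(\psi^{\otimes d})^*c(g^k)$ (your displayed right-hand side actually collapses to $c(g^k)$); since only the isometric character of the action is used, the bound $\big|\|c(g_n^k)\|_{\mathbb{B}}-\|c(g^k)\|_{\mathbb{B}}\big|\le\|c(\psi)\|_{\mathbb{B}}+\|c(\psi^{-1})\|_{\mathbb{B}}$ and the conclusion are unaffected.
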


\begin{proof}[Proof of Proposition \ref{c:continu}] 
In all that follows, when we simply write $\|\cdot\|$, we mean $\|\cdot\|_{L^1([0,1]^d)}$; otherwise we specify the domain of integration. Given $f \in \mathrm{Diff}_+^{r,\Delta} ([0,1])$, we want to prove the continuity of $\drift_c$ at $f$. Since $\drift_c$ is invariant under taking the inverse, we may assume that $f(x) \!>\! x$ for all $x \in (0,1)$. Fix $p \in (0,1)$. Lemma \ref{l:local} implies that, for $N\in\N$ and $n > 2N$, the value of 
\begin{equation}
\label{a-estimar-dist}
\left| \frac{\|c(f^n)\|}{n} -\|c(f^{2N})\|_{L^1(F^{-N}(\cD))} \right|
\end{equation}
is smaller than or equal to 
$$ 
\|c(f)\|_{L^1(\cup_{k\notin[-N,N[}F^{k}(\cD))}+ 
\tfrac{4N}{n} \|c(f)\| + \tfrac{2N}{n} \|c(f^{2N})\|.
$$
Given $\varepsilon > 0$, let $\delta > 0$ be such that $\|c(f)\|_{L^1([0,1]^d\setminus[\delta,1-\delta]^d)}< \frac{\varepsilon}{8}$. Let $\cD_f$ be the polygon $([0,f(p)]\times [p,f(p)])\cup ([p,f(p)]\times[0,f(p)])$, which is a fundamental domain for $F$.
Fix $N$ large enough so that $\cup_{k\notin[-N,N[}F^k(\cD_f)$ lies in the neighborhood $[0,1]^d\setminus[\delta,1-\delta]^d$ of the boundary of $[0,1]^d$. Next, consider any diffeomorphism $g \in \mathrm{Diff}_+^{r,\Delta} ([0,1])$ that is close enough to $f$ in the $C^{r}$ topology so that the following conditions are satisfied:
\begin{itemize}

\item $g(x) > x$ for all $x \in (0,1)$,

\item $\cup_{k\notin[-N,N[}G^k(\cD)\subset [0,1]^2\setminus[\delta,1-\delta]^2$, where $G=g^{\otimes d}$,

\item $\|c(g)\|_{L^1([0,1]^d\setminus[\delta,1-\delta]^d)}< \frac{\varepsilon}{8}$,

\item $\|c(g)\| < \|c(f)\|+1 \,$ and $\, \|c(g^N)\| < \|c(f^N)\| +1$, 

\item $ \big| \|c(f^{2N})\|_{L^1(F^{-N}(\cD_f))} - \|c(g^{2N})\|_{L^1(G^{-N}(\cD_g))}
\big| < \frac{\varepsilon}{4}$, where $\cD_g$ denotes the polygon 
$$([0,g(p)]\times [p,g(p)])\cup ([p,g(p)]\times[0,g(p)]).$$
\end{itemize}

\noindent Now consider any large enough integer $n$ such that $n > 2N$ and 
$$
\tfrac{4N}{n} \big( \|c(f)\|+ 1 \big) + \tfrac{2N}{n} \big( \|c(f^{2N})\| + 1 \big) < \tfrac{\varepsilon}{8}.
$$ 
Since the estimate given for expression (\ref{a-estimar-dist}) holds when replacing $f$ by $g$, we easily conclude from the previous conditions that 
$$
\left| \frac{\|c(f^n)\|}{n} - \frac{\|c(g^n)\|}{n}\right| < \varepsilon.
$$
Since this holds for any large enough $n$, passing to the limit we conclude that 
$$
\big| \drift_c(f)-\drift_c(g) \big| \leq \varepsilon.
$$
This shows the continuity of $\drift_c$ at $f$.
\end{proof}

\subsection{Sufficient condition for zero drift}
This section is devoted to the proof of Theorem \ref{t:vanish-drift-gen}, which we next recall:

\begin{thmA} 
Let $r\ge2$ and $c:\Diff^r_+([0,1])\to L^1([0,1]^d)$ be a continuous cocycle for the isometric action $f\in\Diff^r_+([0,1])\mapsto (f^{\otimes d})^*\in \mathrm{Isom}(L^1([0,1]^d))$. If $f\in\Diff^{r,\Delta}_+([0,1])$ has vanishing asymptotic variation, then $\drift_c(f)=0$.
\end{thmA}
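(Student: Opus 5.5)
We reduce to the case where $f$ embeds in a $C^1$ flow and then use the continuity of the drift on $\Diff^{r,\Delta}_+([0,1])$ (Proposition \ref{c:continu}) and its invariance under almost-conjugacy (Corollary \ref{c:invariance}). Replacing $f$ by $f^{-1}$ if necessary, assume $f(x)>x$ on $(0,1)$. Vanishing asymptotic variation first gives parabolic endpoints: by Lemma \ref{l:T0} and the inequality \eqref{e:ADMIN}, or more directly since a hyperbolic endpoint already forces $V_\infty(f)>0$, we get $Df(0)=Df(1)=1$. Then \eqref{e:ADMIN} (equivalently Theorem \ref{t:mu}, since $\drift_{\nu_i}(f)=0$) yields $\var\log DM_f=0$. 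So the Mather invariant of $f$ is trivial: the left and right Szekeres vector fields $X$ and $Y$ coincide, and $f$ is the time-one map of a $C^1$ vector field $X$ on $[0,1]$, vanishing only at the endpoints, which is $C^{r-1}$ on $(0,1)$.

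\textbf{Step 1 (small translation numbers have small cocycle).} For $t$ small, the time-$t$ map $f^t$ of $X$ is $C^1$-close to the identity. We would like $f^t\to\id$ in $C^r$, so that continuity of $c$ at $\id$ gives $\|c(f^t)\|\to0$. Then for $f=(f^{1/m})^m$ we would get
$$\drift_c(f)=m\,\drift_c(f^{1/m})\le m\,\|c(f^{1/m})\|_{L^1}.$$
The left equality holds because drifts are homogeneous along cyclic subgroups. This bound is not obviously small, so this route is too crude.

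\textbf{Step 2 (localization).} Instead, use Proposition \ref{p:local}:
$$\drift_c(f)=\lim_N\|c(f^{2N})\|_{L^1(F^{-N}(\cD))}.$$
When $f$ is the time-one map of a flow, conjugate by the flow-box coordinates $\psi_X$ and write $f^{2N}$ on $F^{-N}(\cD)$ in terms of the flow. The better route is via conjugacy invariance. Let $h_\lambda$ be a $C^r$ diffeomorphism conjugating the flow of $X$ to the flow of $\lambda X$ only in the middle region, that is, a time reparametrization. Equivalently, $f$ is almost-conjugate to the time-$\lambda$ maps restricted to fundamental domains.

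The cleanest version is as follows. Show that $f$ is $C^r$-almost-conjugate to diffeomorphisms $g$ that coincide with $\id$ outside a small neighbourhood of the boundary, or that are arbitrarily $C^r$-close to $\id$. More precisely, $f$ is $C^r$-almost-reducible in the weak sense needed: there are $\varphi_n$ with $\varphi_n f\varphi_n^{-1}\to f_n$ where $\drift_c(f_n)\to0$. Concretely, take $\varphi_n$ the Szekeres-type conjugacy sending $X$ to $\frac1n X$ in a compact part of $(0,1)$, interpolated near the ends. Since $\varphi_n f\varphi_n^{-1}$ is then $C^r$-close to a time-$\frac1n$-map composed with a localized perturbation, each factor has small $c$ norm, and the bound \eqref{e:upper} on $\drift_c$ gives smallness. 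The difficulty is that the endpoints must remain fixed and the germs there cannot be changed by conjugacy, so the conjugated map is not close to $\id$ near $0$ and $1$.

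\textbf{Main obstacle: the boundary germs.} To handle them, use the localization formula. Only $c(f^{2N})$ on $F^{-N}(\cD)$ matters, and this quantity depends on $f$ only on a compact part $[f^{-N}(p),f^{N}(p)]$, modulo the cocycle contributions near the corners of $\cD$, which are controlled by $\|c(f)\|$ near the boundary of $[0,1]^d$. So we approximate $f$, via Proposition \ref{c:continu}, by diffeomorphisms $g$ without interior fixed point that coincide with time-one maps of $C^\infty$ vector fields near $0$ and $1$. Applying Lemma \ref{l:cancel} kills their Mather invariant while keeping $g$ $C^r$-close to $f$, and closeness holds precisely because $\var\log DM_f=0$ makes $m_f^{p,p}$ close to $\id$. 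The approximants $g$ are then time-one maps of $C^{r-1}$ (or smooth) vector fields $Z$ on $[0,1]$. For these, conjugating by the diffeomorphism taking $Z$ to $\epsilon Z$ on a large compact region exhibits $g$ as almost-conjugate to maps whose localized drift formula involves only $c$ of near-identity maps, so $\drift_c(g)=0$. Continuity (Proposition \ref{c:continu}) then gives $\drift_c(f)=0$. The delicate step is the regularity and closeness of the vector-field approximations in $C^r$, given the loss of one derivative for $X$.
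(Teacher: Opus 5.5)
Your last paragraph contains the approximation half of the paper's argument. You approximate $f$ in $C^r$ by maps without interior fixed points that are smoothly flowable near $0$ and $1$ and agree with $f$ on $[f^{-3}(p),p]$ (Lemma \ref{l:pre-lissage}). You kill their Mather invariant with the localized perturbation of Lemma \ref{l:cancel}. You then pass to the limit using the continuity of $\drift_c$ on $\Diff^{r,\Delta}_+([0,1])$ (Proposition \ref{c:continu}). (That perturbation is $C^r$-small not because $m_f^{p,p}$ is close to $\id$, since it \emph{is} $\id$, but because by \cite{Yoc} the Mather invariants of the approximants converge in $C^r$ to the trivial one.)

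The genuine gap is the other half. You never establish $\drift_c(g)=0$ for the flowable approximants $g$, and the mechanism you sketch cannot give it. Theorem A concerns an arbitrary continuous cocycle with no locality assumption, so $c(g^{2N})$ on $F^{-N}(\cD)$ is not determined by $g$ on a compact part of $(0,1)$. For $d\ge2$ these domains even reach the coordinate hyperplanes through $0$. Your conjugate of $g$ equals the time-$\epsilon$ map of $Z$ on a compact region but, as you say, still equals $g$ near the endpoints. It has the same drift as $g$, and continuity of $c$ gives no control on its $c$-value, because it is not $C^r$-close to $\id$ near the boundary. The localization formula, being a limit over domains accumulating at the boundary, does not change this. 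The only general source of vanishing drift here is \eqref{e:upper}: you need conjugates $\varphi g\varphi^{-1}$ converging to $\id$ in $C^r$ on the \emph{whole} interval, so that $\|c(\varphi g\varphi^{-1})\|_{L^1}\to0$.

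The boundary obstacle you name is not real. A parabolic germ cannot be conjugated to the identity, but it can be conjugated arbitrarily $C^r$-close to it. If $\varphi(x)=\lambda x$ near $0$, then $D^k(\varphi_*Z)(y)=\lambda^{1-k}D^kZ(y/\lambda)$. This is small for $k\ge2$ when $\lambda$ is large, and for $k\in\{0,1\}$ because $Z(0)=DZ(0)=0$ and $y/\lambda\le1/\lambda$. You do this at both ends, join the rescaled boundary pieces through a $C^r$-small field, and use Mather's conjugacy criterion with the synchronization condition. This gives the $C^r$-almost-reducibility of time-one maps of $C^r$ parabolic vector fields proved in \cite{EN21} (compare Proposition \ref{l:woITI}), which is exactly what the paper invokes before applying \eqref{e:upper}.

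This also settles the regularity issue you leave open. An approximant $g$ is the time-one map of a $C^1$ field $Z$ that may be only $C^{r-1}$ inside. However, $Z$ coincides near $0$ and $1$ with smooth parabolic fields. Let $W$ be a smooth field without interior zeros equal to $Z$ near the endpoints. Then $\psi_W\circ\psi_Z^{-1}$, based at a point near $0$, is the identity near $0$ and a flow map of $W$ near $1$. It is a $C^r$ diffeomorphism of $[0,1]$ conjugating $g$ to the time-one map of $W$, so no derivative is lost.
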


The last missing ingredient to prove this is the following approximation result. We say for short that a diffeomorphism of the interval is \emph{$C^r$ parabolically flowable} if it is the time-$1$ map of a $C^r$ vector field without hyperbolic zero (a \emph{$C^r$ parabolic vector field}, for short).

\begin{prop}
\label{p:lissage} Let $r\ge2$. If $f\in\Diff^{r,\Delta}_+([0,1])$ has vanishing asymptotic variation (or equivalently: is ``$C^1$ parabolically flowable''), then there exists a sequence of $C^r$ parabolically flowable elements of $\Diff^{r,\Delta}_+([0,1])$ that $C^r$-converges to $f$.
\end{prop}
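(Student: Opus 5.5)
We will proceed as follows. Assume $f(x)>x$ on $(0,1)$; the other case follows by passing to $f^{-1}$. Vanishing asymptotic variation means that $f$ has parabolic endpoints and a trivial Mather invariant, i.e. $X=Y$ is a global $C^1$ vector field $Z$ with $f=\phi_Z^1$. On $(0,1)$ this field is $C^{r-1}$, and near each endpoint it is $C^{r-1}$ unless $f-\id$ is $C^r$-flat there. The difficulty is that $Z$ may fail to be $C^r$, or even $C^{r-1}$, at $0$ or $1$, so $f$ itself need not be $C^r$ parabolically flowable. The plan is to modify $f$ only in small neighbourhoods $[0,\eta]$ and $[1-\eta,1]$ of the endpoints, by gluing in a genuinely $C^r$ flowable germ. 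We will then show that the result is $C^r$-close to $f$ and still has trivial Mather invariant. The Mather invariant will be killed by the localized perturbation of Lemma~\ref{l:cancel}.

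\textbf{Step 1 (endpoint germs).} Near $0$, we look for a $C^r$ parabolic vector field $X_\eta$ whose time-$1$ map $f_\eta$ is $C^r$-close to $f$ on $[0,\eta']$, where $\eta'$ is slightly larger than $\eta$. We will take $X_\eta=\rho_\eta\cdot(f-\id)$ plus a small correction, or more simply $X_\eta=(f-\id)$ flattened at $0$ to order $r$ by a cutoff $\rho_\eta$. By Taylor's formula, $f-\id$ and its first $r$ derivatives are small near $0$, because $Df(0)=1$ and $f$ is $C^r$. The time-$1$ map of a $C^r$-small vector field $W$ equals $\id+W+O(\|W\|_r^2)$ in $C^r$, so $f_\eta$ and $f$ both lie within $o(1)$ of $\id$ in $C^r$ on $[0,\eta']$, uniformly as $\eta\to0$. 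We then define $\tilde f$ to be $f_\eta$ near $0$, $f$ on $[\eta',1-\eta']$, and the analogous construction near $1$. The pieces are glued with a partition of unity in the transition region, applied to $\tilde f-\id$. This gives $\tilde f\to f$ in $C^r$, and $\tilde f$ has no interior fixed point because $f-\id>0$ there and the perturbation stays positive if it is done multiplicatively.

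\textbf{Step 2 (killing the Mather invariant).} The map $\tilde f$ is $C^r$ flowable near each endpoint, with fields $X_\eta$ on $[0,\eta)$ and $Y_\eta$ near $1$. In general, however, its Mather invariant is not trivial. Since $f$ had trivial invariant, the invariant of $\tilde f$ should be $C^r$-close to a rotation. The reason is that the Szekeres fields depend continuously on the map on compact subsets of the interior, and they agree on a fundamental domain near $\eta$ up to a small error. We then apply Lemma~\ref{l:cancel} with $\phi_0\phi_{1/2}=(M^{p,p}_{\tilde f})^{-1}$ chosen $C^r$-close to the identity, using the fragmentation of a near-identity circle diffeomorphism. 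The resulting $g=\tilde f h$ has trivial Mather invariant, so its left and right generating fields coincide. That common field is $C^r$ near the endpoints because it equals $X_\eta$ and $Y_\eta$ there. In the interior it is $C^r$ because it is the pushforward of a $C^r$ field by the $C^r$ conjugacies $\psi$. So $g$ is $C^r$ parabolically flowable, and $g\to f$ as $\eta\to0$, since $h\to\id$ in $C^r$.

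\textbf{Main obstacle.} The hardest point is the continuity claim: that $M_{\tilde f}$ is $C^r$-close to the identity. This requires controlling the left Szekeres field of $\tilde f$ in $C^{r-1}$, even $C^r$, on an interior fundamental domain, while only $C^r$-closeness of $\tilde f$ to $f$ near $0$ is known. To handle this, we will pick the fundamental domain inside $[\eta',1-\eta']$, where $\tilde f=f$. There, the left field of $\tilde f$ is the pushforward of $X_\eta$ by $\tilde f^{n}=f^n$ with $n$ bounded. We will compare it with $Z$, which is the pushforward of $Z$ near $0$, using the estimate $X_\eta-Z\to0$ in $C^{r-1}$ on the region where both fields are defined. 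If this full-regularity estimate fails because $Z$ is only $C^1$ at $0$, the fallback is to first replace $f$ near $0$ by an $f$ with a $C^r$ field via the interpolation techniques of \cite{EBM25}, accepting some loss, and then recover the result by density.
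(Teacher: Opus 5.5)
Your two-step architecture is the paper's: make $f$ flowable near the endpoints, then cancel the Mather invariant with Lemma~\ref{l:cancel}, using that $M_{\tilde f}$ is $C^r$-close to a rotation. However, Step~1 fails as written.

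Parabolicity only gives $D(f-\id)(0)=0$. For $2\le k\le r$, $D^k(f-\id)$ tends to $D^kf(0)$, which need not vanish unless $f$ is $C^r$-flat at $0$ (think of $f(x)=x+x^2$ near $0$). So $f$ is not $C^r$-close to $\id$ near $0$. More importantly, gluing at scale $\eta$ produces factors $\eta^{-k}$ from the derivatives of the cutoff. These are absorbed only if $D^{r-k}(f_\eta-f)=o(\eta^k)$ on $[0,\eta]$, that is, only if the flowable germ has \emph{the same $r$-jet as $f$} at the endpoint. Your candidates do not. The time-$1$ map of $x^2\partial_x$ is $x/(1-x)$, whose $3$-jet differs from that of $x+x^2$. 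A field flattened to order $r$ has a $C^r$-flat time-$1$ map.

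The missing idea is that of Lemma~\ref{l:pre-lissage}. Take a smooth field $Z$, positive on $(0,1)$, whose time-$1$ map $g$ has the same $r$-jets as $f$ at $0$ and $1$; this is possible because every $r$-jet tangent to the identity is the jet of the time-$1$ map of a polynomial field. Set $h_\eps=\rho_{0,\eps}g+(1-\rho_{0,\eps})f$ near $0$, and similarly near $1$. Since $g-f$ is $C^r$-flat, the mean value theorem gives $\max_{[0,\eps]}|D^{r-k}(g-f)|\le \eps^k\max_{[0,\eps]}|D^r(g-f)|=o(\eps^k)$. This exactly compensates $\|D^k\rho_{0,\eps}\|_\infty\sim\eps^{-k}$. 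Positivity of $h_\eps-\id$ is automatic, since it is a convex combination of positive functions.

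In Step~2 you already name the right tool: continuous dependence of the Szekeres fields on the diffeomorphism, on compact subsets of $(0,1)$. This is precisely the result of Yoccoz that the paper invokes to get $M_{f_n}\to M_f$ and $\psi_{Y_n}\to\psi_Y$ in $C^r$. Your hands-on replacement in the last paragraph is wrong. The number $n$ of iterates carrying $[0,\eta)$ to a fixed fundamental domain tends to infinity as $\eta\to0$, so it is not bounded, and controlling $(f^n)_*$ uniformly in $C^{r-1}$ is the whole difficulty. The ``fallback'' is circular.

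Finally, the claim that the common field is $C^r$ in the interior ``as a push-forward by $C^r$ maps'' loses a derivative, since $\phi_*V=(D\phi\cdot V)\circ\phi^{-1}$ is only $C^{r-1}$. This is not cosmetic. In dimension one, $D\phi^1_W=(W\circ\phi^1_W)/W$ shows that the time-$1$ map of a $C^r$ field is $C^{r+1}$ away from its zeros. So a $C^r$ parabolically flowable approximant cannot coincide with $f$ on an interval where $f$ is not $C^{r+1}$.

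One way out is to first replace $f$ by a smooth $C^r$-approximation $F$ with the same $r$-jets at the endpoints, and glue $g$ to $F$ as above. Then the Szekeres fields, $\psi_Y$, the Mather invariant and the correction $h$ are all smooth in the interior, and the final common field is $C^r$ (even $C^\infty$). The paper's sketch, which keeps $f$ unchanged on $[\eps,1-\eps]$, is silent on this point as well.
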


\begin{proof}[Proof of Theorem \ref{t:vanish-drift-gen}]
Assume $f\in\Diff^{r,\Delta}_+([0,1])$ is $C^1$ parabolically flowable, and let $(f_n)$ be a sequence given by Proposition \ref{p:lissage}. By the continuity property of Corollary \ref{c:continu}, one has $\drift_c(f)=\lim_{n\to+\infty}\drift_c(f_n)$. Now it was proved in \cite{EN21} that each $f_n$ is $C^r$-almost-reducible, and thus has vanishing $\drift_c$ according to \eqref{e:upper}, which concludes the proof.
\end{proof}

Proposition \ref{p:lissage} is proved in two steps. In the first one, we approach $f$ by diffeomorphisms that are $C^r$-flowable \emph{near the boundary} but not necessarily globally (Lemma \ref{l:pre-lissage} below). In the second one, we perform local perturbations on these diffeomorphisms to ``cancel their Mather invariant'' so that they embed in a global $C^r$ flow (the techniques for doing this were already employed in \cite{BCVW,EBN,EBM25}). 

\begin{lem}
\label{l:pre-lissage}
 Let $r\ge2$, $p\in(0,1)$ and $f\in\Diff^{r}_+([0,1])$ such that $f(x)>x$ for all $x\in(0,1)$. For every $\delta>0$, there exists $g\in\Diff^{r,\Delta}_+([0,1])$ such that $d_{C^r}(f,g)<\delta$, $g=f$ on $[f^{-3}(p),p]$ and $g$ locally embeds in a $C^r$ parabolic flow near the endpoints. 
 \end{lem}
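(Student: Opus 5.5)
The plan is to modify $f$ only in small neighbourhoods $[0,2\varepsilon]$ and $[1-2\varepsilon,1]$ of the endpoints, with $2\varepsilon<f^{-3}(p)$ and $1-2\varepsilon>p$. There we replace $f$ by the time-$1$ map of an explicit polynomial vector field whose time-$1$ map has the same $r$-jet as $f$, glued back to $f$ by a cutoff. The endpoint $1$ is treated symmetrically, so I only describe the endpoint $0$.

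First I make a standing assumption: $Df(0)=1$ (and likewise $Df(1)=1$). This is the situation in Proposition \ref{p:lissage}, since vanishing asymptotic variation forces parabolic endpoints. In the hyperbolic case the same construction should go through with $Z'(0)=\log Df(0)$, but the resulting flow would of course not be parabolic.

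\textbf{Case 1: $f-\id$ is not $C^r$-flat at $0$.} Write $f(x)=x+a_kx^k+\dots+a_rx^r+o(x^r)$ with $2\le k\le r$ and $a_k>0$ (positivity because $f>\id$). Solving recursively for the coefficients of the formal logarithm, I find a polynomial vector field
$$Z(x)=\sum_{j=k}^r b_jx^j, \qquad b_k=a_k,$$
whose time-$1$ map $h$ satisfies $h(x)-f(x)=o(x^r)$. Since $f$ and $h$ are $C^r$ with the same $r$-jet at $0$, Taylor's formula gives $D^j(f-h)(x)=o(x^{r-j})$ for $0\le j\le r$.

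\textbf{Case 2: $f-\id$ is $C^r$-flat at $0$.} Here I take $Z=\eta x^{r+1}$ with $\eta>0$ small, tending to $0$ together with $\varepsilon$. Then $D^j(h-\id)=O(\eta x^{r+1-j})$ on $[0,2\varepsilon]$, and $D^j(f-\id)=o(x^{r-j})$.

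\textbf{Gluing.} Let $\rho$ be a smooth cutoff with $\rho=0$ on $[0,\varepsilon]$, $\rho=1$ on $[2\varepsilon,1]$, and $\|D^j\rho\|_\infty\le C_j\varepsilon^{-j}$. Set
$$g=h+\rho\,(f-h)=(1-\rho)\,h+\rho\, f \quad\text{near }0,$$
and $g=f$ elsewhere (together with the analogous construction near $1$). The checks are as follows.

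\begin{itemize}
\item \emph{Closeness.} On $[0,2\varepsilon]$, the Leibniz rule gives $|D^j(g-f)|=|D^j((1-\rho)(h-f))|\le\sum_{i\le j}C\varepsilon^{-i}\,o(\varepsilon^{r-j+i})=o(\varepsilon^{r-j})\to0$ in Case 1, for every $j\le r$. The same bound holds in Case 2, using the flatness of $f-\id$ and the factor $\eta$ for $h-\id$. Hence $d_{C^r}(f,g)<\delta$ once $\varepsilon$ is small.
\item \emph{Agreement on the prescribed domain.} We have $g=f$ on $[f^{-3}(p),p]$ by the choice of $\varepsilon$.
\item \emph{Local flow near the endpoint.} On $[0,\varepsilon]$, $g=h$ is the time-$1$ map of a $C^\infty$ vector field vanishing to order at least $2$ at $0$. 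This is a $C^r$ parabolic flow locally.
\item \emph{No interior fixed points.} We have $g-\id=(1-\rho)(h-\id)+\rho(f-\id)$, a convex combination. Here $f-\id>0$ on $(0,1)$, and $h-\id>0$ on $(0,2\varepsilon]$ for $\varepsilon$ small, because $h-\id\sim a_kx^k$ or $\sim\eta x^{r+1}$. So $g>\id$ on $(0,1)$. The diffeomorphism property follows from $C^1$-closeness.
\end{itemize}

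I expect the main obstacle to be the $C^r$ estimate of the gluing, because derivatives of the cutoff blow up like $\varepsilon^{-j}$. This is precisely why the local model must match the full $r$-jet of $f$ (so that $f-h=o(x^r)$ absorbs those losses), and why the flat case needs the separate, $\eta$-small non-flat model: using $h=\id$ there would create fixed points.
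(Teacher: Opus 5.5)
Your proof is correct and follows the paper's argument. The paper also takes a smooth vector field whose time-$1$ map has the same $r$-jets as $f$ at the endpoints, interpolates it with $f$ through a cutoff supported in an $\varepsilon$-neighbourhood of each endpoint, and controls the $C^r$ distance by the Leibniz rule together with the Taylor/mean-value bound $\max_{[0,\varepsilon]}|D^{r-k}(h-f)|=o(\varepsilon^k)$ (in your notation). Your additions make explicit what the paper leaves implicit: the polynomial model obtained from the formal logarithm, the model $\eta x^{r+1}$ in the flat case (where the factor $\eta$ is unnecessary, since the extra power of $x$ already suffices), and the observation that parabolicity requires $Df(0)=Df(1)=1$, as holds when the lemma is applied in Proposition \ref{p:lissage}.
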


\begin{proof}[Proof of Lemma \ref{l:pre-lissage}] 
This is very similar to the proof of Lemma 3.2 in \cite{EBN}. It is not difficult to construct a smooth vector field $Z$ on $[0,1]$ positive on $(0,1)$ and whose time-$1$ map $g$ has the same $r$-jets as $f$ at the endpoints. Next, fix a 
smooth even map $\rho:\R\to[0,1]$ equal to $1$ on $[-\frac13,\frac13]$, to $0$ outside $(-1,1)$ and positive on $(-1,1)$. For $a=0$ or $1$ and for every $\eps>0$, let $\rho_{a,\eps}$ be the function  $x\mapsto \rho(\frac{x-a}\eps)$. Let us now define $h_\eps$ by 
$$
h_\eps=
\left\{
\begin{matrix}
\rho_{0,\eps}g+(1-\rho_{0,\eps})f&\text{on}&\hspace{-0.7cm}[0,\eps],\\
f\hspace{3cm}&\text{on}& \, [\eps,1-\eps],\\
\rho_{1,\eps}g+(1-\rho_{1,\eps})f&\text{on}&\,\,[1-\eps,1].
\end{matrix}
\right.
$$
The fact that $h_\eps$ is $C^r$, fixes (only) $0$ and $1$ and is $C^r$ flowable near the endpoints is immediate. Let us check that it $C^r$-converges to $f$ when $\eps$ goes to $0$ (in particular, this will imply that $h_\eps$ has non-vanishing derivative and thus is a diffeomorphism for $\eps$ small). Note that $h_\eps-f$ is $C^r$-flat at the endpoints, hence by the mean value theorem (applied several times) one has 
$$
\|h_\eps-f\|_{r}\le \max_{x\in[0,1]}|D^r(h_\eps-f)(x)|.
$$ 
Let us thus evaluate this quantity, starting with $x\in [0,\eps]$. On this interval, $h_\eps-f=\rho_{0,\eps}(g-f)$, so the Leibniz rule gives 
\begin{equation}
D^r(h_\eps-f)=\sum_{k=0}^r \binom{r}{k} D^k\rho_{0,\eps}\times D^{r-k}(g-f), 
\end{equation}
and thus 
$$
\max_{[0,\eps]}|D^r(h_\eps-f)|\le \sum_{k=0}^r \binom{r}{k} \frac{\|D^k\rho\|_\infty}{\eps^k}\max_{[0,\eps]}|D^{r-k}(g-f)|.$$
But, again by (an iterated use of) the mean value theorem, 
$$
\max_{[0,\eps]}|D^{r-k}(g-f)|\le\max_{[0,\eps]}|D^r(g-f)| \eps^k=o(\eps^k),
$$
so we get that when $\eps$ goes to $0$, $\max_{[0,\eps]}|D^r(h_\eps-f)|$ goes to $0$. One proceeds in the same way on $[1-\eps,1]$, and, since $h_\eps=f$ on $[\eps,1-\eps]$, we indeed get that $h_\eps$ converges to $f$ in $C^r$ topology when $\eps$ goes to $0$. Finally, if we take $\eps$ small enough that $[f^{-3}(p),p]\subset [\eps,1-\eps]$, then $h_\eps$ satisfies the last requirement of the statement.
\end{proof}

What follows is a variation on arguments detailed in \cite{BCVW,EBN,EBM25}. Here, we provide a sketch of the proof, referring the reader to the above articles for more details. 

\begin{proof}[Proof of Proposition \ref{p:lissage}]
Let $f\in\Diff^{r,\Delta}_+([0,1])$ be $C^1$ parabolically flowable and $p\in(0,1)$. Lemma \ref{l:pre-lissage} provides a sequence of diffeomorphisms $f_n\in\Diff^{r,\Delta}_+([0,1])$ converging to $f$ in $C^r$ topology, coinciding with $f$ on $[f^{-3}(p),p]$ and $C^r$ flowable \emph{near the endpoints} but maybe not globally. In other words, the $f_n$ may have non-trivial Mather invariants, and we need to ``get rid of these'' by $C^r$-small perturbations. Lemma \ref{l:cancel} tell us exactly how to perform such perturbations with support in $[f^{-3}(p),p]$. The fact that they can be made $C^r$-small when $n$ is big comes from the work of Yoccoz: on the one hand, the Mather invariants of the $f_n$ converge to that of $f$, that is the identity, in $C^r$ topology, and on the other hand, if $Y_n$ denotes the right generating vector field of $f_n$, then $\psi_{Y_n}:[3,0]\to[f^{-3}(p),p]$ converges in $C^r$ topology to $\psi_{Y}:[3,0]\to[f^{-3}(p),p]$ (cf. \cite{Yoc}, chap. V, \S2.3 and chap. IV, \S 2.5 respectively). 
\end{proof}

\subsection{Expression for the drift in terms of the left and right flows}
\label{ss:formula}

We now provide a new formula for the drift at a parabolic diffeomorphism without interior fixed point, in terms of its left and right generating vector fields (the notations are those of \S\ref{ss:mather}), for a cocycle $c:\Diff^r_+([0,1])\to L^1([0,1])$. This corresponds to the case $d=1$ in our general setting, and we don't know whether an analogous formula holds for $d > 1$. 

\begin{thm} 
\label{t:formula}
Let $r\ge2$ and $c:\Diff^r_+((0,1))\to L^1_{\mathrm{loc}}((0,1))$ satisfy 
\begin{enumerate}
\item $c(f)_{|I}$ depends only on $f_{|I}$, 
\item $c(fg)=g^*c(f)+c(g)$, 
\item $c$ induces a continuous map from $\Diff^r_+([0,1])$ to $L^1([0,1])$.
\end{enumerate}
If $f\in\Diff^{r,\Delta}_+([0,1])$ satisfies $f(x)>x$ on $(0,1)$ and is parabolic at the endpoints, then for every $p\in(0,1)$, letting $\cD=[p,f(p)]$,
$$
\drift_c(f)=\lim_{n\to+\infty}\int_{f^{-n}(\cD)}|c(m_f^{p,p})|.
$$
\end{thm}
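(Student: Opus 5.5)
The plan is to compare $f$ with the diffeomorphism $g=fh$ of Lemma \ref{l:cancel}, whose Mather invariant is trivial, and to read off $\drift_c(f)$ from the localization formula (Proposition \ref{p:local}, with $d=1$). The factor $m_f^{p,p}$ should appear through the identity $g^{-3}f^3=m^{p,p}_f$ on $[f^{-3}(p),f^{-2}(p)]$.

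\textbf{Step 1: the drift of $c$ vanishes at $g$.} Since $f$ is parabolic at both endpoints and $g$ coincides with $f$ outside $[f^{-3}(p),p]$, the map $g$ is also parabolic at the endpoints. It has no interior fixed point and has a trivial Mather invariant. Hence its left and right generating fields coincide, $g$ embeds in a $C^1$ parabolic flow, and $V_\infty(g)=\var\log DM_g=0$ by \eqref{e:ADMIN}. Theorem \ref{t:vanish-drift-gen} then gives $\drift_c(g)=0$. This is the only place where parabolicity of $f$ is used. Moreover $g=f$ on the $f$-orbit of $p$, so the fundamental domain $\cD=[p,f(p)]$ and its iterates are the same for $f$ and $g$. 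Applying Proposition \ref{p:local} to $g$ therefore gives $\|c(g^{2N})\|_{L^1(f^{-N}(\cD))}\to0$. More generally, Lemma \ref{l:local} applied to $g$ should show that $\|c(g^k)\|_{L^1(f^{-N'}(\cD))}\to0$ whenever $N'$ and $k-N'$ both tend to infinity.

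\textbf{Step 2: factorization of $f^{2N}$.} By locality (hypothesis 1) we may replace $p$ by $f^{3}(p)$, so that $\cD$ itself is the interval on which $g^{-3}f^3=m:=m_f^{p,p}$. For $N$ large and $x\in f^{-N}(\cD)$, the orbit segment $x,f(x),\dots,f^{N-1}(x)$ avoids the support of $h$, and so does everything after time $N+2$. On $f^{-N}(\cD)$ this gives
$$f^{2N}=g^{2N-N}\circ(g^{-3}f^3)\circ g^{N}\circ\dots,$$
or more precisely $f^{2N}=A\circ m\circ B$ with $B=g^{N}$ and $A=g^{N}$, up to bookkeeping of the three steps. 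The cocycle relation then yields
$$c(f^{2N})=B^*\bigl(m^*c(A)+c(m)\bigr)+c(B),$$
while $c(g^{2N})=B^*c(A)+c(B)$. Subtracting,
$$\|c(f^{2N})-c(g^{2N})\|_{L^1(f^{-N}\cD)}\ \text{differs from}\ \int_{\cD}|c(m)|\ \text{by at most}\ \|c(A)\|_{L^1(m(\cD))}+\|c(A)\|_{L^1(\cD)}.$$
Both error terms tend to $0$ by Step 1, since $m(\cD)$ is again a fundamental domain in the middle of the orbit. Combined with $\|c(g^{2N})\|_{L^1(f^{-N}\cD)}\to0$, this gives
$$\drift_c(f)=\lim_N\|c(f^{2N})\|_{L^1(f^{-N}\cD)}=\lim\int_{\cD_N}|c(m)|,$$
where $\cD_N$ is the fundamental domain at which the factor $m$ sits.

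\textbf{Step 3: matching the statement.} Since $m_f^{p,p}$ commutes with $f$, the cocycle relation gives
$$f^*c(m)-c(m)=m^*c(f)-c(f).$$
The right-hand side is in $L^1$, so $\int_{f^{-n}(\cD)}|c(m)|$ and $\int_{f^{-n-1}(\cD)}|c(m)|$ differ by at most $\|c(f)\|_{L^1}$ restricted to $f^{-n}(\cD)\cup m(f^{-n}(\cD))$, which tends to $0$. Hence the integrals over the various domains $f^{-n}(\cD)$ all have the same limit, and this limit exists. Locality (hypothesis 1) ensures that the restriction of the globally defined $m_f^{p,p}$ to these domains has the same $c$-value as the localized perturbation used in Lemma \ref{l:cancel}.

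\textbf{Main obstacle.} I expect the hardest part to be Step 2: the exact index bookkeeping, and showing that the cross terms $m^*c(A)-c(A)$ are negligible. This needs a version of the localization estimate for $c(g^k)$ on domains sitting between iterates that are far from both ends, not just on the symmetric domain $f^{-N}(\cD)$. I would derive it by rerunning the proof of Lemma \ref{l:local} for $g$ with asymmetric splittings.
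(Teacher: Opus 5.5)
Your Step 1 is exactly the paper's, and your factorization $f^{2N}=g^N\circ m\circ g^N$ on $f^{-N}(\cD)$ is correct. The error estimate in Step 2, however, fails. Since $m(\cD)=\cD$, both of your error terms equal $\|c(g^N)\|_{L^1(\cD)}$. This is the norm of $c(g^N)$ on the \emph{first} domain of the orbit segment $\cD,g(\cD),\dots,g^N(\cD)$. In the notation of your Step 1 this is the case $N'=0$, not a domain far from both ends, so vanishing drift of $g$ gives no control over it.

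It already fails to tend to $0$ for $c=P$, which satisfies all three hypotheses. Let $Z$ be the generating field of $g$. Then $P(g^N)=(g^N)^*(DZ/Z)-DZ/Z$, hence $\|P(g^N)\|_{L^1(\cD)}\to\int_\cD|DZ/Z|$, and
$$m^*P(g^N)-P(g^N)\to -\bigl(m^*(DZ/Z)-DZ/Z\bigr)\quad\text{in } L^1(\cD).$$
This cross term is not negligible. On $\cD$ one checks that $Z=X$, so the cross term turns $P(m)=m^*(DX/X)-DY/Y$ into $DX/X-DY/Y$. The $L^1(\cD)$-norm of the latter is $\var\log DM_f=\drift_P(f)$. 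So the formula your Step 2 would produce, $\drift_c(f)=\int_\cD|c(m)|$ on a fixed domain, is false in general.

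Step 3 cannot bridge this. For $a_n=\int_{f^{-n}(\cD)}|c(m)|$, the bound $|a_n-a_{n+1}|\le 2\|c(f)\|_{L^1(f^{-n-1}(\cD))}$ is summable, so $(a_n)$ converges; your ``tends to $0$'' alone would not give that. But nothing forces $a_0=\lim a_n$.

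The missing idea is to push $m$ to the far end of the orbit segment instead of leaving it in the middle. On $f^{-N}(\cD)$ you have three facts: $m$ commutes with $f$, $m$ preserves $f^{-N}(\cD)$, and $g^N=f^N$. Hence
$$g^Nmg^N=g^Nmf^N=g^Nf^Nm=g^{2N}m \quad\text{on } f^{-N}(\cD),$$
that is, $H_N:=g^{-2N}f^{2N}=m$ there. Locality and the cocycle relation then give $c(f^{2N})=m^*c(g^{2N})+c(m)$ on $f^{-N}(\cD)$. Since $m^*$ preserves the $L^1$ norm on this $m$-invariant domain, the only error is $\|c(g^{2N})\|_{L^1(f^{-N}(\cD))}$, which tends to $0$ by your Step 1. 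Meanwhile $c(m)$ is integrated over the moving domain $f^{-N}(\cD)$, exactly as in the statement.

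This is the paper's argument. It needs neither an asymmetric localization estimate nor your Step 3, and the existence of the limit follows from Proposition~\ref{p:local} applied to $f$.
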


\begin{proof}
Let $f$, $p$ and $\cD$ be as in the statement, and let $g=fh$ be the diffeomorphism given by Lemma \ref{l:cancel}. Note that $f^{-n}(\cD)=g^{-n}(\cD)$ for every $n\in\N$. According to Theorem \ref{t:vanish-drift-gen} and Proposition~\ref{p:local}, 
$$
\drift_c(g)=\lim_{n\to\infty}\int_{g^{-n}(\cD)}|c(g^{2n})|=0.
$$
Given $n\in\N$, let $H_n=g^{-2n}f^{2n}$, so that $f^{2n}=g^{2n}H_n$ and $c(f^{2n})=H_n^*c(g^{2n})+c(H_n)$. Hence 
$|c(f^{2n})-c(H_n)|=|H_n^*c(g^{2n})|$ and in particular 
$$
\left|\int_{f^{-n}(\cD)}|c(f^{2n})|-\int_{f^{-n}(\cD)}|c(H_n)|\right|\le \int_{f^{-n}(\cD)}|H_n^*c(g^{2n})|=\int_{g^{-n}(\cD)}|c(g^{2n})|\xrightarrow[n\to\infty]{}0.
$$
Using Proposition~\ref{p:local} again, this implies 
$$
\drift_c(f)=\lim_{n\to+\infty}\int_{f^{-n}(\cD)}|c(H_n)|.
$$
Now on $f^{-n}(\cD)$,
\begin{align*}
H_n = g^{-2n}f^{2n}= g^{-n}(g^{-n}f^n)f^n = g^{-n}f^n
\end{align*}
because $f^n=g^n$ on $f^n(f^{-n}(\cD))=[p,f(p)]$ (and even on $[p,1]$). Furthermore, still on $f^{-n}(\cD)$, using the last sentence of Lemma \ref{l:cancel} and the fact that $f^{n-3}=g^{n-3}$ on $f^{-n}(\cD)$,
$$
g^{-n}f^n = g^{-n+3}(g^{-3}f^3)f^{n-3}=(f^{n-3})^{-1}m_f^{p,p}f^{n-3}.
$$
We conclude the proof by recalling that $m_f^{p,p}$ commutes with $f$.
\end{proof}

Let us conclude this section by applying this to the cocycle $P:f\mapsto D\log Df$ (cf. Introduction) and recovering the following result of \cite{EN21}: if $f\in\Diff^{2,\Delta}_+([0,1])$ is parabolic at the endpoints, then
$$
\drift_P(f)=\var\log DM_f.
$$
According to Theorem \ref{t:formula}, 
\begin{align*}
\drift_P(f)&=\lim_{n\to+\infty}\int_{f^{-n}(\cD)}|P(m_f^{p,p})|.
\end{align*}
Let us abbreviate $m^{p,p}_f$ by $m_f$. Recall that $m_f=\psi_X\psi_Y^{-1}$, hence 
$$
Dm_f=\frac{D\psi_Y^{-1}}{D\psi_X^{-1}(m_f)}=\frac{X\circ m_f}{Y}.
$$
Thus
$$
\log Dm_f = \log(X\circ m_f)-\log Y,
$$
and therefore 
$$
P(m_f)=\tfrac{DX\circ m_f - DY}Y=\left(\tfrac{DX}X-\tfrac{DY}Y\right)+\left(\tfrac{DX\circ m_f}Y-\tfrac{DX}X\right).
$$
Hence, recalling that $m_f$ fixes $f^{-n}(\cD)$,
\begin{align*}
\int_{f^{-n}(\cD)}|P(m_f)-\left(\tfrac{DX}X-\tfrac{DY}Y\right)|
&\le \max_{f^{-n}(\cD)}|DX|\left(\int_{f^{-n}(\cD)}\tfrac1Y+\int_{f^{-n}(\cD)}\tfrac1X\right).
\end{align*}
The last two integrals are equal to $1$ and $DX$ is continuous at $0$, where it vanishes. Therefore, we finally get 
$$
\lim_{n\to+\infty}\int_{f^{-n}(\cD)}|P(m_f)| = \lim_{n\to+\infty}\int_{f^{-n}(\cD)}|\tfrac{DX}X-\tfrac{DY}Y|=\var\log DM_f.
$$


\section{Criterion for the vanishing of the Liouville drift}
\label{s:necessary}

Here, we prove Theorem \ref{t:criterion}, which we recall:

\begin{thmC}
Given $f\in \Diff^{>2}_+([0,1])$, the Liouville drift vanishes at $f$ if and only if $f$ is $C^{>2}$-conjugated to the restriction to $[0,1]$ of a Möbius map of 
$\R\cup\{\infty\}$ or $f$ embeds in a $C^1$ flow without hyperbolic fixed points (or, equivalently, has vanishing asymptotic variation).
\end{thmC}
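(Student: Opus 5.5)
The plan is to handle the two implications separately, reducing the case with interior fixed points to the case without them by a fragmentation formula for the Liouville drift.

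\textbf{The ``if'' direction.} If $f=\varphi g\varphi^{-1}$ with $g$ the restriction of a Möbius map, then $\ell(g)=0$ identically. This is because $\frac{g(x)-g(y)}{x-y}$ factors as $a(x)b(y)$ for Möbius $g$, so $\partial_x\partial_y\log$ of it vanishes. Hence $\drift_\ell(g)=0$, and by conjugacy invariance of the drift, $\drift_\ell(f)=0$.

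Now suppose instead that $V_\infty(f)=0$. If $f$ has no interior fixed point, this is exactly Theorem \ref{t:vanish-drift-gen} applied to $c=\ell$ with $d=2$, using the continuity from Proposition \ref{p:ell}. One caveat: Theorem \ref{t:vanish-drift-gen} is stated for $\Diff^r$ with $r\ge2$, whereas $\ell$ is only defined on $\Diff^{2+\alpha}$. So I would check that its proof (Propositions \ref{p:local}, \ref{c:continu} and \ref{p:lissage}) runs verbatim in $\Diff^{2+\alpha}_+([0,1])$. The one delicate point is that this space is not a topological group, but only right-translation continuity is used.

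For the general case I would prove a fragmentation formula. Let $\mathrm{Fix}(f)$ be decomposed into the components $I_j$ of its complement. The goal is
$$\drift_\ell(f)=\sum_j \drift_\ell(f|_{I_j}),$$
where each $f|_{I_j}$ is renormalized affinely to $[0,1]$; affine maps are Möbius, so this does not change $\ell$. The idea is to split $[0,1]^2$ into the squares $I_j\times I_j$ and the off-diagonal rectangles $I_j\times I_k$ with $j\ne k$, together with the pieces touching $\mathrm{Fix}(f)$. On an off-diagonal rectangle, $(x,y)$ stay at distance bounded below along the orbit unless both points approach a common fixed point. I expect to show that the $L^1$ mass of $\ell(f^n)$ there is $o(n)$, by using the cocycle relation as in Lemma \ref{l:local} together with integrability of $\ell(f)$ near the fixed set. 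This sublinear bound on the off-diagonal part is the main technical obstacle, especially when there are infinitely many components accumulating. In that situation I would truncate to finitely many components carrying most of the $L^1$ mass of $\ell(f)$.

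Since vanishing asymptotic variation passes to each $f|_{I_j}$ (the drift of $P$ fragments in the same way), each term in the sum vanishes.

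\textbf{The ``only if'' direction.} I would look for an explicit identity relating the cocycles, in the spirit of Lemma \ref{l:relation}. Restrict $\ell(f)$ to a neighbourhood of a boundary point, or integrate it in one variable. For instance,
$$\int_0^1 \ell(f)(x,y)\,dy=\Big[\partial_x\log\tfrac{f(x)-f(y)}{x-y}\Big]_{y=0}^{y=1}=\nu_1(f)(x)-\nu_0(f)(x).$$
Integrating out one variable is an equivariant contraction $L^1([0,1]^2)\to L^1([0,1])$, so it gives $\drift_{\nu_1-\nu_0}\le\drift_\ell$. Localizing the same computation to the sub-squares $I_j\times I_j$ yields, for every component $I_j=(a,b)$ of the complement of $\mathrm{Fix}(f)$,
$$\drift_\ell(f)\ge|\log(Df(a)Df(b))|.$$
This uses Corollary \ref{l:nu}.

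A second relation is needed to control $\var\log DM_f$. Following Theorem \ref{t:formula}, I would compute $\ell(m_f)$ on the thin squares $f^{-n}(\cD)^{\otimes 2}$ near the diagonal. There $\ell$ behaves like $\frac16 S$, and integrating against the kernel should recover $\var\log DM_f$ from $\drift_\ell(f)=0$.

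I would then combine the two relations and split into cases. If $\drift_\ell(f)=0$, the second relation forces $M_f$ to be trivial, so $f$ is $C^1$-flowable on each component, and the first forces $Df(a)Df(b)=1$ at the endpoints. If all fixed points are parabolic, $V_\infty(f)=0$. Otherwise there are hyperbolic fixed points with reciprocal multipliers. In that case I expect that triviality of the ``Liouville Mather invariant'' forces a single component, and that a $C^{>2}$ conjugacy, built from the generating vector field compared with that of a Möbius flow, gives the Möbius alternative. I view this final rigidity step as the most uncertain part of the argument.
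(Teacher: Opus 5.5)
Your ``if'' direction essentially follows the paper: Lemma \ref{l:mobius} plus conjugacy invariance, then Theorem A on each component glued by fragmentation. One caveat there: the fragmentation formula is false at hyperbolic interior fixed points (see the remarks around Lemma \ref{l:fragment}, and Remark \ref{r:hyp}). So your off-diagonal $o(n)$ estimate must use $Df(a)=1$ at shared endpoints $a$, and your sketch does not say where this enters. It is harmless in that direction, since $V_\infty(f)=0$ forces parabolicity. The genuine gaps are in the ``only if'' direction.

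\textbf{First gap: controlling the Mather invariant.} Your ``second relation'' is not an argument. Theorem \ref{t:formula} is stated for $d=1$ and parabolic endpoints. Nothing indicates that integrating $\ell(m_f)$ over thin squares $f^{-n}(\cD)^2$ recovers $\var\log DM_f$. Above all, this route says nothing at hyperbolic endpoints, which is exactly the Möbius case, where you need $M_f$ trivial to build the conjugacy. The missing idea is a small variant of your first relation: split the $y$-integral at the diagonal with opposite signs. By Lemma \ref{l:relation}, $\int_0^x\ell(f)(x,y)\,dy=\tfrac12P(f)(x)-\nu_0(f)(x)$ and $\int_x^1\ell(f)(x,y)\,dy=\nu_1(f)(x)-\tfrac12P(f)(x)$. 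Hence the operator $Tu(x)=\int_0^x u(x,y)\,dy-\int_x^1 u(x,y)\,dy$ sends $\ell(f)$ to $\mu(f)=P(f)-\nu_0(f)-\nu_1(f)$. Since $f$ preserves $\{y<x\}$, $T$ is again an equivariant $L^1$-contraction, so $\drift_\mu(f)\le\drift_\ell(f)$. Theorem \ref{t:mu} then identifies $\drift_\mu(f)$ with $\var\log DM_f$ for every $f$ without interior fixed points, whatever the multipliers at the endpoints (this is Corollary \ref{c:necessary}).

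\textbf{Second gap: ruling out hyperbolicity when there are interior fixed points.} Your component-wise conditions cannot force ``a single component.'' Take a map with one interior fixed point $a$, $Df(0)=Df(1)=1/Df(a)\neq1$, and $C^1$-flowable restrictions to $[0,a]$ and $[a,1]$. It satisfies all of your conditions: both restrictions are Möbius-conjugate, so $\drift_{\ell,[0,a]}(f)=\drift_{\ell,[a,1]}(f)=0$. Yet $\drift_\ell(f)>0$.

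The global input you need is elementary, and you discarded it by routing through Corollary \ref{l:nu}, which requires no interior fixed point. Since $\int_0^1(\nu_1-\nu_0)(f)=\log(Df(0)Df(1))$ for every $f$, one gets $\drift_\ell(f)\ge|\log(Df(0)Df(1))|$ with no assumption on interior fixed points (Proposition \ref{p:multip}). Apply this to $f|_{[0,a]}$, $f|_{[a,1]}$ and $f$, using \eqref{e:loc}. This gives $Df(0)Df(a)=Df(a)Df(1)=Df(0)Df(1)=1$, hence all three derivatives equal $1$ (Corollary \ref{c:multip}). No ``Liouville Mather invariant'' is needed.

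Once these two points are in place, the rigidity step you were worried about is standard. With $M_f$ trivial and $Df(0)=1/Df(1)\neq1$, Sternberg's theorem conjugates $f$ near the endpoints to the Möbius map with the same multipliers. Mather's argument then extends this to a global $C^{>2}$ conjugacy, as in the paper.
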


When there are no interior fixed points, the ``sufficient'' character of the condition comes from Theorem \ref{t:vanish-drift-gen}, the fact that $\ell$ vanishes at Möbius maps (cf. Lemma \ref{l:mobius} below) and that $\drift_\ell$ is a $C^{>2}$-conjugacy invariant. The necessary character is the content of Corollaries \ref{c:necessary} and \ref{c:multip} and comes from a relation between the cocycles $P,\nu_0,\nu_1$ and $\ell$ (cf. Lemma \ref{l:relation}). The general statement uses a fragmentation property established in the next subsection. In Section \ref{s:circle}, we deal with the case of the circle (for which the definition of $\ell$ must be slightly modified).

\begin{lem}
\label{l:mobius}
For $f\in\Diff^1_+([0,1])$, the equality $\ell(f)=0$ holds if and only if $f$ is the restriction to $[0,1]$ of a Möbius transformation of $\R\cup\{\infty\}$ fixing $0$ and $1$.
\end{lem}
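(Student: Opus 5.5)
The plan is to prove both implications directly from the formula $\ell(f)(x,y)=\partial_x\partial_y\log\left(\frac{f(x)-f(y)}{x-y}\right)$ established in \S\ref{ss:def}.

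\textbf{Möbius maps give $\ell(f)=0$.} This is the classical invariance of the cross-ratio. For $f(x)=\frac{ax+b}{cx+d}$ with $ad-bc=1$ (up to sign), one has
$$f(x)-f(y)=\frac{x-y}{(cx+d)(cy+d)}, \qquad Df(x)=\frac{1}{(cx+d)^2}.$$
Hence $\frac{Df(x)Df(y)}{(f(x)-f(y))^2}=\frac{1}{(x-y)^2}$, so $\ell(f)=0$. Equivalently,
$$\log\frac{f(x)-f(y)}{x-y}=-\log(cx+d)-\log(cy+d)$$
is a sum of a function of $x$ alone and a function of $y$ alone, and $\partial_x\partial_y$ kills it. Note that for a Möbius map to restrict to a diffeomorphism of $[0,1]$ fixing $0$ and $1$, its pole must lie outside $[0,1]$, so everything is well defined there.

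\textbf{$\ell(f)=0$ forces $f$ to be Möbius.} Take $f\in\Diff^1_+([0,1])$ with $\ell(f)=0$ (almost everywhere, or pointwise off the diagonal, since $\ell(f)$ is continuous there). Set $\Phi(x,y)=\log\frac{f(x)-f(y)}{x-y}$ on $\{x\neq y\}$. This function extends continuously to the diagonal by $\log Df(x)$, and it is $C^1$ off the diagonal. The condition $\partial_x\partial_y\Phi=0$, read distributionally (which is legitimate because $\ell(f)=\partial_x\partial_y\Phi$ holds classically off the diagonal), says that on each connected component of $\{x<y\}$, which is a convex triangle, $\Phi(x,y)=A(x)+B(y)$. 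The same holds on $\{x>y\}$, and by the symmetry $\Phi(x,y)=\Phi(y,x)$ the two decompositions are compatible. Avoiding distributions altogether, the same conclusion can be reached elementarily:
$$\frac{Df(x)Df(y)}{(f(x)-f(y))^2}=\frac{1}{(x-y)^2}$$
gives
$$\partial_y\left(\frac{-Df(x)}{f(x)-f(y)}\right)=\partial_y\left(\frac{-1}{x-y}\right).$$
So, for fixed $x$, the difference $\frac{Df(x)}{f(x)-f(y)}-\frac{1}{x-y}$ is constant in $y$ on each side of $x$. Since the left-hand side of the original identity is continuous up to the diagonal, this constant is the same on both sides; call it $k(x)$.

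\textbf{Solving explicitly.} Fix two points, say $x=0$ and $x=1$, and use $f(0)=0$, $f(1)=1$. For $x=0$ this gives
$$\frac{Df(0)}{-f(y)}+\frac1y=k(0),$$
that is,
$$f(y)=\frac{Df(0)\,y}{1-k(0)y}.$$
This is already a Möbius map. It remains only to check that it fixes $1$ (which is automatic, since $f(1)=1$ and this determines the relation $k(0)=1-Df(0)$) and that its pole $1/k(0)$ lies outside $[0,1]$, which holds because $f$ is finite and increasing on $[0,1]$. There is a subtlety: the identity at $x=0$ requires $y\neq 0$, which is fine, and continuity at $y\to 0^+$ is not even needed.

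\textbf{Main obstacle.} The hardest point is justifying that the $y$-derivative identity holds for $f$ only $C^1$ and for $\ell(f)=0$ only almost everywhere. Both sides of the original identity are continuous off the diagonal, so equality almost everywhere upgrades to equality everywhere. After that, the antiderivatives in $y$ are classical, and the argument goes through with only $C^1$ regularity.
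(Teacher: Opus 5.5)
Your proof is correct, but it takes a different route from the paper's. The paper integrates $\ell(f)$ over rectangles $[a,b]\times[c,d]$ with $0\le a<b<c<d\le 1$. Using $\int_a^b\int_c^d \frac{dx\,dy}{(x-y)^2}=\log[a,b,c,d]$, it obtains $\iint_{[a,b]\times[c,d]}\ell(f)=\log\frac{[f(a),f(b),f(c),f(d)]}{[a,b,c,d]}$. It then appeals to the classical fact that cross-ratio-preserving maps are Möbius; the converse is a direct computation, as in your first step.

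You instead use a single slice. Since $\ell(f)(0,y)=\partial_y\bigl(\tfrac1y-\tfrac{Df(0)}{f(y)}\bigr)$ for $y\in(0,1]$, vanishing there says that $1/f$ is an affine function of $1/y$. This gives the explicit formula $f(y)=\frac{Df(0)\,y}{1-ky}$ with $k=1-Df(0)$.

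What your route buys:
\begin{itemize}
\item It is self-contained: it avoids the characterization of cross-ratio-preserving maps, which the paper quotes without proof.
\item It needs nothing beyond $C^1$ regularity.
\item It proves more: vanishing of $\ell(f)$ on the boundary segment $\{0\}\times(0,1]$ alone already forces $f$ to be Möbius.
\end{itemize}
The price is that this segment is a null set. You therefore need continuity of $\ell(f)$ on $[0,1]^2$ minus the diagonal to pass from almost-everywhere to pointwise vanishing, and you correctly point this out. The paper's version is more conceptual and works with $\ell(f)$ purely as an $L^1$ object. It also establishes the link between $\ell$ and cross-ratios that is reused in Lemma \ref{l:fragment} and Remark \ref{r:hyp}.

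One side remark in your write-up should be deleted. You claim the two one-sided constants $k(x)$ coincide because ``the left-hand side of the original identity is continuous up to the diagonal''. This is not justified. The quantity $\frac{Df(x)Df(y)}{(f(x)-f(y))^2}$ blows up at the diagonal. Moreover, for $f$ merely $C^1$, the one-sided limits of $\frac{Df(x)}{f(x)-f(y)}-\frac{1}{x-y}$ as $y\to x$ need not exist. Your final argument only uses $x=0$, where only $y>0$ occurs, so this step is never needed. The distributional digression is likewise unnecessary, and both can simply be dropped.
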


\begin{proof} 
If $0\le a<b<c<d\le1$, a direct integration shows that
\begin{equation}
\label{eq:biraport} 
\int_a^b \int_c^d \frac{dx \, dy}{(x-y)^2} = \log \left( \frac{(d-b)(c-a)}{(d-a)(c-b)} \right) = \log ([a,b,c,d]),
\end{equation}
where $[w,x,y,z]$ denotes the cross-ratio of the points $w,x,y,z$ of $\R$.
It follows directly from the definition of $\ell(f)$ that 
$$
\iint_{[a,b]\times[c,d]} \ell(f) = \log \left(\frac{[f(a),f(b),f(c),f(d)]}{[a,b,c,d]}\right).
$$
Hence, if $\ell(f)=0$, $f$ must preserve the cross-ratios of quadruples satisfying $0 \!\le\! a\! < b\! < \!c\! < \!d\! \le \! 1$, and only a Möbius transformation can do this. Conversely, if $f$ is a Möbius transformation, then a straightforward computation shows that $\ell (f) = 0$ everywhere. (Alternatively, for every $x<y\in(0,1)$ and every small $\eps>0$, we get $\iint_{[x-\eps,x]\times[y,y+\eps]} \ell(f)=0$ and, by continuity, this forces $\ell(f)(x,y)$ to vanish.)
\end{proof}

\subsection{Fragmentation property}
\label{ss:fragment}

\begin{notation} 
Given $f\in\mathrm{Diff}^{>2}_+([0,1])$, for any compact interval $I$ stable under $f$, we let 
$$
\drift_{\ell,I}(f):=\lim_{n\to+\infty}\frac{\|\ell(f^n)\|_{L^1(I)}}n\le\|\ell(f)\|_{L^1(I^2)}.
$$
\end{notation}
For every collection $\cC$ of compact subintervals of $[0,1]$ stable under $f$ and with disjoint interiors, it is immediate that
\begin{equation}
\label{e:loc}
\drift_{\ell,[0,1]}(f)\ge \sum_{I\in\cC}\drift_{\ell,I}(f).
\end{equation}
Proposition \ref{p:fragment} claims that this is actually an equality if $\cup_{I\in\cC}I=[0,1]$, provided $f$ is parabolic at the endpoints of the intervals of $\cC$ that lie inside $(0,1)$. It will follow from Proposition \ref{p:multip}, however, that this fails to be true in the event of hyperbolic interior fixed points. 

\begin{prop}
\label{p:fragment} 
Let $f\in\Diff^{>2}_+([0,1])$ and let $\cC$ be a collection of compact subintervals of $[0,1]$ covering $[0,1]$, with non empty disjoint interiors, stable under $f$ and such that $f$ is parabolic at every endpoint of an element of $\cC$ that lies in $(0,1)$. Then
$$
\drift_{\ell,[0,1]}(f)=\sum_{I\in\cC}\drift_{\ell,I}(f).
$$
\end{prop}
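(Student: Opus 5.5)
The plan is to show that the off-diagonal part of $[0,1]^2$ grows sublinearly. The inequality $\ge$ is \eqref{e:loc}, so only $\le$ needs proof. Since each $I\in\cC$ is $f$-stable, so is each product $I\times J$. Hence
$$\|\ell(f^n)\|_{L^1([0,1]^2)}=\sum_{I\in\cC}\|\ell(f^n)\|_{L^1(I^2)}+\sum_{I\neq J}\|\ell(f^n)\|_{L^1(I\times J)}.$$
Dividing by $n$ and letting $n\to\infty$, it is enough to show that $\frac1n\sum_{I\neq J}\|\ell(f^n)\|_{L^1(I\times J)}$ can be made smaller than any $\eps>0$ for large $n$.

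The first step reduces an infinite $\cC$ to a finite one. By the cocycle relation, $\|\ell(f^n)\|_{L^1(A)}\le n\|\ell(f)\|_{L^1(A)}$ for every $F$-invariant set $A$, where $F=f^{\otimes2}$. Since $\ell(f)\in L^1$, I would choose finitely many intervals $I_1,\dots,I_k\in\cC$ and group the remaining ones into finitely many $f$-stable ``gaps'' $J_1,\dots,J_m$, each a union of elements of $\cC$, so that $\|\ell(f)\|_{L^1}$ over $\bigcup_j J_j^2$ and over the products involving the gaps is below $\eps$. The gaps are unions of elements of $\cC$ sharing parabolic endpoints, so the same off-diagonal analysis applies to them. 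Those pieces then contribute at most $n\eps$, and only finitely many products $I\times J$ with $I\neq J$ remain.

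The second step treats a pair of disjoint non-adjacent intervals, which are at positive distance $\eta$. For $(x,y)\in I\times J$, write $|\ell(f^n)|\le \frac{Df^n(x)Df^n(y)}{(f^n(x)-f^n(y))^2}+\frac1{(x-y)^2}$. After the change of variables $(u,v)=F^n(x,y)$, both terms integrate to the integral of $\frac1{(u-v)^2}$ over $I\times J$, which is at most $|I||J|/\eta^2$. This is bounded in $n$.

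The third step, which is the core, treats adjacent intervals $I=[a,b]$ and $J=[b,c]$ with $b\in(0,1)$ a parabolic fixed point. I would split $I\times J$ into the corner $Q_\delta=[b-\delta,b]\times[b,b+\delta]$ and the complementary L-shaped region $R_\delta$. Each of the two rectangles making up $R_\delta$ stays off the diagonal. On $R_\delta$, I use the same two-term bound and the cross-ratio identity \eqref{eq:biraport}. This expresses $\int_{R_\delta}\frac1{(x-y)^2}=O(\log\frac1\delta)$ and $\int_{F^n(R_\delta)}\frac1{(u-v)^2}$ as logarithms of cross-ratios of the points $f^n(a),f^n(b-\delta),b,f^n(b+\delta),f^n(c)$. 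These are $O(\log\frac1{|f^n(b\pm\delta)-b|})$.

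Parabolicity enters exactly here. For a $C^{>2}$ germ with $Df(b)=1$ and no fixed points on either side, the orbit approaches $b$ at most polynomially fast, $|f^n(x)-b|\gtrsim 1/n$, for instance through the comparison $f(x)-x=O((x-b)^2)$. The $R_\delta$ contribution is therefore $O(\log n)=o(n)$. In the hyperbolic case the distance would decay like $e^{-\lambda n}$ and give a linear term, consistent with the remark that the proposition fails there. On $Q_\delta$, I apply the cocycle decomposition $\ell(f^n)=\sum_{k<n}(F^k)^*\ell(f)$ pointwise, which gives
$$\|\ell(f^n)\|_{L^1(Q_\delta)}\le\sum_{k<n}\|\ell(f)\|_{L^1(F^k(Q_\delta))}.$$
The pieces $F^k(Q_\delta)$ that remain in a slightly larger corner $Q_{\delta'}$ cost at most $n\|\ell(f)\|_{L^1(Q_{\delta'})}$. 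This is $\le n\eps$ for $\delta'$ small, since $\ell(f)\in L^1$.

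The main obstacle is the part of $F^k(Q_\delta)$ that escapes $Q_{\delta'}$ when $b$ is repelling on one or both sides. I would first try reversing time: replace $f$ by $f^{-1}$ on the offending side, using that $\|\ell(f^n)\|_{L^1(A)}=\|\ell(f^{-n})\|_{L^1(F^n(A))}$ for invariant bookkeeping. Alternatively, I would redefine the corner region by a fundamental-domain decomposition as in Lemma \ref{l:local}, so that the escaping points are counted in $R_\delta$, where the cross-ratio/$\log n$ estimate already applies. Combining the three steps gives
$$\drift_{\ell,[0,1]}(f)\le\sum_{I\in\cC}\drift_{\ell,I}(f)+C\eps$$
for every $\eps>0$, which concludes the proof.
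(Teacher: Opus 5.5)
Your decomposition of $[0,1]^2$ into diagonal blocks $I^2$ and off-diagonal blocks $I\times J$ is sound. So are the following steps:
\begin{itemize}
\item the reduction to finitely many blocks, which works because $\sum_{I\in\cC}|I|=1$: finitely many intervals leave gaps of small total length, and the $F$-invariant sets $J_j\times[0,1]$, $[0,1]\times J_j$ then carry little of $|\ell(f)|$;
\item the bound $2|I||J|/\eta^2$ for separated pairs;
\item the cross-ratio estimate on the $L$-shaped region $R_\delta$.
\end{itemize}
The gap is the corner $Q_\delta$. You call it the core but do not treat it, and your first remedy does not work as stated. Since $\ell(f^n)(x,y)$ involves $f^n$ at $x$ and at $y$ simultaneously, you cannot reverse time ``on one side'' of $I\times J$. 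In the mixed situation ($b-\delta$ pushed towards $b$ and $b+\delta$ pushed away, or vice versa), replacing $f$ by $f^{-1}$ merely swaps the escaping side. Also, ``attracting/repelling on a side'' is not well defined here, since elements of $\cC$ may contain fixed points accumulating at $b$. What matters is only the direction in which $f$ moves the two points $b\pm\delta$ for the chosen $\delta$. Likewise, the assumption of no fixed points near $b$ in your $1/n$ bound is superfluous, since $|f(x)-b|\ge|x-b|-C|x-b|^2$ for every $x$.

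Here is how to close the gap.
\begin{itemize}
\item If $f(b-\delta)\ge b-\delta$, then $F^k(Q_\delta)=[f^k(b-\delta),b]\times[b,f^k(b+\delta)]\subset[b-\delta,b]\times J$ for all $k\ge0$, whatever $b+\delta$ does. Hence $\|\ell(f^n)\|_{L^1(Q_\delta)}\le\sum_{k<n}\|\ell(f)\|_{L^1(F^k(Q_\delta))}\le n\,\|\ell(f)\|_{L^1([b-\delta,b]\times[0,1])}\le n\eps$ for $\delta$ small, by absolute continuity of the integral. A thin strip suffices; no small corner is needed.
\item The case $f(b+\delta)\le b+\delta$ is symmetric.
\item If both points are pushed away from $b$, use that $I\times J$ is $F$-invariant and $\ell(f^n)=-(F^n)^*\ell(f^{-n})$, so $\|\ell(f^n)\|_{L^1(I\times J)}=\|\ell(f^{-n})\|_{L^1(I\times J)}$. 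Then run the whole argument for this pair, on $R_\delta$ and $Q_\delta$, with $f^{-1}$, for which $Q_\delta$ is forward invariant.
\end{itemize}
Your ``fundamental domain'' alternative can also be made precise. Take an $n$-dependent corner $Q_n\subset Q_\delta$ such that $F^k(Q_n)\subset Q_\delta$ for $0\le k\le n$. The sides of $Q_n$ and of $F^n(Q_n)$ are still $\gtrsim 1/n$, so the complement is covered by your $O(\log n)$ bound.

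With this, you get a correct proof by a route genuinely different from the paper's. The paper never looks at $\ell(f^n)$ or at orbit rates. It proves the one-point case (Lemma~\ref{l:fragment}) via \eqref{eq-drift-cob}, taking $u_n$ equal to an almost optimal $v$ on $I^2\cup J^2$, to $-\frac1{(x-y)^2}$ on the off-diagonal blocks, and to $0$ on a corner of size $\frac1n$. The defect then reduces to small-measure terms plus cross-ratio logarithms tending to $\log Df(a)=0$. Finite collections follow by induction, and infinite ones by absolute continuity near the diagonal. Your route makes the sublinearity of the cross terms explicit and handles all blocks at once. It too needs only $Df(b)=1$, which already gives $\log\frac1{|f^n(x)-b|}=o(n)$.
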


The key ingredient is the following lemma:

\begin{lem}
\label{l:fragment}
If $f\in\Diff^{>2}_+([0,1])$ has $a\in(0,1)$ as a parabolic fixed point, then 
$$\drift_{\ell,[0,1]}(f)=\drift_{\ell,[0,a]}(f)+\drift_{\ell,[a,1]}(f).$$
\end{lem}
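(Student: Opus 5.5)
We need to show that the drift of $\ell$ on $[0,1]$ (with the $L^1$ norm taken on $[0,1]^2$) splits as the sum of the drifts on $[0,a]^2$ and $[a,1]^2$. Since $\ell(f^n)$ is integrated over $[0,1]^2=[0,a]^2\cup[a,1]^2\cup([0,a]\times[a,1])\cup([a,1]\times[0,a])$, inequality \eqref{e:loc} already gives ``$\ge$''. By symmetry of $\ell$, the reverse inequality reduces to
$$
\frac1n\|\ell(f^n)\|_{L^1([0,a]\times[a,1])}\xrightarrow[n\to\infty]{}0 .
$$
So the plan is to control the Liouville cocycle on the off-diagonal block where $x<a<y$.

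The first step is to exploit the explicit primitive of $\ell$ on rectangles. For $x<a<y$ the function $\ell(f^n)(x,y)$ is continuous, and
$$\ell(f^n)(x,y)=\partial_x\partial_y\log\frac{f^n(x)-f^n(y)}{x-y}.$$
I would split $[0,a]\times[a,1]$ into a part near the corner $(a,a)$, say $[a-\eta,a]\times[a,a+\eta]$, and its complement.

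On the complement, $x$ or $y$ stays at distance $\ge\eta$ from $a$. I plan to use the cocycle relation $\ell(f^n)=\sum_{k<n}(F^k)^*\ell(f)$ with $F=f^{\otimes2}$, together with $\|(F^k)^*\ell(f)\|_{L^1(A)}=\|\ell(f)\|_{L^1(F^k(A))}$. This bounds $\frac1n\|\ell(f^n)\|_{L^1(A)}$ by the average of $\|\ell(f)\|_{L^1(F^k(A))}$ over $k<n$. Orbits of $F$ on $[0,a]\times[a,1]$ accumulate on the products of fixed points of $f$. Away from the corner $(a,a)$, $\ell(f)$ is bounded near those limit points (the singularity of $\ell(f)$ is only on the diagonal, which is of order $|x-y|^{\alpha-1}$ by Proposition \ref{p:ell}). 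So by a Cesàro argument the average measure of the mass that stays in a region where $\ell(f)$ is large is small. I expect this to give a $o(n)$ contribution, perhaps after also using that $\ell(f)$ is integrable and the images $F^k(A)$ escape to small-area neighbourhoods of fixed-point products.

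The main obstacle is the corner $(a,a)$, where the rectangles $F^k(A)$ stay near the diagonal singularity. Here I would use parabolicity. Near a parabolic fixed point, $f$ is close to the identity in $C^{2+\alpha}$ on shrinking neighbourhoods after the rescaling $x\mapsto a+\lambda(x-a)$. In fact, $f^n$ restricted to a neighbourhood $[a-\eta_n,a+\eta_n]$ behaves like a near-translation in the Szekeres coordinates on each side. The estimate $|\ell(g)(x,y)|\le C''_g|x-y|^{\alpha-1}$ from Proposition \ref{p:ell}, applied to a germ that is $C^{2+\alpha}$-close to the identity, gives a small contribution on a small square. Concretely, I would try to bound $\|\ell(f^n)\|_{L^1([a-\eta,a]\times[a,a+\eta])}$ by $\sum_{k<n}\|\ell(f)\|_{L^1(F^k(\text{corner}))}$. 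I would then observe that $F^k$ maps the corner square into $[a-\eta,a]\times[a,a+\eta]$ essentially (since $a$ is fixed and $f$ is monotone). The total is then at most $n\,\|\ell(f)\|_{L^1([a-\eta,a+\eta]^2\cap\{x<a<y\})}$, which is $n\cdot o_\eta(1)$ by integrability of $\ell(f)$. This crude bound already seems to suffice: the sides $x<a<y$ are preserved by $F$, so the corner contributes at most $n\,\epsilon(\eta)$.

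The remaining issue is the complement region, where I must ensure that $F^k$ pushes it into the $\eta$-corner for most $k$ or into places where $\ell(f)$ has small mass. This is where I would use parabolicity (or where hyperbolic interior points would break the argument, matching Proposition \ref{p:multip}). Points of $(0,a)$ drift toward fixed points of $f|_{[0,a]}$, which might be $a$ itself or other fixed points, and the mass of $\ell(f)$ near $(b,c)$ with $b<a<c$ and $b\neq c$ is small on small squares. I expect to finish with a dominated Cesàro argument and then let $\eta\to0$.
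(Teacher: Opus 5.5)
\textbf{There is a genuine gap.} Your reduction is fine: ``$\ge$'' is immediate, and by symmetry of $\ell$ the lemma is equivalent to $\frac1n\|\ell(f^n)\|_{L^1(I\times J)}\to0$, where $I=[0,a]$ and $J=[a,1]$. The problem is the mechanism you propose for proving this. All your estimates come from the triangle inequality applied to $\ell(f^n)=\sum_{k<n}(F^k)^*\ell(f)$, that is, $\|\ell(f^n)\|_{L^1(A)}\le\sum_{k<n}\|\ell(f)\|_{L^1(F^k(A))}$. This can never produce $o(n)$ on the off-diagonal block.

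Since $F=f^{\otimes2}$ preserves $I\times J$, for any splitting $I\times J=A_1\sqcup A_2$ (your corner square and its complement), the sets $F^k(A_1)$ and $F^k(A_2)$ again partition $I\times J$ for every $k$. Your two bounds therefore add up to exactly $n\|\ell(f)\|_{L^1(I\times J)}$, which is just the trivial bound. The $L^1$ mass of $\ell(f)$ on the block is conserved under $F$, so it cannot become small on both pieces.

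For instance, if $f$ pushes points away from $a$ on both sides, $F^k$ does not map the corner square into itself; it spreads it over the whole block. Preservation of the sides $x<a<y$ does not give preservation of the corner. If instead $a$ attracts on some side, the images of the corner may shrink, but then the images of the complement fill up the block rather than escaping to small-mass regions.

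A sanity check confirms that something must break: your argument never uses $Df(a)=1$. Yet the statement is false for hyperbolic $a$; see the remark right after the lemma together with Proposition \ref{p:multip}.

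The missing idea is \emph{cancellation}. On $I\times J$ off the corner $(a,a)$, one has $\ell(f)=F^*u-u$ with $u(x,y)=(x-y)^{-2}$, which is integrable there. So on the block, $\ell(f)$ is a genuine coboundary except near the corner. The paper exploits this through \eqref{eq-drift-cob}, taking:
\begin{itemize}
\item $u_n=v$ on $I^2\cup J^2$, where $v$ is almost optimal for $f|_I$ and $f|_J$;
\item $u_n=0$ on $\sigma(R_n)$, where $R_n=(a-\frac1n,a)\times(a,a+\frac1n)$ and $\sigma$ adds the reflected set;
\item $u_n=-(x-y)^{-2}$ elsewhere.
\end{itemize}
The coboundary defect then vanishes on the off-diagonal blocks outside $\sigma(R_n)\cup(f^{\otimes2})^{-1}(\sigma(R_n))$. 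Of the remaining contributions, $\int_{A_n}|\ell(f)|\to0$ by integrability. The others are computed exactly with the cross-ratio formula \eqref{eq:biraport} and are bounded by terms such as $\log\frac{f(a+\frac1n)-a}{1/n}$ and $\log\frac{a-f(a-\frac1n)}{1/n}$. These tend to $\log Df(a)=0$ precisely because $a$ is parabolic; for $Df(a)\neq1$ they tend to positive constants (Remark \ref{r:hyp}). This is where the hypothesis enters, and your proposal has no counterpart to it.
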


\begin{rem}
Upcoming results will imply that this cannot be true without the ``parabolic'' assumption. Indeed, assume for example that $f\in\Diff^{>2}_+([0,1])$ has a unique hyperbolic fixed point at $a$, that $Df(0)=\frac1{Df(a)}=Df(1)$ and that $f_{|[0,a]}$ and $f_{|[a,1]}$ embed in a $C^1$ flow. In this case, $f_{|[0,a]}$ and $f_{|[a,1]}$ are $C^{>2}$-conjugated to Möbius maps (cf. Corollary \ref{c:multip}), and thus, since the drift is invariant under conjugacy, 
$\drift_{\ell,[0,a]}(f) =  \drift_{\ell,[a,1]}(f)=0$. However, we will see in the next subsection that a diffeomorphism of $[0,1]$ with an interior hyperbolic fixed point cannot have vanishing Liouville drift. We will clearly see in the proof below where the parabolicity comes into play (cf. Remark \ref{r:hyp}).
\end{rem}

\begin{proof}[Proof of Lemma \ref{l:fragment}]
As we already pointed out, the inequality 
$$
\drift_{\ell,[0,1]}(f)\ge\drift_{\ell,[0,a]}(f)+\drift_{\ell,[a,1]}(f)
$$  
is immediate. Let us now show that for every $\eps>0$, 
$$
\drift_{\ell,[0,1]}(f)\le \drift_{\ell,[0,a]}(f)+\drift_{\ell,[a,1]}(f)+\eps.
$$ 
According to \eqref{eq-drift-cob}, it suffices to find $u\in L^1([0,1]^2)$ such that 
\begin{equation}
\label{e:ineg-cobord}
\|\ell(f)-(u-f^*u)\|_{L^1([0,1]^2)}\le  \drift_{\ell,[0,a]}(f)+\drift_{\ell,[a,1]}(f)+\eps.
\end{equation}
Let $I=[0,a]$ and $J=[a,1]$. Again, thanks to \eqref{eq-drift-cob} applied to $f_{|I}$ and $f_{|J}$, we know that there exists $v\in L^1(I^2\cup J^2)$ such that 
\begin{equation}
\label{e:cobord2}
\|\ell(f)-(v-f^*v)\|_{L^1(I^2\cup J^2)}\le \drift_{\ell,I}(f)+\drift_{\ell,J}(f)+\tfrac\eps2.
\end{equation} 
Let $\iota$ denote the reflexion $(x,y)\mapsto (y,x)$ and, given a subset $A$ of $[0,1]^2$, define $\sigma(A)$ as $A\cup \iota(A)$.
For $n\ge\max(\frac1a,\frac1{1-a})$, let us define $u_n\in L^1([0,1]^2)$ by 
\begin{itemize}
\item $u_n=v$ on $I^2\cup J^2$, 
\item $u_n=0$ on $\sigma(R_n)$, where $R_n=(a-\frac1n,a)\times (a,a+\frac1n)$,
\item $u_n(x,y)=-\frac1{(x-y)^2}$ everywhere else.
\end{itemize}
The coboundary defect $\delta_n:=|\ell(f)-(u_n-f^*u_n)|$ is then equal to
$$\left \{
      \begin{array}{rl}
         |\ell(f)-(v-f^*v)|  & \mbox{on} \; I^2\cup J^2, \\ 
         |\ell(f)| & \mbox{on} \; \sigma(R_n)\cap(f^{\otimes2})^{-1}(\sigma(R_n))=:A_n, \\
         |f^*u_n| & \mbox{on} \; \sigma(R_n)\setminus(f^{\otimes2})^{-1}(\sigma(R_n))=:B_n, \\
         |u_n| & \mbox{on} \; (f^{\otimes2})^{-1}(\sigma(R_n))\setminus \sigma(R_n)=:C_n, \\
         0 &\mbox{elsewhere}  
      \end{array}
   \right .$$
(recall that, if $u$ is the function $(x,y)\mapsto \frac1{(x-y)^2}$ on $[0,1]^2$ minus the diagonal, then $\ell(f)=f^*u-u$).
Thus, 
\begin{align*}
\label{e:cobord3}
\|\delta_n\|_{L^1([0,1]^2)}
&\le \int_{I^2\cup J^2}|\ell(f)-(v-f^*v)|+\int_{A_n}|\ell(f)|+\int_{B_n}|f^*u_n|+\int_{C_n}|u_n|\\
&\le \drift_{\ell,I}(f)+\drift_{\ell,J}(f)+ \frac\eps2+\int_{A_n}|\ell(f)|+\int_{f^{\otimes 2}(B_n)\cup C_n}|u_n|.
\end{align*}
Since $\ell(f)$ is $L^1$ on $[0,1]^2$ and the measure of $A_n$ goes to $0$ when $n$ goes to infinity, the term $\int_{A_n}|\ell(f)|$ is less than $\frac\eps4$ for $n$ large enough. Let us now evaluate $\int_{f^{\otimes 2} (B_n)}|u_n|$. Let $a_n^\pm~=~a~\pm~\frac1n$. Suppose that $f(a_n^-) \leq a_n^-$ and $f(a_n^+) \geq a_n^+$; the other situations can be handled similarly (note however that the direction of the inequalities might depend on $n$ since there could be an accumulation of fixed points at $a$, and the sign of $f-\id$ might change infinitely many times near $a$). 
In this situation, one easily checks that
$$
f^{\otimes 2}(B_n)
= \sigma(\underbrace{[f(a_n^-),a_n^-]\times[a,f(a_n^+)]}_{D_n}\cup\underbrace{[a_n^-,a]\times[a_n^+,f(a_n^+)]}_{E_n}).
$$
Using (\ref{eq:biraport}), we obtain 
$$
\int_{D_n}|u_n| = \int_{f(a_n^-)}^{a_n^-}  \int_{a}^{f(a_n^+)}  \frac{dx \, dy}{(x-y)^2} 
=\log \underbrace{\left(\frac{f(a_n^+)-a_n^-}{f(a_n^+)-f(a_n^-)}\right)}_{\le1} 
+ \log \underbrace{\left(\frac{a-f(a_n^-)}{a-a_n^-}\right)}_{\xrightarrow[n\to\infty]{}Df(a)=1} 
$$
and
$$
\int_{E_n}|u_n| =  \int_{a_n^-}^{a} \int_{a_n^+}^{f(a_n^+)}\frac{dx \, dy}{(x-y)^2} 
=\log \underbrace{\left(\frac{f(a_n^+)-a}{a_n^+-a}\right)}_{\xrightarrow[n\to\infty]{}Df(a)=1}
+\log \underbrace{\left(\frac{a_n^+-a_n^-}{f(a_n^+)-a_n^-}\right)}_{\le1}.
$$
Thus, if $n$ is large enough, $\int_{f^{\otimes 2}(B_n)}|u_n|<\frac\eps8$. Furthermore, in the situation we are considering, $C_n=\varnothing$. This shows that, in this case, if $n$ is large enough, then 
$$
\|\delta_n\|_{L^1([0,1]^2)}
\le \drift_{\ell,I}(f)+\drift_{\ell,J}(f)+ \eps.
$$

The case where $f(\alpha_n^-) \geq \alpha_n^-$ and $f(\alpha_n^+) \leq \alpha_n^+$ can be treated similarly, replacing $f$ by $f^{-1}$ (in this case, it is $B_n$ which is empty). Up to replacing $f$ by $f^{-1}$ again, there is one case left to study, namely $f(a_n^-) \leq a_n^-$ and $f(a_n^+) \leq a_n^+$. We leave to the reader to check that, again, $\int_{f^{\otimes 2}(B_n)\cup C_n}|u_n|$ can be estimated in terms of $\log Df(a)$, which vanishes. 

Hence for all $n$ large enough, we get that $\|\delta_n\|_{L^1([0,1]^2)}\le \drift_{\ell,I}(f)+\drift_{\ell,J}(f)+ \eps$, 
so \eqref{e:ineg-cobord} is satisfied for $u=u_n$. This concludes the proof.\end{proof}

\begin{rem}
\label{r:hyp}
In the proof above, if $f$ is hyperbolic at $a$ with $Df(a)=\lambda>1$, an easy computation shows that $\int_{D_n}|u_n|$ goes to $\log(\lambda+1)-\log(2)>0$ and $\int_{E_n}|u_n|$ goes to $\log(\lambda)-\log(2)+\log(\lambda+1)>0$, so the proof breaks in this case, as announced.
\end{rem}

\begin{proof}[Proof of Proposition \ref{p:fragment}] 
A straightforward induction yields the case where the collection $\cC$ is finite. Let us now assume it is infinite. It suffices to prove that for every $\eps>0$,
\begin{equation}
\label{e:frag-eps}
\drift_{\ell,[0,1]}(f)\le\sum_{I\in\cC}\drift_{\ell,I}(f)+\eps.
\end{equation}
Fix $\eps>0$, and assume $f$ is $C^{2+\alpha}$ for some positive $\alpha$. By Proposition \ref{p:ell} and the absolute continuity of the integral, there exists $\delta > 0$ such that, if $A \subset [0,1]^2$ is any subset of measure less than $\delta$, then 
\begin{equation}
\label{eq:abs-cont}
\int_{A} | \ell (f) | \leq \varepsilon.
\end{equation}
Let $A$ be a small open neighborhood of the diagonal $\{(x,x) \!: x \in [0,1]\} \subset [0,1]^2$, of measure less than $\delta$. It is easy to see that one can choose finitely many elements $I_1,\dots,I_k$ of $\cC$ such that, if $I_{k+1},\dots,I_m$ are the connected components of $[0,1]\setminus \cup_{1\le i\le k} I_i$, then $I_j^2  \subset D$ for all $j \geq k+1$. Then, according to \eqref{eq:abs-cont},  
\begin{align*}
\drift_{\ell,[0,1]}(f)=\sum_{i=1}^m\drift_{\ell,I_i}(f)
&\le \sum_{i=1}^k\drift_{\ell,I_i}(f) + \sum_{j=k+1}^m\|\ell(f)\|_{L^1(I_j^2)}\\
&\le \sum_{i=1}^k\drift_{\ell,I_i}(f) + \int_{A} | \ell(f) |  \le \sum_{I\in\cC}\drift_{\ell,I}(f) + \eps, 
\end{align*}
which proves \eqref{e:frag-eps}.
\end{proof}

\subsection{Necessary condition for the vanishing of $\drift_\ell$}
\label{ss:necessary}

\begin{lem} 
\label{l:relation}
If $f\in \Diff^2_+([0,1])$, then for all $x\in(0,1)$, 
$$-\nu_0(f)(x)+\tfrac12P(f)(x) = \int_0^x \ell(f)(x,y)dy$$
and 
$$-\nu_1(f)(x)+\tfrac12P(f)(x) = - \int_x^1 \ell(f)(x,y)dy$$
(where the integral are a priori only conditionally converging).
\end{lem}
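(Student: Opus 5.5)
The plan is to exploit the closed form $\ell(f)(x,y)=\partial_x\partial_y\log\tau(x,y)$, where $\tau(x,y)=\frac{f(x)-f(y)}{x-y}$, and to integrate in $y$ exactly, with a limit near the singular point $y=x$. Fix $x\in(0,1)$ and $0<\eps<x$. On $[0,x-\eps]$ the integrand is continuous, so
$$
\int_0^{x-\eps}\ell(f)(x,y)\,dy=\Big[\partial_x\log\tau(x,y)\Big]_{y=0}^{y=x-\eps}.
$$
This is legitimate because $\partial_y\partial_x\log\tau=\partial_x\partial_y\log\tau$ away from the diagonal; there $\tau$ is $C^2$, and in fact $C^\infty$ in $y$ off the diagonal only up to the regularity of $f$, but $\partial_x\log\tau=\frac{Df(x)}{f(x)-f(y)}-\frac1{x-y}$ is $C^1$ in $y$ off the diagonal. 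Alternatively, one can integrate the explicit formula $\frac{Df(x)Df(y)}{(f(x)-f(y))^2}-\frac1{(x-y)^2}$ directly: its $y$-primitive is $\frac{Df(x)}{f(x)-f(y)}-\frac{1}{x-y}$. This direct route is the cleanest, since it needs only $f\in C^1$ for the primitive.

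Evaluating at $y=0$, using $f(0)=0$, gives $\frac{Df(x)}{f(x)}-\frac1x=\nu_0(f)(x)$, and this term enters with a minus sign. At $y=x-\eps$ we need the limit of
$$
\frac{Df(x)}{f(x)-f(x-\eps)}-\frac1\eps
$$
as $\eps\to0^+$. This limit is the main (though mild) step. The plan is to Taylor-expand using $f\in C^2$: $f(x)-f(x-\eps)=\eps Df(x)-\tfrac12\eps^2D^2f(x)+o(\eps^2)$. Then
$$
\frac{Df(x)}{f(x)-f(x-\eps)}=\frac1\eps\Big(1-\tfrac12\eps P(f)(x)+o(\eps)\Big)^{-1}=\frac1\eps+\tfrac12P(f)(x)+o(1),
$$
so the limit is $\tfrac12P(f)(x)$. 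This yields the first identity, with the integral understood as $\lim_{\eps\to0}\int_0^{x-\eps}$, which is the conditional convergence mentioned in the statement. If one wants a symmetric principal value instead, the same expansion on the other side, which appears in the second identity, handles it.

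For the second identity we integrate over $[x+\eps,1]$. The primitive evaluated at $y=1$ gives $\frac{Df(x)}{f(x)-1}-\frac1{x-1}=\nu_1(f)(x)$, using $f(1)=1$. At $y=x+\eps$ we get $\frac{Df(x)}{f(x)-f(x+\eps)}+\frac1\eps$. Expanding $f(x+\eps)-f(x)=\eps Df(x)+\tfrac12\eps^2D^2f(x)+o(\eps^2)$, this equals $-\frac1\eps\big(1-\tfrac12\eps P(f)(x)+o(\eps)\big)+\frac1\eps\to\tfrac12P(f)(x)$. Hence
$$
\int_x^1\ell(f)(x,y)\,dy=\nu_1(f)(x)-\tfrac12P(f)(x),
$$
which is the claim. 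The only care needed is the sign bookkeeping and the observation that merely $C^2$ regularity, and no Hölder condition, suffices pointwise in $x$, because only one-sided Taylor expansions at the fixed point $x$ are used.
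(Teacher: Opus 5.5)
Your proposal is correct and follows the paper's proof essentially step for step. Like the paper, you integrate $\ell(f)(x,\cdot)$ exactly via the primitive $\frac{Df(x)}{f(x)-f(y)}-\frac1{x-y}$, read off $-\nu_0(f)(x)$ at $y=0$, and use a second-order Taylor expansion to get the limit $\tfrac12P(f)(x)$ at $y=x-\eps$. The only difference is that you also write out the second identity, which the paper leaves to the reader as analogous, and your sign bookkeeping there is correct.
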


\begin{proof}
We will only prove the first equality, the second being analogous. Let $x\in(0,1)$ and $\eps\in(0,x)$. Then $y\mapsto \ell(f)(x,y)$ is continuous on $[0,x-\eps]$, and 
\begin{align*}
\int_0^{x-\eps}\ell(f)(x,y)dy
&=\int_0^{x-\eps}\left(\frac{Df(x)Df(y)}{(f(x)-f(y))^2}-\frac1{(x-y)^2}\right)dy\\
&=\left[ \frac{Df(x)}{f(x)-f(y)}-\frac1{x-y}\right]_0^{x-\eps}\\
&=\frac{Df(x)}{f(x)-f(x-\eps)}-\frac1{\eps}\underbrace{-\frac{Df(x)}{f(x)}+\frac1x}_{-\nu_0(f)(x)}
\end{align*}
So we are left with proving that $\frac{Df(x)}{f(x)-f(x-\eps)}-\frac1{\eps}$ goes to $\frac12P(f)(x)$ when $\eps$ goes to $0$. And indeed, one has
$$
f(x-\eps)=f(x)-\eps Df(x)+\frac{\eps^2}2D^2f(x)+o(\eps^2),
$$
so $f(x)-f(x-\eps)=\eps Df(x)-\frac{\eps^2}2D^2f(x)+o(\eps^2)$ and thus
\begin{align*}
\frac{Df(x)}{f(x)-f(x-\eps)}-\frac1{\eps}&=\frac1\eps\left(\frac{\eps Df(x)}{f(x)-f(x-\eps)}-1\right)\\&=\frac1\eps\left(\frac1{1-\frac\eps2 P(f)(x)+o(\eps)}-1\right)=\frac12P(f)(x)+o(1), 
\end{align*}
as required.
\end{proof}

\begin{cor}
\label{c:necessary}
For all $f\in\Diff^{>2}_+[0,1]$ without interior fixed point and all $x\in(0,1)$,
$$
\mu(f)(x) = \int_0^x\ell(f)(x,y)dy-\int_x^1\ell(f)(x,y)dy\quad \text{and}\quad (\nu_1-\nu_0)(f)(x)=\int_0^1\ell (f)(x,y)dy.
$$
In particular,
$$
\|\mu(f)\|_{L^1}\le \|\ell(f)\|_{L^1}\quad \text{and}\quad \|(\nu_1-\nu_0)(f)\|_{L^1}\le \|\ell(f)\|_{L^1},
$$
and after stabilization:
$$\var\log DM_f=\drift_\mu(f)\le \drift_\ell(f)$$
and
$$|\log(Df(0)Df(1))|\le\drift_{\nu_0-\nu_1}(f)\le \drift_\ell(f).$$
\end{cor}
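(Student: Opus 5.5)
The plan is to read both identities off Lemma~\ref{l:relation}, which gives, for every $x\in(0,1)$,
$$-\nu_0(f)(x)+\tfrac12P(f)(x)=\int_0^x\ell(f)(x,y)\,dy,\qquad -\nu_1(f)(x)+\tfrac12P(f)(x)=-\int_x^1\ell(f)(x,y)\,dy.$$
Adding these two lines gives $P(f)-\nu_0(f)-\nu_1(f)=\mu(f)$ on the left and $\int_0^x\ell(f)(x,y)dy-\int_x^1\ell(f)(x,y)dy$ on the right. Subtracting the second line from the first gives $(\nu_1-\nu_0)(f)(x)=\int_0^x\ell(f)(x,y)dy+\int_x^1\ell(f)(x,y)dy=\int_0^1\ell(f)(x,y)dy$.

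The only subtlety is that Lemma~\ref{l:relation} gives these integrals only as conditionally convergent limits, as $\eps\to0$, of integrals over $[0,x-\eps]$ and $[x+\eps,1]$. Since $f$ is $C^{2+\alpha}$ for some $\alpha>0$, the estimate in the proof of Proposition~\ref{p:ell} gives $|\ell(f)(x,y)|\le C''_f|x-y|^{\alpha-1}$. Hence, for every $x$, the function $y\mapsto\ell(f)(x,y)$ is absolutely integrable on $[0,1]$. The conditionally convergent limits therefore coincide with genuine Lebesgue integrals, and the two pointwise identities hold for every $x\in(0,1)$.

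The $L^1$ bounds then follow from the triangle inequality and Fubini (applicable since $\ell(f)\in L^1([0,1]^2)$ by Proposition~\ref{p:ell}):
$$\|\mu(f)\|_{L^1}\le\int_0^1\Big(\int_0^x|\ell(f)(x,y)|dy+\int_x^1|\ell(f)(x,y)|dy\Big)dx=\|\ell(f)\|_{L^1},$$
and the same computation bounds $\|(\nu_1-\nu_0)(f)\|_{L^1}$.

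For the stabilized statements, I apply the $L^1$ inequalities to $f^n$, which is again in $\Diff^{>2}_+([0,1])$ and has no interior fixed point. Dividing by $n$ and letting $n\to\infty$ gives $\drift_\mu(f)\le\drift_\ell(f)$ and $\drift_{\nu_1-\nu_0}(f)\le\drift_\ell(f)$. The drift of $\nu_1-\nu_0$ equals that of $\nu_0-\nu_1$, since the two cocycles differ only by a sign. Theorem~\ref{t:mu} (Theorem B) identifies $\drift_\mu(f)$ with $\var\log DM_f$; this is where the absence of interior fixed points is used. Corollary~\ref{l:nu} gives $|\log(Df(0)Df(1))|\le\drift_{\nu_0-\nu_1}(f)$.

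The only step requiring care is upgrading the conditional integrals to absolute ones, and the Hölder bound on $\ell(f)$ near the diagonal settles it.
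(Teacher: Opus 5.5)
Your proposal is correct and follows the paper's own argument. The corollary is read off from Lemma~\ref{l:relation} by adding and subtracting its two identities, and the $L^1$ bounds follow by integrating in $x$. The stabilized inequalities then come from applying these bounds to $f^n$ and invoking Theorem~\ref{t:mu} and Corollary~\ref{l:nu}. Your use of the bound $|\ell(f)(x,y)|\le C|x-y|^{\alpha-1}$ from the proof of Proposition~\ref{p:ell} turns the conditionally convergent integrals into genuine Lebesgue integrals, a justification the paper leaves implicit.
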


The inequality $|\log(Df(0)Df(1))|\le \drift_\ell(f)$ actually holds in a more general context:

\begin{prop}
\label{p:multip}
For every $f\in\mathrm{Diff}^{>2}_+([0,1])$,
$$\|\ell(f)\|_{L^1}\ge |\log (Df(0)Df(1))|$$ 
and (by stabilization) 
$$\drift_\ell(f)\ge |\log (Df(0)Df(1))|.$$
\end{prop}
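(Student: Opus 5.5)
The plan is to integrate the identity of Lemma \ref{l:relation} in $x$. The key observation is that subtracting the two equalities of Lemma \ref{l:relation} cancels the term $\tfrac12 P(f)$ and gives, for every $x\in(0,1)$,
$$
(\nu_1-\nu_0)(f)(x)=\int_0^x \ell(f)(x,y)\,dy+\int_x^1\ell(f)(x,y)\,dy .
$$
Lemma \ref{l:relation} was stated for all $f\in\Diff^2_+([0,1])$ with no assumption on interior fixed points, so this relation holds for every $f$ in our class. The first half of Corollary \ref{c:necessary} therefore extends verbatim to arbitrary $f\in\Diff^{>2}_+([0,1])$.

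\textbf{Fubini step.} Since $f\in\Diff^{2+\alpha}_+([0,1])$, Proposition \ref{p:ell} gives $\ell(f)\in L^1([0,1]^2)$. By Fubini, for almost every $x$ the function $y\mapsto\ell(f)(x,y)$ is absolutely integrable on $[0,1]$. Its absolutely convergent integral then coincides with the conditionally convergent expression above, namely the limit of the integrals over $[0,x-\eps]$ and $[x+\eps,1]$. Hence $(\nu_1-\nu_0)(f)(x)=\int_0^1\ell(f)(x,y)\,dy$ for a.e. $x$, and
$$
\Big|\int_0^1(\nu_1-\nu_0)(f)\Big|\le\|(\nu_1-\nu_0)(f)\|_{L^1}\le\|\ell(f)\|_{L^1}.
$$

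\textbf{Computing the integral.} I would compute $\int_0^1(\nu_1-\nu_0)(f)$ exactly, using $\nu_i(f)=D\log\big(\tfrac{f-i}{\id-i}\big)$ and Proposition \ref{p:tau}. The latter shows both functions are integrable and that $\tfrac{f}{\id}$ and $\tfrac{f-1}{\id-1}$ extend continuously, with values $Df(0)$ at $0$ and $Df(1)$ at $1$. Integrating over $[\eta,1-\eta]$ and letting $\eta\to0$ gives
$$
\int_0^1\nu_1(f)=\log Df(1)-\log\tfrac{f(0)-1}{0-1}=\log Df(1)
$$
and
$$
\int_0^1\nu_0(f)=\log\tfrac{f(1)}{1}-\log Df(0)=-\log Df(0).
$$
Hence $\int_0^1(\nu_1-\nu_0)(f)=\log(Df(0)Df(1))$, and the first inequality follows.

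\textbf{Stabilization.} I apply the first inequality to $f^n$ and use the chain rule at the fixed points $0$ and $1$: $\log(Df^n(0)Df^n(1))=n\log(Df(0)Df(1))$. Dividing $\|\ell(f^n)\|_{L^1}\ge n|\log(Df(0)Df(1))|$ by $n$ and letting $n\to\infty$ gives $\drift_\ell(f)\ge|\log(Df(0)Df(1))|$.

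The only delicate point is the Fubini step: one has to check that the a.e.-defined absolutely convergent inner integral equals the principal-value-type expression from Lemma \ref{l:relation}. Both are limits of $\int_{[0,1]\setminus(x-\eps,x+\eps)}\ell(f)(x,y)\,dy$, so I expect this to be routine. The endpoint limits in the computation are handled by Proposition \ref{p:tau}.
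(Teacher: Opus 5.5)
Your proof is correct and follows essentially the paper's route. The paper bounds $\|\ell(f)\|_{L^1}$ below by $\bigl|\iint_{[0,1]^2}\ell(f)\bigr|$ and evaluates the inner $y$-integral through the antiderivative $\frac{Df(x)}{f(x)-f(y)}-\frac{1}{x-y}$, which gives exactly $(\nu_1-\nu_0)(f)(x)$, then integrates these logarithmic derivatives to get $\log Df(0)+\log Df(1)$; you reach the same identity via Lemma \ref{l:relation} and an extra step through $\|(\nu_1-\nu_0)(f)\|_{L^1}$, and your Fubini remark makes explicit the continuity of that antiderivative across the diagonal, which the paper uses without comment.
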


\begin{proof} Given $f\in\mathrm{Diff}^{>2}_+([0,1])$,
\begin{align*}
\|\ell(f)\|_{L^1}&= \iint_{[0,1]^2}\left|\ell(f)(x,y)\right|dxdy\\
& \ge \left|\iint_{[0,1]^2}\ell(f)(x,y)dxdy\right|\\
&=\left|\int_0^1 \left[\frac{Df(x)}{f(x)-f(y)}-\frac1{x-y}\right]_{y=0}^{y=1} dx\right|\\
&=\left|\int_0^1 \left(\frac{Df(x)}{f(x)-1}-\frac1{x-1}-\frac{Df(x)}{f(x)}+\frac1x\right)dx\right|\quad\text{(since $f(0)=0$ and $f(1)=1$)}\\
&=\left|\Big[\log(1-f(x))-\log(1-x)-\log f(x)+\log x\Big]_0^1\right|\quad\text{(since $x,f(x)$ \text{lie in} $[0,1]$)}\\
&=\left|\left[\log\left(\frac{f(x)-f(1)}{x-1}\right)-\log\left(\frac{f(x)}x\right)\right]_0^1\right|\\
&=\left|\log Df(1) - \log 1-\log 1+\log Df(0)\right|
\end{align*}
as announced.
\end{proof}

Combining this with \eqref{e:loc}, we already get:
\begin{cor}
\label{c:multip}
If $f\in\Diff^{>2}_+([0,1])$ satisfies $\drift_\ell(f)=0$, then $Df(0)=\frac1{Df(1)}$ and either $f$ has no interior fixed point or $f$ has only parabolic fixed points. 
\end{cor}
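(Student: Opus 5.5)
The plan is to combine Proposition \ref{p:multip} with the superadditivity inequality \eqref{e:loc}, applied both to $[0,1]$ and to the invariant subintervals cut out by interior fixed points.

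\textbf{Step 1: the endpoint relation.} Proposition \ref{p:multip} gives $0=\drift_\ell(f)\ge|\log(Df(0)Df(1))|$. Hence $Df(0)Df(1)=1$, that is, $Df(0)=\frac1{Df(1)}$.

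\textbf{Step 2: localizing to an invariant subinterval.} Suppose $f$ has an interior fixed point $a\in(0,1)$. Both $I=[0,a]$ and $J=[a,1]$ are compact and $f$-stable, with disjoint interiors. By \eqref{e:loc},
$$0=\drift_{\ell,[0,1]}(f)\ge \drift_{\ell,I}(f)+\drift_{\ell,J}(f),$$
so both local drifts vanish.

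We need a version of Proposition \ref{p:multip} on a subinterval. Conjugate $f_{|I}$ to a diffeomorphism $\tilde f$ of $[0,1]$ by the affine map $A:[0,1]\to I$. Since $\ell$ is defined through $\frac1{(x-y)^2}$, it is invariant under affine changes of coordinates: $(A^{\otimes2})^*$ maps $v$ to $v$. Hence
$$\ell(A^{-1}gA)=(A^{\otimes2})^*\ell(g)$$
on $I^2$, up to the cocycle bookkeeping, with $\ell(A)=0$. It follows that $\drift_{\ell,I}(f)=\drift_\ell(\tilde f)$. Applying Proposition \ref{p:multip} to $\tilde f$ gives $Df(0)Df(a)=1$, and the same argument on $J$ gives $Df(a)Df(1)=1$.

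\textbf{Step 3: the fixed point is parabolic.} Combining with Step 1, $Df(0)=Df(1)^{-1}=Df(a)$. Taking $I=[0,a]$ and $J=[a,1]$ simultaneously, the two relations $Df(0)Df(a)=1$ and $Df(a)Df(1)=1$ do not by themselves force $Df(a)=1$, so a third stable pair of intervals is needed. This is the step I expect to be the main obstacle.

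\textbf{Step 4: cases on the number of interior fixed points.} If there is a second interior fixed point $b>a$, applying the relation to $[a,b]$ gives $Df(a)Df(b)=1$. Together with $Df(0)Df(a)=1$, this yields $Df(b)=Df(0)=Df(a)^{-1}$.

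Cycling through consecutive fixed points then gives a closed system. If $\lambda=Df(a)$, then every fixed point adjacent to a fixed point with derivative $\lambda$ has derivative $\lambda^{-1}$. Now use the interval $[0,b]$ itself: it gives $Df(0)Df(b)=1$, so $\lambda^{-1}\cdot\lambda^{-1}=1$. Hence $\lambda=1$, and all fixed points are parabolic.

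If instead $a$ is the only interior fixed point, the relations $Df(0)Df(a)=1$, $Df(a)Df(1)=1$ and $Df(0)Df(1)=1$ give $Df(a)^2=1$. Hence $Df(a)=1$, and likewise $Df(0)=Df(1)=1$.

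In general, for any interior fixed point $a$, the three intervals $[0,a]$, $[a,1]$ and $[0,1]$ supply exactly these three multiplicative relations. So $Df(a)=1$ always holds, and this is the clean argument to write. It also covers accumulating fixed points, since each is treated individually.
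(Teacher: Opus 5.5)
Your proof is correct and is essentially the paper's. You use \eqref{e:loc} to kill $\drift_{\ell,[0,a]}(f)$ and $\drift_{\ell,[a,1]}(f)$; this is the right justification, whereas the paper's proof cites Lemma~\ref{l:fragment}, whose parabolicity hypothesis is not yet available. You then apply Proposition~\ref{p:multip} on $[0,a]$, $[a,1]$ (your affine rescaling validly justifies this) and on $[0,1]$, which gives $Df(0)=Df(1)=Df(a)=1$.

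The ``obstacle'' you flag in Step~3 is not real: the needed third relation is just Step~1. So the case analysis with a second fixed point $b$ in Step~4 can simply be deleted.
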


\begin{proof}
The equality $Df(0)=\frac1{Df(1)}$ follows directly from the above proposition. Now assume $f$ has an interior fixed point $a$. According to Lemma \ref{l:fragment}, $\drift_{\ell,[0,a]}(f)=\drift_{\ell,[a,1]}(f)=0$. Thus, applying Proposition \ref{p:multip} to $f_{|[0,a]}$, $f_{|[a,1]}$ and $f$, we get $Df(0)=\frac1{Df(a)}=Df(1)$ and $Df(0)=\frac1{Df(1)}$ so $Df(0)=Df(1)=Df(a)=1$. 
\end{proof}

We can now prove Theorem \ref{t:criterion}. Let $f\in\mathrm{Diff}^{>2}_+([0,1])$.
\medskip

\noindent\textbf{Necessary condition.} Assume that $\drift_\ell(f)=0$. Let us first assume $f$ has a hyperbolic fixed point. Then according to the above corollary, it actually has exactly two fixed points, $0$ and $1$, and $Df(0)=\frac1{Df(1)} \neq 1$. Let $g$ be the unique Möbius map having the same derivatives as $f$ at the endpoints (such a map exists thanks to the previous equality). The diffeomorphisms $f$ and $g$ are $C^{>2}$-conjugated near the endpoints by Sternberg's linearization theorem and embed respectively in a $C^1$ flow and a smooth flow without interior fixed points. According to classical works by Mather, this directly implies that they are actually $C^{>2}$-conjugated on the whole interval.

Let us now assume that $f$ has only parabolic fixed points. Let $\mathcal{C}$ be the family of closures of connected components of $[0,1]\setminus \Fix(f)$, and let $I$ be any member of this family. According to Proposition \ref{p:fragment}, $\drift_{\ell,I}(f)=0$, which implies, thanks to Corollary \ref{c:necessary}, that $f$ embeds in a $C^1$ flow that is parabolic at the endpoints; equivalently, $V_\infty(f_{|I})=0$. Now it was proved in \cite{EN21} that $V_\infty(f)=\sum_{I\in\cC}V_\infty(f_{|I})$. Therefore $V_\infty(f)=0$, and again due to \cite[Corollary 2]{EN21}, the map $f$ is ``$C^1$ parabolically flowable'' on $[0,1]$.
\medskip

\noindent\textbf{Sufficient condition.} If $f$ is $C^{>2}$-conjugated to a Möbius transformation $g$, since $\drift_\ell$ is invariant under such conjugacies and $\ell(g^n)\equiv0$ for every $n\in\N$ according to Lemma \ref{l:mobius}, we have $\drift_\ell(f)=\drift_\ell(g)=0$. Let us now assume that $f$ embeds in a $C^1$ flow without hyperbolic fixed points. Then according to Proposition \ref{p:fragment}, if $\cC$ denotes the set of closures of connected components of $[0,1]\setminus\Fix(f)$, 
$$
\drift_{\ell,[0,1]}(f)=\sum_{I\in\cC}\drift_{\ell,I}(f).
$$
But according to Theorem \ref{t:vanish-drift-gen} applied to $c=\ell$ (replacing $[0,1]$ by any compact interval), $\drift_{\ell,I}(f)=0$ for every $I\in\cC$, which concludes the proof.

\subsection{A bi-product: non continuity of $\drift_\ell$}
\label{ss:non-cont}

The general considerations of Section \ref{s:drift} imply that $\drift_\ell$ is continuous in the $C^3$ topology at every $f\in\Diff^{3,\Delta}_+([0,1])$ (here, we consider 
$r=3$ just to fix ideas). However, this does not extend to the whole group of $C^3$ diffeomorphisms.
\begin{prop}
The map $\drift_\ell$ is not continuous from $(\mathrm{Diff}^3_+([0,1]),d_{C^3})$ to $\R_+$, nor even from $(\mathrm{Diff}^\infty_+([0,1]),d_{C^\infty})$ to $\R_+$.
\end{prop}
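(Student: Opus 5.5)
We will exhibit a sequence $f_k\to f$ in the $C^\infty$ topology (hence also in $C^3$) with $\drift_\ell(f_k)$ bounded away from $\drift_\ell(f)$. The mechanism is the one in Remark \ref{r:hyp} and Corollary \ref{c:multip}: by Lemma \ref{l:fragment} and Proposition \ref{p:fragment}, the drift is additive over stable subintervals at parabolic interior fixed points. We will then create hyperbolic interior fixed points by arbitrarily small perturbations, and these are penalized by Proposition \ref{p:multip}.

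\textbf{Choice of the limit $f$.} We take $f$ to be the time-$1$ map of a smooth vector field $Z$ on $[0,1]$ such that:
\begin{itemize}
\item $Z$ vanishes only at $0$, $a=\tfrac12$ and $1$;
\item $Z$ is parabolic at each of these zeros;
\item $Z$ is, for instance, $C^\infty$-flat at $a$.
\end{itemize}
Then $f$ embeds in a smooth flow without hyperbolic fixed points. By Theorem \ref{t:criterion} (sufficient condition, via Proposition \ref{p:fragment} and Theorem \ref{t:vanish-drift-gen}), $\drift_\ell(f)=0$.

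\textbf{Choice of the approximations.} Let $Z_k=Z+\eps_k\,\beta\,(x-a)$, where $\beta$ is a smooth bump equal to $1$ near $a$ and supported in a small neighbourhood, and $\eps_k\to0$. We choose $\beta$ and $Z$ so that $a$ stays the only zero near $a$. For this, it suffices that $Z$ have sign compatible with $\eps_k(x-a)$ on each side of $a$. Concretely, we take $Z\ge0$ on $(a,1)$ and $Z\le0$ on $(0,a)$, i.e.\ $a$ is repelling, and $\eps_k>0$. Let $f_k$ be the time-$1$ map of $Z_k$. Then $f_k\to f$ in $C^\infty$. Each $f_k$ fixes only $0,a,1$, with $Df_k(0)=Df_k(1)=1$ and $Df_k(a)=e^{\eps_k}\ne1$.

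\textbf{Lower bound on $\drift_\ell(f_k)$.} Corollary \ref{c:multip} already gives $\drift_\ell(f_k)>0$, but we need a lower bound uniform in $k$. The intended quantitative tool is the cocycle-defect computation of Remark \ref{r:hyp}. Consider $\sigma(R)$ with $R=(a-\delta,a)\times(a,a+\delta)$. The mass of $\ell(f_k^n)$ on the off-diagonal blocks $[0,a]\times[a,1]$ should grow like $n\cdot c(\lambda_k)$, with $\lambda_k=Df_k(a)$. Since $c(\lambda)\to0$ as $\lambda\to1$, this alone does not give a uniform bound.

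So we instead apply Proposition \ref{p:multip} to the restrictions $f_k|_{[0,a]}$ and $f_k|_{[a,1]}$. By \eqref{e:loc},
$$\drift_\ell(f_k)\ge\drift_{\ell,[0,a]}(f_k)+\drift_{\ell,[a,1]}(f_k)\ge 2|\log Df_k(a)|=2\eps_k,$$
which tends to $0$. This shows the naive perturbation fails, so the construction must be changed.

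\textbf{Corrected construction.} We fix the hyperbolicity at a definite size while letting its support shrink. Take $Z_k=Z+\phi_k$, where $\phi_k$ creates, inside a tiny neighbourhood $U_k$ of $a$ on which $Z$ is extremely flat, a new pair of zeros $b_k<c_k$. We want $Dt$-multipliers $e^{\pm1}$ at these zeros, fixed independently of $k$. Such multipliers need $|DZ_k|\approx1$ at $b_k,c_k$, which is not $C^1$-small. We therefore keep the multipliers fixed but tiny: $\log Df_k(b_k)=\eps$ with $\eps$ fixed. Then $\|\phi_k\|_{C^1}\approx\eps$ rather than $o(1)$, so this fails for convergence.

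\textbf{Main obstacle.} The hyperbolicity of created fixed points is bounded by the $C^1$ distance, so Proposition \ref{p:multip} alone cannot give a uniform gap. The real plan is therefore to exploit the \emph{number} of hyperbolic fixed points. We create $N_k\to\infty$ hyperbolic zeros inside a shrinking neighbourhood of the flat point $a$, each with multiplier $e^{\pm\eps_k}$, choosing $N_k\eps_k\ge1$. This is possible in the $C^\infty$ topology because $Z$ is flat at $a$: the oscillating perturbation $\eta_k(x)\sin\big((x-a)/h_k\big)$ has $C^m$ norms controlled by $\eta_k h_k^{-m}$, and the derivative at its zeros is $\sim\eta_k/h_k$. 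We need $\eta_k/h_k\approx\eps_k$ with $h_k^{-m}\eta_k\to0$ for each $m$ along a diagonal sequence, together with $N_k\sim|U_k|/h_k$ and $N_k\eps_k\sim|U_k|\eta_k/h_k^2$. Balancing these exponents against the flatness of $Z$ is the delicate step, and it is where we expect the difficulty to lie. The conclusion then follows from the additive lower bound via \eqref{e:loc} and Proposition \ref{p:multip} applied to each component:
$$\drift_\ell(f_k)\ \ge\ \sum|\log Df_k(b)|+|\log Df_k(c)|\ \gtrsim\ 2N_k\eps_k\ \ge\ 1.$$
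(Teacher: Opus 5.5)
Your approach cannot work, and the reason is structural rather than technical: you are trying to break upper semicontinuity, and $\drift_\ell$ is upper semicontinuous. By Fekete's lemma, $\drift_\ell(g)=\inf_{n\ge1}\|\ell(g^n)\|_{L^1}/n$. For each fixed $n$, the map $g\mapsto\|\ell(g^n)\|_{L^1}$ is continuous on $(\Diff^3_+([0,1]),d_{C^3})$, by Proposition \ref{p:ell}, continuity of $g\mapsto g^n$, and $C^3\subset C^{2+\alpha}$. An infimum of continuous functions is upper semicontinuous. So if $f_k\to f$ in $C^3$ (a fortiori in $C^\infty$), then $\limsup_k\drift_\ell(f_k)\le\drift_\ell(f)$. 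Your limit has $\drift_\ell(f)=0$, so every approximating sequence has $\drift_\ell(f_k)\to0$, and no perturbation can give a uniform gap.

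Your deferred ``balancing step'' already shows this. You need $N_k\eps_k\sim|U_k|\,\eta_k h_k^{-2}\ge1$, but you also require $\eta_k h_k^{-m}\to0$ for every $m$, in particular $m=2$. Since $|U_k|\le1$, this forces $N_k\eps_k\to0$, already under $C^2$ convergence. The flatness of $Z$ at $a$ plays no role.

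The missing idea is to reverse the roles. The discontinuity must be a failure of \emph{lower} semicontinuity: a limit with positive drift approximated by maps with zero drift. Corollary \ref{c:multip} and Proposition \ref{p:multip} are the right tools, but they should be applied to the limit, not to the approximants. This is what the paper does.

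\begin{itemize}
\item Start from the restriction $f_0$ of a M\"obius map whose only fixed points $0,1$ are hyperbolic, with $Df_0(0)=\lambda=Df_0(1)^{-1}\neq1$. It is the time-one map of a smooth field $X_0$.
\item Deform $X_0$ continuously through smooth fields $X_t$ that coincide with $X_0$ near the endpoints, have no interior zero for $t<1$, and have a parabolic interior zero $a$ at $t=1$.
\item For $t<1$, the time-one maps $f_t$ have the same endpoint germs as $f_0$ and trivial Mather invariant, so they are smoothly conjugate to $f_0$. Hence $\drift_\ell(f_t)=0$ by Lemma \ref{l:mobius} and conjugacy invariance.
\item The limit $f_1$ has an interior fixed point and hyperbolic endpoints, so $\drift_\ell(f_1)>0$ by Corollary \ref{c:multip}. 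Quantitatively, $\drift_\ell(f_1)\ge2|\log\lambda|$, by \eqref{e:loc} and Proposition \ref{p:multip} applied on $[0,a]$ and $[a,1]$.
\end{itemize}

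Since $f_t\to f_1$ in $C^\infty$ as $t\to1$, this gives the discontinuity in both topologies.
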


\begin{proof}
Start with a Möbius transformation of $\R\cup\{\infty\}$ having $0$ and $1$ as (its only) hyperbolic fixed points (with opposite derivatives $\lambda^{\pm1}$), and denote by $f$ its restriction to $[0,1]$. This diffeomorphism embeds in the flow of a smooth vector field $X$. Consider a continuous family of smooth vector fields $(X_t)_{t\in[0,1]}$, such that 
\begin{itemize}
\item $X_0=X$, 
\item $X_t$ coincides with $X_0$ near the endpoints for every $t\in[0,1]$,
\item $X_t$ has no zero in $(0,1)$ for every $t\in[0,1)$,
\item $X_1$ has a parabolic zero in $(0,1)$.
\end{itemize}
Now let $f_t$ be the time-$1$ map of $X_t$ for every $t$. All the $f_t$'s, $t\in[0,1)$, are smoothly conjugate near the endpoints and have trivial Mather invariant, so they are all smoothly conjugate to $f=f_0$ and thus have vanishing $\drift_\ell$. However, $f_1$ has an interior fixed point and hyperbolic fixed points, which prevents its $\drift_\ell$ to vanish according to Corollary \ref{c:multip}.
\end{proof}

\subsection{Expression for the Liouville drift at parabolic diffeomorphisms}

Here, we express the Liouville drift at a parabolic (non $C^1$ flowable) diffeomorphism without interior fixed point in terms of its generating vector fields (compare Theorem \ref{t:formula}):

\begin{prop}
\label{t:form-liouv}
Let $p\in(0,1)$ and $f\in \Diff^{>2,\Delta}_+([0,1])$, parabolic at the endpoints. Let $X$ and $Y$ be its left and right generating vector fields respectively, $\tau_X:x\mapsto \int_p^x\frac1X$ and $\tau_Y: x\mapsto \int_p^x\frac1Y$ be the diffeomorphism they induce from $(0,1)$ to $\R$, and $m_f=\tau_X^{-1}\tau_Y$, which is a $C^r$ diffeomorphism of $(0,1)$. Finally, let $\cD_n=\sigma([f^{-n}(p),f^{-n+1}(p)]\times[f^{-2n}(p),f^{-n+1}(p)])$. Then
$$
\drift_\ell(f)=\lim_{n\to\infty} \int_{\cD_n}|\ell(m_f)| 
$$
\end{prop}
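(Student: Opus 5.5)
The plan is to mimic the proof of Theorem \ref{t:formula}, replacing the one-dimensional fundamental domain $[p,f(p)]$ by a two-dimensional one adapted to $F=f^{\otimes2}$. Lemma \ref{l:cancel} provides $g=fh$, with $h$ supported in $[f^{-3}(p),p]$. This $g$ coincides with $f$ outside $[f^{-3}(p),p]$ and on the orbit of $p$, has trivial Mather invariant, and satisfies $g^{-3}f^3=m_f^{p,p}$ on $[f^{-3}(p),f^{-2}(p)]$. Here $m_f^{p,p}$ is exactly the map $m_f=\tau_X^{-1}\tau_Y$ of the statement, since $\tau_X=\psi_X^{-1}$ and $\tau_Y=\psi_Y^{-1}$. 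Because $f$ is parabolic at the endpoints and $g$ equals $f$ near them, $g$ is parabolic and $C^1$ flowable. Hence $V_\infty(g)=0$, and Theorem \ref{t:vanish-drift-gen} applied to $c=\ell$ gives $\drift_\ell(g)=0$.

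Next, I use the localization formula of Proposition \ref{p:local} with the simple fundamental domain $\cD=[0,f(p)]^2\setminus[0,p]^2$. This domain is also a fundamental domain for $G=g^{\otimes2}$, and $G^{-N}(\cD)=F^{-N}(\cD)$. We get
$$\drift_\ell(f)=\lim_{N\to\infty}\|\ell(f^{2N})\|_{L^1(F^{-N}(\cD))},\qquad \lim_{N\to\infty}\|\ell(g^{2N})\|_{L^1(G^{-N}(\cD))}=0.$$
Setting $H_N=g^{-2N}f^{2N}$, the cocycle relation gives $\ell(f^{2N})=(H_N^{\otimes2})^*\ell(g^{2N})+\ell(H_N)$. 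The first term has $L^1$ norm on $F^{-N}(\cD)$ equal to that of $\ell(g^{2N})$ on $H_N^{\otimes 2}(F^{-N}(\cD))=G^{-N}(\cD)$, which tends to $0$. So $\drift_\ell(f)=\lim_N\|\ell(H_N)\|_{L^1(F^{-N}(\cD))}$.

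Then I identify $H_N$ on the relevant coordinates, as in Theorem \ref{t:formula}. On $[0,f^{-N+1}(p)]$, which contains both coordinates of points of $F^{-N}(\cD)$, one would like $H_N=(f^{N-3})^{-1}m_f f^{N-3}=m_f$, using that $m_f$ commutes with $f$. This identification is the delicate step. Since $g=f$ on $[0,f^{-3}(p)]$, the formula $H_N=g^{-N}f^N$ holds only on the region where $f^N$ lands in $[p,1]$. For points much closer to $0$, $f^{2N}$ may not push them beyond $p$, and there $H_N$ is just the identity. So I would restrict to the part of $F^{-N}(\cD)$ where both coordinates lie in $[f^{-2N}(p),f^{-N+1}(p)]$ and one lies in $[f^{-N}(p),f^{-N+1}(p)]$, which is exactly $\cD_N$.

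On $\cD_N$, a point with coordinate $y\in[f^{-2N}(p),f^{-N}(p)]$ is sent by $f^{2N}$ into $[p,f^{N}(p)]$, and the same conjugation argument gives $H_N(y)=m_f(y)$; I expect this needs a slight recentering of the indices. The main obstacle is then showing that the complement $F^{-N}(\cD)\setminus\cD_N$ contributes negligibly. On that complement one coordinate $x$ lies near $f^{-N}(p)$ and the other $y\in[0,f^{-2N}(p)]$ is much smaller. There $\ell(H_N)$ involves $m_f$ at $x$ and the identity near $0$, i.e. $m_f$ glued to a map close to the identity. I would bound it using the cocycle splitting back into $\ell(f^{2N})$ and $\ell(g^{2N})$ pieces, together with the uniform tail estimate of Lemma \ref{l:local}: contributions from domains $F^k(\cD)$ with $k\notin[-N,N[$ are controlled by $\|\ell(f)\|_{L^1}$ on a shrinking neighborhood of the boundary. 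Parabolicity enters because $m_f$ is then $C^1$-tangent to the identity at $0$ in the appropriate sense, making $\ell(m_f)$ small on products of far-apart scales, much as the term $\log Df(a)$ vanished in Lemma \ref{l:fragment}.
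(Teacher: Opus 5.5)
Up to the last step your argument is the paper's. Lemma \ref{l:cancel} and Theorem \ref{t:vanish-drift-gen} give $\drift_\ell(g)=0$, and localization with $H_N=g^{-2N}f^{2N}$ reduces the drift to $\lim_N\|\ell(H_N)\|_{L^1(F^{-N}(\cD))}$. Writing $I_k=f^{-k}([p,f(p)])$, one has $H_N=g^{-k}f^k=m_f$ on $I_k$ for $N\le k\le 2N$, so $\ell(H_N)=\ell(m_f)$ on $\cD_N$.

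The gap is the step you call the main obstacle: showing $\int_{A_N}|\ell(H_N)|\to0$ on $A_N=\sigma(I_N\times(0,f^{-2N}(p)])$. Neither mechanism you invoke delivers it.

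\begin{itemize}
\item The tail term of Lemma \ref{l:local} does not control this region. Telescoping $\ell(f^{2N})=\sum_{j=0}^{2N-1}(F^j)^*\ell(f)$ on $A_N$ produces the sets $F^j(A_N)=\sigma(I_{N-j}\times(0,f^{-2N+j}(p)])\subset F^{j-N}(\cD)$ with $j-N\in[-N,N[$. These are exactly the domains that lemma does not control.
\item $m_f$ is not $C^1$-tangent to the identity at $0$. We have $Dm_f=\frac{X\circ m_f}{Y}$, with $X(m_f(x))/X(x)\to1$ and $X/Y$ invariant under $f$. Hence $Dm_f(f^{-n}(x))\to\frac{X}{Y}(x)$ for $x\in[p,f(p)]$, which is non-constant unless $M_f$ is trivial (and then there is nothing to prove).
\item On $A_N$ only one coordinate is moved by $m_f$, and $H_N$ need not be the identity on $I_{2N+1}\cup I_{2N+2}$. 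So $\ell(H_N)$ is not $\ell(m_f)$ there, and there is no pointwise smallness to exploit.
\end{itemize}

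The missing observation is that on $A_N$ you only need $H_N$ to preserve every $I_k$, which holds because $f$ and $g$ both fix the orbit of $p$. The paper writes $\ell(H_N)=(H_N^{\otimes2})^*u-u$ with $u(x,y)=(x-y)^{-2}$, which is integrable on these off-diagonal rectangles. By change of variables, $\int_{I_k\times I_N}|\ell(H_N)|\le2\int_{I_k\times I_N}u$. Summing over $k>2N$ and using \eqref{eq:biraport} gives
\[
\int_{A_N}|\ell(H_N)|\le 4\Big(\log\tfrac{f^{-N}(p)}{f^{-N+1}(p)}+\log\tfrac{f^{-N+1}(p)-f^{-2N}(p)}{f^{-N}(p)-f^{-2N}(p)}\Big).
\]
The first term tends to $-\log Df(0)=0$ by parabolicity. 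The second tends to $0$ because $\frac{f(x)-x}{x-f^{-M}(x)}\le\frac2M$ for $x$ near $0$, again since $Df(0)=1$.

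Alternatively, your cocycle-splitting idea can be rescued with the right shrinking set. The set $A_N\cap\{y<x\}$ lies in the $F$-invariant region $E_N=\{0<y\le f^{-N}(x)\}$, and the sets $F^j(A_N)$, $0\le j<2N$, have disjoint interiors. Hence $\|\ell(f^{2N})\|_{L^1(A_N)}\le\|\ell(f)\|_{L^1(\sigma(E_N))}\to0$ by absolute continuity of the integral. Meanwhile $\|(H_N^{\otimes2})^*\ell(g^{2N})\|_{L^1(A_N)}\le\|\ell(g^{2N})\|_{L^1(F^{-N}(\cD))}\to0$. This route uses parabolicity only through $\drift_\ell(g)=0$.

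Either argument closes the proof; your proposal, as written, does not.
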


\begin{proof}
We start just like in the proof of Theorem \ref{t:formula}. Letting $g=fh$ be the diffeomorphism given by Lemma \ref{l:cancel}, $\cD=[0,f(p)]^2\setminus[0,p]^2$ and, for every $n\in\N$, $H_n=g^{-2n}f^{2n}$, the very same reasoning yields 
$$
\drift_\ell(f)=\lim_{n\to+\infty}\int_{f^{-n}(\cD)}|\ell(H_n)|.
$$
Now observe 
$$
f^{-n}(\cD) = \cD_n \cup \sigma([f^{-n}(p),f^{-n+1}(p)]\times[0,f^{-2n}(p)]).
$$
Furthermore, if $I_k$ denotes $f^{-k}([p,f(p)]$, $\cD_n$ is a union of rectangles $I_n\times I_k$ or $I_k\times I_n$ with $0\le k\le n$. Now for every such $k$, on $I_k$,
\begin{align*}
H_n = g^{-2n}f^{2n}= g^{n-k}g^{-n}(g^{-2n+k}f^{2n-k})f^nf^{k-n}=g^{n-k}g^{-n}f^nf^{k-n}
\end{align*}
because $f^{2n-k}=g^{2n-k}$ on $f^k(f^{-k}(I_0)=[p,f(p)]$ (and even on $[p,1]$). Furthermore, just like in the proof of Theorem \ref{t:formula}, $g^{-n}f^n=m_f$ on $f^{k-n}(I_k)=I_n$, $g^{n-k}=f^{n-k}$ on $g^{-n}f^nf^{k-n}(I_k)=I_n$, and $m_f$ commutes with $f$. Therefore, in the end, $H_n = m_f$ on $I_k$ and 
$$
\int_{\cD_n}|\ell(H_n)|=\int_{\cD_n}|\ell(m_f)| .
$$
We are thus left with checking that the integral of $|\ell(H_n)|$ on $\sigma([f^{-n}(p),f^{-n+1}(p)]\times[0,f^{-2n}(p)])$ goes to $0$. Now, letting $u:(x,y)\mapsto \frac1{(x-y)^2}$, this integral is equal to
$$
2\sum_{k>2n}\int_{I_k\times I_n}\left|H_n^*u-u\right|\le 2\sum_{k>2n}\left(\int_{I_k\times I_n}|H_n^*u|+\int_{I_k\times I_n}|u|\right)
=4\sum_{k>2n}\int_{I_k\times I_n}|u|=4\int_{(0,f^{-2n}(p)]\times I_n}|u|
$$
since $H_n$ fixes the intervals $I_k$. The last integral can be computed explicitly:
$$
\int_{I_k\times I_n}|u| = \log\left(\frac{f^{-n}(p)}{f^{-n+1}(p)}\right)+\log\left(\frac{f^{-n+1}(p)-f^{-2n}(p)}{f^{-n}(p)-f^{-2n}(p)}\right).
$$
The first term on the right goes to $-\log Df(0)=0$ since $f$ was assumed parabolic, so we are left with proving that the last term goes to $0$ as well, or equivalently that $\frac{f^{-n+1}(p)-f^{-n}(p)}{f^{-n}(p)-f^{-2n}(p)}$ goes to~$0$. 
This is a direct consequence of the following lemma (applied to $x=f^{-n}(p)$), observing that the inequality
$$
0\le \frac{f^{-n+1}(p)-f^{-n}(p)}{f^{-n}(p)-f^{-2n}(p)} \le \frac{f^{-n+1}(p)-f^{-n}(p)}{f^{-n}(p)-f^{-n-N}(p)}
$$
holds for every $n\ge N\ge1$.
\end{proof}

\begin{lem}
Given $N\ge1$, there exists $\eps\in (0,1)$ such that for every $x\in(0,\eps)$,
$$0\le \frac{f(x)-x}{x-f^{-N}(x)}\le \frac2N.$$
\end{lem}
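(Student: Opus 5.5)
Here $f$ is as in the proposition: it satisfies $f(x)>x$ on $(0,1)$ and is parabolic at $0$, so $Df(0)=1$. The plan is to write the denominator as a telescoping sum of the $N$ consecutive displacements between $f^{-N}(x)$ and $x$, and to compare each displacement with $f(x)-x$ using parabolicity. Set $\Delta(z)=f(z)-z>0$ for $z\in(0,1)$. For $x\in(0,1)$,
$$
x-f^{-N}(x)=\sum_{j=1}^{N}\big(f^{-j+1}(x)-f^{-j}(x)\big)=\sum_{j=1}^{N}\Delta(f^{-j}(x)).
$$
Nonnegativity of the quotient is immediate, since $f(x)>x$ and $x>f^{-N}(x)$. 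It therefore suffices to show that each term satisfies $\Delta(f^{-j}(x))\ge\frac12\Delta(x)$ for $1\le j\le N$ and $x$ small. Summing then gives $x-f^{-N}(x)\ge\frac N2(f(x)-x)$, which is the claimed bound.

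To compare consecutive terms, we use $\Delta(f(z))=f(f(z))-f(z)=\int_z^{f(z)}Df$. By the mean value theorem this gives $\Delta(f(z))=Df(\xi)\,\Delta(z)$ for some $\xi\in(z,f(z))$. Since $f$ is $C^1$ with $Df(0)=1$, we can choose $\eta>0$ such that $Df\le 2^{1/N}$ on $[0,\eta]$. Then for every $z$ with $f(z)\le\eta$ we get $\Delta(f(z))\le 2^{1/N}\Delta(z)$. We apply this with $z=f^{-j}(x)$ for $j=1,\dots,N$; this is legitimate because $f(z)=f^{-j+1}(x)\le x$ whenever $x\le\eta$. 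Iterating gives $\Delta(x)\le 2^{j/N}\Delta(f^{-j}(x))\le 2\,\Delta(f^{-j}(x))$. Taking $\eps=\min(\eta,\tfrac12)$ then yields the statement.

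No step is a genuine obstacle. The only point needing care is that the backward orbit $f^{-j}(x)$ stays in $[0,\eta]$. This holds because $f^{-1}(z)<z$ near $0$, so the backward iterates of $x$ decrease.
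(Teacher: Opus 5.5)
Your proof is correct and follows essentially the same route as the paper: you both telescope $x-f^{-N}(x)$ into the $N$ consecutive displacements $f^{-k}(f(x))-f^{-k}(x)$, then use $Df(0)=1$ and the mean value theorem to bound each displacement below by $\tfrac12(f(x)-x)$. The only difference is cosmetic: the paper applies the mean value theorem directly to $f^{-k}$, using $Df^{-k}(0)=1$, while you apply it to $f$ and iterate the one-step bound $Df\le 2^{1/N}$ along the backward orbit.
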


\begin{proof}
For every $x\in(0,1)$,
\begin{align*}
\frac{x-f^{-N}(x)}{f(x)-x} = \sum_{k=1}^{N}\frac{f^{-k}(f(x))-f^{-k}(x)}{f(x)-x}=\sum_{k=1}^{N}Df^{-k}(y_{k,x})
\end{align*}
for some $y_{k,x}\in[x,f(x)]$. Since, for every $k\in[\![1,N]\!]$, $Df^{-k}(0)=1$, for all $x$ sufficiently close to $0$, $Df^{-k}(y_{k,x})\ge \frac12$, which yields the desired result.
\end{proof}

\subsection{The Liouville drift vanishes at minimal circle diffeomorphisms}
\label{s:circle}

In the case of the circle, the proper definition for $\ell$ is:
$$\ell(f)(x,y) = \frac{Df(x) Df(y)}{d(f(x),f(y))^2} - \frac{1}{d(x,y)^2}.$$

The case of circle diffeomorphisms with rational rotation number should reduce to the case of the interval and will not be studied here. For the irrational case, according to Avila-Krikorian (unpublished), any $C^\infty$ circle diffeomorphism with irrational rotation number is $C^\infty$-almost-reducible, and thus has vanishing $\drift_\ell$ according to \eqref{e:upper} ($\drift_\ell$ is defined similarly on the circle). However, this can be proven directly, and more generally for $C^{>2}$ diffeomorphisms. The result below is to be compared to \cite{Na23}, where an analogous statement is established for the asymptotic variation in regularity $C^{1+ac}$. The idea of using the Hahn-Banach theorem comes from the proof therein.

\begin{prop}
Every $C^{>2}$ circle diffeomorphism with irrational rotation number has vanishing $\drift_\ell$. 
\end{prop}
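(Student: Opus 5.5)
The proof will go through the coboundary characterization \eqref{e:gen-fact} and the Hahn--Banach theorem, as in \cite{Na23}. Let $f$ be a $C^{2+\alpha}$ circle diffeomorphism with irrational rotation number $\rho$, and let $F=f^{\otimes 2}$ act on $\T^2$. By \eqref{e:gen-fact}, $\drift_\ell(f)=0$ as soon as $\ell(f)$ lies in the closure of $\{u-F^*u : u\in L^1(\T^2)\}$. Suppose it does not. Hahn--Banach then gives $\varphi\in L^\infty(\T^2)$ that annihilates every coboundary while $\iint\varphi\,\ell(f)\neq0$. Annihilating coboundaries means $\iint(\varphi\circ F^{-1}-\varphi)u=0$ for all $u$, i.e. $\varphi\circ F=\varphi$ almost everywhere. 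So the task reduces to showing $\iint\varphi\,\ell(f)=0$ for every bounded $F$-invariant function $\varphi$.

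\textbf{Describing invariant functions.} By Denjoy's theorem there is a homeomorphism $h$ with $hfh^{-1}=R_\rho$. For each $t\in\sS^1$, the curve $\Gamma_t=\{(x,y): h(y)-h(x)=t\}$ is $F$-invariant, and $F_{|\Gamma_t}$ is topologically conjugate to $f$, hence minimal and uniquely ergodic. I would like to conclude that $\varphi$ coincides almost everywhere with a function of $t=h(y)-h(x)$. More precisely, I want $\varphi$ to be measurable with respect to the $\sigma$-algebra generated by the ``bands'' $B_{[s,s']}=\{(x,y): h(y)-h(x)\in[s,s']\}$, up to Lebesgue-null sets. Then $\varphi$ can be approximated in $L^1(|\ell(f)|\,dx\,dy)$, with uniform $L^\infty$ bounds, by finite linear combinations of indicators of bands.

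\textbf{Main obstacle.} This is the step I expect to be hardest, because $h$ may be singular, so Lebesgue measure need not be comparable to the pushforward of Lebesgue under $h\times h$. My first attempt: write $\varphi(x,y)=\phi(x,\,h(y)-h(x))$ and disintegrate Lebesgue measure along the foliation by the $\Gamma_t$. The conditional measures on each $\Gamma_t$ are $F$-quasi-invariant but not invariant, so unique ergodicity cannot be applied directly. I would therefore instead use the Birkhoff-type averages $\frac1n\sum_{k<n}\varphi\circ F^k$ against test functions, combined with the Denjoy--Koksma estimate ($\log Df^{q_n}\to0$ uniformly along denominators $q_n$ of $\rho$). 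The goal is to show that the conditional expectation of $\varphi$ given the band $\sigma$-algebra reproduces $\varphi$, using that $F^{q_n}$ converges uniformly to the identity in $C^1$ on each $\Gamma_t$ direction.

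\textbf{Bands have zero $\ell$-mass.} For a band $B=B_{[s,s']}$ with $0<s<s'<1$, the set $B$ is $F$-invariant and stays at positive distance from the diagonal. On $B$ the function $v(x,y)=d(x,y)^{-2}$ is bounded, so
$$\iint_B\ell(f)=\iint_B F^*v-\iint_B v=0$$
by the change of variables $F(B)=B$. For the thin bands $s'<\eta$ or $s>1-\eta$ around the diagonal, I would use the absolute continuity of the $L^1$ function $\ell(f)$ (Proposition~\ref{p:ell}, transposed to the circle). Such a band shrinks to the diagonal as $\eta\to0$, so its contribution times $\|\varphi\|_\infty$ is less than any prescribed $\eps$. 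Combining this with the approximation of $\varphi$ by band combinations gives $\iint\varphi\,\ell(f)=0$, which contradicts the choice of $\varphi$ and proves the proposition.
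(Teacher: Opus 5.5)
Your reduction via Hahn--Banach to showing $\iint\varphi\,\ell(f)=0$ for every bounded $\varphi$ with $\varphi\circ F=\varphi$ is exactly the paper's. So is your use of the $F$-invariant bands $\{h(y)-h(x)\in[s,s']\}$.

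\textbf{The gap.} It is the step you flag yourself: the claim that $\varphi$ is, up to Lebesgue-null sets, measurable with respect to the band $\sigma$-algebra, so that it can be approximated by combinations of band indicators. Nothing in the proposal proves this, and it is not a formality. When $h$ is singular, it amounts to ergodicity of $F$ on almost every leaf $\Gamma_t$ for the leafwise conditional measures of Lebesgue measure on $\T^2$. Those conditional measures are singular and only quasi-invariant, as you note. Neither unique ergodicity of $F|_{\Gamma_t}$ nor Denjoy--Koksma type bounds on $\log Df^{q_n}$ gives this directly. As written, the argument is incomplete at its load-bearing step.

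\textbf{The fix.} The step is unnecessary. Since $\varphi\circ F=\varphi$, off the diagonal you have $\varphi\,\ell(f)=\varphi\,(F^*v-v)=F^*(\varphi v)-\varphi v$, with $v=d(x,y)^{-2}$.
\begin{itemize}
\item On the invariant set $B_\eta=\{(x,y): h(y)-h(x)\in[\eta,1-\eta]\}$, which stays at positive distance from the diagonal, $\varphi v$ is bounded. The change of variables $F(B_\eta)=B_\eta$ then gives $\iint_{B_\eta}\varphi\,\ell(f)=0$ directly. This is your ``bands have zero $\ell$-mass'' computation with $\varphi$ placed inside the pullback.
\item On the complementary invariant band around the diagonal, the area tends to $0$ with $\eta$ by uniform continuity of $h^{-1}$. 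The integral there is bounded in absolute value by $\|\varphi\|_\infty\int|\ell(f)|$ over that band, which tends to $0$.
\end{itemize}
Hence $\iint\varphi\,\ell(f)=0$, contradicting the choice of $\varphi$. This is precisely the paper's proof. It never needs to know what the invariant functions look like, only that they are bounded and $F$-invariant.
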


\begin{proof} 
Let $f$ be a $C^{>2}$ diffeomorphism of the circle with irrational rotation number. According to Section \ref{s:drift}, abbreviating $L^1(\T^1\times \T^1)$ by $L^1$,
\begin{equation}
\drift_\ell(f)=0 \quad\Leftrightarrow\quad \ell(f)\in \overline{\{u-f^*u,u\in L^1\}}^{L^1}.
\end{equation}
Let $F$ denote the closed vector subspace on the right hand side, and assume by contradiction that $\ell(f)\notin F$. Then the Hahn-Banach theorem asserts that there exists a continuous linear form $\lambda$ on $L^1$
 such that $\lambda_{|F}\equiv 0$ and $\lambda(\ell(f))\neq0$. But $\lambda$ is of the form $v\mapsto \int_{\T^2}v K$ where $K$ is an $L^\infty$ map. The fact that $\lambda$ cancels on $F$ directly implies, by change of variable, that $K=K\circ f^{\otimes2}$ in~$L^\infty$. Let us now turn to $\lambda(\ell(f))$, which is thus equal to $\int_{\T^2} \left(\frac{Df(x)Df(y)K(f(x),f(y))}{(f(x)-f(y))^2}- \frac{K(x,y)}{(x-y)^2}\right)dxdy$. 
 
 One cannot split the integral by linearity and apply a change of variable to obtain $0$ because the two parts are not integrable, though their difference is. However, they are integrable away from the diagonal. The idea is then to split the domain of integration into two regions of the torus that are invariant under $f^{\otimes2}$. One of these regions will stay away from the diagonal, so that both terms of the difference remain integrable (and the integrals cancel each other).The other region will be its complement, which corresponds to an arbitrarily thin topological annulus along the diagonal; therein,  the integral will tend to zero as the area of the annulus goes to zero, since the map we are integrating is $L^1$ on the whole torus. 

The annulus and its complement are defined as follows. By Denjoy's theorem, $f$ is conjugate, by some homeomorphism $\varphi$, to an irrational rotation $r$, that is, $f=\varphi^{-1} r \varphi$. So naturally, if $F=f^{\otimes2}$, $R=r^{\otimes2}$ and $\Phi=\varphi^{\otimes2}$, one has $F=\Phi^{-1}R\Phi$. Now $R$ preserves the ``circles of slope 1'' defined as $\cS^1_u=\{(x,y)\in \T^2, x-y=u \,\, \mathrm{ (mod \,\, 1) }\}$, $u\in\T^1$, and 
thus any annulus $A_\eps=\cup_{-\eps\le u\le \eps}\cS^1_u$, $\eps>0$. Therefore, $F$ preserves $\Phi^{-1}(A_\eps)$ 
for every $\eps>0$, and the area of these regions goes to $0$ as $\eps$ goes to $0$ by uniform continuity of $\varphi^{-1}$. This concludes the proof.  
\end{proof}


\section{Almost-reducibility and distortion in finite regularity}
\label{s:a-r}

In \cite{EBM25}, it was proved that if $f\in\Diff^\infty_+([0,1])$ has vanishing asymptotic variation, then it is $C^\infty$-\emph{almost-reducible}, in the sense that it can be smoothly conjugated arbitrarily $C^\infty$-close to the identity. In this section, we extend this result to finite regularity. First recall the following definition.

\begin{defn}
\label{d:a-r}
Let $r\in(0,+\infty)$. An element $f\in\Diff^r_+([0,1])$ is said to be $C^r$-\emph{almost-reducible} if it can be $C^r$-conjugated arbitrarily $C^r$-close to the identity. 
\end{defn}

In \cite{EN21}, it was proved that if $f\in\Diff^{r}_+([0,1])$ is the time-$1$ map of a $C^r$ vector field non vanishing in $(0,1)$ and without hyperbolic fixed point, then it is $C^r$-almost-reducible. The main problem is that if one only assumes that $f\in\Diff^{r}_+([0,1])$ has vanishing asymptotic variation and no interior fixed point, or equivalently is the time-$1$ map of a $C^1$ parabolic vector field without interior zero, this vector field is in general only $C^{r-1}$ on $(0,1)$ and $C^1$ on $[0,1]$, and the rather elementary argument of \cite{EN21} does not work in this case. However, this issue was overcome in \cite{EBM25} in the smooth setting, thanks to the fact that, though it is globally irregular, there are regions arbitrarily close to the endpoints where the generating vector field is rather tame (cf.~Definition \ref{d:tame} below). Moreover, \cite{EBM25} deals with the nontrivial issue of possible interior fixed points.

\begin{defn}
\label{d:tame}
Let $r\in\N$ and let $X$ be a $C^1$ vector field on $[0,1]$ that is $C^r$ on the complement of its zero set and whose time-$1$ map $f$ is $C^r$ on the whole interval. We say that $X$ is \emph{$r$-tame} at $p\in X^{-1}(\{0\})$ if $f-\id$ is $C^r$-flat at $p$ and there exists $\delta\in(0,\frac1r)$ such that, arbitrarily close to $p$ (on both sides if applicable), one can find $x_0$ such that 
$$
\forall k\in[\![1,r]\!],\quad |D^kX(x_0)|\le |f(x_0)-x_0|^{1-k\delta}.
$$
\end{defn}

The analogue in finite regularity of what is proved in \cite{EBM25} can then be stated as follows:

\begin{thm}
\label{t:a-r}
Let $r\in\N$ and let $f\in\Diff^{r}_+([0,1])$ be the time-$1$ map of a $C^1$ parabolic vector field $X$. If, for every zero $p$ of $X$, either $p$ is an isolated zero at which $X$ is $C^r$, or $X$ is $r$-tame at $p$, then $f$ is $C^{r}$-almost-reducible.
\end{thm}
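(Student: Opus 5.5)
The strategy is to follow the scheme of \cite{EN21} and \cite{EBM25}: conjugate $f$ by diffeomorphisms $\varphi$ that shrink the vector field, so that $\varphi f\varphi^{-1}$ is the time-$1$ map of $\varphi_*X$, and make $\varphi_*X$ small in $C^r$. The time-$1$ map of a vector field $Z$ is $C^r$-close to the identity once $\|Z\|_{C^r}$ is small, with an estimate of the form $\|\phi^1_Z-\id\|_r\le C_r\|Z\|_r$ for $\|Z\|_r\le 1$. So it suffices to construct $C^r$ diffeomorphisms $\varphi_\eps$ of $[0,1]$ such that $(\varphi_\eps)_*X$ is $C^r$ on all of $[0,1]$ and $\|(\varphi_\eps)_*X\|_r\to0$. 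There is one subtlety: $X$ itself is only $C^1$ at non-isolated or flat zeros. The conjugate $\varphi_\eps f\varphi_\eps^{-1}$ is nevertheless $C^r$, and we only need $(\varphi_\eps)_*X$ to be $C^r$ with small norm on the region where $\varphi_\eps$ is not the identity. Away from that region, $\varphi_\eps f\varphi_\eps^{-1}=f$ locally.

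\textbf{Step 1: treat each zero separately.} The zero set of $X$ is compact, and the complementary intervals on which $X$ is $C^r$ and nonvanishing have lengths summing to at most $1$. Fix $\eps>0$. Around each isolated zero where $X$ is $C^r$, we use the elementary argument of \cite{EN21}. On a neighbourhood $[p-\eta,p+\eta]$, conjugating by a map equal to a contraction $x\mapsto p+\lambda(x-p)$ near $p$ pushes $X$ to $\lambda X(p+(\cdot-p)/\lambda)$. Its $k$-th derivative is $\lambda^{1-k}D^kX$ evaluated at a point close to $p$, and $D^kX(p)=0$ for all $k\le r$ fails only if $DX(p)\ne0$, which parabolicity rules out. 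Hence $\lambda^{1-k}|D^kX|$ is small on a small enough neighbourhood after the appropriate rescaling. Actually, the cleaner route is to quote directly the result of \cite{EN21} for $C^r$ parabolic fields, applied locally. For a tame zero $p$, we exploit the points $x_0$ given by Definition~\ref{d:tame}. Pick such $x_0$ very close to $p$ on each side, so that $|f(x_0)-x_0|$ is tiny and the derivatives of $X$ at $x_0$ up to order $r$ are controlled by $|f(x_0)-x_0|^{1-k\delta}$. By invariance~\eqref{e:invariance}, the same control propagates along the fundamental domain $[x_0,f(x_0)]$, with $|X|\asymp |f(x_0)-x_0|$ there by \eqref{e:X0}. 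Between two such points $x_0^-<p<x_0^+$, we replace the dynamics by conjugating with a diffeomorphism that maps a long stretch of orbit to a very short interval. In the spirit of \cite{EBM25}, the interpolation there is between the field $X$ and a tiny field $\eps\cdot(\text{standard flat bump})$. We glue on the fundamental domain $[x_0,f(x_0)]$ via a partition of unity. The glued field has $C^r$ norm bounded by a sum of terms $\eps^{-j}|D^{k}X|\cdot|I|^{\dots}$, which the tameness exponent $1-k\delta>1-k/r\cdot$ makes go to $0$ as $x_0\to p$. The condition $\delta<1/r$ is what makes the power of $|f(x_0)-x_0|$ positive after the $r$ differentiations of cutoffs of scale $|f(x_0)-x_0|$.

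\textbf{Step 2: the rest of the interval.} Outside these finitely many chosen neighbourhoods of zeros, there remain finitely many compact intervals where $X$ is $C^r$ and nonvanishing, plus countably many small complementary intervals lying inside the chosen neighbourhoods. On each interval with nonvanishing $X$, the modified field is $C^r$. We then conjugate it to a constant-speed field of tiny size, using the flow-box coordinate $\int dx/X$ suitably rescaled. The rescaling is chosen to match the glued fields at the ends, yielding a global $C^r$ field with small norm.

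\textbf{Main obstacle.} The hardest step is the gluing estimate at tame zeros. One must check that the conjugacy is genuinely $C^r$, rather than merely that the target field is. One must also bound all mixed terms in the Fa\`a di Bruno expansion of the pushforward by powers $|f(x_0)-x_0|^{\gamma}$ with $\gamma>0$ uniformly in $k\le r$. I would first reduce to $\varphi$ affine on the fundamental domain, so that the pushforward only rescales derivatives, and then track exponents explicitly.
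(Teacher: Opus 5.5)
There is a genuine gap, and it sits in the step you single out: the gluing at a tame zero. With cutoffs at the scale $\tau=|f(x_0)-x_0|$ of the fundamental domain $[x_0,f(x_0)]$, the Leibniz expansion blows up.
\begin{itemize}
\item Unless your target field $Z$ agrees with $X$ up to $O(\tau^{r})$ there, the single term $D^r\chi\cdot(X-Z)$ already has size $\tau^{-r}\cdot\tau=\tau^{1-r}$.
\item The terms $D^j\chi\cdot D^{r-j}X$ are of order $\tau^{1-j-(r-j)\delta}$, a negative power for every $j\ge1$, so $\delta<1/r$ does not help.
\item Definition~\ref{d:tame} only controls the $r$-jet of $X$ at the single point $x_0$. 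The relation $X\circ f=X\cdot Df$ links $x$ to $f(x)$; it does not spread that control over $[x_0,f(x_0)]$.
\end{itemize}
The missing idea is the construction of Proposition~\ref{p:a-r}. For $x\ge x_0$, replace $X$ by
\[
Y(x)=X(x_0)+\chi\Big(\frac{x-x_0}{b}\Big)\sum_{i=1}^{r}\frac{D^iX(x_0)}{i!}(x-x_0)^i,\qquad b=\tau^{\delta+\eps}\gg\tau .
\]
This uses only the jet at $x_0$, so $Y$ is $C^r$ there. It never cuts off the constant term $X(x_0)\sim\tau$, and it cuts the other terms at the intermediate scale $b$. One gets $|D^iX(x_0)|\,b^{i-k}\le\tau^{1+i\eps-k(\delta+\eps)}$, with exponent at least $1-r\delta-(r-1)\eps>0$; this is exactly where $\delta<1/r$ enters. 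Finally, $X(x_0)\sim\tau$ is what prevents $Y$ from vanishing.

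Second, a glued field is not yet a conjugate of $X$, and your remedy of taking $\varphi$ affine on fundamental domains does not work. An affine piece of ratio $\lambda$ multiplies $D^kX$ by $\lambda^{1-k}$, so it cannot shrink the $C^0$ size without inflating the derivatives of order $\ge2$, or vice versa. Moreover, the transition to the identity near the tame zero, where $X$ is only $C^1$, recreates the same gluing problem.

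In the paper the conjugacy comes for free from Mather's criterion \cite{Mather}. Two fields without interior zeros that coincide near the endpoints are conjugated by the flow-box map $\psi_{\tilde X}\circ\psi_X^{-1}$, which is the identity near one end and a flow map near the other. A synchronization of transit times, arranged through the freedom in the interpolation, makes it the identity near both ends, hence $C^r$ at the tame zeros.

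Likewise, quoting \cite{EN21} locally around an isolated $C^r$ zero gives maps that do not glue. What is needed is Proposition~\ref{l:woITI}, whose conjugacies coincide near both endpoints with homotheties of a common, arbitrarily large ratio $\lambda\ge\lambda_X$. It is this \emph{expansion}, via the factor $\lambda^{1-k}$, that kills derivatives of order $k\ge2$. Parabolicity gives only $DX(p)=0$, not $D^kX(p)=0$ as you claim.

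Finally, isolated zeros may accumulate at tame zeros, so one needs the reduction to a finite subdivision at tame points. On pieces where $f$ is already $C^r$-close to $\id$ one leaves it untouched, and elsewhere one uses conjugacies that are $C^r$-flat to $\id$ at the tame points (Proposition~\ref{p:a-r-reduite}).
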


It directly follows from Proposition \ref{p:bounds} in the Appendix \ref{s:bounds} that if $f$ is $C^R$ with $R>2r$ and is the time-$1$ map of some $C^1$ vector field $X$, then $X$ is $r$-tame at every $C^{R}$-flat fixed point of $f$. What's more, according to \cite{Ta,Yoc}, $X$ is $C^{R-1}$ and thus $C^{r}$ on the complement (and the non-flat fixed points are naturally isolated). So we directly get the following corollary:

\begin{cor}
\label{c:a-r}
Let $2\le R\in\N$ and $f\in\Diff^{R}_+([0,1])$ with vanishing asymptotic variation. Then $f$ is $C^{r}$-almost-reducible for every $r<\frac{R}2$, and actually $C^{R-1}$-reducible if $f-\id$ is nowhere $C^R$-flat. 
\end{cor}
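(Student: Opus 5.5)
The plan is to deduce Corollary \ref{c:a-r} from Theorem \ref{t:a-r}, after checking that its hypotheses hold for the $C^1$ generating vector field of $f$.

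\textbf{Step 1: the vector field.} Since $V_\infty(f)=0$, \cite[Corollary 2]{EN21} says that $f$ is $C^1$ parabolically flowable. So $f$ is the time-$1$ map of a $C^1$ vector field $X$ on $[0,1]$ without hyperbolic zeros, and its zero set is exactly $\Fix(f)$. On each component of $[0,1]\setminus\Fix(f)$, $X$ coincides with the Szekeres generating field of $f$ on that component (uniqueness, \S\ref{ss:mather}). By Takens and Yoccoz \cite{Ta,Yoc}, $X$ is therefore $C^{R-1}$ on the complement of its zero set.

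\textbf{Step 2: sorting the zeros.} Let $p\in\Fix(f)$.

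If $f-\id$ is not $C^R$-flat at $p$, some lowest-order nonzero Taylor coefficient of $f-\id$ at $p$ forces $f(x)\neq x$ on a punctured neighbourhood, so $p$ is isolated. By Takens' theorem, $X$ is then $C^{R-1}$ near $p$; this is the same fact used in \S\ref{ss:mather} for non-flat endpoints. This gives the first alternative of Theorem \ref{t:a-r} for every $r\le R-1$.

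If $f-\id$ is $C^R$-flat at $p$, Proposition \ref{p:bounds} applies (taken as given, as stated just before the corollary). It yields, for $R>2r$, points $x_0$ arbitrarily close to $p$ on each relevant side with $|D^kX(x_0)|\le|f(x_0)-x_0|^{1-k\delta}$ for $1\le k\le r$ and some $\delta<1/r$. The $C^r$-flatness of $f-\id$ at $p$ also holds, since $r<R$. So $X$ is $r$-tame at $p$.

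\textbf{Step 3: conclusion.} Take $r<R/2$, so $r\le R-1$ and $2r<R$. Every zero falls into one of the two cases of Step 2. Since $f\in\Diff^r_+$, Theorem \ref{t:a-r} gives $C^r$-almost-reducibility.

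If moreover $f-\id$ is nowhere $C^R$-flat, the flat case never occurs. Every zero is then isolated, so there are finitely many by compactness, and $X$ is $C^{R-1}$ near each one. Applying Theorem \ref{t:a-r} with $r=R-1$ gives $C^{R-1}$-almost-reducibility, which is what "reducible" in the statement should mean.

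\textbf{Main obstacle.} The delicate point is Step 2 in the non-flat case. One must justify that $X$ is genuinely $C^{R-1}$ across an isolated non-flat zero from both sides. The two one-sided Szekeres fields glue to the global $C^1$ field $X$, and each side is $C^{R-1}$ up to $p$ by Takens. Their jets at $p$ agree because both are determined by the $R$-jet of $f$ at $p$, via Takens' formal normal form. Hence the glued field is $C^{R-1}$. I would verify this jet-matching argument carefully, together with the small bookkeeping that a boundary zero only requires tameness from one side.
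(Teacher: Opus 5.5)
Your proposal is correct and follows the paper's own argument. Vanishing asymptotic variation gives a $C^1$ parabolic generating field; Takens and Yoccoz give $C^{R-1}$ regularity away from the $C^R$-flat fixed points, the non-flat ones being isolated; and Proposition~\ref{p:bounds} gives $r$-tameness at the flat ones when $R>2r$, so Theorem~\ref{t:a-r} applies. Your jet-matching argument at interior non-flat zeros makes explicit a point the paper leaves to Takens and Yoccoz.

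One precision. The $r$-tameness for all $R>2r$ comes from the $k$-by-$k$ bound $|D^kX(x_0)|\le C|f(x_0)-x_0|^{1-2k/R}$ obtained inside the proof of Proposition~\ref{p:bounds}, taking $\delta\in(2/R,1/r)$. The uniform exponent $1-2r/R$ in the displayed statement of that proposition would only give the $k=1$ inequality when $R>2r^2$.
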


The central ingredients of Theorem \ref{t:a-r} are the next two propositions.

\begin{prop} 
\label{l:woITI}
Let $\eta>0$, $r\in\N$ and $X$ be a $C^r$ vector field on $[0,1]$ vanishing only at $0$ and~$1$ and with vanishing first derivative at these points. Then there exists $\lambda_X>1$ such that, for any $\lambda\ge\lambda_X$, there exists $\f\in\Diff^r_+([0,1])$ such that:
\begin{enumerate}
\item $\|\varphi_*X\|_r<\eta$,
\item on some neighborhood of each endpoint,
$\f$ coincides with the homothety of ratio $\lambda$ centered at that point.
\end{enumerate}
\end{prop}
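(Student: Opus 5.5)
Our plan is to build $\f$ by gluing three pieces: near each endpoint the homothety of ratio $\lambda$, and in the middle a diffeomorphism adapted to the flow of $X$. Recall that $\varphi_*X=(DX\cdot)$, namely $\varphi_*X=(D\varphi\cdot X)\circ\varphi^{-1}$. If $\varphi=\lambda\,\id$ near $0$, then on the corresponding neighborhood $\varphi_*X(y)=\lambda X(y/\lambda)$. Hence
$$D^k(\varphi_*X)(y)=\lambda^{1-k}D^kX(y/\lambda).$$
Since $X(0)=DX(0)=0$ and $X$ is $C^r$, for $k\ge 2$ this is at most $\lambda^{1-k}\|X\|_r$. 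For $k=0,1$ we use instead that $X(x)=o(x)$ and $DX(x)=o(1)$: on the region $y\in[0,\lambda a]$ the quantities $|\lambda X(y/\lambda)|$ and $|DX(y/\lambda)|$ are bounded by $\lambda a\,\omega(a)$ and $\omega(a)$, with $\omega(a)\to0$. So the plan is to choose a small $a$ with $\omega(a)$ tiny, and then take $\lambda$ large but with $\lambda a$ still at most some fixed constant.

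\textbf{Step 1 (endpoint zones).} First fix $a>0$ small, to be determined. Declare $\varphi=\lambda\,\id$ on $[0,a]$ and $\varphi=1-\lambda(1-\id)$ on $[1-a,1]$. This needs $\lambda a<\frac12$, so the two images are disjoint. Let $b=\lambda a$; the image of $[0,a]$ is then $[0,b]$.

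\textbf{Step 2 (middle zone).} In the middle, $\varphi$ must send $[a,1-a]$ onto $[b,1-b]$ with the prescribed $r$-jets at the endpoints. We also need $\varphi_*X$ to be $C^r$-small there. The natural choice is to conjugate to the flow: we want $\varphi_*X$ on $[b,1-b]$ to equal a prescribed small vector field $Z$ joining the boundary germs $y\mapsto\lambda X(y/\lambda)$ and its counterpart at $1$. The flow-time identity forces
$$\int_b^{1-b}\frac{dy}{Z}=\int_a^{1-a}\frac{dx}{X}=:T(a),$$
and then $\varphi$ is determined by matching times, $\tau_Z\circ\varphi=\tau_X$, which is $C^r$ because $Z$ and $X$ are $C^r$ and nonvanishing on these ranges. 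The key point is that $T(a)\to\infty$ as $a\to0$, since $X=o(x)$ makes $\int_0 dx/X$ diverge. This is exactly what lets $Z$ be small: a long flow time is compatible with a tiny vector field.

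\textbf{Step 3 (choice of $Z$).} Fix $b=\frac14$ and set $\lambda=b/a$. Take $Z=\lambda X(\cdot/\lambda)$ near $b$ and the symmetric germ near $1-b$; these germs are $C^r$-small by the estimates above once $a$ is small. In between, take $Z=\epsilon\rho$, where $\rho>0$ is a fixed smooth bump-shaped function. Interpolate with a fixed partition of unity on $[\frac14,\frac14+\frac1{10}]$ and on the symmetric interval near $1-b$, so that $Z$ remains positive and $C^r$-small. Then tune $\epsilon$ so that the time integral equals $T(a)$: the integral is continuous and monotone in $\epsilon$, and it tends to $\infty$ as $\epsilon\to0$. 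Reparametrizing via $a$ gives every $\lambda\ge\lambda_X$ (one needs $T(a)$ to exceed the time contribution of the glued germs, which holds since both are computed from the same $X$ near the ends). Finally, $\varphi$ agrees with the homotheties near the endpoints, because $Z$ equals the pushed-forward germ there and the times match. It is $C^r$ by construction, with $\|\varphi_*X\|_r=\|Z\|_r<\eta$ for $a$ small.

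\textbf{Main obstacle.} The delicate point is the uniform $C^r$-smallness of $Z$ in the gluing regions, in particular of the derivatives of order $0$ and $1$ of the germ $\lambda X(\cdot/\lambda)$ on $[b,b+\frac1{10}]$. This region corresponds to $x\in[a,\frac{b+1/10}{b}a]$, which is a fixed multiple of $a$, so the $o(x)$ and $o(1)$ bounds still apply. A second point to check carefully is that the time-matching equation is solvable for every large $\lambda$, which requires monotonicity of the integral in $\epsilon$ together with positivity of $Z$.
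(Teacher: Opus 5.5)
Your proposal is correct and follows the paper's route. Homotheties near the ends make the pushed germs $C^r$-small, a small field $Z$ interpolates between them, and the conjugacy $\tau_Z^{-1}\circ\tau_X$ is the Mather-type conjugacy; your time-matching condition $\int_b^{1-b}dy/Z=\int_a^{1-a}dx/X$ is exactly the ``synchronization'' that the paper's ``extra care'' refers to. Tuning $\epsilon$ by the intermediate value theorem makes that step explicit, but the inequality it actually needs is $T(a)\ge\int_b^{1-b}dy/Z_{\epsilon_0}$, where $\epsilon_0$ is the largest admissible $\epsilon$. This holds because $1/Z_{\epsilon_0}\le 1/(\lambda X(\cdot/\lambda))+1/(\epsilon_0\rho)$ on the gluing regions, so the germ times cancel and you are left comparing the divergent $\int_{7a/5}^{1-7a/5}dx/X$ with the $a$-independent cost $\epsilon_0^{-1}\int 1/\rho$.
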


\begin{prop}
\label{p:a-r}
Let $r \geq 2$ be an integer and let $f\in\Diff^r_+([0,1])$ be the time-1 map of a vector field $X$ that is $C^1$ on $[0,1]$, $C^r$ on $(0,1)$ and $r$-tame at $0$. Then, for every $\eta>0$, there exists $x_0<\eta$ and a $C^1$ vector field $Y$ such that: 
\begin{enumerate}
\item $Y=X$ on $[0,x_0]$,
\item $Y$ is constant equal to $X(x_0)$ on $[x_0+\eta,1)$,
\item $Y$ is $C^r$ on $(0,1)$,
\item $\|Y_{|[f^{\mp1}(x_0),x_0+\eta]}\|_{r}\le\eta$,
\item $Y$ does not vanish on $[x_0,x_0+\eta]$.
\end{enumerate}
\end{prop}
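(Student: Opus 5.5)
The plan is to take $Y$ to be an interpolation between $X$ and the constant $X(x_0)$ on a short interval to the right of $x_0$. Concretely, I fix a smooth cutoff $\rho:\R\to[0,1]$ equal to $1$ on $(-\infty,0]$ and to $0$ on $[1,+\infty)$, and for a small scale $\eps\le\eta$ I set
$$
Y=\rho\!\left(\tfrac{\cdot-x_0}{\eps}\right)X+\left(1-\rho\!\left(\tfrac{\cdot-x_0}{\eps}\right)\right)X(x_0).
$$
With this choice, items 1, 2 and 3 are automatic: $Y=X$ on $[0,x_0]$, $Y\equiv X(x_0)$ on $[x_0+\eps,1)$, and $Y$ is $C^r$ on $(0,1)$ because $X$ is. For item 5, on $[x_0,x_0+\eta]$ the field $Y$ is a convex combination of $X$ and $X(x_0)$. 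Since $X$ has no zero near $0$ other than $0$ itself (by parabolicity and tameness, $x_0$ will be chosen in a region where $X>0$, say), $Y$ does not vanish there. The whole difficulty is therefore item 4.

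To get item 4, I first choose $x_0$ using $r$-tameness at $0$: $x_0$ is arbitrarily small and satisfies $|D^kX(x_0)|\le|f(x_0)-x_0|^{1-k\delta}$ for $1\le k\le r$, with $k\delta<1$. Write $\tau=f(x_0)-x_0$, which is comparable to $X(x_0)$ by \eqref{e:X0}. By $C^r$-flatness of $f-\id$ at $0$, $\tau$ is tiny. On $[f^{-1}(x_0),f(x_0)]$ itself, which has length of order $\tau$, I control $D^kX$ via the invariance relation \eqref{e:invariance}, $X\circ f=X\cdot Df$, differentiated up to order $r$. Since $\|f-\id\|_{C^r}$ is small near $0$, this propagates the tame bounds at $x_0$ to the whole fundamental interval, up to constants: $|D^kX|\lesssim\tau^{1-k\delta}$. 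A Taylor expansion then gives, on an interval of length $\eps=\tau^{\delta}$ (or a similar power) to the right of $x_0$, the bound $|D^j(X-X(x_0))|\lesssim\sum_{k\ge j}\tau^{1-k\delta}\eps^{k-j}$.

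I expect the main obstacle to lie here. Taylor bounds only use derivatives at or near $x_0$, but $\eps\gg\tau$, so I need bounds on $D^kX$ on all of $[x_0,x_0+\eps]$, not just on a fundamental domain. I would try to handle this by iterating the invariance relation over roughly $\eps/\tau$ steps. A cleaner alternative, which I would try first, is to replace $X$ on $[x_0,x_0+\eps]$ by its Taylor polynomial of order $r$ at $x_0$ glued in via $\rho$: the resulting field is still $C^r$ and equals $X$ on $[0,x_0]$, so it suffices to estimate polynomial terms. With $\eps=\tau^{\delta}$, the Leibniz terms are of the form $\eps^{-i}\cdot\tau^{1-k\delta}\eps^{k-j}$ with $i+j\le r$, which is $\lesssim\tau^{1-r\delta}\to0$ because $r\delta<1$. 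This yields item 4 on $[x_0,x_0+\eta]$. On the part $[f^{\mp1}(x_0),x_0]$ of that domain, where $Y=X$, the bound is $\|X\|_r\lesssim\tau^{1-r\delta}$ from the propagated tame estimates. Finally I take $x_0$ small enough that all of these quantities are below $\eta$ and that $\eps\le\eta$.
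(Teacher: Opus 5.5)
Your final construction, the order-$r$ Taylor polynomial $T$ of $X$ at $x_0$ glued to the constant $X(x_0)$ by a cutoff at a scale comparable to $|\tau|^{\delta}$, is the paper's construction. Your Leibniz estimate for item~4 on $[x_0,x_0+\eta]$ is correct. Two steps, however, do not go through as written.

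\emph{Item 5.} Your non-vanishing argument is written for the first construction (a convex combination of $X$ and $X(x_0)$). Even there it relies on the unproved claim that $X$ has no zero near $0$ other than $0$; tameness does not exclude zeros accumulating at $0$. For the construction you actually adopt, $Y=X(x_0)+\rho\,(T-X(x_0))$ on $[x_0,x_0+\eps]$, you need $|T-X(x_0)|<|X(x_0)|$ there. With $\eps=|\tau|^{\delta}$ the tame bounds only give $\sum_{k=1}^{r}|D^kX(x_0)|\,\eps^k/k!\le\sum_{k\ge1}|\tau|/k!=(e-1)|\tau|$. But $|X(x_0)|\sim|\tau|$ by \eqref{e:X0}, so a zero of $Y$ is not excluded.

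The missing idea is to work slightly below the scale $|\tau|^{\delta}$. The paper takes $b=|\tau|^{\delta+\kappa}$ with a small extra exponent $\kappa>0$. Then $|D^kX(x_0)|\,b^k\le|\tau|^{1+k\kappa}$, and the whole correction is at most $|\tau|\,(e^{|\tau|^{\kappa}}-1)=o(|\tau|)$. This loses against $|X(x_0)|/|\tau|\to1$ once $x_0$ is small. The price is the exponent $1-r\delta-(r-1)\kappa$ in item~4, which is still positive for $\kappa$ small. A scale $c\,|\tau|^{\delta}$ with a small constant $c$ would also work, at the cost of a constant factor in item~4.

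\emph{Item 4 on the left piece $[f^{\mp1}(x_0),x_0]$, where item~1 forces $Y=X$.} Differentiating $X\circ f=X\cdot Df$ only relates the $r$-jet of $X$ at $y$ to its $r$-jet at $f(y)$. So it transports the tame bounds from $x_0$ to the orbit points $f^{n}(x_0)$, and says nothing about the other points of the fundamental interval. Bounding $D^kX$ in between would need control of $D^{k+1}X$ there, which you do not have (and which need not exist for $k=r$). So this ``propagation'' is not a valid step.

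The paper's computation likewise only estimates $Y$ for $x\ge x_0$. Smallness of $X$ on a fundamental interval to the left of $x_0$ does not come from the pointwise definition of tameness combined with the invariance relation. What actually supplies it in the application is Proposition~\ref{p:bounds}, which bounds $\|X\|_r$ on the whole interval $[f^{2}(x_0),f^{-2}(x_0)]$.
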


In the next section, we explain how to adapt arguments of \cite{EBM25} to deduce Theorem \ref{t:a-r} from the above propositions, and sketch the proof of Proposition \ref{l:woITI}. In \S\ref{ss:a-r}, we give a complete proof of Proposition \ref{p:a-r}, which is the only place where some special care is needed to go from infinite to finite regularity.

\subsection{From Propositions \ref{l:woITI} and \ref{p:a-r} to Theorem \ref{t:a-r}}
\label{ss:adapt}

In \cite[\S2.1]{EBM25}, one explains in detail the obstacles on the way towards the smooth analogue of Theorem \ref{t:a-r}, and how to overcome them one at a time. The difficulties and solutions are similar here, but, unfortunately, we cannot repeat such a big part of another article. We settle for a shorter overview and refer the reader to the original reference for more details (concretely, Propositions \ref{l:woITI}, \ref{p:a-r} and \ref{p:a-r-reduite} below correspond to Propositions 2.1, 2.2 and 2.3 therein, respectively). One natural and easy step, requiring no knowledge on conjugacy classes of interval diffeomorphisms, consists in reducing to the case where the only possible $r$-tame fixed points lie in the boundary. 

\begin{prop}
\label{p:a-r-reduite}
Let $r$, $f$ and $X$ be as in Theorem \ref{t:a-r} and, in addition, assume that the only possible $r$-tame fixed points lie in the boundary $\{0,1\}$. Then $f$ is $C^r$-almost-reducible. 

More precisely, for every $\eps>0$, there exists $\f\in\Diff_+^r([0,1])$ such that $d_r(\f f \f^{-1},\id)<\eps$ and such that $\f-\id$ is $C^r$-flat at every $r$-tame fixed point. 
\end{prop}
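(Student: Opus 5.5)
The plan is to build the conjugacy $\f$ piece by piece along the fixed point set of $f$. Near each tame endpoint we take $\f=\id$. Everywhere else we define $\f$ as a conjugacy between the flow of $X$ and the flow of a model vector field $Z$ that is $C^r$-small. Then $\f f\f^{-1}$ is the time-$1$ map of $Z$, so it is $C^r$-close to $\id$ once $\|Z\|_r$ is small.

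\textbf{Step 1: the fixed points.} By assumption, an interior zero of $X$ is not $r$-tame, so it is isolated and $X$ is $C^r$ near it. Zeros can therefore accumulate only at $0$ or $1$, and any endpoint that is not isolated in the zero set must be $r$-tame. Fix $\eta>0$.
\begin{itemize}
\item At a tame endpoint, say $0$, apply Proposition \ref{p:a-r} to get $x_0<\eta$ and a field $Y$ with $Y=X$ on $[0,x_0]$ and $\|Y\|_{C^r([f^{-1}(x_0),x_0+\eta])}\le\eta$. We may choose $x_0$ so small that $\|f-\id\|_{C^r([0,f(x_0)])}<\eps$; this is possible because $f-\id$ is $C^r$-flat at $0$.
\item Do the same at $1$ if it is tame.
\item If an endpoint is a non-tame isolated zero, treat it like the interior zeros below.
\end{itemize}
Between these two small neighborhoods only finitely many zeros $a_1<\dots<a_m$ of $X$ remain. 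At each of them $X$ is $C^r$ and parabolic, i.e. $DX(a_i)=0$.

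\textbf{Step 2: the model field $Z$.} On $[0,x_0]$, set $Z=X$, and continue with $Y$ up to the point $x_0+\eta$, where $Y$ has become the constant $X(x_0)$. Keep this constant along a stretch whose length $T_1$ is a free parameter. Near each $a_i$, insert a rescaled copy of $X$: a localized version of Proposition \ref{l:woITI} gives a map $\f_i$, equal to a homothety of large ratio $\lambda$ near $a_i$, such that $(\f_i)_*X$ is $C^r$-small on a neighbourhood of the image point $a_i'$. The smallness comes from $DX(a_i)=0$: pushing $X$ forward by the homothety gives $\lambda X(a_i+\cdot/\lambda)$, which is uniformly small in $C^r$. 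Connect consecutive pieces by small nonvanishing constant stretches, and finish symmetrically near $1$. Each gluing is a partition-of-unity interpolation between two nonvanishing, $C^r$-small, same-sign fields on a fixed interval, so $\|Z\|_r\lesssim\eta$.

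\textbf{Step 3: the conjugacy $\f$.} Let $J$ be a component of $(0,1)$ minus the zeros. On $J$, both $X$ and $Z$ are nonvanishing and have the same sign, and both generate complete flows on $J$, since the endpoints of $J$ are zeros. So on $J$ we set
\[
\f=\phi_Z^{t}\circ\phi_X^{-t}\ \text{ along orbits, i.e. }\ \f=\tau_Z^{-1}\circ\tau_X ,
\]
normalized so that $\f=\id$ on $[0,x_0]$. Such a $\f$ is $C^r$ in the interior of $J$ and satisfies $\f_*X=Z$, hence $\f f\f^{-1}=\phi_Z^1$.

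\textbf{Main obstacle: regularity of $\f$ at the zeros $a_i$.} The normalization on one side of $a_i$ fixes the time offset of $\f$ there, and for $\f$ to be $C^r$ across $a_i$ it must coincide with $\f_i$ (up to a flow time of $X$ near $a_i$) on both sides simultaneously. This is exactly where the free lengths $T_j$ of the constant stretches are used. The transit time of $Z$ through a stretch depends continuously and monotonically on its length, and it sweeps all large values. Hence we can adjust $T_1$, then the next stretch, and so on inductively, so that the time-parametrizations agree and $\f=\phi_X^{s_i}\circ\f_i$ near $a_i$ for a suitable constant $s_i$. The same adjustment at the last stretch matches the normalization near $1$.

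I expect the delicate point to be checking that these adjusted stretches stay long enough, and the interpolations mild enough, that $Z$ remains $C^r$-small. This is where the threshold $\lambda_X$ of Proposition \ref{l:woITI} must be chosen uniformly over the finitely many $a_i$. Once this is in place, $\f=\id$ near the tame endpoints, so $\f-\id$ is $C^r$-flat there. Moreover, $d_r(\f f\f^{-1},\id)$ is controlled by $\eps$ on $[0,f(x_0)]$, where $\f f\f^{-1}=f$, and by $\|Z\|_r$ elsewhere, which proves the claim.
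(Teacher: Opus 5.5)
Your architecture is the one the paper sketches, and it is sound: Proposition \ref{p:a-r} at the tame endpoints, large homotheties at the finitely many isolated $C^r$ parabolic zeros (you inline the mechanism of Proposition \ref{l:woITI} instead of applying it per component and then contracting), constant pieces and interpolations in between, and a conjugacy built from time functions subject to a synchronization condition. The gap is in how you do the synchronization.

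At an interior zero $a_i$, no stretch length needs adjusting. On each component of the complement of the zero set, $\tau_Z^{-1}\circ(\tau_X+o)$ conjugates $X$ to $Z$ for every $o\in\R$. So you just choose the offset on the right of $a_i$ so that $\f$ is given by the same map $\f_i\circ\phi_X^{s_i}$ on both sides.

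The only real constraint arises when both endpoints are tame and you demand $\f=\id$ near both. Without interior zeros, this is the equality $T_Z(x_0\to x_1)=T_X(x_0\to x_1)$ of transit times, and lengthening stretches can only increase $T_Z$. The equality can be impossible. In the proof of Proposition \ref{p:a-r}, $Y\equiv X(x_0)$ on $[x_0+b,x_0+\eta]$, so $T_Z(x_0\to x_1)\ge(\eta-b_0)/|X(x_0)|+(\eta-b_1)/|X(x_1)|$. Take $X=x^k$ near $0$, $X=(1-x)^k$ near $1$, positive in between, with $k>r$; this is $r$-tame at both ends, and $b_0=o(x_0)$, $b_1=o(1-x_1)$. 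Then $T_X(x_0\to x_1)\le \frac{x_0}{(k-1)X(x_0)}+\frac{1-x_1}{(k-1)X(x_1)}+O(1)$. Since $x_0,1-x_1<\eta$, $k\ge 3$ and $X(x_0)<\eta^k$, we get $T_Z-T_X\ge \frac{\eta/2-b_0}{X(x_0)}-O(1)\to+\infty$ as $\eta\to0$. So $\f=\id$ near both ends is out of reach, whatever you do with the middle pieces.

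The missing observation is that exact matching is not needed. With $\f=\id$ near $0$, your $\f$ equals $\phi_X^{t}$ near $1$, with $t=T_X(x_0\to x_1)-T_Z(x_0\to x_1)$ (and a similar residual time in the presence of interior zeros). Since $X$ is only $C^1$ at a tame point, this map is in general $C^r$ there only for $t\in\Z$. In that case it equals $f^t$, which is $C^r$, and $f^t-\id$ is $C^r$-flat at $1$ because tameness includes $C^r$-flatness of $f-\id$. This is exactly the room the statement leaves: it asks for $\f-\id$ to be $C^r$-flat at tame fixed points, not for $\f=\id$ nearby.

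Matching modulo $1$ only requires tuning one transit time over an interval of length $1$. Changing one stretch length by about $|X(x_0)|$ does this, compensating the total length on a piece with a different constant value so the model still fills $[0,1]$. With this correction your argument goes through.

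A minor point: $\f f\f^{-1}=f$ holds only on $[0,\min(x_0,f^{-1}(x_0))]$, not on all of $[0,f(x_0)]$. The remaining piece is covered by item 4 of Proposition \ref{p:a-r}, which you did quote.
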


We will see shortly that, thanks to the precision at the end, one can apply this result on neighboring subintervals of $[0,1]$ in the general context of Theorem \ref{t:a-r} and obtain conjugacies that are $C^r$ even near the gluing points. Now regarding the restricted statement \ref{p:a-r-reduite}, the case where $X$ is $C^r$ and has finitely many zeroes is a straightforward consequence of Proposition~\ref{l:woITI}: one applies the latter to the closure of every connected component of the complement of the zero set, and everything glues up nicely again thanks to the specification on the boundary germs of the conjugacy in Proposition \ref{l:woITI}. (It is worth noting at this point that these boundary requirements, necessary for our purpose, substantially complicate the proofs...). The general case uses a combination of Propositions \ref{l:woITI} and \ref{p:a-r}, and its proof, as well as that of \ref{l:woITI}, will be sketched below. Before that, naturally, we will need to explain how to guarantee that two relatively simple interval diffeomorphisms (or their generating vector fields) are conjugated (due to this navigation between diffeomorphisms and vector fields, we will sometimes abusively use the word \emph{conjugacy} for vector fields, instead of the standard terminology of push-forward or pull-back). 

\paragraph{Reduction to Proposition \ref{p:a-r-reduite} (sketch).}

Let $f\in\Diff^{r}_+([0,1])$ satisfy the hypotheses of Theorem \ref{t:a-r}. Given $\eps>0$, one easily shows the existence of a finite subdivision $a_0=0<\dots<a_n=1$ of $[0,1]$ such that the generating vector field $X$ is tame at every interior $a_i$ and, for each $i$, either $X$ is $C^r$ and has no accumulation of zeroes in the interior of $I_i=[a_i,a_{i+1}]$ or $f$ is $C^r$-close to $\id$ on $I_i$ (for this, simply note that, near an accumulation point of $r$-tame zeroes of $X$, $f$ must be $C^r$-close to the identity). One can then find a $C^r$ conjugate of $f$ that is $C^r$-close to $\id$ by applying Proposition \ref{p:a-r-reduite} on the first subintervals and letting $f$ unchanged on the others. We refer to \cite[\S2.3]{EBM25} for the details.

\paragraph{Conjugacy criterion.} Classical works of \cite{Mather} show that if two $C^s$ vector fields on $[0,1]$ without interior zero have the same germs at the endpoints, then they are globally $C^s$-conjugated by a diffeomorphism that is the identity near one of the endpoints and a flow map of both vector fields near the other. Adding some extra ``synchronisation condition'', one can guarantee that the latter flow map is actually the identity (cf. \cite[\S1]{EBM25} for more details).

\paragraph{Sketch of proof of Proposition \ref{l:woITI}.} First, conjugating the given vector field $X$ by a smooth diffeomorphism which is a homothety of sufficiently big ratio near the boundary, we get a new vector field $\bar X$ with small derivatives up to order $r$ near the endpoints. One can interpolate between the restrictions of $\bar X$ near $0$ and $1$ to get a $C^r$-small vector field $\tilde X$ on the whole interval. According to the previous paragraph, $\tilde X$ is $C^r$ conjugated to $\bar X$, but by a diffeomorphism that coincides with the identity near one of the endpoints and with a flow map near the other, while we would like it to coincide with the identity near both. Some extra care in both steps (initial conjugation and interpolation) allows to fix this issue (cf. \cite[\S2.4]{EBM25} for the details).

\paragraph{Sketch of proof of Proposition \ref{p:a-r-reduite} assuming Propositions \ref{l:woITI} and \ref{p:a-r}.} To fix ideas, let us consider the case where $X$ is $r$-tame at $0$ and $1$. Applying Proposition \ref{p:a-r} near both endpoints, we get two vector fields $X_0$ and $X_1$ that coincide with $X$ near $0$ and $1$, respectively (say on $[0,x_0]$ and $[x_1,1]$,  respectively), are $C^r$-small outside the corresponding neighborhood, and are constant outside a bigger neighborhood. If the initial vector field had no zero in the interior, one can interpolate between the constant parts of $X_0$ and $X_1$ to get a $C^1$ vector field $\tilde X$ which is $C^r$ small except maybe in the boundary region where it coincides with the initial $X$, where $f$ is $C^r$-close to the identity. The time-1 map $\tilde f$ of $\tilde X$ is thus $C^r$-close to the identity, and it is $C^1$-conjugated to $f$, and even $C^r$-conjugated if one puts some extra care in the interpolation to fulfill the ``synchronization condition'' alluded to above. 

If the initial vector field did have zeroes in the interior (necessarily in finite number in $[x_0,x_1]$ due 
to the ``isolation'' condition), one should first conjugate $X_{|[x_0,x_1]}$ using Proposition \ref{l:woITI} and a 
contracting homothety, and then interpolate between this new vector field and $X_0$ and $X_1$. 
For more details, see \cite[\S2.6]{EBM25}.  

\subsection{Proof of Proposition \ref{p:a-r}}
\label{ss:a-r}

The following is a refinement of the proof of Proposition~2.7 in \cite{EBM25} with a particular care given to the optimality of the exponents.

\begin{proof}
Let $\chi:[0,+\infty) \rightarrow \mathbb{R}$ be a $C^{\infty}$ function that is equal to $1$ on $[0,\frac{1}{2}]$ and to $0$ on $[1,+\infty)$, with $0 \leq \chi \leq 1$. Let $M_r=\left\| \chi \right\|_r$. By assumption, there exists $\delta\in(0,\frac1r)$ such that, arbitrarily close to $0$, one can find $x_0>0$ such that 
\begin{equation}
\label{e:x0}
\forall k\in[\![1,r]\!],\quad |D^kX(x_0)|\le |f(x_0)-x_0|^{1-k\delta}.
\end{equation}
Since $1-r\delta>0$, one can pick $\eps>0$ small enough that $1-r\delta-(r-1)\eps$ is still positive. 
Take $x_0$ such that \eqref{e:x0} holds and small enough so that 
$$
X(x_0)+ r2^r M_r (f(x_0)-x_0)^{1-r\delta-(r-1)\eps} < \eta
$$
and
$$ 
\frac{X(x_0)}{f(x_0)-x_0}>e^{(f(x_0)-x_0)^{\eps}}-1.
$$ 
For the second inequality, observe that the right-hand side expression
is equivalent to $(f(x_0)-x_0)^{\eps}$ and thus goes to $0$ when $x_0$ goes to $0$, while the left-hand side goes to $1$ (cf. Lemma \ref{t:szek} in the appendix). 
Take 
\begin{equation}
\label{e:b}
b=(f(x_0)-x_0)^{\delta+\eps}.
\end{equation}
For any $i \in [\![0,r]\!]$, let $a_i=D^i X(x_0)$, so that $|a_0|=|X(x_0)|$. 
By the choice of $x_0$,
\begin{equation}
\label{e:ai}
|a_i|\le (f(x_0)-x_0)^{1-i\delta},
\end{equation}
and thus 
\begin{equation}
\label{e:aibi}
|a_i|b^i\le (f(x_0)-x_0)^{1+i\eps}.
\end{equation}
We define $Y$ by setting $Y(x)=X(x)$ if $x \leq x_0$ and
$$
Y(x)=a_0+ \chi \left(\frac{x-x_0}{b} \right)\sum_{i=1}^{r}\frac{a_i}{i!}(x-x_0)^i
$$
if $x \geq x_0$. It is clear that the vector field $Y$ satisfies the three first properties. Let us now deal with property 4.

For any $i \in [\![1,r]\!]$, let $f_i$ be defined for $y \in \mathbb{R}$ by
$$
f_i(y)=\frac{a_i}{i!} y^i \chi \left(\frac{y}{b}\right).
$$
Then for any $k \in [\![0,r]\!]$,
$$
D^kf_i(y)=\sum_{j=0}^{i} \binom{k}{j}  \frac{a_i}{(i-j)!}\frac{y^{i-j}}{b^{k-j}} D^{k-j}\chi \left(\frac{y}{b}\right).
$$
Hence, as this function is supported in $[0,b]$,
$$
\begin{array}{rcl}
\left\| D^k f_i \right\|_{\infty} & \leq & \displaystyle \sum_{j=0}^{i} \binom{k}{j}  |a_i|b^{i-k} M_r \\
 & \leq & 2^{k} M_r (f(x_0)-x_0)^{1+i\eps-k(\delta+\eps)} \quad\text{according to \eqref{e:aibi} and \eqref{e:b}}\\
 & \leq & 2^{r} M_r (f(x_0)-x_0)^{1-r\delta-(r-1)\eps}.
\end{array}
$$
From this and from our choice of $x_0$, we deduce the fourth property.
\medskip

It remains to check the last point. If $Y$ vanishes at some point $x_1$, then $x_1=x_0+y_1$ belongs to $[x_0,x_0+b]$. As $Y(x_1)=0$, we have 
$$ a_0=-\sum_{i=1}^{k} f_i(y_1),$$
so that
$$
X(x_0) 
  = |a_0| 
 \leq  \displaystyle \sum_{i=1}^{k} |f_i(y_1)| 
 \leq  \displaystyle \sum_{i=1}^{k} \frac{|a_i|}{i!}b^i 
 \leq  (f(x_0)-x_0)\displaystyle \sum_{i=1}^{k} \frac{1}{i!} (f(x_0)-x_0)^{i\eps}, 
$$
according to \eqref{e:aibi}. As a consequence, 
$$\frac{X(x_0)}{f(x_0)-x_0}\le e^{(f(x_0)-x_0)^{\eps}}-1,$$
which is not possible by our choice of $x_0$.
\end{proof}

\subsection{From almost-reducibility to distortion}
\label{ss:a-r-dist}

Here, we give an elementary proof of Theorem \ref{t:dist-fini} (recalled below), based on finite regularity analogues of statements of \cite{EBM25} whose proofs are sketched in Appendix \ref{s:perfect}.

\begin{thmF}
If $f\in\Diff^r_c(\R)$ is $C^r$-almost-reducible for a certain $r \geq 3$, then it is $C^{r-2}$-distorted. 
\end{thmF}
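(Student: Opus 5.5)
The plan follows the scheme of \cite{EBM25} (itself inspired by Avila's argument \cite{Av08}): a fixed finite set $S$ of $C^{r-2}$ compactly supported diffeomorphisms generates $f$ and all of its iterates with sublinear word length. The two ingredients are the almost-reducibility hypothesis and a uniform local perfectness statement in finite regularity. The latter is the result of Appendix \ref{s:perfect} (Theorem \ref{Thm:locfragperf}), and I take it in roughly the following form. There exist $\eps_0>0$, an integer $K$ and a finite set $S_0\subset\Diff^{r-2}_c(\R)$ such that every $g\in\Diff^r_c(\R)$ supported in a fixed interval $J$ with $d_{C^r}(g,\id)<\eps_0$ can be written as a product of at most $K$ elements, each conjugated by elements of $S_0$ to commutators of $C^{r-2}$ diffeomorphisms. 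These commutators involve elements whose $C^{r-2}$ size is controlled by $d_{C^r}(g,\id)$. The loss of two derivatives comes from the Arnold--Moser--Herman linearization step, and this is why the conclusion is only $C^{r-2}$.

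\textbf{Step 1.} Using almost-reducibility, for each $n$ choose $\varphi_n$ with common compact support such that $g_n:=\varphi_n f\varphi_n^{-1}$ satisfies $d_{C^r}(g_n,\id)\le\eps_n$, where $\eps_n$ is a sequence decaying as fast as we like. Then $f^{k}=\varphi_n^{-1}g_n^{k}\varphi_n$. The difficulty is that the conjugators $\varphi_n$ are not in a fixed finitely generated group.

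\textbf{Step 2.} I would handle the conjugators with the standard trick for $\Diff_c(\R)$. Fix a diffeomorphism $\tau$, for instance a translation or a contraction moving the support $J$ off itself, so that the conjugates $\tau^{m}J$ are disjoint and accumulate or escape. Then encode the whole sequence $(\varphi_n)$, and the $C^r$-small elements $g_n^{k}$, into finitely many ``infinite product'' diffeomorphisms $\Phi=\prod_m\tau^{m}\varphi_{n_m}\tau^{-m}$. These products are genuinely $C^{r-2}$, or even $C^r$, provided the pieces become $C^r$-small fast enough, which is what the freedom in $\eps_n$ and the local perfectness decomposition provide. The elements $\tau^{-m}\Phi\tau^{m}$ then cost $O(m)$ letters.

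\textbf{Step 3.} Write $g_n^{2^{n}}$ using local perfectness. Because $g_n$ is $\eps_n$-close to $\id$, the iterate $g_n^{2^n}$ is still $C^r$-small if $\eps_n\ll 2^{-nC}$; note that $C^r$ norms of iterates grow at most exponentially. By the Avila-type doubling, obtain $|f^{2^n}|_S=O(n^2)$ or $O(n)$, which is sublinear in $2^n$. A binary expansion then gives $|f^N|_S=O((\log N)^2)$ for general $N$.

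The main obstacle is Step 2 combined with Step 3: making the infinite products converge in $C^{r-2}$. One needs to ensure that conjugating by $\tau^{m}$, which rescales by contraction ratios, does not blow up the $C^{r-2}$ norms, while simultaneously fitting $\varphi_n$ inside them. I would first try using contractions $\tau$ of ratio $\lambda<1$ together with the freedom to choose $\eps_n$ super-exponentially small. Derivatives of order $j$ scale like $\lambda^{1-j}$, so the required decay can be absorbed. For $\varphi_n$ themselves, which are not small, I would instead absorb them into the local perfectness decomposition, so that only small elements are ever placed in the infinite products.
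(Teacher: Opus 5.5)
Your architecture matches the paper's: almost-reducibility, uniform local perfectness losing two derivatives through Herman's theorem, and an infinite-product encoding of a sequence of increasingly small elements. The paper packages the encoding as Proposition \ref{p:GSM2}, with word length at most $70n+70$ for the $n$-th element. The genuine gap is the conjugators $\varphi_n$. You propose to ``absorb them into the local perfectness decomposition'', but Theorems \ref{Thm:locfragperf} and \ref{t:unif-perfect} only apply to diffeomorphisms $C^r$-close to the identity, and the $\varphi_n$ are not. The missing ingredient is Lemma \ref{Lem:smallpieces}. Since $\Diff^r_c(\R)$ is simple for $r\neq 2$, the minimal number of time-one maps of $C^r$ flows needed to write an element is a conjugation-invariant norm. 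By Theorem \ref{t:BIP} (Burago--Ivanov--Polterovich) this norm is bounded by $14$. Writing $\varphi=\prod_{i\le 14}(\varphi_i^{1/N})^N$ then shows that $\varphi$ lies in the group generated by at most $14$ elements that are $\delta$-close to the identity, for any prescribed $\delta$. What is bounded is the \emph{number of generators}, not the word length. This is what lets the paper fix $n_k$ and $\delta_k=\min_{n\le 15k}\eta_n$ in advance. It then places these generators, together with $\varphi_kf^{n_k}\varphi_k^{-1}$, in the predetermined slots $15k-14,\dots,15k$ of the sequence fed to Proposition \ref{p:GSM2}.

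Second, even once $\varphi_n\in\langle S\rangle$, your Step 3 ignores the cost of conjugating back. You have $|f^m|_S\le 2|\varphi_n|_S+|(\varphi_nf\varphi_n^{-1})^m|_S$, and nothing controls $|\varphi_n|_S$. In the decomposition above it is of order $N$, where $N$ depends on $\delta$ and on $\varphi_n$, so it is only known after the exponent has been chosen. Hence the bounds $|f^{2^n}|_S=O(n^2)$ and $|f^N|_S=O((\log N)^2)$ do not follow. The paper instead lets the exponent pay for the conjugation. With $p_k:=|\varphi_k|_S$ and $f_{15k}=\varphi_kf^{n_k}\varphi_k^{-1}$,
\[
|f^{p_kn_k}|_S=\bigl|\varphi_k^{-1}f_{15k}^{\,p_k}\varphi_k\bigr|_S\le p_k\bigl(2+|f_{15k}|_S\bigr)\le p_k\,(70\cdot 15k+72),
\]
which is $o(p_kn_k)$ as soon as $k=o(n_k)$. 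Since $m\mapsto|f^m|_S$ is subadditive, Fekete's lemma upgrades this subsequence to $|f^m|_S/m\to 0$. Incidentally, no control of $C^r$ norms of iterates is needed: almost-reducibility of $f$ directly gives conjugates of $f^{n_k}$ that are $\delta_k$-close to the identity.
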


Here are the fundamental ingredients: 

\begin{prop} 
\label{p:GSM2}
Let $r\ge3$. There exists a sequence of positive numbers $(\eta_n)_{n \geq 0}$ such that, for any sequence of diffeomorphisms $(f_n)_{n \geq 0}$ in $\Diff^r_c(\R)$ with a common compact support and satisfying $d_{C^r}(f_n,\id) < \eta_n$, the following property holds: there exists a finite subset $S$ of  $\Diff^{r-2}_c(\R)$ such that, for any $n$, the 
diffeomorphism $f_n$ belongs to $ \langle S \rangle$ and
$$\ell_S(f_n) \leq 70n+70.$$
\end{prop}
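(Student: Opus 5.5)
Proposition~\ref{p:GSM2} is the finite-regularity analogue of the ``uniform generation of sequences of small diffeomorphisms'' in \cite{EBM25} (itself inspired by Avila \cite{Av08}). I would prove it by writing each $f_n$ as a product of a \emph{bounded} number of commutators of $C^{r-2}$-small elements, then encoding all these elements, via shifts and contractions, into a fixed finite set. The loss of two derivatives comes from the Arnold--Moser--Herman local linearization theorem used in the perfectness step.

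\textbf{Step 1: uniform local perfectness.} Fix a compact interval $K$ containing all supports. I would first establish (this is the content of Theorem~\ref{Thm:locfragperf} in Appendix~\ref{s:perfect}) that there are $\eta>0$ and a constant $C$ such that every $g\in\Diff^r_c(\R)$ supported in $K$ with $d_{C^r}(g,\id)<\eta$ can be written as a product of a fixed number (say $\le 4$) of commutators $[a_j,b_j]$. Here $a_j$ is a fixed rigid element (a fixed lift of a Diophantine rotation, cut off suitably), and $b_j\in\Diff^{r-2}_c(\R)$ satisfies
\[
d_{C^{r-2}}(b_j,\id)\le C\, d_{C^r}(g,\id).
\]
To get this, fragment $g$ into pieces supported in short intervals. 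View each piece, composed with the rotation on a circle model, as a perturbation of a Diophantine rotation. Herman's theorem then linearizes this perturbation with a conjugacy $h$ controlled in $C^{r-2}$ by the $C^r$ size of the perturbation, which writes the piece as $h R h^{-1}R^{-1}$.

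\textbf{Step 2: encoding the sequence in a finite set.} Choose $\eta_n$ decreasing very fast, so that the $b_j^{(n)}$ attached to $f_n$ have $C^{r-2}$ norms summable after rescaling. Following \cite{Av08,EBM25}, fix a diffeomorphism $\tau$ contracting toward a point and a translation-like element $\sigma$ that moves the supports $\tau^{k}(K)$ off one another. Then define a single $B_j\in\Diff^{r-2}_c(\R)$ as the infinite product of the disjointly supported conjugates $\sigma^{n}\,\tau\, b_j^{(n)}\,\tau^{-1}\sigma^{-n}$, suitably placed. The fast decay of $\eta_n$ ensures that $B_j$ is $C^{r-2}$, because conjugating by contractions multiplies higher derivatives by growing factors, and the choice of $\eta_n$ must beat these. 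One recovers $b_j^{(n)}$ from $B_j$ by conjugating by $\sigma^{n}$ and restricting. The restriction is obtained with a commutator trick using an element that is the identity on one support and a shift on the others.

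Taking $S=\{a_j,B_j,\sigma,\tau,\dots\}$, each $f_n$ is then a word of length $O(n)$: the $\sigma^{\pm n}$ cost $2n$ per occurrence, and a bounded number of occurrences gives the linear bound $70n+70$ after counting.

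\textbf{Main obstacle.} The hardest step is Step 1 with its quantitative, \emph{linear} estimates in finite regularity. These must be uniform over the compact support, and the Herman-type linearization must lose exactly two derivatives. Step 2 is largely bookkeeping once the decay of $\eta_n$ is tuned against the derivative blow-up of the rescalings.
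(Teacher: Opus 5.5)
Your overall architecture agrees with the paper's. There, Proposition~\ref{p:GSM2} follows by combining two results. Theorem~\ref{t:unif-perfect} writes each $f_n$ as a product of five commutators of $C^{r-2}$-small elements of $\Diff^{r-2}_c(J)$. Proposition~\ref{p:GSM} encodes sequences of commutators of fast-decaying elements in a finite set, at cost $14n+14$ each, whence $5(14n+14)=70n+70$. Your Step~2 is essentially Proposition~\ref{p:GSM}. The \emph{linear} estimate you single out as the main obstacle is not needed: since $\eta_n$ is chosen after $\varepsilon_n$, a qualitative $\varepsilon$--$\varepsilon'$ statement suffices.

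The genuine gap is Step~1.

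\begin{itemize}
\item Herman's theorem is a statement about circle diffeomorphisms. From $gR_\alpha=hR_\alpha h^{-1}$ you get $g=[h,R_\alpha]$, with $h$ a \emph{global} circle diffeomorphism and $R_\alpha$ a rotation. This also requires $\rho(gR_\alpha)=\alpha$, which you never arrange.
\item Neither $h$ nor $R_\alpha$ lies in $\Diff^{r-2}_c(\R)$, and cutting off $R_\alpha$ destroys the conjugacy relation.
\item Viewing the circle as the orbit space of a flow on an interval does not help either. A circle conjugacy that is not a rotation lifts to a map whose derivative oscillates without limit at the fixed endpoints.
\end{itemize}

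Two ideas are missing.

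\begin{itemize}
\item \emph{Avila's fragmented local perfectness} (Theorem~\ref{Thm:locfragperf}). A $C^r$-small circle diffeomorphism is a product of four commutators of $C^{r-2}$-small factors, each supported in one of two proper arcs $J_1,J_2$. Arc-supported factors are exactly what can be transported to $\R$.
\item \emph{Mather's trick.} Fix an auxiliary $h$ supported in $J$ (with $\bar I\subset J$ as in Theorem~\ref{t:unif-perfect}). Take $h$ to be $C^r$-close to $\id$, the time-one map of a smooth flow, without fixed points in $I$, and hyperbolic at $\partial I$. For $f$ supported in $I$ and $C^r$-small, Sternberg linearization conjugates the germs of $fh$ and $h$ at $\partial I$. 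The only remaining obstruction to a global conjugacy is the Mather invariant of $fh$, a circle diffeomorphism $C^r$-close to the identity. Decompose it via Theorem~\ref{Thm:locfragperf}, and place the arc-supported factors in distinct fundamental domains as in Lemma~\ref{l:cancel}. This yields $u$, a product of four small commutators, with $ufh=\varphi h\varphi^{-1}$ for some $C^{r-2}$-small $\varphi$. Hence $f=u^{-1}[\varphi,h]$ is a product of five small commutators.
\end{itemize}

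Without these two steps, your Step~1 never produces commutators inside $\Diff^{r-2}_c(\R)$.
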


\begin{lem} 
\label{Lem:smallpieces}
Let $r\neq2$. For every diffeomorphism $\varphi$ in $G=\Diff^r_c(\R)$ and every $\delta>0$, there exists a subset $S\subset G$ with $|S| \leq 14$ such that:
\begin{enumerate}
\item The subset $S$ consists of diffeomorphisms that are $\delta$-close to the identity.
\item The diffeomorphism $\varphi$ belongs to $\langle S \rangle$.
\end{enumerate}
\end{lem}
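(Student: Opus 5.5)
The plan is to write $\varphi$ as a product of boundedly many diffeomorphisms, each $\delta$-close to the identity, in two stages. First I reduce to maps close to the identity using a continuous path. Then I write each such map as a product of maps that are small in $C^r$ norm, using two disjointly supported pieces and conjugations by affine contractions. The number $14$ will come from counting the pieces in this decomposition.

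\textbf{Step 1 (path to the identity).} Since $\varphi\in\Diff^r_c(\R)$ is orientation preserving and compactly supported, say in $[-A,A]$, the isotopy $\varphi_t=(1-t)\id+t\varphi$ stays in $\Diff^r_c(\R)$, because $D\varphi_t=(1-t)+tD\varphi>0$. This path is continuous in the $C^r$ topology.

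\textbf{Step 2 (fragmentation).} Fix a partition of unity $\{\rho_1,\rho_2\}$ on $[-A,A]$ subordinate to two intervals $U_1,U_2$, each of length less than $A$. Using the path, I write $\varphi$ as a product $\varphi=\psi_1\psi_2$, with $\psi_j$ supported in $U_j$. This can be done by truncating a vector field or the path via the usual fragmentation lemma, which costs a bounded number of factors, say a few, independent of $\varphi$. The point to watch is that the number of factors must not depend on $\varphi$. Since only the count $14$ matters and not smallness at this stage, I would instead use the standard trick below rather than fine fragmentation.

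\textbf{Step 3 (shrinking by affine conjugation).} Let $h$ be an affine contraction $x\mapsto \lambda x+c$ composed with a compactly supported modification. For $\psi$ supported in an interval $J$, the conjugate $h\psi h^{-1}$ with $h$ affine of ratio $\lambda$ on $J$ satisfies $D^k(h\psi h^{-1})=\lambda^{1-k}D^k\psi$. This is small for $k=0$ and $k=1$ only after subtracting the identity, and $D^k$ grows for $k\ge 2$. So contraction alone does not help. The useful move is \emph{expansion}: conjugating by a dilation of large ratio $\mu$ gives norm $\mu^{1-k}\|D^k(\psi-\id)\|$ for $k\ge2$ and $\|D(\psi-\id)\|$ for $k=1$. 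The $C^1$ part is not reduced. Hence I first make $\psi$ $C^1$-close to the identity by writing it as a product of boundedly many $C^1$-small diffeomorphisms. One way is to take roots: the isotopy $\psi_t$ gives $\psi=\prod(\psi_{t_{i+1}}\psi_{t_i}^{-1})$, but this takes many factors. To keep the count bounded, I use commutators instead, in the spirit of the Mather--Thurston/Tsuboi trick. Take $g$ a contracting map sending $U$ into a small region. The infinite product $\Psi=\prod_k g^k\psi g^{-k}$ has disjoint supports shrinking to a point, and $\psi=\Psi\,(g\Psi g^{-1})^{-1}$. Both $\Psi$ and $g$ must then be written with $\delta$-small pieces: $g$ can be chosen as a product of a few maps that are near-identity in $C^r$, since a contraction by a factor close to $1$ with a long support is $C^r$-small. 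Whether $\Psi$ is $C^r$ at the accumulation point requires $g^k\psi g^{-k}$ to be $C^r$-small, which fails for $r\ge2$ under pure contraction.

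\textbf{Main obstacle.} This is exactly why the hypothesis excludes $r=2$. For $r\ne 2$ the scaling exponent $1-k$ interacts favorably with the chosen scalings, presumably via Tsuboi-type arguments where a conjugation by a contraction of one factor and a dilation of another balance. I would follow Tsuboi's scheme, or \cite{EBM25}'s finite-regularity version, and count: two fragments, each a product of about $7$ near-identity maps (shift conjugators plus $\Psi$ pieces), giving $|S|\le 14$. I expect the delicate step to be ensuring convergence in $C^r$ of the infinite product with a bounded number of $\delta$-small factors.
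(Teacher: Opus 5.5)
Your proposal does not reach a proof. Step~3 ends with the (correct) observation that $\Psi=\prod_k g^k\psi g^{-k}$ need not be $C^r$ at the accumulation point. The bound $|S|\le 14$ is then a guessed count ($2\times 7$) attributed to an unspecified ``Tsuboi scheme''.

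The underlying problem is that you are bounding the wrong quantity. $|S|$ counts distinct \emph{generators}, not letters of a word, so a factor repeated many times costs nothing. This is exactly why roots are the right tool, and you discarded them for the wrong reason. The increments $\psi_{t_{i+1}}\psi_{t_i}^{-1}$ of an arbitrary isotopy are pairwise distinct. But if $\varphi$ is the time-one map of a $C^r$ flow $(\varphi^t)_{t\in\R}$, then $\varphi=(\varphi^{1/n})^n$ lies in the group generated by the \emph{single} element $\varphi^{1/n}$, which is $\delta$-close to the identity in $C^r$ for $n$ large.

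The missing idea is therefore to write an arbitrary $\varphi$ as a product of a \emph{uniformly bounded} number of time-one maps of $C^r$ flows. The paper gets this from the simplicity of $\Diff^r_c(\R)$ for $r\neq 2$. The subgroup generated by time-one maps of flows is normal and nontrivial, hence is the whole group. The minimal number $\nu(f)$ of such factors needed to write $f$ is then a conjugation-invariant norm, and Theorem~\ref{t:BIP} gives $\nu\le 14\,\nu(\theta)=14$ for any nontrivial time-one map $\theta$. Writing $\varphi=\phi_1^1\cdots\phi_k^1$ with $k\le 14$ and $(\phi_i^t)_t$ flows of class $C^r$, the set $S=\{\phi_i^{1/n}: 1\le i\le k\}$ works for $n$ large.

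This also corrects your reading of the hypothesis $r\neq 2$. It has nothing to do with the scaling exponents $1-k$. It comes from the fact that simplicity (in particular perfectness) of $\Diff^r_c(\R)$ is only known for $r\neq 2$ (Mather), which is what both the normal-subgroup step and Theorem~\ref{t:BIP} require.

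Finally, the ``expansion'' move in Step~3 does not work even heuristically. Conjugating by an affine dilation of ratio $\mu>1$ multiplies $\|\psi-\id\|_0$ by $\mu$. It trades smallness of the higher derivatives for a blow-up of the $C^0$ distance, and so never produces $C^r$-small elements.
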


Let us now prove Theorem \ref{t:dist-fini}. Fix $r\ge3$. Let $f$ be a $C^r$-almost-reducible $C^r$-diffeomorphism, meaning that $f$ (and thus its iterates) can be conjugated arbitrarily close to the identity by diffeomorphisms with a common compact support in the interior of a compact interval $I$. The idea is to apply Proposition \ref{p:GSM2} to a sequence $(f_n)$ consisting both of conjugates of big iterates of $f$ that are close to the identity and ``small pieces'' (close to the identity) provided by Lemma~\ref{Lem:smallpieces} for each of the previous conjugating diffeomorphisms. 
Concretely, let:
\begin{itemize}
\item $(n_k)$ be an increasing sequence of integers such that $70k+70=o(n_k)$ (which will be the exponents of the iterates alluded to above),
\item $(\eta_n)$ be the sequence given by Proposition \ref{p:GSM2},
\end{itemize}
and, for every $k\in\N$, let 
\begin{itemize}
\item $\delta_k=\min\{\eta_n, n\le 15k\}$,
\item $\varphi_k\in \Diff^r_c(I)$ be such that $d_{C^r}(\varphi_kf^{n_k}\varphi_k^{-1},\id)<\delta_k$,
\item $S_k\subset \Diff^r_c(I)$ be a family of $14$ elements obtained by applying Lemma \ref{Lem:smallpieces} to $\varphi_k$ and $\delta_k$,
\item $S'_k:=S_k\cup\{\varphi_kf^{n_k}\varphi_k^{-1}\}$ (which lies in the $\delta_k$-neighborhood of the identity for the $C^r$ topology).
\end{itemize}
Define a sequence $(f_n)$ of $C^r$ diffeomorphisms supported in $I$ by enumerating the elements of $S'_1$, then $S'_2$, etc. More precisely, $\{f_1,\dots,f_{14}\}=S_1$, $f_{15}=\varphi_1f^{n_1}\varphi_1^{-1}$, $\{f_{16},\dots,f_{31}\}=S_2$, $f_{32}=\varphi_2f^{n_2}\varphi_2^{-1}$, etc. By construction, for every $n\in\N$, we have 
$d_{C^r}(f_n,\id)<\eta_n$, so Proposition~\ref{p:GSM2} applies and provides a finite family $S\subset \Diff^{r-2}_c(\R)$ such that, for every $n\in\N$, $f_n\in\,\langle S\rangle$ and ${|f_n|}_S\le 70 n + 70$. Since $\varphi_k\in\,\langle S_k\rangle$ for every $k\in\N$, a fortiori we have $\varphi_k\in\,\langle S\rangle$. Let $p_k=|\varphi_k|_S$. Then, for all $k\in\N$, $f^{n_k} = \varphi_k\circ f_{15k}\circ\varphi_k^{-1}$. Therefore,
\begin{align*}
\frac{|f^{p_kn_k}|_S}{p_kn_k} 
= \frac{1}{p_kn_k}\left|\varphi_k^{-1}\,f_{15k}^{p_k}\;\varphi_k\right|_S
\le  \frac{1}{p_kn_k}\left(2p_k + p_k(70k+70)\right)
\le\frac1{n_k}(70k+72)\xrightarrow[k\to\infty]{}0,
\end{align*}
which concludes the proof.


\section{Appendix: Bounds on the generating vector fields}
\label{s:bounds}

As was already mentioned, if a $C^r$ diffeomorphism of the interval is the time-$1$ map of a $C^1$ vector field, this (unique) vector field is not necessarily $C^r$. It is always $C^{r-1}$ on the complement of the set of ``flat'' fixed points (where $f-\id$ is $C^r$-flat) according to \cite{Ta,Yoc}, but can sometimes be only $C^1$ at these fixed points. This phenomenon was highlighted by Sergeraert in \cite{Se}, where he nevertheless obtained, in the $C^\infty$ setting and in the absence of interior fixed points, some control on the vector field in terms of the diffeomorphism it generates, allowing him to prove the smoothness of the vector field under some extra ``non oscillation'' condition. In \cite[Prop. 2.17]{BE16}, the authors observed that Sergeraert's estimates imply, in the general case (with no extra condition and with possibly interior fixed points), that there are regions arbitrarily close to the fixed points where the generating vector field is rather tame. Here we prove a refined version of this result in finite regularity, Proposition \ref{p:bounds} below. The difference is that we keep track of the exact exponents in the estimates, but apart from that, the proof is exactly the same, and no new idea is involved. For readability reasons, we restrict to diffeomorphisms without interior 
fixed points, but just like in \cite{BE16}, the generalization to the case with interior fixed points is straightforward, though tedious, as it requires considering $f$ or $f^{-1}$ on the various complementary intervals of the set of fixed points, depending on which ``pushes to the left/right''.

\begin{prop}
\label{p:bounds}
Let $2\le r<R\in\N$ and $f$ be an element of $\Diff^R_+([0,1])$ which is $C^R$-tangent to the identity at $0$ and is the time-$1$ map of a $C^1$ 
vector field $X$ that is ``negative'' on $(0,1)$. Then there exists $C=C(r,R,f)$ such that, arbitrarily close to $0$, one can find $x_0>0$ such that 
$$ \left\| X_{|[f^{2}(x_0), f^{-2}(x_0)]} \right\|_r \leq C x_0^r\,|f(x_0)-x_0|^{1-\frac{2r}R}.$$
\end{prop}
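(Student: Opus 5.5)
The plan is to follow Sergeraert's argument as reorganised in \cite[Prop. 2.17]{BE16}, keeping track of exponents. Since $X<0$ on $(0,1)$, the orbits $f^{n}(x)$ decrease to $0$. Write $u=f-\id$, which is $C^R$ and $C^R$-flat at $0$. By the invariance relation \eqref{e:invariance}, $X\circ f^n = X\cdot Df^n$, so $X$ on a fundamental interval $[f(x),x]$ is determined by $X$ far out along the orbit. The aim is to express $X$ on a fundamental domain near $x_0$ through the Szekeres-type formula
$$X=\lim_{n\to\infty}(Df^n)^{-1}\,(u\circ f^n),$$
or, equivalently, through the Takens/Yoccoz approximation $X\approx u-\tfrac12 u\,Du+\dots$ near $0$. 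Then $D^kX$ is controlled by the derivatives $D^ju$ at points of the fundamental domain, together with the derivatives of $\log Df^n$.

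\textbf{Step 1: choice of $x_0$.} The key one-variable input is a Landau--Kolmogorov-type interpolation on the flat function $u$ near $0$. I would choose $x_0$ as a ``record point'' for $|u|$: say $|u(x_0)|\ge |u(y)|\,(y/x_0)^{R}$ for all $y\le x_0$, or simply $|u(x_0)|=\max_{[0,x_0]}|u|$ up to a polynomial factor. Such points exist arbitrarily close to $0$ because $u$ is not identically $0$ near $0$; if it were, $f=\id$ there, $X$ would vanish, and the statement would be trivial, which contradicts $X<0$. On $[0,2x_0]$, $\|D^Ru\|_\infty$ is bounded by a constant $C_f$, and $\sup|u|$ is comparable to $|u(x_0)|$ on the relevant range. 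Interpolating between $C^0$ and $C^R$ then gives
$$\|D^ju\|_{[f^2(x_0),f^{-2}(x_0)]}\lesssim |u(x_0)|^{1-j/R}.$$
A polynomial factor in $x_0$ is lost when rescaling the interval of length $\sim x_0$, and this is where the $x_0^r$ prefactor should come from.

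\textbf{Step 2: estimate $X$ from $f$.} Using the chain-rule expansions of $D^k(Df^n)$ and Faà di Bruno, I would bound $D^k\log Df^n$ on $[f^2(x_0),f^{-2}(x_0)]$ uniformly in $n$. The idea is to sum the contributions $D^{j}u\circ f^m$ along the orbit: the orbit points $f^m(x)$ shrink toward $0$, where $u$ is flat, and the sum converges geometrically in terms of the spacing $|u|$. The result should be
$$|D^kX|\lesssim \sum_{j\le k}\|D^ju\|\cdot(\text{products of lower-order terms}),$$
which is dominated by $|u(x_0)|^{1-k/R}$ times the worst case. Finally, a product-type term such as $D^ju\cdot D^{k-j}u/u$ yields the factor $|u|^{1-2k/R}$, which is where the exponent $1-\tfrac{2r}R$ in the statement comes from; I take $k\le r$.

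\textbf{Main obstacle.} The hard part is the uniform-in-$n$ control of the higher derivatives of $Df^n$ (or of $\log Df^n$) along backward/forward orbits. These sums involve terms like $\sum_m D^ju(f^m x)$, weighted by powers of $1/u$ coming from the change of variable relating orbit spacing to $u$. I would handle this as Sergeraert does, by comparing $\sum_m |u(f^mx)|^{\alpha}$ to $\int dy/|u(y)|^{1-\alpha}$. Convergence of this integral requires $\alpha>0$ and is ensured by the record-point choice of $x_0$, which forces $|u|$ at smaller points to be at most comparable. Matching the precise exponents is routine once this summation lemma is in place.
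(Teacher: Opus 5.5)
You have the right framework (Sergeraert's argument as in \cite{BE16}, Hadamard interpolation for flat functions), and your choice of $x_0$ as a record point, $|f(x_0)-x_0|=\norm{f-\id}_{0,[0,x_0]}$, is exactly the paper's. The gap is the core estimate of Lemma \ref{p:estb}, namely $|X^{n-1}D^nX(x)|\le Cx^n\norm{f-\id}_{0,[0,x]}^{n(1-\frac2R)}$ for $n\le R-1$. You defer exactly this as ``routine once the summation lemma is in place'', and the summation lemma you propose is false. Consecutive orbit points are spaced by $|u(f^mx)|$, so $\sum_m|u(f^mx)|^\alpha\approx\int_0^x|u|^{\alpha-1}$. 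For $0<\alpha<1$ this diverges whenever $u$ is flat at $0$ (e.g. $u(y)=-e^{-1/y}$). The record-point choice cannot rescue it: it bounds $|u|$ from \emph{above} on $[0,x_0]$, which only makes $|u|^{\alpha-1}$ larger. The only orbit sums under control have exponent exactly one, $\sum_i|X(f^ix)|\le C\sum_i(f^ix-f^{i+1}x)\le Cx$ (by Lemma \ref{t:szek}). They are multiplied by a sup norm over $[0,x]$, which is monotone in $x$ and so can be pulled out of the sum. This telescoping is where the factor $x^n$, hence $x_0^r$, comes from. It is a gain, not a loss from rescaling: Lemma \ref{l:had} has no rescaling loss, since a flat function extends by $0$.

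Step 2 is also not workable as written. Differentiating $(u\circ f^n)/Df^n$ by Faà di Bruno needs uniform bounds on $D^jf^m$, but these are themselves expressions in $X$ and its derivatives (e.g. $Df^m=X\circ f^m/X$). So the argument is circular unless it is organized as an induction. The paper's device is to replace $D$ by the Lie derivative $L_X=X\cdot D$, which commutes with composition by $f^i$ because $f$ preserves $X$. From \eqref{e:Dxi} this gives $\Phi_n=(L_X)^{n-1}DX=\sum_{i\ge0}\varphi_n\circ f^i$ with $\varphi_n=-(L_X)^n\log Df$. Sergeraert's universal identities (Lemma \ref{l:alg}), with nonnegative coefficients and the right homogeneity, then convert between $\Phi_n$, $\varphi_n$ and $\mu_n=X^{n-1}D^nX$, which is what makes the induction of Lemma \ref{l:estb} work.

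Your exponent bookkeeping is also off. With $|D^ju|\lesssim|u|^{1-j/R}$, a term $D^ju\,D^{k-j}u/u$ gives $|u|^{1-k/R}$, not $|u|^{1-2k/R}$. The actual source is the estimate $\mu_k=O((x\norm{\log Df}_{0,[0,x]}^{1-\frac1{R-1}})^k)$, combined with $\norm{\log Df}_{0,[0,x]}^{1-\frac1{R-1}}=O(\norm{u}_{0,[0,x]}^{(1-\frac1R)(1-\frac1{R-1})})=O(\norm{u}_{0,[0,x]}^{1-\frac2R})$ (Corollary \ref{c:had}, Lemma \ref{l:no-se}). Dividing by $|X|^{k-1}\sim|u|^{k-1}$ at a record point then gives $|D^kX|\lesssim x^k|u|^{1-\frac{2k}R}$. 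Transferring from $x\in[f^2(x_0),f^{-2}(x_0)]$ to $x_0$ uses Lemmas \ref{l:dist} and \ref{t:szek}.
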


 The proof is a combination of the following lemmas. Lemma \ref{l:dist} is elementary and Lemma~\ref{t:szek} is quite standard. They correspond to Lemmas 2.18  and 2.19 in \cite{BE16}, for example (in a much more general context with interior fixed points; for the case without interior fixed points, see \cite[Lemma 2.9]{Se});  we will not repeat their proof here. Lemma \ref{p:estb} is the difficult part, and its proof will take the three whole sections \ref{ss:prep}, \ref{ss:estb1} and \ref{ss:estb2}.  It is a refined version of Lemma 2.20 from \cite{BE16} in the case without interior fixed points, which corresponds to Lemma 3.6 in \cite{Se}.

\begin{lem}
\label{l:dist} 
Let $f$ be \emph{any} $C^1$ diffeomorphism of $[0,1)$ satisfying $Df(0)=1$. Then
$$\sup_{y\in[x,f^{\pm2}(x)]}\left|\frac{f(y)-y}{f(x)-x}-1\right|\underset{{x\to 0}\atop{x\notin\Fix(f)}}{\to}0.$$
\end{lem}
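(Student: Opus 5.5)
The plan is to write the ratio in terms of the displacement function $\Delta := f-\id$ and use continuity of $D\Delta$ at $0$, where $D\Delta(0)=0$ since $Df(0)=1$. Fix $x$ small, not fixed by $f$, and let $y\in[x,f^{\pm2}(x)]$ (the interval between $x$ and $f^{\pm 2}(x)$, in whichever order). By the mean value theorem,
$$
\Delta(y)-\Delta(x) = D\Delta(\xi)\,(y-x)
$$
for some $\xi$ between $x$ and $y$. Hence
$$
\left|\frac{\Delta(y)}{\Delta(x)}-1\right| \le \sup_{[0,\max(x,f^{\pm2}(x))]}|D\Delta| \cdot \frac{|y-x|}{|\Delta(x)|},
$$
and everything reduces to a uniform bound $|y-x|\le C|\Delta(x)|$ with $C$ independent of $x$.

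To get that bound, I control the length of $[x,f^{\pm2}(x)]$ by the first step. Note that $|f^2(x)-x|\le |f(f(x))-f(x)|+|f(x)-x|$. The first term equals
$$
|f(f(x))-f(x)| = |f(f(x))-f(x)| \le \sup|Df|\,|f(x)-x|,
$$
by the mean value theorem applied to $f$ on the interval between $x$ and $f(x)$. The same argument works for $f^{-1}$ and $f^{-2}$, using $|f^{-1}(x)-x|=|f^{-1}(x)-f(f^{-1}(x))|$. So with $C=1+\sup|Df^{\pm1}|$ (sup over $[0,1/2]$, say) we get $|y-x|\le 2C|\Delta(x)|$ for all small $x$.

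Combining the two steps, the supremum in the statement is at most $2C\sup_{[0,\eta(x)]}|D\Delta|$, where $\eta(x)=\max(x,f^{\pm2}(x))\to0$ as $x\to0$. This upper bound tends to $0$ because $D\Delta$ is continuous and vanishes at $0$. The only point needing care is that $f^{\pm2}(x)\to0$ when $x\to0$, which holds since $0$ is fixed and $f$ is continuous. I expect no real obstacle here: the lemma is elementary, and it only uses $C^1$ regularity and $Df(0)=1$, exactly as the statement indicates.
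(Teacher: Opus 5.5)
Your argument is correct. The paper does not prove this lemma at all: it calls it elementary and refers to \cite{BE16}, and to \cite{Se} for the case without interior fixed points. So there is no route to compare against, and your mean value theorem argument is the natural proof. It bounds the length of $[x,f^{\pm2}(x)]$ by a constant times $|f(x)-x|$, then uses continuity of $Df-1$ at $0$.

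One step needs a small repair, in the backward iterates. The identity $|f^{-1}(x)-x|=|f^{-1}(x)-f(f^{-1}(x))|$ only rewrites this quantity as $|(f-\id)(f^{-1}(x))|$. It does not by itself bound it by $|f(x)-x|$.

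Instead, write $f^{-1}(x)-x=f^{-1}(x)-f^{-1}(f(x))$ and apply the mean value theorem to $f^{-1}$. This gives $|f^{-1}(x)-x|\le K|f(x)-x|$, and hence $|f^{-2}(x)-x|\le K(1+K)|f(x)-x|$, where $K$ bounds $Df^{-1}$ near $0$. That constant is not your $2C$, but any uniform constant suffices, so the conclusion is unaffected.
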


\begin{lem}
\label{t:szek} 
Let $f$ and $X$ be as in Proposition \ref{p:bounds}. Then
\begin{itemize} 
\item $\log\left|\frac{X}{f-\id}\right|$ is bounded on $(0,1)$,
\item $X(x)\underset{x\to 0}{\sim}f(x)-x.$ 
\end{itemize}
\end{lem}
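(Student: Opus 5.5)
The approach is to write $f-\id$ as an average of $X$ along its own flow and then use the invariance of $X$ under that flow. Let $(\phi^t)_{t\in\R}$ be the flow of $X$, so that $f=\phi^1$. Since $X$ is $C^1$ on $[0,1]$ and vanishes at the endpoints, $(t,x)\mapsto\phi^t(x)$ is a $C^1$ map on $\R\times[0,1]$. In particular $(t,x)\mapsto D\phi^t(x)$ is continuous and positive there.

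For every $x\in[0,1]$,
$$
f(x)-x=\phi^1(x)-x=\int_0^1 X(\phi^t(x))\,dt .
$$
The time-$t$ analogue of \eqref{e:invariance} gives $X\circ\phi^t=X\cdot D\phi^t$; this follows by differentiating the identity $\phi^t\circ\phi^s=\phi^s\circ\phi^t$ in $s$ at $s=0$. Hence, for $x\in(0,1)$, where $X(x)\neq0$,
$$
\frac{f(x)-x}{X(x)}=\int_0^1 D\phi^t(x)\,dt .
$$
The right-hand side is a continuous, positive function of $x$ on the compact interval $[0,1]$, so it is bounded above and below by positive constants. This gives the first item. Note that $X$ and $f-\id$ have the same sign on $(0,1)$: both are negative, since $f(x)-x=\int_0^1X(\phi^t(x))\,dt$ and $X<0$ on $(0,1)$.

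For the second item, let $x\to0$ in the same formula. By continuity the ratio tends to $\int_0^1 D\phi^t(0)\,dt$. Since $\phi^t(0)=0$ for all $t$, differentiating the flow equation in $x$ at $0$ gives $D\phi^t(0)=e^{tDX(0)}$. By \eqref{e:derivX}, $DX(0)=\log Df(0)=0$, because $f$ is $C^R$-tangent to the identity at $0$. So $D\phi^t(0)=1$ for every $t$, and the limit equals $1$; that is, $X(x)\sim f(x)-x$, in agreement with \eqref{e:X0}.

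The main point requiring care is the joint continuity of $D\phi^t(x)$ up to the boundary, which uses only that $X$ is $C^1$ on the closed interval. If $X$ were known to be $C^1$ only on $[0,1)$, I would handle a neighbourhood of $1$ separately: there I would use that $f-\id$ and $X$ are both $C^1$ and that the compactness argument above applies on $[0,1-\eta]$.
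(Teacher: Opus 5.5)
Your proof is correct. The paper does not reprove this lemma; it cites \cite[Lemma 2.19]{BE16} and \cite{Se} as standard, and your argument is the standard one: use $X\circ\phi^t=X\cdot D\phi^t$ to get $\frac{f(x)-x}{X(x)}=\int_0^1 D\phi^t(x)\,dt$ on $(0,1)$, then conclude by compactness of $[0,1]$ together with $D\phi^t(0)=e^{tDX(0)}=1$. Your closing remark about $X$ being $C^1$ only on $[0,1)$ is unnecessary, since Proposition~\ref{p:bounds} assumes $X$ is $C^1$ on all of $[0,1]$; as phrased, it would not control the ratio near $1$ in any case.
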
 

\begin{lem}
\label{p:estb}
Let $R$, $f$ and $X$ be as in Proposition \ref{p:bounds}. Then, for all $n\in[\![0,R-1]\!]$, there exists $C=C(n,R,f)$ such that, for all $x$ close enough to $0$,
$$|X^{n-1}(x)D^{n}X(x)|\le Cx^n\norm{f-\id}_{0,[0,x]}^{n(1-\frac2R)}.$$
\end{lem}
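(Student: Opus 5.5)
We would follow Sergeraert's scheme, working directly from the invariance relation \eqref{e:invariance}. Writing it as $X(x)=X(f^k(x))/Df^k(x)$ and letting $k\to+\infty$ (the orbit $f^k(x)$ tends to $0$ since $X<0$), we get a representation of $X$ near $0$ as a limit of explicit expressions in $f$:
$$\log\left|\frac{X}{f-\id}\right|(x)=\lim_{k\to\infty}\Big(\log\left|\tfrac{X}{f-\id}\right|(f^kx)-\log Df^k(x)+\log\tfrac{|f-\id|(f^kx)}{|f-\id|(x)}\Big).$$
By Lemma~\ref{t:szek}, the first term tends to $0$. Hence $\log|X/(f-\id)|$ is a convergent series $\sum_{i\ge0}\Phi\circ f^i$ with $\Phi=\log\frac{f(f-\id)\circ f - \ldots}{\ldots}$; concretely $\Phi(y)=\log\frac{|f(f(y))-f(y)|}{Df(y)\,|f(y)-y|}$. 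This is the quantity we would differentiate $n$ times.

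\textbf{Step 1 (interpolation).} Put $\eps(x)=\norm{f-\id}_{0,[0,x]}$. Since $f-\id$ is $C^R$ and $C^R$-flat at $0$, the Landau--Kolmogorov/Hadamard inequalities on $[0,x]$ (shrinking the interval near the boundary if needed) give
$$\|D^j(f-\id)\|_{0,[0,x]}\le C\,\eps(x)^{1-j/R}\quad\text{for } 0\le j\le R.$$
From this we obtain bounds for $D^j\Phi$ on $[0,x]$. Expanding $\Phi$ as a function of $f-\id$ and its derivatives, each derivative costs a factor $\eps^{-1/R}$, or a factor $1/|f-\id|$ coming from the logarithm of $f-\id$. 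Lemma~\ref{l:dist} lets us compare $|f-\id|$ at $y$ and at $f(y)$, which gives the cancellation making $\Phi$ itself $O(\eps^{1-2/R})$-small relative to $|f(y)-y|$.

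\textbf{Step 2 (summing along the orbit).} The natural normalisation is $|f-\id|^{j}D^j$, i.e.\ derivatives measured in the flow time. The chain rule for $D^j(\Phi\circ f^i)$ (Faà di Bruno) involves $D^m f^i$ for $m\le j$, and these we control by the same series method, by induction on $j$. The key summation identity is
$$\sum_{i\ge0}|f(f^ix)-f^ix|=x,$$
so a sum of terms of size $|f-\id|(f^ix)\cdot\eps^{-2/R}\cdot(\text{normalisations})$ is bounded by $x\,\eps^{1-2/R}$. Each further derivative contributes at most one more factor $x\,\eps^{-2/R}\cdot\eps$ after multiplying by $X\sim f-\id$. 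Then, by induction on $n\le R-1$, we obtain
$$|X^{n-1}D^nX(x)|\le C\,x^n\,\eps(x)^{n(1-2/R)}.$$
The restriction $n\le R-1$ is exactly what is needed so that the required derivatives of $f$ (up to order $n+1\le R$) exist. We would pass from derivatives of $\log|X/(f-\id)|$ to $D^nX$ by the Leibniz rule, again using the bounds of Step 1 on $f-\id$.

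\textbf{Main obstacle.} We expect the main difficulty to be the bookkeeping in Step 2: controlling the derivatives $D^m f^i$ uniformly in $i$ along the orbit. The plan is to prove, jointly with the claim by induction, the bound
$$|Df^i|^{-1}\,|D^m f^i|\le C\,\big(x\,\eps^{-2/R}\cdot\eps/|f-\id|\big)^{m-1}\ \text{on the orbit segment},$$
via $D\log Df^i=\sum_{l<i}(D\log Df)\circ f^l\cdot Df^l$. One then checks that every Faà di Bruno term carries a total weight $|f-\id|\cdot\eps^{-2/R}$ per derivative, so that the orbit sum above closes the estimate. If uniform control of this kind fails, we would instead stop the series at the first index where $f^ix$ leaves a fixed multiplicative neighbourhood, and handle the tail with Lemma~\ref{t:szek}.
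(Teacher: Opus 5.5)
Your route (a Szekeres-type series for $\log|X/(f-\id)|$, ordinary derivatives, and Fa\`a di Bruno with inductive control of $D^mf^i$) differs from the paper's and can be made to work. As written, though, the step that makes the differentiated orbit sums converge is missing, and the mechanism you describe would actually break it.

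The identity $\sum_i|f(f^ix)-f^ix|=x$ only helps if every summand of $X(x)^jD^j(\Phi\circ f^i)(x)$ carries a factor $|f-\id|(f^ix)$. A summand merely of size $|D(f-\id)|(f^ix)$ already gives a divergent series, because $\sum_{i<N}\log Df(f^ix)=\log Df^N(x)=\log\big(X(f^Nx)/X(x)\big)\to-\infty$. Your accounting (``a factor $1/|f-\id|$ coming from the logarithm of $f-\id$'') produces exactly such terms. Already for $j=1$, the two pieces $\frac{D(f-\id)(f(y))\,Df(y)}{(f-\id)(f(y))}$ and $\frac{D(f-\id)(y)}{(f-\id)(y)}$ of $D\Phi(y)$, after multiplication by $X(y)\sim(f-\id)(y)$, can only be bounded by quantities of size $|D(f-\id)|$, so the estimate does not close.

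The only cancellation you invoke is for $\Phi$ itself, and you attribute it to Lemma \ref{l:dist}. That lemma is a qualitative $C^0$ statement with no rate, and it says nothing about derivatives. So your claim that each further derivative contributes one factor $x\eps^{1-2/R}$ after multiplying by $X$ is unsupported.

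The missing observation is that $\Phi$ is not singular at all. Since $f(f(y))-f(y)=(f(y)-y)\int_0^1Df\big(y+t(f(y)-y)\big)dt$, you have $\Phi(y)=\log\int_0^1Df\big(y+t(f(y)-y)\big)dt-\log Df(y)$. This is a $C^{R-1}$ function with $|\Phi|\le C\eps^{1-\frac2R}|f-\id|$ and $|D^k\Phi|\le C\eps^{1-\frac{k+1}R}$ on $[0,x]$ for $1\le k\le R-1$; note the points $y+t(f(y)-y)$ stay in $[0,x]$.

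With this, every Fa\`a di Bruno term of $X(x)^jD^j(\Phi\circ f^i)(x)$ contains $X(x)^k(Df^i(x))^k=X(f^ix)^k$ with $k\ge1$. That factor supplies the needed $|f-\id|(f^ix)$. The surplus exponents are then absorbed using $\eps(x)^{1/R}=o(x)$, which follows from $C^R$-flatness. You also need this fact, without stating it, for the terms $X^jD^j\log|f-\id|$ and for your bound on $D^mf^i/Df^i$. With this repair your argument goes through.

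The paper avoids all of this bookkeeping by differentiating along $X$. Since $L_Xf^i=X\circ f^i$, the operator $L_X=X\,D$ commutes with composition by $f^i$, so no derivative of $f^i$ ever appears. The paper starts from $DX=-\sum_i(L_X\log Df)\circ f^i$, whose summands already contain the factor $X$, and iterates $L_X$ term by term. It then converts $(L_X)^{n-1}DX$ back into $X^{n-1}D^nX$ using Sergeraert's universal polynomial identities (Lemma \ref{l:alg}). Interpolation is applied to $\log Df$, and together with Lemma \ref{l:no-se} this gives the exponent $(1-\frac1{R-1})(1-\frac1R)=1-\frac2R$.
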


\subsection{Proof of Proposition \ref{p:bounds}}

This is a slightly adapted version of the proof of Proposition 2.17 in \cite{BE16}. {\color{black}Let $r$, $R$, $f$ and $X$ be as in Proposition \ref{p:bounds}. According to Lemma~\ref{p:estb}, there exist $C>1$ and $x_1>0$ such that, for all $x\in(0,x_1]$ and all $k\in[\![0,R-1]\!]$,
\begin{equation}\label{e:estb1}
|X^{k-1}(x)D^{k}X(x)|\le Cx^k\,\norm{f-\id}_{0,[0,x]}^{k(1-\frac2R)}.
\end{equation}
Pick $x_0\in(0,x_1]$ satisfying $|f(x_0)-x_0| = \norm{f-\id}_{0,[0,x_0]}$. Such an $x_0$ can be chosen arbitrarily close to $0$.For all $x\in [f^{2}(x_0),f^{-2}(x_0)],$ 
\begin{equation}
\label{e:xxo}
\norm{f-\id}_{0,[0,x]} \le\norm{f-\id}_{0,[0,f^{-2}(x_0)]}. 
\end{equation}
Therefore, for all $k\in[\![0,R-1]\!]$, 
\begin{align*}
|D^kX(x)|
&\le Cx^k\,\frac{\norm{f-\id}_{0,[0,x]}^{k(1-\frac2R)}}{|X(x)|^{k-1}}\quad \text{by \eqref{e:estb1}}\\
&\le Cx^k\,\frac{\norm{f-\id}_{0,[0,f^{ -2}(x_0)]}^{k(1-\frac2R)}}{|X(x)|^{k-1}}\quad \text{by \eqref{e:xxo}}\\
&=Cx_0^k\times\left(\frac{x}{x_0}\right)^k\times\left(\frac{\norm{f-\id}_{0,[0,f^{ -2}(x_0)]}}{\norm{f-\id}_{0,[0,x_0]}}\right)^{k(1-\frac2R)}\times \frac{|f(x_0)-x_0|^{k(1-\frac2R)}}{|f(x_0)-x_0|^{k-1}}\\
&\hspace{9cm}
\times\left|\frac{f(x_0)-x_0}{f(x)-x}\right|^{k-1}
\times \left|\frac{f(x)-x}{X(x)}\right|^{k-1},
\end{align*}
where we have used the equality of the first denominator and the second numerator given by the choice of $x_0$.
Thus, 
$$
\|D^kX\|_{0,[f^{2}(x_0),f^{-2}(x_0)]}
\le C |f(x_0)-x_0|^{1-\frac{2k}R}\times A(x_0)^k\times B(x_0)^{k(1-\frac2R)}\times C(x_0)^{k-1}\times D(x_0)^{k-1},
$$
where
$$
A(x_0) = \max_{[f^2(x_0),f^{-2}(x_0)]}\left|\frac{x}{x_0}\right|
=1+\max\left(\frac{x_0-f^2(x_0)}{x_0},\frac{f^{-2}(x_0)-x_0}{x_0}\right)
\xrightarrow[x_0\to0]{}1
$$
because $f^2-\id$ and $f^{-2}-\id$ are $C^R$-flat at $0$,
$$
B(x_0):=\max_{[x_0,f^{-2}(x_0)]}\left|\frac{f(x)-x}{f(x_0)-x_0}\right| 
\xrightarrow[x_0\to0]{}1
$$
and 
$$
C(x_0):=\max_{[x_0,f^{-2}(x_0)]}\left|\frac{f(x_0)-x_0}{f(x)-x}\right| 
\xrightarrow[x_0\to0]{}1
$$
according to Lemma \ref{l:dist}, and
$$
D(x_0):=\max_{[x_0,f^{-2}(x_0)]}\left|\frac{f(x)-x}{X(x)}\right| 
\xrightarrow[x_0\to0]{}1
$$
according to Lemma \ref{t:szek}. Since $x_0$ can be chosen arbitrarily close to $0$, this concludes the proof of Proposition \ref{p:bounds}.}

\subsection{Preparation for the proof of Lemma \ref{p:estb}: nice formulas for $X^{k-1}D^kX$}
\label{ss:prep}

We denote by $L_X$ the Lie derivative along $X$: for a $C^1$ map $\varphi$, 
$$
L_X\varphi:=X\cdot D\varphi.
$$ 
Let us introduce the following functions, defined on $(0,1)$:  
$$
\forall n\in[\![1,R-1]\!],\quad 
\mu_n=X^{n-1}D^{n}X,\quad  \Phi_n= (L_X)^{n-1}DX
\quad\text{and}\quad
\varphi_n=-(L_X)^{n}\log Df.
$$
With these notations, 
\begin{equation}
\begin{cases}
\mu_1=\Phi_1,\\
L_X f^{i}= X\circ f^{i}\label{e:lie}\quad \forall i\in\Z,\\
\mu_{n+1}=L_X\mu_{n}-(n-1)\mu_1\mu_{n}\quad \forall n\ge 1.
\end{cases}
\end{equation}
By induction, this leads to the following lemma, which is due to Sergeraert \cite{Se} and appears verbatim in \cite{BE16} (we will not provide a proof here). In this statement (for $n=1$), a polynomial (function) in $0$ variables composed with a $0$-tuple of functions (resp. of monomials in one variable) is to be understood as a \emph{constant} (resp. a constant polynomial in $1$ variable). We adopt this (controversial) convention only to make the statement simpler.

\begin{lem}
\label{l:alg} 
For all $n\ge 1$, 
$$\mu_n=\Phi_n-P_n(\mu_1,...,\mu_{n-1})\quad\text{on $(0,1)$}$$
and
$$\varphi_n=-\sum_{q=0}^{n-1} D^q\log Df\times X^{q+1}\times Q_{n,q}(\mu_1,...,\mu_{n-1})\quad\text{on $[0,1]\setminus\Fix(f)$},$$
for some polynomials $P_n$ and $Q_{n,q}$ in $n-1$ variables, \emph{independent of $f$}, with nonnegative (integer) coefficients, satisfying
\begin{equation}
\label{e:alg}
P_n(X,...,X^{n-1}) = \alpha_n X^{n}
\quad\text{and}\quad 
Q_{n,q}(X,...,X^{n-1})=\beta_{n,q}X^{n-1-q}\tag{$*_n$}
\end{equation}
 for some $\alpha_n,\beta_{n,q}\in \N$. 
\end{lem}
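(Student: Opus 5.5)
The plan is a simultaneous induction on $n$. To make the induction close, I will prove a slightly stronger statement than \eqref{e:alg}: I give the variable $\mu_i$ the \emph{weight} $i$, and I require $P_n$ to be weighted-homogeneous of degree $n$ and $Q_{n,q}$ to be weighted-homogeneous of degree $n-1-q$, all with nonnegative integer coefficients. Under the substitution $\mu_i\mapsto X^i$, a weighted-homogeneous monomial of degree $m$ becomes $X^m$. So this stronger form implies \eqref{e:alg}, with $\alpha_n$ and $\beta_{n,q}$ equal to the sums of the coefficients. Throughout, $n\le R-1$, so that all the functions involved are differentiable enough on the stated domains: $X$ is $C^{R-1}$ on $(0,1)$, and $\log Df$ is $C^{R-1}$.

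\textbf{Base case.} For $n=1$ we have $\mu_1=DX=\Phi_1$, so $P_1=0$. We also have $\varphi_1=-X\,D\log Df$, which is the $q=0$ term with $Q_{1,0}=1$, a constant of weight $0$.

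\textbf{The two derivation rules.} The first ingredient is the recursion $\mu_{n+1}=L_X\mu_n-(n-1)\mu_1\mu_n$ from \eqref{e:lie}; it is a one-line check, since $L_X(X^{n-1}D^nX)=(n-1)X^{n-1}DX\,D^nX+X^nD^{n+1}X$. Rewritten, it says that $L_X$ acts on the generators by
$$
L_X\mu_i=\mu_{i+1}+(i-1)\mu_1\mu_i .
$$
Hence for any polynomial $Q$, the chain rule gives
$$
L_X\bigl(Q(\mu_1,\dots,\mu_k)\bigr)=\sum_i \partial_iQ\cdot\bigl(\mu_{i+1}+(i-1)\mu_1\mu_i\bigr).
$$
This is again a polynomial with nonnegative integer coefficients. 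Each term replaces one factor $\mu_i$ by $\mu_{i+1}$ or by $\mu_1\mu_i$, so it raises the weight by exactly one. So $L_X$ preserves the class of weighted-homogeneous polynomials in the $\mu$'s with nonnegative coefficients and raises the degree by $1$. The same holds for multiplication by $\mu_1$, and for $L_X(X^{q+1})=(q+1)X^{q+1}\mu_1$.

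\textbf{Inductive step for $\mu_n$.} Apply $L_X$ to $\mu_n=\Phi_n-P_n(\mu_1,\dots,\mu_{n-1})$ and use $\Phi_{n+1}=L_X\Phi_n$. This gives
$$
\mu_{n+1}=\Phi_{n+1}-\Bigl[\textstyle\sum_i\partial_iP_n\,(\mu_{i+1}+(i-1)\mu_1\mu_i)+(n-1)\mu_1\mu_n\Bigr].
$$
Define $P_{n+1}$ to be the bracket. It has nonnegative integer coefficients, it is independent of $f$, and it is weighted-homogeneous of degree $n+1$.

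\textbf{Inductive step for $\varphi_n$.} Work on $[0,1]\setminus\Fix(f)$ and apply $L_X$ to each summand $D^q\log Df\cdot X^{q+1}\,Q_{n,q}$ of $\varphi_n$, using $\varphi_{n+1}=L_X\varphi_n$. The derivative splits into three parts:
\begin{itemize}
\item $X\,D^{q+1}\log Df\cdot X^{q+1}Q_{n,q}$, which feeds the index $q+1$ with the coefficient polynomial $Q_{n,q}$, of weight $n-1-q=(n+1)-1-(q+1)$;
\item $(q+1)\,D^q\log Df\cdot X^{q+1}\mu_1Q_{n,q}$, which stays at index $q$;
\item $D^q\log Df\cdot X^{q+1}\,L_XQ_{n,q}$, which also stays at index $q$.
\end{itemize}
The last two parts have weight $n-q=(n+1)-1-q$. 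Collecting terms, set
$$
Q_{n+1,q}=(q+1)\mu_1Q_{n,q}+L_XQ_{n,q}+Q_{n,q-1},
$$
with the convention that $Q_{n,q}=0$ for $q$ outside $[\![0,n-1]\!]$. These polynomials have the required form and weights.

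The only point needing care is the bookkeeping: checking that the weights match exactly, and handling the degenerate conventions for $n=1$. There is no real analytic obstacle; one only needs the regularity $n\le R-1$ so that all the Lie derivatives make sense.
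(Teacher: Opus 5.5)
Your induction on the recursion \eqref{e:lie} is the argument the paper has in mind; the paper itself gives no proof and defers to \cite{Se,BE16}. The $P_n$ half is correct as you wrote it, including the weighted-homogeneity bookkeeping. The $\varphi_n$ half has a flaw: your base case and your inductive step refer to two different formulas.

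As printed, the $q=0$ summand for $n=1$ is $-\log Df\cdot X\cdot Q_{1,0}$. So your claim that $\varphi_1=-X\,D\log Df$ ``is the $q=0$ term with $Q_{1,0}=1$'' is false for the displayed statement. In fact the displayed statement is false: for $n=1$ it would force $D\log Df=c\,\log Df$ on $(0,1)$ for a universal constant $c$, which fails in general. The statement contains an index typo. It should read
$$\varphi_n=-\sum_{q=0}^{n-1}D^{q+1}\log Df\cdot X^{q+1}\cdot Q_{n,q}(\mu_1,\dots,\mu_{n-1}).$$
This is the form the paper actually uses later: in Claim~\ref{c:fibound}, and in the proof of $(ii)_{n+1}$, where $D^qP(f)=D^{q+1}\log Df$ appears. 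Your base case silently uses this corrected form. Your inductive step, however, is written with the printed summands $D^q\log Df\cdot X^{q+1}Q_{n,q}$. You should have flagged the typo rather than passing between the two versions.

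The fix is painless: run the whole $\varphi_n$ induction with the summands $D^{q+1}\log Df\cdot X^{q+1}Q_{n,q}$. Your base case is then correct, and your three-term computation goes through verbatim. When $L_X$ hits $D^{q+1}\log Df$ it produces $D^{q+2}\log Df\cdot X^{q+2}Q_{n,q}$, which is exactly the index-$(q+1)$ summand. The other two pieces stay at index $q$. Hence your recursion $Q_{n+1,q}=(q+1)\mu_1Q_{n,q}+L_XQ_{n,q}+Q_{n,q-1}$ and the weight count $n-q$ are unchanged. You can check this against $n=2$, which gives $Q_{2,0}=\mu_1$ and $Q_{2,1}=1$. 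With that correction, and an explicit remark that you are proving the corrected statement, your proof is complete.
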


To prove Lemma \ref{p:estb}, we combine these inductive formulas to the following estimates proved by Sergeraert in \cite{Se} in the $C^\infty$ case using Hadamard inequalities, and whose finite regularity versions are straightforward. 

\begin{lem}[cf. \cite{Se}, 3.3]
\label{l:had}
Let $g\in C^k([0,1],\R)$ be $C^k$-flat at $0$. Then, for all $n\le k$, 
$$\norm{g}_{n,[0,x]}  \underset{x\to 0}{=}O(\norm{g}_{0,[0,x]}^{1-\frac{n}k}).$$
\end{lem}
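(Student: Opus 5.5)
The plan is to reduce this interpolation estimate to the classical Landau--Kolmogorov (Hadamard) inequality on an interval. I would set $\varepsilon(x):=\norm{g}_{0,[0,x]}$ and $M:=\norm{D^kg}_{0,[0,1]}<\infty$. The case $n=0$ is trivial and $n=k$ just says the $C^k$ norm is bounded, so I assume $0<n<k$.

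\textbf{Step 1: flatness.} Since $g$ is $C^k$-flat at $0$, Taylor's formula gives $|D^jg(y)|\le \frac{y^{k-j}}{(k-j)!}\sup_{[0,y]}|D^kg|$ for every $j\le k$. In particular $\varepsilon(x)\le Cx^k\sup_{[0,x]}|D^kg|$, so $\varepsilon(x)=o(x^k)$.

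\textbf{Step 2: localized Landau--Kolmogorov.} For an interval $J$ of length $h$ and $u\in C^k(J)$, the classical inequality reads
$$\norm{D^nu}_{0,J}\le C_{n,k}\left(h^{-n}\norm{u}_{0,J}+h^{k-n}\norm{D^ku}_{0,J}\right),$$
obtained for instance by Lagrange interpolation at $k$ equally spaced nodes plus the Taylor remainder. Fix $y\in[0,x]$. I would apply this on an interval $J\subset[0,x]$ containing $y$ of length $h\le x$ chosen to balance the two terms, namely $h=\min\big(x,(\varepsilon(x)/M)^{1/k}\big)$; such $J$ exists since $h\le x$. This yields $|D^ng(y)|\le C(\varepsilon h^{-n}+Mh^{k-n})$.

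\textbf{Step 3: choice of $h$.} If $h=(\varepsilon/M)^{1/k}$, the bound becomes $C M^{n/k}\varepsilon^{1-n/k}$, which is what we want. If instead $h=x$, i.e. $x^k\le\varepsilon/M$, then the bound is $C(\varepsilon x^{-n}+Mx^{k-n})$, and here I need $x^{-n}\lesssim\varepsilon^{-n/k}$. This is the delicate step: I would avoid it by using $M(x):=\sup_{[0,x]}|D^kg|$ in place of $M$. Step 1 gives $\varepsilon\le Cx^kM(x)$, so $(\varepsilon/M(x))^{1/k}\le C'x$. After adjusting constants (shrinking $h$ by $1/C'$), $h$ can always be taken equal to $c(\varepsilon/M(x))^{1/k}\le x$, giving
$$\norm{D^ng}_{0,[0,x]}\le C\,M(x)^{n/k}\varepsilon(x)^{1-n/k}=O\big(\varepsilon(x)^{1-n/k}\big),$$
and in fact $o$, since $M(x)\to0$. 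Degenerate cases are immediate: if $M(x)=0$ then $g\equiv0$ on $[0,x]$ by flatness, and if $\varepsilon(x)=0$ then likewise $g\equiv0$ there.

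\textbf{Step 4: lower-order derivatives.} To obtain the full $C^n$ norm, I would apply the same bound for each $j\le n$, using that $\varepsilon^{1-j/k}\le\varepsilon^{1-n/k}$ when $\varepsilon\le1$.

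The main obstacle is Step 3, ensuring the optimal interpolation scale fits inside $[0,x]$. Flatness (Step 1) handles this through the comparison $\varepsilon\lesssim x^kM(x)$.
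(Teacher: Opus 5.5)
Your proof is correct, and it is essentially the argument the paper points to: the paper gives no proof of its own, only a reference to Sergeraert's use of Hadamard (Landau--Kolmogorov) inequalities, with the remark that the finite-regularity version is straightforward, and your localized interpolation inequality combined with flatness is exactly that. The ``delicate'' case in Step 3 never actually occurs, even with the global constant $M$: Step 1 already gives $\varepsilon(x)\le x^kM(x)/k!$, so the balancing scale $(\varepsilon/M)^{1/k}$ automatically fits inside $[0,x]$. An equivalent route is to extend $g$ by $0$ to $(-\infty,x]$, which flatness makes $C^k$, and apply the half-line Kolmogorov inequality directly.
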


\begin{cor}[cf. \cite{Se}, {3.4}]
\label{c:had}
Let $f$ be a $C^R$ diffeomorphism of $[0,1]$ such that $f-\id$ is $C^R$-flat at $0$. Then, for all $n\le R$, 
$$\norm{f-\id}_{n,[0,x]} 
\underset{x\to 0}{=}O(\norm{f-\id}_{0,[0,x]}^{1-\frac{n}{R}}).$$
Moreover, $\log Df$ is $C^{R-1}$-flat at $0$, hence for all $n\le R-1$, 
$$\norm{\log Df}_{n,[0,x]}  
\underset{x\to 0}{=}O(\norm{\log Df}_{0,[0,x]}^{1-\frac{n}{R-1}}).$$
\end{cor}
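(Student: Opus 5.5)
The statement to prove is the last one above, Corollary~\ref{c:had}. It follows from Lemma~\ref{l:had}, applied once to $g=f-\id$ with $k=R$ and once to $g=\log Df$ with $k=R-1$. So the work is (i) checking the flatness hypotheses and (ii) recalling why Lemma~\ref{l:had} holds in finite regularity.

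\textbf{First estimate.} Set $g=f-\id$. Since $f\in\Diff^R$, we have $g\in C^R([0,1])$, and by assumption $g$ is $C^R$-flat at $0$. Lemma~\ref{l:had} with $k=R$ then gives $\norm{f-\id}_{n,[0,x]}=O(\norm{f-\id}_{0,[0,x]}^{1-n/R})$ for all $n\le R$.

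For completeness I would sketch Lemma~\ref{l:had} itself. Use Landau--Kolmogorov (Hadamard) inequalities on an interval of length comparable to $x$:
$$\|D^n g\|_{0,[0,x]}\lesssim \|g\|_{0,[0,x]}^{1-n/k}\,\|D^kg\|_{0,[0,x]}^{n/k}+x^{-n}\|g\|_{0,[0,x]}.$$
Flatness at $0$ gives the Taylor bound $\|g\|_{0,[0,x]}\le x^k\|D^kg\|_{0,[0,x]}/k!$. This makes the second term no larger than the first, up to a constant. Finally $\|D^kg\|_{0,[0,x]}$ is bounded, since $g\in C^k([0,1])$. For the full norm $\norm{g}_{n,[0,x]}$, combine the derivatives of order $j\le n$; the exponent $1-j/k\ge 1-n/k$ dominates because $\|g\|_{0,[0,x]}\to 0$.

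\textbf{Second estimate.} Set $h=\log Df$. Since $Df$ is $C^{R-1}$ and positive, $h\in C^{R-1}([0,1])$. Flatness of $f-\id$ at $0$ gives $Df(0)=1$ and $D^jf(0)=0$ for $2\le j\le R$, so $D(Df-1)$ vanishes to order $R-1$ at $0$. Composing with $\log$, which is smooth near $1$, and using Fa\`a di Bruno, every derivative $D^jh(0)$ with $j\le R-1$ is a polynomial in $D^{i}(Df-1)(0)$, $i\le j$, with no constant term, hence it vanishes. So $h$ is $C^{R-1}$-flat at $0$. Lemma~\ref{l:had} with $k=R-1$ then yields the second bound.

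The only delicate point is the Hadamard inequality on the short interval $[0,x]$: its constants must be uniform in $x$. This is the step I expect to require care. I would handle it by rescaling $[0,x]$ to $[0,1]$ (scaling law $D^n\mapsto x^nD^n$) and applying the Kolmogorov inequality on a fixed interval. That inequality is standard, and its finite-regularity version is identical to Sergeraert's.
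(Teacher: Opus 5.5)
Your proposal is correct and follows the paper's route. The paper obtains the corollary by applying Lemma~\ref{l:had} to $f-\id$ with $k=R$ and to $\log Df$ with $k=R-1$ after checking flatness; it leaves the finite-regularity Hadamard estimate to Sergeraert, and your rescaled Landau--Kolmogorov argument, with the extra $x^{-n}\|g\|_{0,[0,x]}$ term absorbed by the Taylor bound from flatness, is a valid way to supply that omitted step.
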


Note that under the hypothesis above, $\log Df$ and $Df-1=D(f-\id)$ are equivalent near~$0$, 
so applying the first estimate above with $n=1$, we get:

\begin{lem}
\label{l:no-se}
Let $f$ be a $C^R$ diffeomorphism such that $f-\id$ is $C^R$-flat at $0$. Then 
$$\norm{\log Df}_{0,[0,x]} 
\underset{x\to 0}{=}O(\norm{f-\id}_{0,[0,x]}^{1-\frac{1}{R}}).$$
\end{lem}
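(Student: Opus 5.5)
The plan is to combine the first estimate of Corollary \ref{c:had} (with $n=1$) with the observation that $\log Df$ and $D(f-\id)$ are comparable near $0$. The statement only asks for an $O(\cdot)$ bound, so I expect the argument to be short.

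\emph{Step 1: comparability of $\log Df$ and $Df-1$.} Since $f-\id$ is $C^R$-flat at $0$ and $R\ge 2$, we have $Df(0)=1$. By continuity, $Df$ takes values in $[\frac12,2]$ on some interval $[0,x_1]$. The elementary inequality $|\log t|\le 2|t-1|$ for $t\in[\frac12,2]$ (which follows from the mean value theorem, since $1/t\le 2$ there) then gives
$$\norm{\log Df}_{0,[0,x]}\le 2\,\norm{D(f-\id)}_{0,[0,x]}\le 2\,\norm{f-\id}_{1,[0,x]}\quad\text{for all } x\le x_1.$$

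\emph{Step 2: Hadamard-type interpolation.} Apply the first estimate of Corollary \ref{c:had} with $n=1\le R$:
$$\norm{f-\id}_{1,[0,x]}=O\big(\norm{f-\id}_{0,[0,x]}^{1-\frac1R}\big)\quad\text{as } x\to 0.$$
Chaining this with Step 1 yields the desired bound. If one wants to unpack Corollary \ref{c:had} rather than quote it, the key fact is the Landau--Kolmogorov/Hadamard inequality on $[0,x]$ applied to $g=f-\id$. The flatness at $0$ gives $\norm{D^Rg}_{0,[0,x]}\to 0$, which removes the length-dependent term $\norm{g}_0/x$ that would otherwise appear in an interpolation on a short interval.

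There is no real obstacle. The only point that needs care is the justification of the interpolation step on the shrinking interval $[0,x]$. This is exactly what Lemma \ref{l:had} and Corollary \ref{c:had} provide, and I take them as given.
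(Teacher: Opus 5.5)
Your proposal is correct and follows the paper's own argument: the paper likewise observes that $\log Df$ and $Df-1=D(f-\id)$ are equivalent near $0$ and then applies the first estimate of Corollary~\ref{c:had} with $n=1$. Your explicit bound $|\log t|\le 2|t-1|$ on $[\frac12,2]$ simply makes that equivalence quantitative.
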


We also admit the following formula for $DX$ on $(0,1)$ (in particular, the convergence of the series) under the hypotheses of Proposition \ref{p:bounds} (it is enough to assume $f$ of class $C^2$ here, cf. for example \cite{Yoc}):
\begin{equation}
\label{e:Dxi}
DX =-\sum_{i=0}^{+\infty}(L_X\log Df)\circ f^{i}.
\end{equation}

The estimates above, first used to control $\mu_1=DX$ and then injected in the formulas of Lemma~\ref{l:alg}, result, by induction, in the following Lemma, which implies Lemma \ref{p:estb} by combining $(i)_{n+1}$ for $n\in[\![0,R-1]\!]$ to Lemma \ref{l:no-se}, observing that $(1-\frac1{R-1})(1-\frac1R)=1-\frac2R$. This induction is carried out in Sections \ref{ss:estb1} and \ref{ss:estb2}. We set $\mu_0=0$.

\begin{lem}
\label{l:estb}
Let $r, R, f, X$ be as in Proposition \ref{p:bounds}.
 Then for all $n\in[\![1,R-1]\!]$, 
\begin{enumerate}[label=$(\roman*)_n$]
\item $\mu_{n-1}(x)\underset{x\to 0}{=}O\left( x\norm{\log Df}_{0,[0,x]}^{1-\frac{1}{R-1}}\right)^{n-1}$,
\item $|\f_n(x)|\underset{x\to 0}{=}O\left( |X(x)|\times x^{n-1}\times\left( \norm{\log Df}_{0,[0,x]}^{1-\frac{1}{R-1}}\right)^{n}\right)$,
\item $\Phi_n= \sum_{i= 0}^{+\infty} \f_n\circ f^{i} \quad\text{ on $(0,1)$}$,
\item $|\Phi_n(x)|\underset{{x\to 0}}{=}O\left( \left( x\norm{\log Df}_{0,[0,x]}^{1-\frac{1}{R-1}}\right)^{n}\right)$, 
\end{enumerate}
and $(i)_R$ also holds.
\end{lem}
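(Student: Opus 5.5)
We will prove the four assertions simultaneously by induction on $n$, in the order $(i)_n\Rightarrow(ii)_n\Rightarrow(iii)_n\Rightarrow(iv)_n\Rightarrow(i)_{n+1}$. Throughout, set $\omega(x):=x\norm{\log Df}_{0,[0,x]}^{1-\frac1{R-1}}$, a nondecreasing function of $x$. For $n=1$, $(i)_1$ is trivial since $\mu_0=0$. The identity $(iii)_1$ is formula \eqref{e:Dxi} once we note that $\varphi_1=-L_X\log Df$. For $(ii)_1$ we write $\varphi_1=-X\cdot D\log Df$ and apply Corollary \ref{c:had} with $n=1$ to $\log Df$. This gives $|D\log Df|\le\norm{\log Df}_{1,[0,x]}=O(\norm{\log Df}_{0,[0,x]}^{1-\frac1{R-1}})$, which is exactly the bound in $(ii)_1$.

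For the general step, we first use Lemma \ref{l:alg} to express $\varphi_n$ as the sum $-\sum_q D^q\log Df\cdot X^{q+1}Q_{n,q}(\mu_1,\dots,\mu_{n-1})$. Corollary \ref{c:had} gives $|D^q\log Df|\le \norm{\log Df}_{0,[0,x]}^{1-\frac q{R-1}}$, and we bound each $\mu_j$ by $(i)_j$ (available up to $j=n-1$ from earlier steps), that is, $\mu_j=O(\omega^j)$. The homogeneity relation $(*_n)$ tells us that each monomial of $Q_{n,q}$ has total weight $n-1-q$ when $\mu_j$ is given weight $j$, so $Q_{n,q}(\mu)=O(\omega^{n-1-q})$. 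Using also $|X|\le Cx$ (from Lemma \ref{t:szek} and $|f(x)-x|\le x\norm{f-\id}_1$), the $q$-th term is bounded by
$$|X|\,x^{q}\,\norm{\log Df}_{0,[0,x]}^{1-\frac q{R-1}}\,\omega^{n-1-q}.$$
To compare this with $(ii)_n$, which requires the exponent $n(1-\frac1{R-1})$, we note that $1-\frac q{R-1}\ge (q+1)(1-\frac1{R-1})$ for $q\le R-2$, and $\norm{\log Df}_{0,[0,x]}\le1$ near $0$. This yields $(ii)_n$.

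For $(iii)_n$ we use the Lie-derivative relations \eqref{e:lie}. Since $L_X(u\circ f^i)=(L_Xu)\circ f^i$ (because $X$ is $f$-invariant, by \eqref{e:invariance}), applying $L_X^{n-1}$ termwise to \eqref{e:Dxi} gives $\Phi_n=\sum_i\varphi_n\circ f^i$. The termwise differentiation must be justified by locally uniform convergence of the differentiated series, and this follows from the bound in $(ii)_n$.

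For $(iv)_n$ we estimate the sum from $(iii)_n$. Write $y_i=f^i(x)$, which decreases to $0$ (the field $X$ is negative). Because $\omega$ is monotone, $(ii)_n$ gives $|\varphi_n(y_i)|\le C|X(y_i)|\,y_i^{-1}\omega(x)^n$. Then
$$\sum_i |X(y_i)|/y_i\le C'\sum_i (y_i-y_{i+1})/y_i,$$
using $|X|\sim|f-\id|$ from Lemma \ref{t:szek}. This last sum is a divergent logarithm, so it cannot be bounded this way; this is the main obstacle. To handle it, we instead bound $|\varphi_n(y)|\le C|X(y)|\,\omega(y)^{n}/y$ with $\omega(y)$ evaluated at $y_i$ rather than at $x$, and replace the Riemann sum by $\int_0^x \omega(y)^n\,dy/y^2$. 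Using only the monotonicity of $\norm{\log Df}_{0,[0,y]}$, this integral is at most $C\norm{\log Df}_{0,[0,x]}^{n(1-\frac1{R-1})}\int_0^x y^{n-2}dy$, which is fine for $n\ge2$. For $n=1$ we handle the case directly, as in Sergeraert's argument, using the absolute convergence in \eqref{e:Dxi} together with $DX(x)\to0$ and flatness. If this fails, we fall back on the precise Riemann-sum comparison with $\int dy/X$ used in \cite{Se}.

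Finally, $(i)_{n+1}$ follows from the first formula of Lemma \ref{l:alg}, $\mu_n=\Phi_n-P_n(\mu_1,\dots,\mu_{n-1})$. We bound $\Phi_n$ by $(iv)_n$ and bound $P_n(\mu)$ by homogeneity $(*_n)$ together with $(i)_j$ for $j<n$. Taking $n=R-1$ in this last step gives $(i)_R$.
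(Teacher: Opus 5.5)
Your architecture matches the paper's. You run the induction $(i)_n\to(ii)_n\to(iii)_n\to(iv)_n\to(i)_{n+1}$ using Lemma~\ref{l:alg} with the homogeneity $(*_n)$, Corollary~\ref{c:had}, and termwise differentiation via $L_X(u\circ f^i)=(L_Xu)\circ f^i$. The base case, $(iii)_n$ and $(i)_{n+1}$ are fine.

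However, $(ii)_n$ fails as written. The inequality $1-\frac{q}{R-1}\ge(q+1)\bigl(1-\frac1{R-1}\bigr)$ that you invoke is equivalent to $q\le\frac1{R-1}$. It is therefore false for every $q\ge1$, since $R\ge3$. The loss comes from bounding $|X|^{q}$ only by $(Cx)^q$. What is needed, and what the paper uses, is that $X$ is much smaller than $x$: by Lemma~\ref{t:szek}, $|X(x)|=O(|f(x)-x|)=O\bigl(x\norm{D(f-\id)}_{0,[0,x]}\bigr)=O\bigl(x\norm{\log Df}_{0,[0,x]}\bigr)$. So $|X|^{q}$ contributes $x^q\norm{\log Df}_{0,[0,x]}^{q}$.

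Note also that the $q$-th term of $\varphi_n$ involves $D^{q+1}\log Df$, not $D^{q}\log Df$. Already $\varphi_1=-X\,D\log Df$; the displayed formula in Lemma~\ref{l:alg} has an index slip, which the paper's own computation silently corrects. This factor is $O\bigl(\norm{\log Df}_{0,[0,x]}^{1-\frac{q+1}{R-1}}\bigr)$. With these inputs the exponents add up to exactly $\bigl(1-\frac{q+1}{R-1}\bigr)+q+(n-1-q)\bigl(1-\frac1{R-1}\bigr)=n\bigl(1-\frac1{R-1}\bigr)$, and the powers of $x$ to $x^{n-1}$, which is $(ii)_n$. With only $|X|\le Cx$, the terms with $q\ge1$ are too large.

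Your $(iv)_n$ is not established either. The ``divergent logarithm'' is self-inflicted: you bounded $\omega(y_i)$ by $\omega(x)$ but kept the factor $1/y_i$. Since $n\ge1$, the function $y\mapsto\omega(y)^n/y=y^{n-1}\norm{\log Df}_{0,[0,y]}^{n(1-\frac1{R-1})}$ is itself nondecreasing. So $(ii)_n$ gives directly $|\varphi_n(y_i)|\le C|X(y_i)|\,\omega(x)^n/x$. Lemma~\ref{t:szek} and telescoping give $\sum_i|X(y_i)|\le C'\sum_i(y_i-y_{i+1})\le C'x$. Hence $|\Phi_n(x)|\le CC'\omega(x)^n$ for every $n\ge1$, including $n=1$; this is exactly the paper's argument.

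Your proposed replacement does not work. The sum $\sum_i(y_i-y_{i+1})\,\omega(y_i)^n/y_i$ compares with $\int_0^x\omega(y)^n\,dy/y$, not with $\int_0^x\omega(y)^n\,dy/y^2$. The bound $\norm{\log Df}_{0,[0,x]}^{n(1-\frac1{R-1})}\int_0^x y^{n-2}\,dy\asymp x^{n-1}\norm{\log Df}_{0,[0,x]}^{n(1-\frac1{R-1})}$ that you obtain is one power of $x$ short of $(iv)_n$. The case $n=1$ is also left unresolved.

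Since $(iv)_n$ feeds $(i)_{n+1}$, the induction does not close as written. Both gaps are repaired by the two elementary observations above: the sharper bound on $|X|$, and the monotonicity of $\omega(y)^n/y$.
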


\subsection{Proof of Lemma \ref{l:estb}: base case, $n=1$}
\label{ss:estb1}

We check each property below:
\medskip

\noindent$(i)_1$ : This is straightforward since $\mu_0=0$.\bigskip

\noindent$(ii)_1$ : For all $x\in (0,1)$, 
$$|\f_1(x) |=| (D\log Df\times  X)(x) |\le | X(x)|\times\norm{D\log Df}_{0,[0,x]}\underset{x\to 0}{=}O\left(| X(x)|\times\norm{\log Df}^{1-\frac1{R-1}}_{0,[0,x]}\right),$$
according to Corollary \ref{c:had}.\bigskip

\noindent$(iii)_1$ : This is exactly Formula \eqref{e:Dxi}.\bigskip

\noindent$(iv)_1$ : According to $(ii)_1$ above and Lemma \ref{t:szek}, there exist $C>0$ and $x_1\in (0,1)$ such that, for all $x\in(0, x_1)$, 
\begin{equation}
\label{e:iio}
|\f_1(x) |\le C | X(x)|\times\norm{\log Df}^{1-\frac1{R-1}}_{0,[0,x]}\quad\text{and}\quad \left|\frac{ X(x)}{f(x)-x}\right|\le 2.
\end{equation}
For all such $x$, one has $f^{i}(x)\le x\le x_1$ for all $i\in\N$, hence
\begin{align*}
\label{e:iioi}
|\f_1(f^{i}(x)) |&\le C | X(f^{i}(x))|\times\norm{\log Df}^{1-\frac1{R-1}}_{0,[0,f^{i}(x)]}\\
&\le 2C\; |(f-\id)\circ f^{i}(x)|\times\norm{\log Df}^{1-\frac1{R-1}}_{0,[0,x]}\\
&\le 2C \;\left(f^{i}(x)-f^{(i+1)}(x)\right)\times\norm{\log Df}^{1-\frac1{R-1}}_{0,[0,x]}.
\end{align*}
As a consequence,
\begin{equation*}
|\Phi_1(x)|=\left|\sum_{i=0}^{+\infty}
    \f_1\circ f^{i}(x)\right|\le 2C  \norm{\log Df}^{1-\frac1{R-1}}_{0,[0,x]}\underbrace{\sum_{i=0}^{+\infty}(f^{i}(x)-f^{i+1}(x))}_{\le x}.
\end{equation*}
which concludes the proof of $(iv)_1$.

\subsection{Proof of Lemma \ref{l:estb}: inductive step}
\label{ss:estb2} 

Let $n\in[\![1,R-1]\!]$ (this is not a mistake, we really mean $R-1$ and not $R-2$ because we want to prove that $(i)_R$ holds too). Assume that $(i)_q$ to $(iv)_q$ are satisfied for all $q\le n$. 
\medskip

\noindent$(i)_{n+1}$ : According to $(iv)_n$ and $(i)_{2\,\text{to}\,n}$, there exist $C>0$ and $x_1\in(0,1]$ such that, for all $x\in(0,x_1)$, 
\begin{equation}
\label{e:phin}
|\Phi_n(x)|\le C\left( x\norm{\log Df}_{0,[0,x]}^{1-\frac{1}{R-1}}\right)^{n}\quad\text{and}\quad |\mu_{k}(x)|\le C^{k} \left( x\norm{\log Df}_{0,[0,x]}^{1-\frac{1}{R-1}}\right)^{k}\quad \forall k\in[\![1,n-1]\!].
\end{equation}
For all such $x$,
\begin{align*}
|\mu_{n}(x)| 
&= |\Phi_n(x) - P_n(\mu_1(x),...,\mu_{n-1}(x))|\quad\text{according to Lemma \ref{l:alg},}\\
& \le |\Phi_n(x)| + P_n(|\mu_1(x)|,...,|\mu_{n-1}(x)|)\quad\text{since the coefficients of $P_n$ are positive,}\\
&\le Cx^n\norm{\log Df}^{n-\frac{n}{R-1}}_{0,[0,x]}+ P_n\left(Cx\norm{\log Df}^{1-\frac{1}{R-1}}_{0,[0,x]},...,\left(Cx\norm{\log Df}^{1-\frac{1}{R-1}}_{0,[0,x]}\right)^{n-1}\right)\;\text{by \eqref{e:phin}}\\
&\le Cx^n\norm{\log Df}^{n-\frac{n}{R-1}}_{0,[0,x]} + \alpha_{n}\left(Cx\norm{\log Df}^{1-\frac{1}{R-1}}_{0,[0,x]}\right)^{n}\;\text{according to Lemma \ref{l:alg}}\\
&\le (C+\alpha_nC^{n})x^n \norm{\log Df}^{n-\frac{n}{R-1}}_{0,[0,x]},
\end{align*} 
which proves $(i)_{n+1}$.
\bigskip

Let now $n\in[\![1,R-2]\!]$, and assume $(i)_q$ to $(iv)_q$ are satisfied for all $q\le n$.
\medskip

\noindent$(ii)_{n+1}$ : According to $(i)_{2\,\text{to}\,n+1}$, there exist $C>0$ and $x_1\in(0,1]$ such that, for all $x\in(0,x_1]$, 
\begin{equation}
\label{e:mum}
|\mu_k(x)|\le C^k \norm{\log Df}^{k-\frac{k}{R-1}}_{0,[0,x]}\quad \forall k\in[\![1,n]\!].
\end{equation}
In particular, for all such $x$ and all $q\in[\![0,n]\!]$,
\begin{align*}
\left| Q_{n+1,q}(\mu_1,...,\mu_{n})(x)\right|
&\le Q_{n+1,q}\left( |\mu_1(x)|,...,|\mu_{n}(x)|\right)\,\text{\,\,\, since the coef. of $Q_{n+1,q}$ are $\ge0$,}\\
&\le Q_{n+1,q}\left(Cx\norm{\log Df}^{1-\frac{1}{R-1}}_{0,[0,x]},...,\left(Cx\norm{\log Df}^{1-\frac{1}{R-1}}_{0,[0,x]}\right)^{n}\right)\,\text{\,\, by \eqref{e:mum}}\\
&=\beta_{n+1,q}\left(Cx\norm{\log Df}^{1-\frac{1}{R-1}}_{0,[0,x]}\right)^{n-q}\;\text{\, by Lemma \ref{l:alg}}\\
&\underset{x\to 0}{=}O\left( \left(x\norm{\log Df}^{1-\frac{1}{R-1}}_{0,[0,x]}\right)^{n-q}\right).
\end{align*}
Now, according to Lemma \ref{l:alg}, for all $x\in(0,1)$,
\begin{align*}
\label{e:fin}
|\f_{n+1}(x)|
&=\left|\sum_{q=0}^{n} D^qPf(x)\times  X^{q+1}(x)\times Q_{n+1,q}(\mu_1,...,\mu_{n})(x)\right|\notag\\
&\le | X(x)|\times \sum_{q=0}^{n}\underbrace{\left|D^qLf(x) \right|}_{{\underset{x \to 0}{=}O\left(\norm{\log Df}^{1-\frac{q+1}{R-1}}_{0,[0,x]}\right)}\atop {\text{according to \ref{c:had}}}}\times |  X^{q}(x)| 
\times \underbrace{\left| Q_{n+1,q}(\mu_1,...,\mu_{n})(x)\right|.}_{{\underset{x \to 0}{=}O\left(\left(x\norm{\log Df}^{1-\frac{1}{R-1}}_{0,[0,x]}\right)^{n-q}\right)}\atop{\text{as we just saw}}}
\end{align*}
Now observe that $|X(x)|^q=O(|f(x)-x|^q)$ and 
$$|f(x)-x|=\left|\int_0^xD(f-\id)(y)dy\right|=O(x\|D(f-\id)\|_{0,[0,x]})=O(x\|\log Df\|_{0,[0,x]}).$$
Adding the exponents, one precisely gets that 
$$\quad|\f_{n+1}(x)|\underset{{x\to 0}}{=}O\left( | X(x)|\times x^n\times \left(\norm{\log Df}^{1-\frac{1}{R-1})}_{0,[0,x]}\right)^{n+1}\right),$$ 
which proves $(ii)_{n+1}$.
\medskip

Note that, more generally:
\begin{claim}
\label{c:fibound}
The quotient
$\frac{\varphi_{n+1}}{ X}$ is bounded on $(0,1)$. 
\end{claim}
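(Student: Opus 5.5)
The plan is to use the explicit formula of Lemma~\ref{l:alg}:
$$\varphi_{n+1}=-\sum_{q=0}^{n} D^q\log Df\times X^{q+1}\times Q_{n+1,q}(\mu_1,\dots,\mu_n)$$
on $[0,1]\setminus\Fix(f)$. Dividing by $X$ (which does not vanish on $(0,1)$) gives
$$\frac{\varphi_{n+1}}{X}=-\sum_{q=0}^{n} D^q\log Df\times X^{q}\times Q_{n+1,q}(\mu_1,\dots,\mu_n).$$
Boundedness on $(0,1)$ therefore reduces to boundedness of each factor. Since $q\le n\le R-2$ and $f$ is $C^R$ on $[0,1]$, each $D^q\log Df$ is continuous on $[0,1]$, hence bounded. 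Likewise $X$ is $C^1$ on $[0,1]$, so $X^q$ is bounded. The only issue is therefore the boundedness of $\mu_k=X^{k-1}D^kX$ for $k\in[\![1,n]\!]$ on $(0,1)$. Once these are bounded, the polynomials $Q_{n+1,q}$ evaluated at them are bounded.

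I will split $(0,1)$ into three pieces. On a compact subinterval $[x_1,x_2]\subset(0,1)$, the $\mu_k$ are continuous because $X$ is $C^{R-1}$ on the interior (by \cite{Ta,Yoc}) and $k\le n\le R-2$. So they are bounded there.

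Near $0$, I will use the induction hypothesis $(i)_{2}$ to $(i)_{n+1}$, which holds at this stage of the induction. It gives
$$|\mu_k(x)|=O\Big(\big(x\norm{\log Df}^{1-\frac1{R-1}}_{0,[0,x]}\big)^k\Big),$$
which tends to $0$. So the $\mu_k$ are bounded on some $(0,x_1]$.

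Near $1$ there is no flatness assumption, and this is the step I expect to be the main obstacle. I will argue via the generating vector field at $1$. If $f-\id$ is not $C^R$-flat at $1$, then by the recalled facts from \S\ref{ss:mather} ($X$ is the unique $C^1$ generator, and it is $C^{R-1}$ on $(0,1]$ when the fixed point is not flat), $X$ is $C^{R-1}$ up to $1$. Then $\mu_k$ extends continuously to $1$ and is bounded. If instead $f-\id$ is $C^R$-flat at $1$, I will apply the same estimates $(i)_k$ symmetrically at $1$. To do so, I conjugate by $x\mapsto 1-x$ and use $f^{-1}$ so that the point is attracting: the hypotheses of Proposition~\ref{p:bounds} are symmetric, and the whole induction proven so far applies verbatim at the other endpoint. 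This gives $\mu_k\to0$ at $1$.

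Combining the three regions gives the boundedness of all $\mu_k$, hence of $\varphi_{n+1}/X$ on $(0,1)$.
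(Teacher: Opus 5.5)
Your proof is correct and follows the paper's argument: divide the formula of Lemma \ref{l:alg} by $X$, bound the $\log Df$ and $X$ factors trivially, and get boundedness of $\mu_1,\dots,\mu_n$ near $0$ from $(i)_2$ to $(i)_{n+1}$, in the interior from regularity of $X$, and near $1$ either from $C^{R-1}$ regularity of $X$ there or from the symmetric flat-case estimates. One small correction: the factor should be $D^{q+1}\log Df$, not $D^q\log Df$ (the case $\varphi_1=-X\,D\log Df$ shows this, and the paper's proof of the claim uses it), but it is still bounded because $q+1\le n+1\le R-1$ and $\log Df$ is $C^{R-1}$.
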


\begin{proof}
According to Lemma \ref{l:alg}, on $(0,1)$,
\begin{equation*}
\label{e:fibound}
\frac{\varphi_{n+1}}{ X}=-\sum_{q=0}^{n} D^{q+1}\log Df\times  X^{q}\times Q_{n+1,q}(\mu_1,...,\mu_{n}).
\end{equation*}
For all $q\in[\![0,n]\!]\subset [\![0,R-2]\!]$, $D^{q+1}\log Df$ is bounded on $(0,1)$ by $\norm{\log Df}_{R-1,[0,1]}$. Furthermore, $ X$ is continuous and thus bounded on $[0,1]$. Finally, $\mu_1=D X,\hdots, \mu_n=$ $ X^{n-1}D^n X$ are continuous on $(0,1)$ and extend continuously to $[0,1)$, by estimates $(i)_{2\,\text{to}\,n+1}$. Naturally, if $f$ is $C^R$ tangent to the identity at $1$, similar estimates hold near $1$. And if it is not, $ X$ is $C^{R-1}$ on a neighborhood of $1$, so $\mu_1$,..., $\mu_n$ are continuous there too. Thus, in the end, $\mu_1$,..., $\mu_n$ extend continuously to $[0,1]$, so they are bounded on $[0,1]$, and so is $Q_{n+1,q}(\mu_1,...,\mu_{n})$. 
\end{proof}
\bigskip

\noindent$(iii)_{n+1}$ : Note that
$$
L_X(\varphi_n\circ f^i)=D\varphi_n\circ f^i\times L_X f^i=D\varphi_n\circ f^i\times X \circ f^i=\varphi_{n+1}\circ f^i,
$$
so what we want to prove is that the symbol interversion $(*)$ below is licit:
$$
\Phi_{n+1}=L_X\Phi_n\underset{(iii)_{n}}{=}L_X\left(\sum_{i= 0}^{+\infty} \f_n\circ f^i \right)\underset{(*)}{=}\sum_{i= 0}^{+\infty} L_X(\f_n\circ f^i)=\sum_{i= 0}^{+\infty}\f_{n+1}\circ f^i.
$$
To do this, it is sufficient to prove that the last series converges uniformly on every segment contained in $(0,1)$. Let $J$ be such a segment. Let $C$ and $C'$ denote $\norm{\frac{X}{f-\id}}_{0,(0,1)}$ (cf. Lemma \ref{t:szek}) and $\norm{\frac{\varphi_{n+1}}{X}}_{0,(0,1)}$ (cf. Claim \ref{c:fibound}), respectively. For all $x\in J$, one has $f^i(x)\le x$ for all $i\in\N$, so 
\begin{equation}
\label{e:iini}
\left|\f_{n+1}(f^i(x)) \right|\le C' |X(f^i(x))|\le C'C\left(f^i(x)-f^{i+1}(x)\right).
\end{equation}
Therefore, since $\sum_{i\ge 0}(f^i-f^{i+1})$ converges uniformly on $J$ (towards $\id$), so does $\sum_{i\ge 0} \f_{n+1}\circ f^i$, and this concludes the proof of $(iii)_{n+1}$.
\bigskip

\noindent$(iv)_{n+1}$ : According to $(ii)_{n+1}$ and Lemma \ref{t:szek}, there exist $C>0$ and $x_1\in (0,1]$ such that, for all $x\in(0,x_1)$,
\begin{equation}
\label{e:iin}
|\f_{n+1}(x) |\le C (x-f(x))\times x^n\times \norm{f-\id}^{n+1-\frac{n+1}{R-1}}_{0,[0,x]}.
\end{equation}
Hence, for all such $x$,
\begin{align*}
|\Phi_{n+1}(x)|
&\le  \sum_{i\ge 0} |\f_{n+1}\circ f^i(x)|\quad\text{\,\, by $(iii)_{n+1}$ above,}\\
&\le C'x^n\norm{f-\id}^{n+1-\frac{n+1}{R-1}}_{0,[0,x]}\underbrace{\sum_{i\ge 0}(f^i(x)-f^{i+1}(x))}_{\le x}\quad\text{\,\, by \eqref{e:iin}.}
\end{align*}
This concludes the proof of $(iv)_{n+1}$, and thus the proof of Lemma~\ref{p:estb}.


\section{Appendix: Some tools for proving distortion from almost-reducibility in finite regularity}
\label{s:perfect}

In this section, we briefly present the tools behind the main statements of Section \ref{ss:a-r-dist} leading to Theorem \ref{t:dist-fini}, which claims that almost-reducibility implies distortion for compactly supported diffeomorphisms of $\R$, but with some loss of regularity. They are straightforward analogues in finite regularity of statements of \cite{EBM25}. We include them here for completeness's sake. 

\subsection{Ingredients of Proposition \ref{p:GSM2}}
\label{ss:tools1}

Proposition \ref{p:GSM2} is a direct consequence of the following two statements:

\begin{prop}
\label{p:GSM}
Let $r\in\N$ and $J,K$ be two bounded open intervals of $\R$ with $\bar J\subset K$. There exists a sequence $(\varepsilon_n)_n$ of positive real numbers such that the following property holds. For any sequences $(g_n)_{n \geq 0}$ and $(g'_n)_{n \geq 0}$ of $C^r$ diffeomorphisms supported in $J$ satisfying $d_{C^r}(g_n,\mathrm{id}) < \varepsilon_n$ and $d_{C^r}(g'_n,\mathrm{id})<\varepsilon_n$ for all $n$, there exists a finite subset $S$ of  $\mathrm{Diff}^{r}_c(J)$ such that, for each $n$, 
\begin{enumerate}
\item the commutator $[g_n,g'_n]$ belongs to $\langle S\rangle$,
\item $\ell_{S}([g_n,g'_n]) \leq 14 n+14$.
\end{enumerate}
\end{prop}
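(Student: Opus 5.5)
The plan is to adapt the ``commutator encoding'' construction of Avila \cite{Av08}, as refined in \cite[\S3]{EBM25}, to finite regularity. Fix an open interval $J'$ with $\bar J\subset J'\subset\bar J'\subset K$. Choose a $C^\infty$ diffeomorphism $h$ supported in $K$ such that the intervals $h^k(J)$, $k\in\Z$, are pairwise disjoint and accumulate on a single boundary point $p$ of $K$; for instance take $h$ contracting towards $p$ on a neighbourhood of $\bar J$. The point is to encode the whole sequence of commutators in a few ``infinite products''. Concretely, set
$$
G=\prod_{n\ge0} h^{n}g_nh^{-n},\qquad G'=\prod_{n\ge0} h^{n}g'_nh^{-n},
$$
where the factors have disjoint supports $h^n(J)$. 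The first key step is to choose $\eps_n$ so small, in terms of the $C^r$ norms of $h^{\pm n}$ on $J$, that $d_{C^r}(h^ng_nh^{-n},\id)\to0$ fast enough for $G$ and $G'$ to be $C^r$ diffeomorphisms that are $C^r$-flat at $p$. This is where the sequence $(\eps_n)$ is fixed, and it depends only on $h$, i.e.\ on $J$ and $K$.

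The second step is to isolate the $n$-th commutator. Since the supports are disjoint, $[G,G']=\prod_n h^n[g_n,g'_n]h^{-n}$, so it suffices to extract the single factor indexed by $n$ with a number of letters linear in $n$. For this I would add to $S$ a diffeomorphism $\theta$ that restricts to a given block. A naive cut-off is not available inside a finitely generated group, so I would use the standard trick of commutating with an element equal to the identity on $h^n(J)$ and acting elsewhere. Specifically, take $\sigma$ supported near the closure of $\bigcup_{k\ge1}h^k(J')$ with the property that it displaces $h^k(J)$ off itself for $k\ge1$ while fixing $J'$. The elements $G\sigma G^{-1}\sigma^{-1}$ and their variants then kill every factor except $k=0$. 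The $n$-th factor is conjugated down to $k=0$ by $h^{-n}$, which costs $2n$ letters on each side. Combining the pieces, I expect a word of the form
$$
h^{-n}\,[\,[G,\tau],[G',\tau]\,]\,h^{n},
$$
or a similar expression with a fixed number of copies of $h^{\pm n}$, which gives the bound $\ell_S([g_n,g'_n])\le 14n+14$. The constant $14$ simply counts the occurrences of $h^{\pm1}$.

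The main obstacle is the step that isolates a single block without losing regularity. Because $\sigma$ and $\tau$ must fix $J$ pointwise while moving the neighbouring blocks, the commutator identities need exact disjointness of supports. If that causes trouble, I would instead work with two interlaced families (even and odd $n$) and write $[g_n,g'_n]$ as the commutator of $h^{-n}Gh^n$ restricted to $J$ against $h^{-n}G'h^n$. The cross terms vanish because, after conjugating by a fixed contraction $\sigma$ that sends $J$ into a region where only the $0$-th factor of one product is nontrivial, the other factors commute. A further point to check is that $G$ and $G'$ are genuinely $C^r$ at $p$. This needs $\|h^ng_nh^{-n}-\id\|_r\le C\,\|h^n\|_r^{r}\|h^{-n}\|_r^{r}\eps_n$ by the Faà di Bruno formula, and the choice $\eps_n=2^{-n}\big(\|h^n\|_r\|h^{-n}\|_r\big)^{-r-1}$ makes this summable. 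No loss of regularity occurs in this proposition; the loss $r\mapsto r-2$ in Proposition \ref{p:GSM2} will come from the other ingredient, the local perfectness via Herman's theorem.
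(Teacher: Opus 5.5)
Your encoding step is the right one and agrees with the paper. You take $h$ so that a neighbourhood $J'$ of $\bar J$ is wandering, and you choose $\eps_n$, depending only on $h$, so that $d_{C^r}(h^ng_nh^{-n},\id)\to0$. That is already enough, since the supports are disjoint. Then $G=\prod_n h^ng_nh^{-n}$ and $G'=\prod_n h^ng'_nh^{-n}$ are $C^r$.

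The gap is in extracting $[g_n,g'_n]$ with a word of length linear in $n$, which is the real content of the proposition. As written, that step fails for three reasons (convention $[a,b]=aba^{-1}b^{-1}$).
\begin{itemize}
\item Your key identity is backwards. Since $\sigma$ is the identity on $J'$, we have $\sigma G_0^{-1}\sigma^{-1}=G_0^{-1}$. So in $G\sigma G^{-1}\sigma^{-1}$ the factor on $J$ \emph{cancels}, while every factor with $k\ge1$ survives together with a displaced inverse copy.
\item After conjugating by $h^{-n}$, the unwanted factors of $h^{-n}Gh^n$ sit on $h^j(J)$ for all $j\ge-n$, $j\neq0$, including negative $j$, and your one-sided $\sigma$ does not move those blocks. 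Also, a $C^r$ map displacing infinitely many blocks that accumulate at $p$ does not come for free. For instance, $\prod_k h^k\psi h^{-k}$ with $\psi$ a fixed bump is in general not even $C^1$ at $p$, and you do not construct one.
\item The final word cannot be right if $\tau$ is a fixed element of $S$. Then $h^{-n}[[G,\tau],[G',\tau]]h^n=h^{-n}wh^n$ with $w$ independent of $n$. Requiring $h^{-n}wh^n=[g_n,g'_n]$ for all $n$ forces $\mathrm{supp}(w)\subset h^n(\bar J)$ for every $n$. Since these sets are pairwise disjoint, $w=\id$.
\end{itemize}
The count ``$14$ = occurrences of $h^{\pm1}$'' is not derived from anything. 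The fallback via ``$h^{-n}Gh^n$ restricted to $J$'' does not help, because restriction is not a group operation.

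The missing idea is to let a \emph{single} element acting only near $J$ carry the locality, while the conjugation by $h^{\pm n}$ moves the $n$-th block onto $J$.
\begin{itemize}
\item Take $\psi$ supported in $J'$ with $\psi(J)\cap J=\emptyset$.
\item Write $G_n=h^{-n}Gh^n=g_nB$ and $G'_n=h^{-n}G'h^n=g'_nB'$, where $B,B'$ are supported in $\bigcup_{j\neq0}h^j(J)$, which is disjoint from $J'$.
\item Since $\psi$ commutes with $B$, one gets $[G_n,\psi]=g_n\,(\psi g_n^{-1}\psi^{-1})$. The second factor is supported in $\psi(J)\subset J'\setminus J$.
\item $G'_n$ is the identity on $\psi(J)$, and $B'$ commutes with everything supported in $J'$. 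Hence $[[G_n,\psi],G'_n]=[g_n,g'_n]$.
\end{itemize}
This is a word of length $12n+10\le 14n+14$ in $S=\{h,\psi,G,G'\}$, which is exactly the four-element family described in the paper. Since $\psi$ is a single compactly supported diffeomorphism, it raises no regularity issue at the accumulation point.
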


\begin{thm}
\label{t:unif-perfect}
Let $r\ge3$, $\varepsilon >0$ and let $I,J$ be two bounded open intervals of $\R$ with $\bar I\subset J$. There exists $\varepsilon'>0$ such that any diffeomorphism in $\mathrm{Diff}^{r}_c(I)$ which is $\varepsilon'$-close to the identity in $C^r$ topology coincides on $J$ with a product of five commutators of elements of $\mathrm{Diff}^{r-2}_c(J)$ that 
are $\varepsilon$-close to the identity in $C^{r-2}$ topology.
\end{thm}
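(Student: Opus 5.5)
The plan is to follow the strategy of \cite{Av08} and \cite{EBM25}, keeping track of the regularity lost at each step; the only analytic input is the Arnold--Moser--Herman local linearization theorem for circle diffeomorphisms. Fix once and for all a Diophantine number $\alpha$ of constant type. The version of Herman's theorem we will use is the following: there are $\varepsilon_0>0$ and $C>0$ such that every $C^r$ circle diffeomorphism $h$ with $d_{C^r}(h,R_\alpha)<\varepsilon_0$ and rotation number $\alpha$ is conjugate to $R_\alpha$ by some $\varphi\in\Diff^{r-2}_+(\sS^1)$ with $d_{C^{r-2}}(\varphi,\id)\le C\,d_{C^r}(h,R_\alpha)$. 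We should double-check this quantitative, loss-of-two-derivatives form for constant type numbers and $r\ge3$; this is where the hypothesis $r\ge3$ and the passage from $C^r$ to $C^{r-2}$ come from.

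\textbf{Circle model.} Let $g$ be $C^r$-close to $\id$ with support in a short arc. Since the rotation number depends continuously on the map, there is a small $s$ such that $gR_{\alpha+s}$ has rotation number exactly $\alpha$. The map $gR_{\alpha+s}$ is also $C^r$-close to $R_\alpha$, so Herman's theorem gives $gR_{\alpha+s}=\varphi R_\alpha\varphi^{-1}$, that is,
$$
g=[\varphi,R_\alpha]\,R_{-s}.
$$
Here $\varphi$ is $C^{r-2}$-close to $\id$, and $s$ is small, of the order of $d_{C^0}(g,\id)$. The leftover rotation $R_{-s}$ is handled by expressing it as a commutator of two elements that are close to $\id$ (for instance, using $PSL_2(\R)$-type elements near the identity). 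This accounts for two of the five commutators.

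\textbf{Transfer to $\Diff^{r-2}_c(J)$.} The real difficulty is transporting this picture to compactly supported maps of $J$, where there is no rotation. I plan to proceed in three steps.
\begin{enumerate}
\item Fragment $g$, $C^r$-continuously in $g$, as $g=g_1g_2$, with each $g_i$ supported in a short interval $I_i\subset I$. This uses a partition of unity on the generating vector field (or on $g-\id$) and costs nothing in regularity.
\item For each piece $g_i$, choose a fixed smooth diffeomorphism $T_i$ supported in $J$ which, on a neighbourhood $U_i$ of $\bar I_i$, coincides with a model on which the above circle argument can be reproduced. Concretely, $U_i$ should be identified with an arc of the circle on which $R_\alpha$ acts, and the Herman conjugacy should be cut off outside $U_i$ by a fixed bump function.
\item Show that the error introduced by the cut-off is $C^{r-2}$-small and supported away from $\bar I$. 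Then absorb it into the remaining commutators, using that elements supported in disjoint intervals commute, and the standard trick that an element supported in an interval which is displaced by a fixed diffeomorphism $\tau$ equals a commutator with $\tau$ times a controlled correction.
\end{enumerate}
Counting commutators across these steps, the budget should close at five: one Herman commutator per fragment, together with the commutators absorbing the rotation and cut-off corrections. Since all the constructions depend continuously on $g$ and equal $\id$ at $g=\id$, a continuity argument then gives the required $\varepsilon'$ for each $\varepsilon$.

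\textbf{Main obstacle.} I expect the hardest part to be step 2 together with the cut-off estimate in step 3. One must localise Herman's global circle conjugacy into a compactly supported one on $J$ without losing more than two derivatives and without the number of commutators growing. I would first try the construction of \cite{EBM25} in the smooth case, keeping explicit $C^{r}$ and $C^{r-2}$ norms throughout. The point to verify is that every auxiliary map there is fixed, and hence smooth, so that only the Herman conjugacy contributes the loss of regularity.
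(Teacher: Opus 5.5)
There is a genuine gap, located where you yourself place the main obstacle: the passage from the circle to $\Diff^{r-2}_c(J)$. The mechanism in your steps 2--3 cannot work as stated.

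A diffeomorphism $T$ supported in $J$ has no recurrence off its fixed point set. If it acts as a translation on $U_i$, it pushes $U_i$ off itself and all orbits there are wandering, so nothing on $J$ reproduces the dynamics of $R_\alpha$. Herman's conjugacy $\f$, with $gR_{\alpha+s}=\f R_\alpha\f^{-1}$, is a global object produced by that recurrence. At the linear level it solves a cohomological equation over $R_\alpha$, whose solution is in general nonzero all around the circle even when $g$ is supported in a short arc. Cutting $\f$ off outside $U_i$ therefore destroys the conjugacy relation. You give no argument that the resulting defect is supported away from $\bar I$, or that a bounded number of further $\varepsilon$-small commutators can absorb it. The leftover rotation $R_{-s}$ has no counterpart at all among compactly supported maps. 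So the count ``should close at five'' has nothing behind it.

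Two further points on the circle model itself. First, $[\f,R_\alpha]$ is admissible only if $R_\alpha$ is itself $\varepsilon$-close to $\id$, so $\alpha$ must depend on $\varepsilon$ rather than being fixed once and for all. Second, to be transported anywhere, the factors must be supported in short arcs. Producing such a decomposition is exactly the content of Avila's local fragmented perfection, Theorem \ref{Thm:locfragperf}, which gives four commutators whose factors are supported in $J_1$ or $J_2$.

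The missing idea is that circle dynamics enters the interval only through the orbit space of a map without fixed points, that is, through the Mather invariant. The paper's argument runs as follows.
\begin{itemize}
\item Fix an auxiliary smooth $h$, $C^r$-close to $\id$, supported in $J$, without fixed points in $I$, hyperbolic at $\partial I$, and the time-one map of a smooth flow.
\item For $f\in\Diff^r_c(I)$ close enough to $\id$, $fh$ coincides with $h$ outside $I$, has no fixed point in $I$, and has the same derivatives at $\partial I$. By Sternberg's theorem, the germs of $fh$ and $h$ at $\partial I$ are conjugate by germs equal to the identity outside $I$.
\item The only obstruction to a global conjugacy is the Mather invariant of $fh$, a circle diffeomorphism $C^r$-close to the identity (that of $h$ being trivial).
\item Theorem \ref{Thm:locfragperf} writes this invariant as a product of four commutators of $C^{r-2}$-small circle diffeomorphisms supported in arcs of length less than $1$.
\item Lifting these to fundamental domains and transporting them by the flow coordinates, as in Lemma \ref{l:cancel}, gives $u\in\Diff^{r-2}_c(J)$, a product of four small commutators, such that $ufh$ has trivial Mather invariant.
\item Hence $ufh=\psi h\psi^{-1}$ with $\psi$ $C^{r-2}$-close to $\id$, and $f=u^{-1}[\psi,h]$. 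This is where the number five comes from.
\end{itemize}
Herman's theorem is used only inside Theorem \ref{Thm:locfragperf}, on the circle, where no cut-off is needed.
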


\paragraph{Comments on Proposition \ref{p:GSM}.} The statement of Proposition \ref{p:GSM} is very classical and holds in a much more general context. Its proof is identical to that of Proposition 4.1 in \cite{EBM25} (for example), simply replacing $C^\infty$ by~$C^r$. Concretely, the family $S$ consists of four diffeomorphisms. Two of them depend only on $J$ and $K$: one supported in an intermediate interval $J'$ and for which $J$ is wandering, and one supported in $K$ and for which $J'$ is wandering. Naming $h$ the latter, the other two elements of $S$ are supported in $\cup_{n\in\N}h^n(J)$ and coincide on each component $h^n(J)$ with $h^ng_nh^{-n}$ and $h^ng_n'h^{-n}$ respectively.

\paragraph{Comments on Theorem \ref{t:unif-perfect}.} Just like Theorem 3.6 in \cite{EBM25}, we will see that it is a combination of an analogous statement for circle diffeomorphisms (Theorem \ref{Thm:locfragperf} below) and ideas of Mather already alluded to in Lemma \ref{l:cancel} and its applications. 

\begin{thm}[Local fragmented perfection] 
\label{Thm:locfragperf}
Let $r\ge3$ and $J_1,J_2$ be open intervals covering the circle. For any $\eta'>0$, there exists 
$\eta>0$ such that, for any  diffeomorphism $f \in \mathrm{Diff}^{r}_+(\mathbb{S}^1)$ with $d_{C^r}(f,\mathrm{id}) < \eta$, 
there exist two families of $C^{r-2}$-diffeomorphisms $(g_i)_{1 \leq i \leq 4}$ and $(g'_i)_{1 \leq i \leq 4}$ with the following properties:
\begin{enumerate}
\item $g_1,g'_1,g_4,g'_4$ are supported in $J_1$, while $g_2,g'_2,g_3,g'_3$ are supported in $J_2$.
\item For any $i$, one has $d_{C^{r-2}}(g_i,\mathrm{id}) <\eta'$ and $d_{C^{r-2}}(g'_i,\mathrm{id}) <\eta'$. 
\item $f=[g_1,g'_1][g_2,g'_2][g_3,g'_3][g_4,g'_4].$
\end{enumerate}
\end{thm}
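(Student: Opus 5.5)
We would follow Avila's approach (and its smooth version in \cite{EBM25}): write $f$ as a product of a rotation-like element and a conjugate, using the Arnold--Moser--Herman local linearization theorem. The plan has three steps.

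\textbf{Step 1: fragmentation.} Since $f$ is $C^r$-close to $\id$, write $f=\bar f_1\bar f_2$ with $\bar f_1$ supported in $J_1$ and $\bar f_2$ supported in $J_2$, both $C^r$-close to $\id$. To do this, take a partition of unity $\rho_1+\rho_2=1$ subordinate to the cover and a lift $\tilde f=\id+u$ with $u$ small, and set $\bar f_2=\id+\rho_2 u$, $\bar f_1=f\bar f_2^{-1}$. All $C^r$ distances are controlled linearly by $\|u\|_r$.

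\textbf{Step 2: two commutators per piece.} It then suffices to write each $\bar f_i$ as a product of two commutators of $C^{r-2}$-small diffeomorphisms supported in $J_i$. We do this on the circle, for a small diffeomorphism $h$ with support in an interval $J$, as follows.
\begin{itemize}
\item Choose a Diophantine number $\alpha$ of fixed type and a fixed smooth diffeomorphism $\psi$, close to the identity, supported in $J$ and with $\psi$ conjugating appropriately.
\item Alternatively, following Avila, fix a rotation $R_\alpha$ with $\alpha$ Diophantine. Consider $R_\alpha h$; by Herman's theorem with parameter (Arnold--Moser), for $h$ $C^r$-small there is $t$ small such that $R_{t}R_\alpha h=\phi R_\alpha\phi^{-1}$, with $\phi$ of class $C^{r-2}$ (the loss of $2$ derivatives for $r\ge3$ is the classical loss in the KAM/Newton scheme for a fixed Diophantine class), and with $\phi$ and $t$ controlled by $\|h\|_r$.
\item This gives $h=R_{-\alpha-t}\phi R_\alpha\phi^{-1}=R_{-t}[R_{-\alpha},\phi]$.
\item The rotation $R_{-t}$ is itself a commutator of small elements, for instance of rotations conjugated by a fixed map, with smallness controlled by $t$.
\end{itemize}
Support is the issue: rotations are not supported in $J_i$.

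\textbf{Step 3: restoring support.} To fix this, use the Mather-type trick as in \cite{EBM25}. Pass from a diffeomorphism supported in $J$ to a circle diffeomorphism via a ``rolling up'' map, where the interval $J$ is identified with a circle up to germs at the endpoints. Commutators of circle diffeomorphisms near $\id$ then lift to commutators of interval diffeomorphisms whose supports stay in $J_i$. Equivalently, apply the linearization to $R_\alpha h$, where $R_\alpha$ is replaced by a fixed element $T$ supported in a slightly larger interval, and then correct the boundary germs. Each piece thus produces two commutators, giving four in total, with the ordering $J_1,J_2,J_2,J_1$ as in the statement. The estimates follow by continuity of the linearization map at $h=\id$, in the $C^r\to C^{r-2}$ topologies.

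\textbf{Main obstacle.} The main obstacle is the quantitative Herman--Moser step. It requires continuity of $h\mapsto(\phi,t)$ from $C^r$ to $C^{r-2}$ at the identity, in finite regularity with loss exactly $2$, which forces $r\ge3$. It also requires making the support bookkeeping compatible with the circle formulation. We would first try to quote the finite-regularity KAM theorem (Herman, Chapter IV; or Moser's version) for Diophantine rotations of small exponent, and check its loss bound.
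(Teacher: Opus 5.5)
\textbf{The gap is Step~3.} Localizing the supports is the whole content of the statement, not bookkeeping. Your Step~2 is essentially Herman's classical argument, which produces small but \emph{global} commutators $R_{-t}[R_{-\alpha},\phi]$. You are right that the finite-regularity Herman theorem, with loss of two derivatives, is the only regularity-sensitive input: that is the only change the paper makes to Avila's $C^\infty$ proof in \cite{Av08}. But neither of your two fixes for the supports works.

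\begin{itemize}
\item \emph{Rolling up.} The orbit-space (Mather) correspondence produces a smooth diffeomorphism compactly supported in the interval only from a circle map supported in an arc of length $<1$. Only then can a lift be placed in one fundamental domain and extended by the identity. A global map such as $R_{-\alpha}$ or the KAM conjugacy $\phi$ can only be lifted equivariantly. This gives a map commuting with the auxiliary dynamics, which is not compactly supported and in general not even $C^1$ at the endpoints. So this step presupposes commutators of circle maps supported in proper arcs, which is the theorem itself. That is exactly the direction in which the paper uses Theorem~\ref{Thm:locfragperf}, namely to prove Theorem~\ref{t:unif-perfect}.
\item \emph{Replacing $R_\alpha$ by $T$ supported in an interval.} This fails more simply: $T$ and $T\bar f$ have fixed points, hence rotation number $0$, and no linearization theorem applies.
\end{itemize}

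There are also two minor points. Fragmenting $f=\bar f_1\bar f_2$ first gives the order $J_1,J_1,J_2,J_2$, not $J_1,J_2,J_2,J_1$. And writing $R_{-t}$ as a commutator of ``rotations conjugated by a fixed map'' is unjustified; it can be done in $\mathrm{PSL}_2(\R)$, but again with global factors.

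\textbf{What is missing} is a way to produce localized commutators directly on the circle. A natural one is to fragment the Diophantine reference map instead of $f$.
\begin{itemize}
\item Take compact sets $K_i\subset J_i$ with $\mathbb{S}^1\setminus K_1\subset J_2$ and $\mathbb{S}^1\setminus K_2\subset J_1$. Take a smooth $\xi=\xi_1\xi_2$ with very Diophantine rotation number, with $\xi_i$ supported in $K_i$ and close to $\id$ (for instance, a fragmented small Diophantine rotation).
\item Let $a,b$ be small and supported in $J_1,J_2$ respectively, with $\rho(\xi_1ab\xi_2)=\rho(\xi)$. Theorem~\ref{Thm:Herman} gives $\xi_1ab\xi_2=h\xi h^{-1}$ with $h$ $C^{r-2}$-small. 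Hence, with $[u,v]=uvu^{-1}v^{-1}$,
\[ab=[\xi_1^{-1},h]\,[h,\xi_2].\]
\item Now fragment $h$ in two different ways. Write $h=h_1h_2$ with $h_2$ supported in $\mathbb{S}^1\setminus K_1$, so that $h_2$ commutes with $\xi_1$. Also write $h=h_2'h_1'$ with $h_1'$ supported in $\mathbb{S}^1\setminus K_2$, so that $h_1'$ commutes with $\xi_2$. Then
\[ab=[\xi_1^{-1},h_1]\,[h_2',\xi_2],\]
a commutator of small elements supported in $J_1$ times one supported in $J_2$.
\item The same computation at $\xi_2\xi_1$ writes $b'a'$ as a $J_2$-commutator times a $J_1$-commutator, whenever $\rho(\xi_2b'a'\xi_1)=\rho(\xi)$.
\item Finally, split $f=(ab)(b'a')$ so that both rotation-number conditions hold. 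This adjustment is possible because $\xi_1\xi_2$ and $\xi_2\xi_1$ have different invariant measures, for example when $\xi$ is a rotation. The result is exactly the pattern $J_1,J_2,J_2,J_1$.
\end{itemize}
An argument of this type has to replace your Steps~1 and~3.
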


This is a straightforward adaptation of Theorem 1.2 in \cite{Av08} (cf. also \cite[Thm 3.1]{EBM25}), which deals with the $C^\infty$ case. Before highlighting the changes between the finite and infinite regularity contexts, let us explain roughly how this statement implies Theorem \ref{t:unif-perfect} (for the details, see \cite{EBM25}; there are no subtleties due to finite regularity here). One starts with an auxiliary diffeomorphism $h$ which is $C^r$-close to the identity, supported in $J$, has no fixed point in $I$, has hyperbolic fixed points at $\partial I$ and is the time-$1$ map of a smooth flow. If $f\in\Diff^r_+(I)$ is sufficiently $C^r$-close to the identity, $fh$ still has no fixed points in $I$, it has the same derivatives as $h$ at $\partial I$, and it coincides with $h$ outside $I$. By Sternberg's linearization theorem (see \cite[Appendix]{Yoc} for the finite regularity version), the germs of $fh$ and $h$ at $0$ (resp. $1$) are $C^r$-conjugated by a germ which is the identity on the left of $0$ (resp. on the right of $1$). If these germs extend to a $C^r$ diffeomorphism $\f$ of the entire $I$ close to the identity and conjugating $h$ to $fh$, then $f$ can be written as a single commutator of ``small'' diffeomorphisms. This is not the case in general: $h_{|I}$ has a trivial Mather invariant while we only know that that of $fh$ is a circle diffeomorphism $C^r$-close to the identity. But thanks to Theorem \ref{Thm:locfragperf}, we can decompose it as a product of four commutators of circle diffeomorphisms $C^{r-2}$-close to the identity, supported in intervals of length less than $1$. Using the ideas of Lemma \ref{l:cancel}, one can then perform a ``correction'' on $fh$, composing it with some $u\in\Diff^{r-2}_c(\R)$ which is itself a product of four ``small'' commutators, so that $ufh$ is $C^{r-2}$ conjugated to $h$, which concludes the sketch of proof. 

\paragraph{Comments on Theorem \ref{Thm:locfragperf}}
The only difference between the proof of Theorem \ref{Thm:locfragperf} and Avila's original proof in $C^\infty$ regularity is the use of a finite regularity version of Herman's linearization theorem instead of the $C^\infty$ version. Namely:

\begin{thm}[Herman's theorem] 
\label{Thm:Herman}
Let $\xi$ be a diffeomorphism in $\mathrm{Diff}^{\infty}_+(\mathbb{S}^1)$ with ``very'' diophantine rotation number $\alpha$ (such that $|\alpha-\frac pq|>q^{-2-\delta}$ for some $\delta<1$, for all positive integers $p,q$. Given $r\ge3$, for any $\varepsilon_1>0$, there exists $\varepsilon_2>0$ 
such that, for any diffeomorphism $g \in \mathrm{Diff}^{r}_+(\mathbb{S}^1)$ with $\rho(g)=\rho(\xi)$ 
and $d_{C^r}(g,\xi)< \varepsilon_2$, there exists a diffeomorphism $h' \in \mathrm{Diff}^{r-2}_+(\mathbb{S}^1)$ such that $d_{C^{r-2}}(h',\mathrm{id})< \varepsilon_1$ and 
$$g=h' \xi (h')^{-1}.$$
\end{thm}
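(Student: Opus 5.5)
Since $\xi$ is $C^\infty$ with very diophantine rotation number $\alpha$, Herman's global theorem (smooth version, which we take as known) gives $h_0\in\Diff^\infty_+(\mathbb{S}^1)$ with $\xi=h_0 R_\alpha h_0^{-1}$. Write $g=h_0\tilde g h_0^{-1}$ with $\tilde g=h_0^{-1}gh_0$. The map $g\mapsto\tilde g$ is continuous in $C^r$ near $\xi$, because conjugation by a fixed smooth diffeomorphism is continuous (right translation is always continuous, and left composition by a $C^\infty$ map is continuous on $\Diff^r$). Moreover $\rho(\tilde g)=\alpha$, and $\tilde g$ is $C^r$-close to $R_\alpha$ when $g$ is $C^r$-close to $\xi$. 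It therefore suffices to prove the statement for $\xi=R_\alpha$, taking $h'=h_0 h h_0^{-1}$ afterwards, where $h$ is the conjugacy found for $\tilde g$. Since $h\mapsto h_0hh_0^{-1}$ is continuous at the identity in $C^{r-2}$, smallness of $h$ transfers to smallness of $h'$.

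For $\xi=R_\alpha$ this is the finite-regularity local Arnold--Moser--Herman theorem. I would run a Newton (KAM) scheme.
\begin{itemize}
\item Write $g=R_\alpha+\Delta$ and solve the linearized cohomological equation $\phi(x+\alpha)-\phi(x)=\Delta(x)-\int\Delta$. For the Fourier coefficients this means dividing by $e^{2\pi i k\alpha}-1$. The diophantine condition $|\alpha-p/q|>q^{-2-\delta}$ gives the bound $|k|^{1+\delta}$ on the small divisors, so with $\delta<1$ the loss is strictly less than two derivatives in H\"older or Sobolev scales.
\item The averaged term $\int\Delta$ can be absorbed using the constraint $\rho(g)=\alpha$. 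This is done in the standard way, through a translation parameter that must vanish in the limit.
\item Set $h_1=\id+\phi$. The new error $h_1^{-1}gh_1-R_\alpha$ is quadratic in $\Delta$, measured in a norm with loss.
\end{itemize}

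The scheme is iterated with smoothing operators (Nash--Moser style, or Moser's version using truncation of Fourier series between scales). In finite regularity the quadratic convergence beats the loss, provided the total loss is controlled. Each step costs at most $1+\delta<2$ derivatives, and by interpolation (Hadamard inequalities) one can arrange that the composed conjugacy converges in $C^{r-2}$, with norm bounded by a function of $d_{C^r}(g,R_\alpha)$ that goes to $0$. This gives $\varepsilon_2$ for a given $\varepsilon_1$. An alternative I would also consider is simply quoting Herman's finite-regularity local result (``Sur la conjugaison différentiable\dots'', or the local theorem in Herman's IHES paper), which states exactly that for $\alpha$ of type $2+\delta$ with $\delta<1$, a $C^r$-small perturbation with rotation number $\alpha$ is $C^{r-1-\delta-\epsilon}$-conjugate to the rotation, continuously. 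Since $r-2\le r-1-\delta-\epsilon$ for small $\epsilon$, this suffices.

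The main obstacle is the bookkeeping of losses in finite regularity: making sure the iteration closes with a loss of less than $2$ derivatives, including the H\"older-versus-$C^k$ issues at integer exponents, and making sure the smallness of the conjugacy in $C^{r-2}$ depends continuously on the initial $C^r$ distance. I would handle this by working in H\"older classes $C^{s}$ with $s$ non-integer, between $r-2$ and $r-1-\delta$, so that the conclusion holds in $C^{r-2}$ with room to spare.
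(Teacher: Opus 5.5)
Your proposal is correct, and in its citation form it matches what the paper does. The paper gives no proof of this statement; it simply imports it as the finite-regularity version of Herman's local linearization theorem.

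What you make explicit, and the paper leaves implicit, is the passage from an arbitrary smooth $\xi$ to the rotation. The local theorem is naturally a statement near $R_\alpha$, and the reduction costs the global smooth linearization $\xi=h_0R_\alpha h_0^{-1}$ (Herman, and Yoccoz for all Diophantine $\alpha$). Conjugation by the fixed $C^\infty$ map $h_0$ is continuous in both $C^r$ and $C^{r-2}$, and $\rho(h_0^{-1}gh_0)=\rho(g)$. So the choice of $\varepsilon_2$ from $\varepsilon_1$ transfers exactly as you describe.

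Your KAM sketch is not needed, and as written it puts the difficulty in the wrong place. The total loss of a Newton scheme is not just the per-step loss $1+\delta$ of the cohomological equation. The real question is whether the scheme closes starting from only $C^3$ data when $\delta$ is close to $1$, and a straightforward truncated Newton iteration with Hadamard interpolation does not obviously do so.

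What works is a better-organized scheme, for instance:
\begin{itemize}
\item approximate the map by analytic maps $g_k$ on complex strips of width $\sigma_k$, with sup-norm errors $|g_{k+1}-g_k|_{\sigma_{k+1}}=O(\sigma_k^{r})$;
\item solve the analytic conjugacy problem exactly at each scale, where the analytic smallness condition is $O(\sigma_k^{2+\delta})$.
\end{itemize}
This works as soon as $r>2+\delta$ and yields a $C^{r-1-\delta-\epsilon}$ conjugacy. Alternatively, one can use a genuinely one-dimensional argument as in Herman's work. If you want a self-contained proof, that is where the work lies; otherwise the citation together with your reduction suffices.
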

Apart from that, the proof of \ref{Thm:locfragperf} is identical to that of Avila, which has already been repeated in \cite{EBM25}, so we will not provide it here.

\subsection{Proof of Lemma \ref{Lem:smallpieces}}

Lemma \ref{Lem:smallpieces} is the finite regularity analogue of Lemma 4.3 in \cite{EBM25}. Just like the latter, it relies on the following particular case of a theorem of \cite{MR2509711} proved in $C^\infty$ regularity (cf. proof of Theorem 1.17 therein), but which actually holds in any regularity for which $\Diff^r_c(\R)$ 
is perfect, that is, for any $r\neq 2$. Recall that a conjugacy-invariant norm on a group $G$ is a map $\nu:G\to\R_+$ such that, for any $f,g\in G$, $\nu(f)=0$ if and only if $f=\id$, $\nu(fg)\le \nu(f)+\nu(g)$, $\nu(gfg^{-1})=\nu(f)$, and $\nu(f^{-1})=\nu(f)$.

\begin{thm}
\label{t:BIP}
Every conjugacy-invariant norm $\nu$ on $\Diff^r_c(\R)$, $r\neq2$, is bounded above by $14\nu(\phi)$ for any $\phi\in\Diff^r_c(\R)\setminus\{\id\}$. 
\end{thm}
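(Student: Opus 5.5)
The plan follows the Burago--Ivanov--Polterovich scheme: first show that commutators of diffeomorphisms supported in a set displaced by $\phi$ cost at most $4\nu(\phi)$, then write an arbitrary element of $\Diff^r_c(\R)$ as a product of a uniformly bounded number of such commutators. The second step is where I am least sure of the details.

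\textbf{Step 1: small commutators are cheap.} Fix $\phi\in\Diff^r_c(\R)\setminus\{\id\}$ and pick a point moved by $\phi$. By continuity there is a bounded open interval $U$ with $\phi(U)\cap U=\varnothing$. Let $f,g\in\Diff^r_c(U)$. Then
$$[f,\phi]=f\cdot(\phi f^{-1}\phi^{-1}),$$
and $h:=\phi f^{-1}\phi^{-1}$ is supported in $\phi(U)$, so it commutes with both $f$ and $g$. A direct computation then gives
$$[[f,\phi],g]=fhgh^{-1}f^{-1}g^{-1}=[f,g].$$
Now use the axioms of a conjugacy-invariant norm: subadditivity, invariance under inversion, and invariance under conjugation. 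They give $\nu([a,\phi])\le \nu(a\phi a^{-1})+\nu(\phi^{-1})=2\nu(\phi)$, and similarly $\nu([k,g])\le 2\nu(k)$. Hence
$$\nu([f,g])\le 2\nu([f,\phi])\le 4\nu(\phi).$$
Since $\nu$ is conjugation invariant, the same bound holds for $[f,g]$ whenever $f,g$ are supported in any interval $C^r$-conjugate to $U$. This covers every bounded interval, because any two bounded intervals are conjugate by an element of $\Diff^r_c(\R)$.

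\textbf{Step 2: bounded commutator length with small supports.} It then suffices to show the following: there is a universal $N$ such that every $\psi\in\Diff^r_c(\R)$ is a product of at most $N$ commutators $[f_i,g_i]$, each pair supported in one bounded interval, together with a bounded number of extra factors that are conjugates of $\phi^{\pm1}$.

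\begin{enumerate}
\item Conjugate $\psi$ so that its support lies in a bounded interval $V$.
\item Choose $h\in\Diff^r_c(\R)$ such that the intervals $h^k(V)$, $k\ge0$, are pairwise disjoint and accumulate on a point.
\item Form the infinite product $\Psi=\prod_{k\ge0}h^k\psi h^{-k}$. Then $\psi=\Psi\,(h\Psi h^{-1})^{-1}=[\Psi,h]$, a single commutator.
\end{enumerate}

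The difficulty is that $\Psi$ need not be $C^r$. Conjugating by a contraction makes the $j$-th derivatives, for $j\ge2$, blow up. Here perfectness enters, which is why $r\ne2$ is needed: by Mather--Thurston, $\Diff^r_c(\R)$ is perfect for $r\neq 2$.

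\begin{enumerate}
\item First fragment $\psi$ into a bounded number of pieces, each $C^r$-close to the identity and supported in a small interval.
\item Write each small piece as a product of commutators of elements $C^r$-close to the identity, with controlled norms.
\item Rescale the pieces along the orbit of $h$ so that the infinite products converge in $C^r$. The pieces must be chosen increasingly close to the identity, fast enough to beat the derivative growth of $h^k$.
\end{enumerate}

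Counting the resulting commutators, each costing $4\nu(\phi)$ by Step 1, and the extra factors, each costing $\nu(\phi)$, should produce the constant $14$. Roughly, this means three commutators plus two single conjugates of $\phi^{\pm1}$.

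\textbf{Main obstacle.} The hard point is Step 2: writing an arbitrary $\psi$ as a \emph{uniformly} bounded number of commutators whose factors are genuinely $C^r$, despite the loss of regularity caused by conjugating along a contracting orbit. My first attempt would be to combine the infinite-product trick with a fragmentation/perfectness argument in which the pieces placed at the $k$-th level are taken $\varepsilon_k$-close to the identity, with $\varepsilon_k$ decaying fast enough that $\Psi$ converges in $C^r$. This mirrors the mechanism in Proposition~\ref{p:GSM}.
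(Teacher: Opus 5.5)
Your Step~1 is correct. Since any two elements of $\Diff^r_c(\R)$ are supported in a common bounded interval that some conjugate of $\phi$ displaces, it already gives $\nu([a,b])\le 4\nu(\phi)$ for \emph{every} commutator in $\Diff^r_c(\R)$.

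The gap is Step~2. It carries the whole weight of the theorem, you do not establish it, and the mechanism you sketch cannot work. For $h\in\Diff^r_c(\R)$, the intervals $h^k(V)$ accumulate at a fixed point $p$ of $h$. The product $\Psi=\prod_k h^k\psi h^{-k}$ is then in general not even $C^1$ at $p$. Indeed, if $\psi$ fixes some $q$ with $D\psi(q)\neq 1$, then $h^k\psi h^{-k}$ fixes $h^k(q)\to p$ with derivative $D\psi(q)$, while $D\Psi=1$ at the endpoints of the intervals $h^k(V)$.

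Your remedy, taking the level-$k$ pieces $\psi_k$ increasingly close to the identity, destroys the identity you rely on. For $\Psi=\prod_{k\ge0}h^k\psi_kh^{-k}$ one gets $[\Psi,h]=\prod_{k\ge0}h^k\psi_k\psi_{k-1}^{-1}h^{-k}$ (with $\psi_{-1}=\id$), which equals $\psi$ only if $\psi_k=\psi$ for all $k$. This is exactly why Proposition~\ref{p:GSM} yields word length growing linearly in $n$, not a bounded decomposition of a fixed element.

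Two further problems remain. Writing pieces as commutators of elements $C^r$-close to the identity, with control, is a uniform local perfectness statement that in finite regularity is only available with loss of derivatives (compare Theorem~\ref{t:unif-perfect}). And the count leading to $14$ is never derived.

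The missing idea is that neither an infinite product nor any derivative control is needed. This is why the paper, which simply invokes the Burago--Ivanov--Polterovich argument, can say that the only regularity-dependent input is perfectness of $\Diff^r_c(\R)$ for $r\neq2$. Here is the finite version:

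\begin{enumerate}
\item By perfectness, $\psi=x_1\cdots x_m$ with $x_j=[a_j,b_j]$, where all $a_j,b_j$ are supported in a bounded interval $V$ and $m$ may depend on $\psi$.
\item Choose $c\in\Diff^r_c(\R)$ such that $V,c(V),\dots,c^{m+1}(V)$ are pairwise disjoint.
\item Put $y_j=x_1\cdots x_j$, so $y_0=\id$ and $y_m=\psi$, and set $\Phi=\prod_{j=0}^m c^jy_jc^{-j}$. This is a \emph{finite} product, hence lies in $\Diff^r_c(\R)$.
\end{enumerate}
Comparing $\Phi$ and $c\Phi c^{-1}$ on each $c^j(V)$ gives
\[
[\Phi,c]=\Phi\,(c\Phi c^{-1})^{-1}=\Big(\prod_{j=1}^m c^j\,(y_{j-1}x_jy_{j-1}^{-1})\,c^{-j}\Big)\cdot c^{m+1}\psi^{-1}c^{-(m+1)}.
\]
The first factor is a single commutator $[A,B]$, because its pieces are commutators of elements supported in the pairwise disjoint intervals $c^j(V)$. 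Hence $c^{m+1}\psi c^{-(m+1)}=[c,\Phi]\,[A,B]$, and your Step~1 yields $\nu(\psi)\le 8\nu(\phi)\le 14\nu(\phi)$.
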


This being said, the proof of Lemma \ref{Lem:smallpieces} is identical to that of its analogue in \cite{EBM25}, from which what follows is extracted, for completeness' sake. 

\begin{proof}[Proof of Lemma \ref{Lem:smallpieces}] 
If $f$ is the time-one map of a $C^{r}$ flow $(\varphi^{t})_{t \geq 0}$ on $\R$, then we can write $f= \left( \varphi^{\frac{1}{n}} \right)^{n}$, with sufficiently large $n$, so that 
properties 1 and 2 in Lemma \ref{Lem:smallpieces} hold by taking $S= \left\{\varphi^{\frac{1}{n}} \right\}$.

Now recall that for $r\neq2$, the group $G=\Diff^r_c(\R)$ is a simple group. As a conjugate of a flow by a diffeomorphism is still a flow, the subgroup of $G$ generated be time-one maps of $C^r$ flows is a nontrivial normal subgroup of $G$. Hence any diffeomorphism $f$ in $G$ can be written as a product of time-one maps of $C^r$ flows, and we denote by $\nu(f)$ the minimal number of factors of such a product. This defines a conjugation-invariant norm on the group $G$, which is thus bounded from above by $14\times1$ according to Theorem \ref{t:BIP}.
Hence, for any diffeomorphism $f$ in $\mathrm{Diff}^{r}_c(\R)$, there exists $k \leq 14$ and $C^{r}$ flows $(\varphi^{t}_i)_{t \in \mathbb{R}}$, for $i \in [\![1,k]\!]$, such that
$$f =\varphi_1^1 \varphi_2^1 \ldots \varphi_k^{1}.$$
Taking $n$ sufficiently large so that each diffeomorphism $\varphi^{\frac{1}{n}}_i$ is $\delta$-close to the identity in $C^r$ topology, we have  
$$f =\left(\varphi_1^{\frac{1}{n}}\right)^n \left(\varphi_2^{\frac{1}{n}}\right)^n \ldots \left(\varphi_k^{\frac{1}{n}}\right)^n,$$
and Properties 1 and 2 of the lemma hold with 
$$S= \left\{ \varphi_i^{\frac{1}{n}} \ | \ 1 \leq i \leq k \right\}.$$
Since $|S| \leq k \leq 14$, this closes the proof.
\end{proof}


\noindent{\bf Acknowledgments.} 
We are strongly grateful to Emmanuel Militon for several remarks and corrections to a first version of this article. 
This material is based upon work supported by the National Science Foundation under Grant No.~DMS-2424139, while the authors were in residence at the Simons Laufer Mathematical Sciences Institute in Berkeley, California, during the Spring 2026 semester. Both authors were also funded by the ECOS research project 230003 ``Small spaces under action'' during the last part of the preparation of the article. Moreover, the second-named author was funded by the Fondecyt regular research project 1260336 ``On the growth of certain geometric and dynamical structures''. 


\begin{footnotesize}

\bibliographystyle{amsalpha}
\bibliography{Biblio}

\noindent {\bf H\'el\`ene Eynard-Bontemps} \hfill{\bf Andr\'es Navas}

\noindent Sorbonne Université, Université Paris Cité,\hfill{ Dpto. de Matem\'aticas y C.C.}

\noindent  CNRS, IMJ-PRG,  \hfill{ Universidad de Santiago de  Chile}

\noindent F-75005 Paris, \hfill{Alameda Bernardo O'Higgins 3363}

\noindent France \hfill{Estaci\'on Central, Santiago, Chile} 

\noindent eynard@imj-prg.fr \hfill{andres.navas@usach.cl}

\end{footnotesize}

\end{document}